\newcommand{\F}{{\mathbb{F}}}
\newcommand{\Z}{{\mathbb Z}}
\newcommand{\N}{{\mathbb N}}
\newcommand{\Q}{{\mathbb Q}}
\newcommand{\C}{{\mathbb C}}
\newcommand{\Hc}{{\mathcal H}}
\newcommand{\Ac}{{\mathcal A}}
\newcommand{\res}{\mathop{\rm res}}       
\newcommand{\Pc}{{\mathcal P}}
\newcommand{\Cc}{{\mathcal C}}
\newcommand{\Vc}{{\mathcal V}}
\newcommand{\LL}{\mathcal L}
\newcommand{\Ab}{{\mathbb A}}
\newcommand{\OO}{{\mathcal O}}
\newcommand{\Nil}{{\rm Nil}}
\newcommand{\Hom}{{\rm Hom}}
\newcommand{\Gal}{{\rm Gal}}
\newcommand{\Sym}{{\rm Sym}}
\newcommand{\Ker}{{\rm Ker}}
\newcommand{\rk}{{\rm rk}}
\newcommand{\Spec}{{\rm Spec} }
\newcommand{\Tor}{{\rm Tor}}
\newcommand{\sgn}{{\rm sgn}}
\newcommand{\Li}{{\rm Li}}
\newcommand{\gm}{\mathbb G_{m}}
\newcommand{\ga}{\mathbb G_{a}}
\newcommand{\z}{{\mathbb{Z}}}
\newcommand{\g}{{\mathbb{G}}}
\newcommand{\lrto}{\longrightarrow}
\newcommand{\lo}{L}
\newcommand{\uz}{\underline{\mathbb{Z}}}
\newcommand{\vv}{{\mathbb{V}}}
\theoremstyle{plain}
\newtheorem{theor}{Theorem}[section]
\newtheorem{prop}[theor]{Proposition}
\newtheorem{corol}[theor]{Corollary}
\newtheorem{lemma}[theor]{Lemma}
\theoremstyle{remark}
\newtheorem{rmk}[theor]{Remark}
\newtheorem{examp}[theor]{Example}
\theoremstyle{definition}
\newtheorem{defin}[theor]{Definition}
\newtheorem{defin-prop}[theor]{Definition-Proposition}
\newcommand{\quash}[1]{}  
\title{A higher-dimensional Contou-Carr\`ere symbol: local theory\footnotetext{This work is supported by the RSF under a grant 14-50-00005.}}
\author{Sergey Gorchinskiy and Denis Osipov\\ \\
\small{Steklov Mathematical Institute, Moscow, Russia}\\
\small{e-mails: {\tt gorchins@mi.ras.ru}, {\tt d\_osipov@mi.ras.ru}}}
\date{}
\begin{document}
\maketitle

\begin{abstract}
We construct a higher-dimensional Contou-Carr\`ere symbol and we study its various fundamental properties. The higher-dimensional Contou-Carr\`ere symbol is defined by means of the boundary map for $K$-groups. We prove its universal property. We provide an explicit formula for the
 higher-dimensional
 Contou-Carr\`ere symbol over $\Q$ and we prove integrality of this formula. A relation with the higher-dimensional Witt pairing is also studied.
\end{abstract}

\tableofcontents

\section{Introduction}

We start by recalling the one-dimensional situation, that is, the case of Laurent series in one variable. For any commutative associative unital ring $A$, Contou-Carr\`ere~\cite{CC1},~\cite{CC2} introduced a remarkable bilinear antisymmetric pairing between the groups of invertible elements in the ring~$A((t))$ of Laurent series over $A$
\begin{equation}\label{pairing}
A((t))^*  \times A((t))^*  \lrto A^* \,,
\end{equation}
which is functorial with respect to $A$. Pairing~\eqref{pairing} is called now a Contou-Carr\`ere symbol. We will also call it a one-dimensional Contou-Carr\`ere symbol and we will denote it by~$CC_1$. Using an observation of Deligne, one can define $CC_1$ with the help of the following analogy with the analytic setting.

Let $X$ be a Riemann surface, $p$ be a point on $X$, and $f,g$ be two meromorphic functions on $X$. By $(f,g)_p\in\C^*$ denote the tame symbol of $f$ and $g$ at $p$, which is given by an algebraic formula
$$
(f,g)_p=(-1)^{\nu_p(f)\nu_p(g)}\left[\frac{f^{\nu_p(g)}}{g^{\nu_p(f)}}\right](p)\,,
$$
where $\nu_p$ denotes the valuation at $p$. Beilinson~\cite{Bei} and Deligne~\cite{Del} have independently discovered an analytic expression for $(f,g)_p$ (see also a detailed exposition in~\cite[\S\,1]{EV}). Namely, let~$U$ be a complement to finitely many points in $X$ such that~$f$ and $g$ are holomorphic and invertible on $U$. Then, by definition, the functions~$f$ and $g$ are elements of the first Deligne cohomology group ${H^1_D\big(U, \Z(1)\big) \simeq H^0 (U, \OO_{U}^*)}$ and their cup-product~$f \cup g$ is an element of the second Deligne cohomology group ${H^2_D\big(U, \z(2)\big)\simeq H^1\big(U, \OO_{U}^*  \to \Omega_U^1\big)}$. The last group admits a geometric description as the group of equivalence classes of holomorphic line bundles with holomorphic connection. The point is that the (inverse of the) tame symbol $(f,g)_p$ is equal to the monodromy of~$f \cup g$ at $p$, that is, the monodromy along a small circle $\sigma_p$ in counterclockwise direction around $p$. In particular, when~${\nu_p(f)=0}$, then the above interpretation gives immediately the following obvious formula:
\begin{equation}  \label{analytic}
(f,g)_p  = \exp  \left(   \frac{1}{2\pi i}  \oint_{\sigma_p} \log(f)\, \frac{dg}{g}   \right)=\exp\,\res\nolimits_p \left( \log(f) \, \frac{dg}{g}\right)   \,,
\end{equation}
where $\log(f)$ is a branch of the logarithm of $f$ on a small neighborhood of $p$ (see more general formulas when $\nu_p(f)$ is an arbitrary integer in~\cite{Bei} and~\cite[(2.7.2)]{Del}).

Now we come back to the Contou-Carr\`ere symbol. If $A$ is a field, then the Contou-Carr\`ere symbol coincides with the tame symbol. For example, one has the equality $CC_1(t,t)=-1$. Let us describe $CC_1$ for arbitrary Laurent series. For simplicity, suppose in what follows that $A$ is not a product of two rings. Then a Laurent series $f=\sum\limits_{l\geqslant m}a_lt^l$, $a_l\in A$, is invertible in~$A((t))$ if and only if there is an integer $l_0 \geqslant m$ such that $a_{l_0} \in A^*$ and for any $l < l_0$, the coefficient~$a_l$ is nilpotent. Denote the integer~$l_0$ by~$\nu(f)$. For all $a\in A^*$ and $g\in A((t))^*$, there is an equality $CC_1(a,g)=a^{\nu(g)}$. By bilinearity and the antisymmetric property, it remains to define $CC_1(f,g)$ when $\nu(f)=0$ and the constant term $a_0$ of $f$ equals $1$, which is the most essential case for the Contou-Carr\`ere symbol. Using an analogy with analytic formula~\eqref{analytic}, Deligne~\cite[\S\,2.9]{Del} suggested the following explicit formula for the Contou-Carr\`ere symbol $CC_1$ over a $\Q$-algebra $A$:
\begin{equation}  \label{1-dim}
CC_1(f,g)=\exp\,\res\left(\log(f)\,\frac{dg}{g}\right)\,,
\end{equation}
where $f,g\in A((t))^*$, $\nu(f)=0$, and $a_0=1$. Here ``$\log$'' and ''$\exp$'' are defined by the standard formal series. For the convergence of $\log$ one uses an $A$-linear topology on $A((t))$ with the base of open neighborhoods of zero given by $A$-submodules $t^mA[[t]]$, $m\in \Z$ (the ring $A$ is discrete). One shows that the expression under $\exp$ is a nilpotent element in $A$, whence the right hand side of formula~\eqref{1-dim} is a well-defined element in $A^*$. This defines the Contou-Carr\`ere symbol over $\Q$-algebras.

It turns out that formula~\eqref{1-dim} can be extended by elementary methods to a pairing over all rings, not only over $\Q$-algebras (see, e.g.,~\cite[\S\,2]{OZ1}). Shortly, the reason is as follows. Any element in $A((t))^*$ decomposes uniquely into a product of a power of $t$, an element in $A^*$, and an infinite product of elements of type $1-u_i t^i$, where $i\ne 0$, $u_i \in A$, and $u_i$ is nilpotent for $i<0$ and equals zero for sufficiently negative $i$. When $A$ is a $\Q$-algebra, one easily deduces from formula~\eqref{1-dim} that if $i>0$ and $j<0$, then there is an equality $CC_1(1-u_i t^i,1-v_j t^j)=(1-u_i^{-j/(i,j)}v_j^{i/(i,j)})^{(i,j)}$, where $(i,j)>0$ denotes the greatest common divisor of $i$ and~$j$. This expression does not have non-trivial denominators and makes sense over any ring. In addition, $CC_1(1-u_i t^i,1-v_j t^j)=1$ when $i$ and~$j$ have the same sign. Thus by bilinearity and the antisymmetric property, we obtain the Contou-Carr\`ere symbol over all rings. The corresponding explicit formula was given in~\cite[Intr.]{AP} in the case of Artinian rings. Note that the above discussion also implies that formally opening brackets in the right hand side of formula~\eqref{1-dim}, one actually obtains a power series with integral coefficients whose variables are coefficients of $f$ and $g$.

Moreover, the extension of $CC_1$ from $\Q$-algebras to all rings is unique. This is a consequence of an easy argument with flatness. Namely, the reason is that the functor $L\g_m(A):=A((t))^*$ on the category of all rings is represented by an ind-affine scheme, which is ind-flat. This means that $L\gm$ is a (formal) direct limit of flat affine schemes over $\z$ (see more details in~\cite[\S\,2]{OZ1}).

\medskip

Now we pass to double iterated Laurent series, which are elements of the ring~$A((t_1))((t_2))$. It is well-known to experts that, usually, complexity grows drastically with such generalization, because many basic one-dimensional facts fail in the two-dimensional case. Nevertheless, using an analog of formula~\eqref{1-dim}, the second named author and Zhu~\cite[Def.\,3.4]{OZ1} defined a multilinear antisymmetric functorial map
\begin{equation}  \label{2-dim}
CC_2\;:\;A((t_1))((t_2))^*  \times A((t_1))((t_2))^*   \times A((t_1))((t_2))^*   \lrto A^*  \,,
\end{equation}
where $A$ is a $\Q$-algebra. The key ingredient is the formula
 \begin{equation}  \label{CC_2}
 CC_2(f,g,h)= \exp\,\res \left( \log(f)\,\frac{dg}{g}   \wedge \frac{dh}{h}   \right)    \, ,
 \end{equation}
where $f,g,h \in A((t_1))((t_2))^*$ and there are some conditions on $f$ so that the series $\log(f)$ converges in the topology of $A((t_1))((t_2))$. Note that this topology is quite different from the topology on $A((t)))$. The map $\res$ is the two-dimensional residue and is equal to the coefficient by $t_1^{-1} t_2^{-1} dt_1 \wedge dt_2$.

How to extend formula~\eqref{CC_2} to a map over all rings, not just $\Q$-algebras?  Similarly as in the one-dimensional case, one shows that the value of $CC_2$ on elements of type $1 -u_{i,j} t_1^i t_2^j$ defined by formula~\eqref{CC_2} does not have non-trivial denominators and thus makes sense over all rings. However, as it is explained in~\cite[\S\,3.4]{OZ1}, this method works, for example, for Noetherian rings, but it is unclear how this method could work for all rings. The problem is with the decomposition of an arbitrary element from $A((t_1))((t_2))^*$
into an infinite product of elements of type $1-u_{i,j} t_1^i t_2^j$.  Thus less elementary methods were applied in~\cite{OZ1} in order to extend the map $CC_2$ to all rings. More exactly, there were used two alternative approaches: one based on categorical central extensions and higher commutators, and the other one based on the boundary map for algebraic $K$-groups
$$
\partial_{m+1} \;  :  \;   K_{m+1} \big(A((t))\big)  \lrto K_m(A)   \,  , \qquad m \geqslant 0  \,  ,
$$
where $A$ is any commutative ring. Note that it is important here that the ring $A$ can be non-regular, in particular, can have nilpotent elements. The construction of the maps $\partial_{m+1}$ is based on ``the Localization Theorem for projective modules'' proved by Gersten~\cite{Ger} and in a more general case by Grayson~\cite{Gr}. Kato~\cite[\S\,2.1]{K1} applied this theorem to a particular ring $A((t))$ to obtain the maps~$\partial_{m+1}$. In~\cite[\S\,7]{OZ1} the construction of the maps~$\partial_{m+1}$ was extended to arbitrary (including non-Noetherian) commutative rings. The maps~$\partial_{m+1}$ are functorial with respect to a ring $A$ (see Proposition~\ref{nat-tr} of the present paper).

Now the map $CC_2$ for an arbitrary ring $A$ is defined as the composition of the product between algebraic $K$-groups, the map $\partial_{m+1}$, and the natural map  $\det\colon K_1(A)\to A^*$:
\begin{multline}  \label{2-dim-K-theory}
A((t_1))((t_2))^*  \times A((t_1))((t_2))^*   \times A((t_1))((t_2))^*   \lrto \\
\lrto K_3\big(A((t_1))((t_2))\big)   \stackrel{\partial_3}{\lrto}  K_2\big(A((t_1))\big)  \stackrel{\partial_2}{\lrto}
K_1(A) \stackrel{\det} {\lrto} A^*  \, .
\end{multline}
It was proved in~\cite[Theor.\,7.2]{OZ1} that the map defined by an explicit formula~\eqref{CC_2} coincides with the one defined by the formula~\eqref{2-dim-K-theory} over $\Q$-algebras. For this it was used functoriality of the map given by formula~\eqref{2-dim-K-theory} together with a careful analysis of geometric properties of the ind-affine scheme that represents the functor $L^2\gm(A):=A((t_1))((t_2))^*$.

Finally, it was proved in~\cite[Lem.~3.10]{OZ1} that the extension of $CC_2$ from $\Q$-algebras to all rings is unique. For this it was used that the ind-scheme $L^2 \g_m$ is from the class~$\mathcal{E}\mathcal{F}$ (essentially flat) \cite[Def.\,3.1, Lem.\,3.5]{OZ1}, which is a weaker property than being ind-flat. Note that contrary to the ind-affine scheme $L\g_m$, the ind-affine scheme $L^2 \g_m$ is not known to be ind-flat (in particular, schemes in the standard representation of $L^2\gm$ are not flat, see a discussion after~\cite[Lem.\,3.4]{OZ1}).

\medskip

We came to the higher-dimensional case, that is, to iterated Laurent series in~$n$ variables for arbitrary $n\geqslant 1$. Define the functor $L^n \g_m (A):= A((t_1))  \ldots ((t_n))^*$ and, as usual, put $\g_m(A):= A^*$. From the above point of view it is now natural to define the $n$-dimensional Contou-Carr\`ere symbol $CC_n$ as a multilinear antisymmetric morphism of functors $(L^n \g_m)^{\times (n+1)}\to \g_m$ given by the formula
\begin{equation}  \label{CC_n}
\begin{CD}
CC_n \quad :  \quad  L^n \g_m (A)^{\times (n+1)}
@>>>
K_{n+1}\big(A((t_1))  \ldots ((t_n))\big)
@>{\partial_2  \cdot \ldots \cdot \partial_{n+1}}>>
 K_1(A)
 @>{\det}>>
 A^* \, ,
\end{CD}
\end{equation}
where the first arrow is the product between algebraic $K$-groups. It is also natural to expect that when $A$ is a $\Q$-algebra then $CC_n$ is given by the following explicit formula, which generalizes formulas~\eqref{1-dim} and~\eqref{CC_2}:
\begin{equation}  \label{Q-n-dim}
CC_n(f_1,f_2,\ldots,f_{n+1})=\exp\,\res\left(\log(f_1)\,\frac{df_2}{f_2}\wedge\ldots\wedge
\frac{df_{n+1}}{f_{n+1}}\right)   \, ,
\end{equation}
where $f_1, \ldots, f_{n+1}  \in L^n \g_m (A)$, $f_1$ satisfies additional conditions so that $\log(f_1)$ is well-defined, and the map $\res$ is the $n$-dimensional residue. Note that an analytic analog of expression~\eqref{Q-n-dim} was studied by Brylinski and McLaughlin in~\cite{BM}, where it was related with the product in Deligne cohomology of an $n$-dimensional complex manifold. Obtaining an equality between formulas~\eqref{CC_n} and~\eqref{Q-n-dim} for $\Q$-algebras was one of the main motivations for the present paper.

\quash{
Finally, by analogy with the case $n=1$, one can also ask whether formula~\eqref{Q-n-dim} is actually given by a power series with integral coefficients whose variables are coefficients of $f_1,\ldots,f_{n+1}$. }

\medskip

Let us explain main results of the paper. For this  goal, first note that since the product between algebraic $K$-groups satisfies the Steinberg relations, the map $CC_n$ defined by formula~\eqref{CC_n} factors through the Milnor $K$-group $K^M_{n+1}\big(A((t_1))\ldots((t_n))\big)$. Here, given a commutative ring $B$, its Milnor \mbox{$K$-group} $K^M_{m}(B)$, $m\geqslant 0$, is defined as the quotient of the group~$(B^*)^{\otimes m}$ by the subgroup generated by Steinberg relations, that is, by elements ${b_1 \otimes \ldots \otimes b_{m}}$ such that $b_i+b_{i+1}=1$ for some $i$, $1\leqslant i\leqslant m-1$. Define the functor $L^nK^M_{n+1}(A):=K^M_{n+1}\big(A((t_1))\ldots((t_n))\big)$.

Our key result is the isomorphism of groups
$$
\Hom^{gr}\big(L^nK^M_{n+1},\gm\big)\simeq\z\,,
$$
where $\Hom^{gr}$ denotes the group of all morphisms between the corresponding functors that respect the group structures, see Theorem~\ref{theor-key}. Besides, the morphism $CC_n$ is the generator of this infinite cyclic group, see Theorem~\ref{theor:intCC}.

Explicitly, this means the following. Let $\Phi\colon (\lo^n\gm)^{\times(n+1)}\to\gm$ be any multilinear morphism of functors that satisfies the Steinberg relations. Then $\Phi=(CC_n)^i$ for an integer $i\in\z$. As far as we know, this universal property of the $n$-dimensional Contou-Carr\`ere symbol is new also in the classical one-dimensional case.

Note that we prove Theorems~\ref{theor-key} and~\ref{theor:intCC} in a more general setting, namely, for restriction of the functors involved to the category of $R$-algebras, where $R$ is an arbitrary ring such that the natural homomorphism $R\to R\otimes_{\z}\Q$ is injective.

Moreover, in the course of the proof of Theorem~\ref{theor-key} we obtain an explicit formula for the map $CC_n$ over $\Q$-algebras thus proving the equality between formulas~\eqref{CC_n} and~\eqref{Q-n-dim}, see Theorem~\ref{prop-CCrational}. This gives a new sense to formula~\eqref{Q-n-dim} as the result of direct calculations of certain canonical maps. (Note that we announced the equality between formulas~\eqref{CC_n} and~\eqref{Q-n-dim} in the short note~\cite{GO1}, which did not contain proofs.)

Also, we show that for any natural number $N$, the extension of any morphism of functors $(L^n\gm)_{\Q}^{\times N}\to(\gm)_{\Q}$ from $\Q$-algebras to all rings is unique, see Theorem~\ref{cor:uniq}.

We prove that formally opening brackets in the right hand side of formula~\eqref{Q-n-dim}, one obtains a power series with integral coefficients whose variables are coefficients of $f_1,\ldots,f_{n+1}$, see Theorem~\ref{theor:integalexpl}. This gives an explicit expression over any ring for the map defined by formula~\eqref{CC_n}. Note that this integrality result generalizes the case $n=1$ and is new already in the case $n=2$.

As an illustration of our results, we show how the Contou-Carr\`ere symbol $CC_n$ and its explicit formula~\eqref{Q-n-dim} lead to a new definition of the $n$-dimensional generalization of the Witt pairing, see Proposition~\ref{prop-Witt}. Note that the $n$-dimensional Witt pairing is crucial for the Parshin explicit construction of the higher local class field theory, see~\cite{P1}.

\medskip

Our method is based on the study of geometric properties of ind-affine schemes that represent the functor $\lo^n\gm$ and its certain special subgroups. Toward this end we develop a theory of so-called thick ind-cones. A thick ind-cone $X$ over a ring $R$ is an ind-closed subscheme in a (possibly, infinite-dimensional) affine space over $R$ with some additional properties, see Definitions~\ref{defin:indcone},~\ref{defin:dset}. Roughly speaking, $X$ is invariant under homothethies and contains sufficiently many points with nilpotent coordinates. The main feature of a thick ind-cone $X$ is that regular functions on $X$ are uniquely expanded as power series in coordinates of the ambient affine space. Thick ind-cones possess many additional nice properties, in particular, the class of thick ind-cones in closed under products and extensions of scalars, which perfectly fits our needs. One can say that the class of thick ind-cones is a suitable replacement of the class of ind-flat ind-affine schemes. We expect a vast range of further applications of this technique to various questions on iterated Laurent series over rings. The notion of a thick ind-cone arose as a modification of the notion of an ind-scheme from the class ${\mathcal E}{\mathcal F}$ used in~\cite{OZ1} for two-dimensional iterated Laurent series.

\medskip

The paper is organized as follows. Section~\ref{sect:notation} contains notation and terminology, mainly concerning functors, which is used throughout the paper. In Section~\ref{iterated}, we collect and prove some general facts about the ring of iterated Laurent series $A((t_1))\ldots((t_n))$, the topology on it, and its differential forms.

In Section~\ref{multiplicative}, we define iterated loop groups, see Definition~\ref{defin:loopgroup}, and introduce our main object of study, the iterated loop group $L^n\gm$. We construct a decomposition of~$\lo^n\gm$ that generalizes a result of Contou-Carr\`ere in the case $n=1$, see Proposition~\ref{prop-decomp}. We also introduce special group subfunctors $(\lo^n\gm)^{0}$ and~$(\lo^n\gm)^{\sharp}$ of $\lo^n\gm$, see Definition~\ref{defin:specialmult}. Namely, $(\lo^n\gm)^{0}$ is the group of iterated Laurent series with trivial valuation, and~$(\lo^n\gm)^{\sharp}$ is a subgroup of $(\lo^n\gm)^0$ such that the logarithm is well-defined on it over $\Q$, see Proposition~\ref{log-map}.

We develop the theory of thick ind-cones in Section~\ref{representab}. First we give a short account of ind-affine schemes in Subsection~\ref{subsect:indschemes}. Then we introduce a ring of power series in infinitely many variables, see Definition~\ref{defin:powerseries}, define algebraic convergence of power series, see Definition~\ref{defin-conv}, and consider certain ind-closed subschemes in ind-affine spaces that consist of points with nilpotent coordinates, see Definition~\ref{defin:hat}. These ind-closed subschemes are used to define thick ind-cones, see Definitions~\ref{defin:indcone},~\ref{defin:dset}. The main properties of thick ind-cones are contained in Propositions~\ref{prop-injectiveseries},~\ref{prop:geninj} and Lemmas~\ref{lemma:decomposethickcone},~\ref{lemma:extescalthickcone}. Finally, we discuss connectedness of ind-schemes over a base ring in Subsection~\ref{subsect:connect} and density of an ind-closed subscheme of an ind-affine scheme in Subsection~\ref{subsection:density}.

In Section~\ref{section:reprfunc}, we apply the theory of thick ind-cones to iterated loop groups. We start by introducing a non-commutative product of (strict) ind-sets, see Definition~\ref{defin:starindind}, which is very useful to work with representability of iterated loop functors, see Proposition~\ref{lemma-repraffine}. Then we prove that the ind-affine schemes that represent the functor $\lo^n\gm$ and its special subgroups are products of thick-ind cones and ind-flat ind-affine schemes, see Proposition~\ref{lemma:sharprepr}. This leads to many nice properties of these ind-affine schemes, see Theorems~\ref{prop-key} and~\ref{cor:uniq}. We also show that $(\lo^n\gm)^0$ is connected and that $(\lo^n\gm)^{\sharp}$ is dense in $(\lo^n\gm)^0$, see Proposition~\ref{prop:densegm}. We study characters of $\lo^n\gm$ over $\Q$ and prove that they commute with their differentials through the exponential map, see Propositions~\ref{lemma:difflngmga} and~\ref{prop:charactgm}. Finally, we show that any functorial linear map $\Omega^n_{A((t_1))\ldots((t_n))}\to A$ factors through the group of ``continuous'' differential forms $A((t_1))\ldots((t_n))dt_1\wedge\ldots\wedge dt_n$, see Proposition~\ref{lemma:algiterforms}.

Section~\ref{sect:Ktheory} collects auxiliary facts on Milnor $K$-groups and algebraic $K$-groups. Namely, we recall the main result from~\cite{GOMilnor} that claims an isomorphism between the tangent space to the Milnor $K$-group $TK^M_{m+1}(A)$ and the group of absolute K\"ahler differential forms $\Omega^m_A$ when $A$ is a ring that contains $\frac{1}{2}$ and has sufficiently many invertible elements, see Theorem~\ref{thm:tangentMilnor} (this is a generalization of a well-known result of Bloch~\cite{Blo}). In Subsection~\ref{subsect:bound}, we recall the construction of a boundary map for algebraic $K$-groups $\partial\colon K_{m+1}\big(A((t))\big)\to K_m(A)$ and show its functoriality, see Proposition~\ref{nat-tr}.

Section~\ref{sect:main} contains the main results of the paper described above and their proofs, see Theorems~\ref{theor-key},~\ref{theor:intCC},~\ref{prop-CCrational}, and~\ref{theor:integalexpl}. Before stating the main results, we study in Subsection~\ref{subsect:add} a simpler case of an additive symbol, see Definition~\ref{def:addsymb}, which allows to see the main patterns related to the Contou-Carr\`ere symbol. The proof of the key result, Theorem~\ref{theor-key}, is contained in Subsection~\ref{subsect:proofrigid}. Briefly, first we pass from a base ring~$R$ to the ring $S:=R\otimes_{\z}\Q$ using the theory of thick ind-cones. Since $S$ is a $\Q$-algebra, we reduce a character of $(L^nK^M_{n+1})_S$ to its differential. Finally, we apply the description of the tangent space to Milnor $K$-groups in order to obtain both the key result and the explicit formula~\eqref{Q-n-dim}. In order to show the  result on integrality of the explicit formula~\eqref{Q-n-dim}, we introduce a completed version of the Contou-Carr\`ere symbol, see Definition~\ref{defin:compCC}, which is an interesting object of further study in its own right.

Finally, after a short account of the explicit higher local class field theory in Subsection~\ref{subsect:localcft}, we relate
the Contou-Carr\`ere symbol $CC_n$ with the $n$-dimensional Witt pairing in Subsection~\ref{ASWp}.

\medskip

The authors are grateful to S.\,Galkin for pointing out that a particular case of the result on integrality of the explicit formula~\eqref{Q-n-dim} has been obtained by Kontsevich in~\cite{Kon}.

\section{Notation and terminology}\label{sect:notation}

By a {\it ring} we mean a commutative associative unital ring and the same for {\it algebras} over a ring. Throughout the paper, $A$ and $R$ denote arbitrary rings unless something more is specified on the rings $A$ and $R$.

\medskip

Let us fix some terminology concerning functors. We work with covariant functors from the category of commutative associative unital rings to the category of sets and to the category of Abelian groups. We call them, for short, just {\it functors} and {\it group functors}, respectively. We usually denote test rings on which we evaluate functors by $A,B,\ldots$

By a {\it subfunctor} $F\subset G$, we mean a morphism of functors $F\to G$ such that for any ring~$A$, the corresponding map $F(A)\to G(A)$ is injective. By a {\it group subfunctor} $F\subset G$, we mean a subfunctor such that the corresponding morphism $F\to G$ is, in addition, a morphism of group functors.

Given a ring $R$, by a {\it functor over $R$} (respectively, a {\it group functor over $R$}), we mean a covariant functor from the category of commutative associative unital $R$-algebras to the category of sets (respectively, to the category of Abelian groups). Given a functor~$F$, by~$F_{R}$ denote the corresponding functor over $R$ which is the restriction of~$F$ to the category of $R$-algebras. We usually denote base rings over which we consider functors by $R,S,\ldots$

All functors that we consider in this paper satisfy the following property: morphisms from a functor~$F$ over a ring $R$ to a functor $G$ over  $R$ form a set $\Hom_R(F,G)$. For example, this holds if $F$ and $G$ are representable by (ind-)affine schemes over $R$. If $F$ and $G$ are group functors, then by~$\Hom^{gr}_R(F,G)$ we denote the set of all morphisms of group functors from~$F$ to~$G$. By~$\underline{\Hom}^{gr}_R(F,G)$ we denote the internal Hom, that is, $\underline{\Hom}^{gr}_R(F,G)$ is a group functor over $R$ that sends and $R$-algebra $A$ to the group $\Hom^{gr}_A(F_A,G_A)$.

\medskip

Let us also introduce notation for various products. Let $n$ be a natural number. If $X$ is an object in a category with finite products (e.g., $X$ is a set, a scheme, or a functor), then by~$X^{\times n}$ we denote the $n$-th Cartesian power of~$X$ (which is a set, a scheme, or a functor, respectively). We emphasize that even if $X$ is additionally a group object (e.g.,~$X$ is a group, a group scheme, or a group functor), then $X^{\times n}$ still denotes just an object in the initial category, without a group structure (which is a set, a scheme, or a functor, respectively).

In contrast, if $M$ is an object in an additive category (e.g., $M$ is an Abelian group, a module over a ring, or a sheaf of Abelian groups), then $M^{\oplus n}$ denotes the $n$-fold direct sum of $M$ with itself (which is an Abelian group, a module over the ring, or a sheaf of Abelian groups, respectively). We sometimes abbreviate $M^{\oplus n}$ to $M^n$, in particular, when $M=\Z$.

For Abelian groups $M$ and $N$, by a {\it multilinear map} $\varphi\colon M^{\times n}\to N$ we mean a map of sets (according to our notation, $M^{\times n}$ is just a set) such that
$$
\varphi(m_1,\ldots,m_i,\ldots,m_n)+\varphi(m_1,\ldots,m'_i,\ldots,m_n)=
\varphi(m_1,\ldots,m_i+m'_i,\ldots,m_n)
$$
for all $m_1,\ldots,m_i,m'_i\ldots,m_n\in M$, $1\leqslant i\leqslant n$. Note that we still use the term ``multilinear'' even if the group law in $M$ or $N$ is denoted multiplicatively, in particular, for the group of invertible elements of a ring with the group law given by the product of elements.

\section{Ring of iterated Laurent series}\label{iterated}

\subsection{Definition}\label{subsect:Laurent}

 Given a ring $A$, we have the ring $A[[t]]$ of {\it power series} and the ring $A((t))=A[[t]][t^{-1}]$ of {\it Laurent series} over $A$ in a formal variable $t$. Explicitly, $A[[t]]$ is the ring of all series of the form $\sum\limits_{0 \leqslant l \in \Z} a_lt^l$, where $a_l \in A$, and $A((t))$ is the ring of all series of the form $\sum\limits_{ m \leqslant l \in \Z} a_l t^{l}$, where $m$ can be any integer and $a_l\in A$. For short, we put
$$
\LL(A):=A((t))\,.
$$
Repeating this construction, we obtain the ring of {\it iterated Laurent series} over $A$ in formal variables $t_1,\ldots,t_n$:
$$
\LL^n(A):=A((t_1))\ldots((t_n))\,.
$$

\medskip

Let us give an explicit description of iterated Laurent series. For this, introduce a set
$$
\Lambda_n:=\big\{(\lambda_1,\ldots,\lambda_n)\;\mid\;\lambda_p \, \colon  \, \z^{n-p}\lrto \z \quad \mbox{for} \quad 1\leqslant p\leqslant n-1,\quad \lambda_n\in\Z \big\}\,.
$$
We stress that $\lambda_p$ is an arbitrary function for each $p$, where $1 \leqslant p \leqslant n-1$. Given \mbox{$\lambda=(\lambda_1,\ldots,\lambda_n)\in\Lambda_n$}, define a set
$$
\z^n_{\lambda}:=\big\{(l_1,\ldots,l_n)\in\Z^n\;\mid\;l_n\geqslant \lambda_n,\,l_{n-1}\geqslant \lambda_{n-1}(l_n),\,\ldots,\,l_{1}\geqslant \lambda_{1}(l_2,\ldots,l_n)\big\}\,.
$$
For a multi-index $l=(l_1,\ldots,l_n)\in\z^n$, put $t^{l}:=t_1^{l_1}\ldots t_n^{l_n}$. Then~$\LL^n(A)$ is the ring of all series of the form $\sum\limits_{l\in \z^n_{\lambda}} a_l t^{l}$, where $\lambda\in\Lambda_n$ and $a_l\in A$.

\medskip

By $\Nil(R)$ denote the nilradical of a ring $R$, that is, the set of all nilpotent elements of~$R$. We will use the following fact about nilpotent iterated Laurent series.

\begin{rmk}\label{rmk:nilpseries}
Suppose that
$$
\mbox{$f=\sum\limits_{l\in\z^n}a_lt^l=\sum\limits_{i\in\Z}g_i t_n^i$}
$$
is a nilpotent element in $\LL^n(A)$, where $g_i\in\LL^{n-1}(A)=A((t_1))\ldots((t_{n-1}))$. One shows by induction on $i$ that $g_i\in\Nil\big(\LL^{n-1}(A)\big)$ for all $i\in\Z$. Further, by induction on $n$, we obtain that $a_l\in\Nil(A)$ for all $l\in\z^n$.
\end{rmk}

If $A$ is Noetherian, then the converse is true as well, because $\Nil(A)^N=0$ for some~\mbox{$N\in\N$}. However, in general, the converse is false. For example, take
$$
A=\Z[\varepsilon_1,\varepsilon_2,\ldots]/(\varepsilon_1^2,\varepsilon_2^3,\ldots)
$$
and consider the power series $f=\sum\limits_{l\geqslant 1}\varepsilon_lt^l$ in $A[[t]]$. Then all coefficients of $f$ are nilpotent, but $f$ is not nilpotent (one can show this by taking reductions modulo primes).

\subsection{Topology}\label{subsect:topology}

Let us introduce a topology on the ring of iterated Laurent series $\LL^n(A)$ over a ring~$A$. This topology is given by iterated direct and inverse limits and is defined inductively as follows. A base of open neighborhoods of zero in $\LL(A)=A((t))$ consists of \mbox{$A$-submodules} \mbox{$U_m:=t^m A[[t]]$}, $m\in \Z$. A base of open neighborhoods of zero in $\LL^n(A)=A((t_1))\ldots ((t_n))$ consists of \mbox{$A$-submodules}
\begin{equation}  \label{top}
\mbox{$U_{m, \{V_j \}}:=\Big(\bigoplus\limits_{j<m} t_n^j\cdot V_j \Big)$} \,\oplus \, \mbox{$ t_n^m\cdot
\LL^{n-1}(A)[[t_n]]\,,$}
\end{equation}
where $m\in\Z$ and for every integer $j$ with $j<m$, we have that the $A$-module $V_j$ is from the base of open neighborhoods of zero in $\LL^{n-1}(A)=A((t_1))\ldots((t_{n-1}))$. Now a topology on $\LL^n(A)$ is defined uniquely by the condition that the additive group of $\LL^n(A)$ is a topological group.

\begin{rmk}
When $A={\mathbb F}_q$ is a finite field, the above topology on the higher local field \mbox{$\LL^n(A)={\mathbb F}_q((t_1))\ldots((t_n))$} was introduced by Parshin~\cite{P1} for constructions in \mbox{$n$-dimensional} local class field theory.
\end{rmk}

Recall that a {\it Cauchy sequence} in a topological (Abelian) group is a sequence of elements $\{f_i\}$, $i\in\N$, such that for any open neighborhood of zero $U$, there is $N\in\N$ that satisfies $f_i-f_j\in U$ for all $i,j\geqslant N$. Clearly, this property does not depend on the order of elements in the sequence and thus is well-defined for an arbitrary (non-ordered) countable set of elements.

\begin{lemma}\label{lemma:top}
\hspace{0cm}
\begin{itemize}
\item[(o)]
The topological space $\LL^n(A)$ is Haussdorf.
\item[(i)]
For any Cauchy sequence $\{f_i\}$, $i\in \N$, in $\LL^n(A)$, there is $m\in \Z$ such that
$$
f_i\in t_n^m\cdot\LL^{n-1}(A)[[t_n]]=t_n^m\cdot A((t_1))\ldots((t_{n-1}))[[t_n]]
$$
for all $i\in \N$.
\item[(ii)]
Every Cauchy sequence has a limit in $\LL^n(A)$.
\item[(iii)]
The natural isomorphism of rings
$$
\LL^{n-1}(A)[[t_n]]\simeq \varprojlim\limits_{m\in\N}\LL^{n-1}\big(A[[t_n]]/(t_n^m)\big)
$$
is also an isomorphism of topological groups, where the topology on the left hand side is restricted from $\LL^n(A)$ and the topology on the right hand side is the inverse limit of topologies on $\LL^{n-1}(B_m)$, where the ring $B_m$ is $A[[t_n]]/(t_n^m)$.
\end{itemize}
\end{lemma}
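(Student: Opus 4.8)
The plan is to prove (o)--(iii) simultaneously by induction on $n$. The base case $n=1$ is elementary: the basis $\{t^mA[[t]]\}_{m\in\Z}$ of neighbourhoods of zero in $A((t))$ makes (o) the statement $\bigcap_m t^mA[[t]]=0$; for (i) one chooses $N$ with $f_i-f_N\in A[[t]]$ for $i\geqslant N$ and then accounts for the finitely many $f_1,\dots,f_{N-1}$; for (ii) the $t$-coefficients of a Cauchy sequence stabilise term by term and one assembles the limit; and (iii) is the identification of the $t$-adic topology on $A[[t]]$ with $\varprojlim_m A[[t]]/(t^m)$. For the inductive step the decisive elementary observation, read off from~\eqref{top}, is the \emph{coordinate description} of a basic neighbourhood: writing $f=\sum_k g_kt_n^k$ with $g_k\in\LL^{n-1}(A)$, one has $f\in U_{m,\{V_j\}}$ if and only if $g_k\in V_k$ for every $k<m$, with no constraint on $g_k$ for $k\geqslant m$.

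Granting this, (o) and (ii) are quick. If $f$ lies in every basic neighbourhood of zero, then each coordinate $g_k$ lies in every neighbourhood of zero in $\LL^{n-1}(A)$, hence $g_k=0$ by the inductive Hausdorff property, so $f=0$. For (ii), let $\{f_i\}$ be a Cauchy sequence; by (i) we may fix $m$ with $f_i\in t_n^m\LL^{n-1}(A)[[t_n]]$ for all $i$, and write $f_i=\sum_{k\geqslant m}g_{i,k}t_n^k$. The coordinate description shows that each sequence $\{g_{i,k}\}_i$ is Cauchy in $\LL^{n-1}(A)$, hence converges to some $g_k$ by the inductive completeness; setting $f:=\sum_{k\geqslant m}g_kt_n^k\in\LL^n(A)$ and testing convergence against an arbitrary $U_{m',\{V_j\}}$, only the finitely many coordinates $m\leqslant k<m'$ are constrained, and each $g_{i,k}\to g_k$, so $f_i\to f$.

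The crux is (i), which is genuinely more subtle than the one-dimensional case: for $n\geqslant 2$ no basic neighbourhood of zero is bounded in $t_n$-order, since the negative part of an element of $U_{m,\{V_j\}}$ may have arbitrarily low $t_n$-order. I would argue by contradiction. If the $t_n$-orders of $\{f_i\}$ are not bounded below, pass to a subsequence $\{f_{i_l}\}$ whose orders $k_l$ are pairwise distinct and tend to $-\infty$, so that the lowest nonzero coordinate $g_{i_l,k_l}\in\LL^{n-1}(A)$ is nonzero. Using the inductive Hausdorff property, choose for each index $k_l$ a neighbourhood $V_{k_l}$ of zero in $\LL^{n-1}(A)$ with $g_{i_l,k_l}\notin V_{k_l}$, and fill in the remaining $V_j$ arbitrarily; then $U_{0,\{V_j\}}$ is a neighbourhood of zero with the property that $f_{i_l}-f_{i_{l'}}\notin U_{0,\{V_j\}}$ whenever $k_l<k_{l'}$, because the $t_n^{k_l}$-coefficient of the difference is exactly $g_{i_l,k_l}$. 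Since such pairs exist with $i_l,i_{l'}$ arbitrarily large, the subsequence is not Cauchy, contradicting that $\{f_i\}$ is. This is precisely where the freedom to choose each $V_j$ independently, and the inductive hypothesis, are indispensable.

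It remains to prove (iii). Intersecting~\eqref{top} with $\LL^{n-1}(A)[[t_n]]$ and using the coordinate description shows that a basic neighbourhood of zero on the left is $\{\sum_{k\geqslant 0}g_kt_n^k:\ g_k\in V_k\text{ for }0\leqslant k<m\}$, while on the right a basic neighbourhood is the $\pi_m$-preimage of a neighbourhood of zero in $\LL^{n-1}(B_m)$, where $B_m=A[[t_n]]/(t_n^m)$ and $\pi_m$ denotes the projection from the inverse limit. To match these under the natural isomorphism $\LL^{n-1}(A)[[t_n]]\toiso\varprojlim_m\LL^{n-1}(B_m)$ one needs a single auxiliary fact: for a ring $C$ that is free of finite rank over $A$ with a chosen basis, the topology on $\LL^{n-1}(C)$ is the product topology on the corresponding finite power of $\LL^{n-1}(A)$. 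Applied to $C=B_m\cong A\oplus At_n\oplus\dots\oplus At_n^{m-1}$ this identifies the topology on $\LL^{n-1}(B_m)$ with that of $\LL^{n-1}(A)^{\oplus m}$, after which the two neighbourhood bases correspond exactly under the isomorphism. This auxiliary statement is itself proved by the same induction on $n$, starting from the evident homeomorphism $C((t))\cong A((t))^{\oplus\rk C}$ sending $t^mC[[t]]$ to $(t^mA[[t]])^{\oplus\rk C}$. The only real obstacle in the whole argument is (i); everything else is bookkeeping with the coordinate description of basic neighbourhoods.
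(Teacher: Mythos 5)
Your proposal is correct and follows essentially the same route as the paper: part $(i)$ is the same contradiction argument (a subsequence with strictly decreasing $t_n$-orders, Hausdorffness of $\LL^{n-1}(A)$ to pick the $V_{j}$ at those orders, and failure of the Cauchy condition for $U_{0,\{V_j\}}$), and $(ii)$ is the same induction on $n$. Your auxiliary fact for $(iii)$ — that $\LL^{n-1}(B_m)$ with $B_m\simeq A^{\oplus m}$ carries the product topology of $\LL^{n-1}(A)^{\oplus m}$ — is exactly the paper's combination of the topological additivity $\LL^{n-1}(M\oplus M')\simeq\LL^{n-1}(M)\oplus\LL^{n-1}(M')$ with the remark that the $B$-module and $A$-module topologies on $\LL^{n-1}(B)$ agree.
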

\begin{proof}

$(o)$ This is just obvious.

The proofs of items~$(i)$ and $(ii)$ are similar to that of~\cite[\S\,1, Prop.\,2.1]{P1} and~\cite[\S\,1, Prop.\,2.2]{P1}, respectively. We give them for convenience of the reader.

$(i)$ Suppose the converse. Then we obtain a strictly decreasing sequence of negative integers $\{j_k\}$, $k\in\N$, such that for each $k\in \N$, there is $i_k\in\N$ that satisfies
$$
f_{i_k}\in t_n^{j_k}\cdot\LL^{n-1}(A)[[t_n]]\,,\qquad f_{i_k}\notin t_n^{j_k+1}\cdot\LL^{n-1}(A)[[t_n]]\,.
$$
Since $\LL^{n-1}(A)$ is Haussdorf, there is an open neighborhood of zero $W_k\subset \LL^{n-1}(A)$ such that
$$
f_{i_k}\notin t_n^{j_k}\cdot W_k \, \oplus  \, t_n^{j_k+1}\cdot\LL^{n-1}(A)[[t_n]]\,.
$$
Now for each $j<0$, let $V_j:=W_k$ if $j=j_k$ for some $k\in\N$ and let $V_j$ be any open neighborhood of zero in~$\LL^{n-1}(A)$ otherwise. Then the subsequence $\{f_{i_k}\}$ (and hence the sequence $\{f_i\}$) does not satisfy the Cauchy condition with respect to the open neighborhood of zero \mbox{$U_{0, \{V_j \}}$} in $\LL^n(A)$, which gives a contradiction.

$(ii)$ The proof is based on item~$(i)$ and a trivial induction on~$n$.

$(iii)$ Note that for any $A$-module $M$, we can consider an \mbox{$\LL^n(A)$-module} ${\LL^n(M):=M((t_1))\ldots((t_n))}$ with an analogous topology as on~$\LL^n(A)$ given by a formula similar to~\eqref{top}. (We note that $M$ has the discrete topology.) One shows by induction on $n$ that for all $A$-modules $M$ and $M'$, the natural isomorphism of~\mbox{$\LL^n(A)$-modules}
$$
\LL^n(M)\oplus\LL^n(M')\simeq \LL^n(M\oplus M')
$$
is also an isomorphism of topological groups. In particular, for any $m\in\N$, we have an isomorphism of topological groups
$$
\LL^{n-1}(A)^{\oplus m}\simeq \LL^{n-1}(A^{\oplus m})\,.
$$
Note that for any $A$-algebra $B$, we readily have by induction on $n$ that the base of open neighborhoods of zero in $\LL^{n-1}(B)$ given by $B$-modules by equation~\eqref{top} coincides with the base of open neighborhoods of zero in $\LL^{n-1}(M)$ given by $A$-modules. Therefore applying an isomorphism of $A$-modules \mbox{$A^{\oplus m}\simeq A[[t_n]]/(t_n^m)$} and using the isomorphism of topological groups
$$
\LL^{n-1}(A)[[t_n]]\simeq\varprojlim\limits_{m\in \N}\LL^{n-1}(A)^{\oplus m}\,,
$$
we finish the proof.
\end{proof}

\begin{rmk}
Actually, a stronger statement than Lemma~\ref{lemma:top}$(ii)$ holds: the topology on~$\LL^n(A)$ is complete, that is, every Cauchy net has a limit in~$\LL^n(A)$, not only a Cauchy sequence. Equivalently, the natural homomorphism
$$
\LL^n(A)\lrto\varprojlim\limits_{(m,\{V_j\})}\LL^n(A)/U_{m,\{V_j\}}
$$
is an isomorphism. The proof is by induction on $n$ and uses that inverse limits commute with direct sums in the following sense: suppose that for each element $\lambda$ in a set $\Lambda$, it is given a directed set $I_{\lambda}$ and an inverse system of Abelian groups $\{P^{\lambda}_i\}$, $i\in I_{\lambda}$. Define a directed set $J:=\prod\limits_{\lambda\in\Lambda}I_{\lambda}$. Then there is an isomorphism of Abelian groups
$$
\mbox{$\bigoplus\limits_{\lambda\in\Lambda}\Big(\varprojlim\limits_{i\in I_{\lambda}}P_i^{\lambda}\Big)\simeq
\varprojlim\limits_{(i_{\lambda})\in J}\Big(\bigoplus\limits_{\lambda\in\Lambda}P^{\lambda}_{i_\lambda}\Big)$}\,.
$$
The proof of this fact follows the same idea as the proof of Lemma~\ref{lemma:top}$(i)$.
\end{rmk}

Note that $\LL^n(A)$ is not a topological ring  when $n \geqslant 2$, since  for all open $A$-submodules $U$ and $U'$ in~$\LL^n(A)$ we have $U \cdot U' =\LL^n(A)$,~\cite[\S\,1, Rem.\,1]{P1}. Indeed, for any iterated Laurent series $f\in\LL^n(A)$, there is a monomial $t_n^it_{n-1}^j$, $i,j\in\Z$, such that $t_n^it_{n-1}^j f\in U$ and $(t_n^it_{n-1}^j)^{-1}\in U'$. However the next lemma still asserts a certain compatibility between the topology and the product in $\LL^n(A)$.

\begin{lemma}\label{lemma:seriesconvdom}
\hspace{0cm}
\begin{itemize}
\item[(i)]
For any element $f\in \LL^n(A)$, multiplication by $f$ is a continuous map from~$\LL^n(A)$ to itself.
\item[(ii)]
Given two Cauchy sequences $\{f_i\}$, $i\in \N$, and $\{g_{j}\}$, $j\in \N$, in $\LL^n(A)$, their pair-wise product $\{f_i g_{j}\}$, $(i,j)\in \N^{\times 2}$, is a Cauchy sequence as well (with respect to any bijection $\N^{\times 2}\simeq\N$) and the limit of the sequence $\{f_i g_j\}$ equals the product of the limits of $\{f_i\}$ and $\{g_j\}$.
\end{itemize}
\end{lemma}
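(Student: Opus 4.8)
The plan is to prove both items by induction on $n$, with base of the recursion $n=0$: there $\LL^0(A):=A$ carries the discrete topology, and both claims are trivial (every map from a discrete group is continuous, and a Cauchy sequence in a discrete group is eventually constant). Two preliminary facts, each obtained by a quick induction on $n$, will be used repeatedly. First, every basic open neighbourhood $U_{m,\{V_j\}}$ of zero in $\LL^n(A)$ is an $A$-submodule, hence a subgroup closed under finite sums. Second, for each $r\in\Z$ the map $\LL^n(A)\to\LL^{n-1}(A)$ sending $\sum_i g_it_n^i$ to its coefficient $g_r$ is a continuous homomorphism of topological groups, since the preimage of a basic neighbourhood $V$ of zero contains $U_{r+1,\{V_j\}}$ with $V_r:=V$. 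As a continuous homomorphism carries Cauchy sequences to Cauchy sequences and commutes with limits, the coefficients of a Cauchy sequence in $\LL^n(A)$ are then Cauchy in $\LL^{n-1}(A)$, and the coefficients of its limit are the limits of its coefficients.

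For item (i), since multiplication by $f$ is additive it suffices to check continuity at zero. I would write $f=\sum_{a\geqslant k}g_at_n^a$ with $g_a\in\LL^{n-1}(A)$ and fix a basic neighbourhood $U_{p,\{V_q\}}$ of zero in the target. For any $h\in\LL^n(A)$ the coefficient $(fh)_q=\sum_{a\geqslant k}g_ah_{q-a}$ is a \emph{finite} sum, because $h$ has $t_n$-valuation bounded below. By the inductive hypothesis applied to each $g_a$ there is a basic neighbourhood $W_{a,q}$ of zero in $\LL^{n-1}(A)$ with $g_aW_{a,q}\subseteq V_q$; defining $V'_b$ to be the intersection of the $W_{a,q}$ over the pairs with $k\leqslant a$, $q<p$ and $q-a=b$ — a finite intersection for each fixed $b$, hence again a neighbourhood of zero — one checks directly, coefficient by coefficient, that $f\cdot U_{p-k,\{V'_b\}}\subseteq U_{p,\{V_q\}}$.

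For item (ii), let $f=\lim f_i$ and $g=\lim g_j$; these exist by Lemma~\ref{lemma:top}(ii). By Lemma~\ref{lemma:top}(i) there are integers $m_0,m_1$ with $f_i\in t_n^{m_0}\LL^{n-1}(A)[[t_n]]$ and $g_j\in t_n^{m_1}\LL^{n-1}(A)[[t_n]]$ for all $i,j$, whence $f_ig_j$, and also $f$, $g$, $fg$, lie in $t_n^{m_0+m_1}\LL^{n-1}(A)[[t_n]]$. To prove that $\{f_ig_j\}$ is Cauchy with limit $fg$, it then suffices, for each basic neighbourhood $U_{p,\{V_q\}}$, to control the finitely many coefficients indexed by $q$ with $m_0+m_1\leqslant q<p$ (the constraints for $q<m_0+m_1$ being automatic). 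For such $q$ one has the \emph{finite} sum $(f_ig_j)_q=\sum_{m_0\leqslant r\leqslant q-m_1}(f_i)_r(g_j)_{q-r}$; by the preliminary remarks $\{(f_i)_r\}_i$ and $\{(g_j)_s\}_j$ are Cauchy in $\LL^{n-1}(A)$ with limits $(f)_r$ and $(g)_s$, so the inductive hypothesis makes each $\{(f_i)_r(g_j)_{q-r}\}_{(i,j)}$ Cauchy with limit $(f)_r(g)_{q-r}$, and summing the finitely many of these shows that $\{(f_ig_j)_q\}$ is Cauchy with limit $\sum_r(f)_r(g)_{q-r}=(fg)_q$. Since the Cauchy condition and the value of a limit do not depend on the ordering of a countable family (as observed just before Lemma~\ref{lemma:top}), the conclusion carries over to an arbitrary bijection $\N^{\times2}\simeq\N$.

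The hard part is that for $n\geqslant2$ the ring $\LL^n(A)$ is not a topological ring, so the usual one-line argument ``in a topological ring a product of null sequences is a null sequence'' is unavailable. What rescues the induction is exactly Lemma~\ref{lemma:top}(i): a Cauchy sequence has uniformly bounded $t_n$-valuation, which reduces every coefficient of a product to a \emph{finite} sum of products in $\LL^{n-1}(A)$, where the inductive hypothesis applies. Beyond this, the only slightly delicate point is the finiteness bookkeeping in (i) — that for fixed $b$ only finitely many pairs $(a,q)$ enter the definition of $V'_b$ — which is immediate from $k\leqslant a$, $q-a=b$ and $q<p$.
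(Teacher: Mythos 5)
Your overall strategy is the same as the paper's: induction on $n$, with Lemma~\ref{lemma:top}$(i)$ supplying the uniform bound on the $t_n$-valuation of a Cauchy sequence. The implementation differs: the paper runs the induction step through the topological isomorphism $\LL^{n-1}(A)[[t_n]]\simeq\varprojlim_m\LL^{n-1}\big(A[[t_n]]/(t_n^m)\big)$ of Lemma~\ref{lemma:top}$(iii)$, applying the inductive hypothesis over the truncated coefficient rings, whereas you stay over $A$ and argue coefficient-by-coefficient via continuity of the coefficient projections; your base is $n=0$ rather than the paper's $n=1$. Your argument for item (i) is complete and correct: the finiteness bookkeeping for the $V'_b$ is right, and shrinking each $V'_b$ to a basic neighbourhood is harmless.

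In item (ii), however, there is a genuine wrinkle, located in the base case and in your final sentence. With the order-independent notion of Cauchy that the paper sets up (differences small outside a \emph{finite subset} of the index set, equivalently Cauchy for every enumeration), the assertion for arbitrary Cauchy sequences is strictly stronger than what your induction actually propagates, namely smallness once \emph{both} indices exceed some $N$; and the stronger assertion is false in general: take $f_1=1$, $f_i=0$ for $i\geqslant 2$ and $g_j=1$ for all $j$ (already in $A((t))$, or in your discrete base case). Then every enumeration of $\{f_ig_j\}$ takes the values $1$ and $0$ infinitely often, so it is not Cauchy, although both input sequences are. Consequently ``(ii) is trivial for discrete groups'' fails in the bijection sense, and the closing appeal to order-independence conflates ``outside a finite subset of $\N^{\times 2}$'' with ``for all $i,j\geqslant N$''. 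To be fair, this defect lies in the wording of the statement as much as in your proof, and the paper's own two-line reduction does not address it either; what your induction does prove correctly is the ``both indices large'' form with limit $fg$, and in the only situation where the stronger form is needed later (terms of convergent series, so both limits are zero, as in Remark~\ref{rmk:multtop}) it is true and your coefficient argument yields it verbatim once the base case is replaced by the observation that if $a_i=0$ for $i\geqslant N_1$ and $b_j=0$ for $j\geqslant N_2$, then $a_ib_j\neq 0$ for only finitely many pairs $(i,j)$.
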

\begin{proof}
We follow an argument from the proof of~\cite[\S\,1, Prop.\,2.3]{P1}. In order to show both items, we use induction on $n$. The case $n=1$ is clear for each of two items. To make the induction step, observe that item~$(i)$ is evident for $f=t_n^m$, $m\in\Z$, that is, multiplication by $t_n$ is a homeomorphism of the topological group $\LL^n(A)$. Now using this fact, we consider each item separately.

$(i)$ By item~$(i)$ for $t_n^m$, $m\in\Z$, we can suppose that $f \in \LL^{n-1}(A)[[t_n]]$. It follows from formula~\eqref{top} that a subset of $\LL^n(A)$ is open if and only if its intersection with each $A$-submodule $t_n^l \cdot \LL^{n-1}(A)[[t_n]]$, $l\in\Z$, is open. Therefore it is enough to prove that multiplication by $f$ is a continuous map from $t_n^l \cdot \LL^{n-1}(A)[[t_n]]$ to itself for all $l\in\Z$. We can also fix $l=0$ and finish the proof using Lemma~\ref{lemma:top}$(iii)$ and the induction hypothesis.

$(ii)$ By item~$(i)$ for $t_n^m$, $m\in\Z$, and by Lemma~\ref{lemma:top}$(i)$, it is enough to prove a version of item~$(ii)$ for the subring
$$
A((t_1))\ldots((t_{n-1}))[[t_n]]=\LL^{n-1}(A)[[t_n]]\subset \LL^n(A)
$$
with the topology being restricted from $\LL^n(A)$. By the induction hypothesis and the isomorphism from Lemma~\ref{lemma:top}$(iii)$, this finishes the proof.
\end{proof}

\subsection{Convergence of power series}

By definition, a series $\sum\limits_{i\geqslant 0}f_i$ of elements of a topological Abelian group {\it converges} if there is a limit of the sequence of partial sums~$\sum\limits_{i=1}^N f_i$. By Lemma~\ref{lemma:top}$(ii)$, a series converges in~$\LL^n(A)$ if and only if the sequence of its terms $\{f_i\}$, $i\in\N$, tends to zero, because the base of open neighborhoods of zero is given by subgroups. If this holds, the result of the summation does not depend on the order of summation and also on the way to represent the series as a double series or a higher iterated series, cf.~\cite[Ch.\,4, Sec.\,5, Theor.\,1]{BS}. In particular, convergence is well-defined for an infinite sum of countably many elements in~$\LL^n(A)$.

\begin{rmk}\label{rmk:multtop}
Given two convergent series $\sum\limits_{i\geqslant 0}f_i$ and $\sum\limits_{j\geqslant 0}g_j$ in $\LL^n(A)$, by Lemma~\ref{lemma:seriesconvdom}$(ii)$, the series $\sum\limits_{(i,j)\in\N^{\times 2}}f_ig_j$ is also convergent (with respect to any bijection $\N^{\times 2}\simeq\N$) and we have the equalities
$$
\mbox{$\Big(\sum\limits_{i\geqslant 0}f_i\Big)\cdot\Big(\sum\limits_{j\geqslant 0}g_j\Big)=\sum\limits_{i\geqslant 0}\Big(\sum\limits_{j\geqslant 0}f_ig_j\Big)=\sum\limits_{j\geqslant 0}\Big(\sum\limits_{i\geqslant 0}f_ig_j\Big)\,.$}
$$
\end{rmk}

\begin{defin}\label{defin:addsharp}
Define $\LL^n(A)^{\sharp}$ to be the set of all elements $f\in\LL^n(A)$ such that the sequence~$\{f^i\}$,~$i\in\N$, tends to zero in $\LL^n(A)$.
\end{defin}

It follows from Lemma~\ref{lemma:seriesconvdom}$(ii)$ that if the sequences $\{f^i\}$ and $\{g^i\}$ tend to zero, then $\{(f+g)^i\}$ tends to zero as well. In other words, $\LL^n(A)^{\sharp}$ is a subgroup of $\LL^n(A)$. Also, for any power series $\varphi\in A[[x_1,\ldots,x_r]]$ and elements $f_1,\ldots,f_r\in\LL^n(A)^{\sharp}$, the series $\varphi(f_1,\ldots,f_r)$ converges in $\LL^n(A)$. Here is a more explicit description of the group~$\LL^n(A)^{\sharp}$, cf.~\cite[Lem.\,1.1]{FVZ}.

\begin{prop}\label{prop:analsharp}
For any ring $A$, there is an equality of groups
\begin{equation}\label{eq:expladd}
\LL^n(A)^{\sharp}=\Big\{\mbox{$\sum\limits_{l\in\z^n} a_{l}t^{l}\;\mid\;
\sum\limits_{l >0} a_{l}t^{l} \in \LL^n(A)
,\,\sum\limits_{l\leqslant 0} a_lt^{l} \in \Nil\big(\LL^n(A)\big)$}\Big\}\,.
\end{equation}
\end{prop}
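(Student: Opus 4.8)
The plan is to establish the two inclusions of~\eqref{eq:expladd} separately, by induction on $n$, using the presentation $\LL^n(A)=\LL^{n-1}(A)((t_n))$ together with Lemma~\ref{lemma:top}$(iii)$. I write $f=\sum_jg_jt_n^j$ with $g_j\in\LL^{n-1}(A)$, and I let $f_{>0}$, $f_{\le 0}$ be the parts of $f$ gathering the terms $a_lt^l$ with $l>0$ or $l\le 0$ in the lexicographic order comparing $t_n$-exponents first, so that $f_{\le 0}=\sum_{j<0}g_jt_n^j+(g_0)_{\le 0}$. I would first record two routine facts. (a) For every $f\in\LL^n(A)$ both $f_{>0}$ and $f_{\le 0}$ again lie in $\LL^n(A)$ (immediate induction on $n$), so in~\eqref{eq:expladd} the clause ``$f_{>0}\in\LL^n(A)$'' is automatic once $f$ itself lies in $\LL^n(A)$. (b) A ``binomial lemma'': in any $\LL^k(B)$, if $x^i\to 0$ and $y$ is nilpotent of index $N$, then $(x\pm y)^i\to 0$; indeed for $i\ge N$ one has $(x\pm y)^i=\sum_{0\le j<N}\binom{i}{j}\,x^{i-j}(\pm y)^j$, and each summand tends to $0$ because the basic neighbourhoods of~$0$ in $\LL^k(B)$ are subgroups (so an integer multiple of a small element stays small) and multiplication by the fixed element $(\pm y)^j$ is continuous by Lemma~\ref{lemma:seriesconvdom}$(i)$. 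The same argument, with $x$ replaced by $x\pm y$, shows conversely that if $(x\pm y)^i\to 0$ and $y$ is nilpotent then $x^i\to 0$.

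For the inclusion ``$\supseteq$'' I would argue as follows: given $f$ with $f_{\le 0}$ nilpotent and $f_{>0}\in\LL^n(A)$, by (b) it suffices to show $f_{>0}^i\to 0$. Being positively supported, $f_{>0}$ lies in $\LL^{n-1}(A)[[t_n]]$ and has the form $g_0+t_nh$ with $g_0\in\LL^{n-1}(A)$ positively supported. By Lemma~\ref{lemma:top}$(iii)$ we have $\LL^{n-1}(A)[[t_n]]\simeq\varprojlim_m\LL^{n-1}(B_m)$ as topological groups, $B_m:=A[[t_n]]/(t_n^m)$, so it is enough to prove the image of $f_{>0}$ in each $\LL^{n-1}(B_m)$ is topologically nilpotent. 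There this image is the sum of $g_0$, which lies in $\LL^{n-1}(B_m)^{\sharp}$ by the inductive hypothesis (it is positively supported over $B_m$ with zero non-positive part), and of $t_nh$, which is nilpotent because $t_n^m=0$ in $B_m$; now apply (b).

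For ``$\subseteq$'' --- the crux, and the step I expect to be the obstacle --- one must prove $f_{\le 0}$ is genuinely nilpotent, whereas for $n\ge 2$ it has infinitely many coefficients, so showing each coefficient is nilpotent is not enough: the full topology of $\LL^n(A)$, not merely coefficientwise convergence, has to be used. The plan: given $f\in\LL^n(A)^{\sharp}$, split $f=f^{-}+\tilde f$ with $f^{-}=\sum_{j<0}g_jt_n^j$ a finite sum and $\tilde f=\sum_{j\ge 0}g_jt_n^j\in\LL^{n-1}(A)[[t_n]]$. \emph{Step 1 (peeling).} If $f^{-}\neq 0$, let $j_1<0$ be the lowest $t_n$-degree occurring in $f$; since the only way to obtain $t_n^{ij_1}$ in $f^i$ is to take $g_{j_1}$ from each factor, $(f^i)_{t_n^{ij_1}}=g_{j_1}^i$ for all $i$. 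I claim $g_{j_1}$ is nilpotent: if not, $g_{j_1}^i\neq 0$ for all $i\ge 1$, so by Hausdorffness (Lemma~\ref{lemma:top}$(o)$) one may choose a basic neighbourhood $V_{ij_1}$ of~$0$ in $\LL^{n-1}(A)$ with $g_{j_1}^i\notin V_{ij_1}$; completing $\{V_j\}_{j<0}$ by arbitrary basic neighbourhoods, formula~\eqref{top} forces $f^i\notin U_{0,\{V_j\}}$ for every $i\ge 1$, contradicting $f^i\to 0$. Replacing $f$ by $f-g_{j_1}t_n^{j_1}$, which is again topologically nilpotent by (b), and iterating the finitely many times needed, every $g_j$ with $j<0$ is nilpotent; hence $f^{-}=t_n^{j_1}P$ with $P\in\LL^{n-1}(A)[t_n]$ having nilpotent coefficients, so $f^{-}$ is nilpotent, and $\tilde f=f-f^{-}$ is topologically nilpotent by (b). \emph{Step 2 (reduce modulo $t_n$).} The $m=1$ projection $\LL^{n-1}(A)[[t_n]]\to\LL^{n-1}(A)$, $t_n\mapsto 0$, of Lemma~\ref{lemma:top}$(iii)$ is a continuous ring homomorphism carrying $\tilde f$ to $g_0$, so $g_0^i\to 0$, that is $g_0\in\LL^{n-1}(A)^{\sharp}$; by the inductive hypothesis $(g_0)_{\le 0}$ is nilpotent. \emph{Conclusion.} $f_{\le 0}=f^{-}+(g_0)_{\le 0}$ is a sum of two nilpotent elements of $\LL^n(A)$, hence nilpotent, and $f_{>0}\in\LL^n(A)$ by (a), so $f$ lies in the right-hand side of~\eqref{eq:expladd}. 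The base case $n=1$ is the same argument but easier: $f_{\le 0}$ is then a Laurent polynomial in $t^{-1}$ with finitely many coefficients, so ``nilpotent coefficients'' already yields ``nilpotent'', and $\LL^{n-1}(A)$ is replaced throughout by the discrete ring $A$.
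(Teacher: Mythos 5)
Your proof is correct and takes essentially the same approach as the paper's: induction on $n$, splitting off the negative $t_n$-degree part and deducing nilpotency of its coefficients from the explicit topology (your peeling step is an unpacked form of the paper's appeal to Lemma~\ref{lemma:top}$(i)$), then reducing the $t_n$-degree-zero part to the induction hypothesis via the restriction of the topology to $\LL^{n-1}(A)$. The only difference is cosmetic: for the easy inclusion the paper simply notes that $\Nil\big(\LL^n(A)\big)\subset\LL^n(A)^{\sharp}$, that $t_n\cdot\LL^{n-1}(A)[[t_n]]\subset\LL^n(A)^{\sharp}$, and that $\LL^n(A)^{\sharp}$ is a group, whereas you route the same conclusion through the truncations appearing in Lemma~\ref{lemma:top}$(iii)$.
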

\begin{proof}
The proof is by induction on $n$. The base $n=0$, that is, the case of $A$ with a discrete topology, is clear: $\{f^i\}$ tends to zero in $A$ if and only if $f$ is a nilpotent element in $A$. Let us show an induction step. It is readily seen that both sides of formula~\eqref{eq:expladd} contain the group $t_n\cdot\LL^{n-1}(A)[[t_n]]$.

Now take an element \mbox{$f=\sum\limits_{i\leqslant 0}g_i t_n^i\in \LL^n(A)^{\sharp}$}, where $g_i\in\LL^{n-1}(A)$. Since $\{f^i\}$ is a Cauchy sequence, using Lemma~\ref{lemma:top}$(i)$, we obtain that $g_i\in\Nil\big(\LL^{n-1}(A)\big)$ for all $i<0$. Thus the series $\sum\limits_{i<0}g_it_n^i$ is nilpotent.

Further, by the induction hypothesis, we have the equality between the intersections of the subring $\LL^{n-1}(A)\subset\LL^n(A)$ with two sides of formula~\eqref{eq:expladd}. To prove this, we use Remark~\ref{rmk:nilpseries} and the fact that the restriction of topology from~$\LL^n(A)$ to $\LL^{n-1}(A)$ coincides with the topology on $\LL^{n-1}(A)$. This proves that the group $\LL^n(A)^{\sharp}$ is a subgroup of the group defined by the right hand side of formula~\eqref{eq:expladd}. To see the inclusion in the opposite direction we note  that $\Nil\big(\LL^n(A)\big) \subset \LL^n(A)^{\sharp}$.
\end{proof}

The proof of the following statement uses Remark~\ref{rmk:multtop} and copies the proof of~\cite[Ch.\,4, Sec.\,5, Theor.\,2]{BS} (see also~\cite[Lem.\,1.2]{FVZ}).

\begin{lemma}\label{lemma:convdom}
Let $\varphi\in A[[x_1,\ldots,x_r]]$ and $\psi_i\in A[[y_i]]$, $1\leqslant i\leqslant r$, be power series such that the constant terms of all $\psi_i$ equal zero. Let $\phi\in A[[y_1,\ldots,y_r]]$ be the formal composition $\varphi\big(\psi_1,\ldots,\psi_r\big)$. Then for all elements $f_1,\ldots,f_r\in \LL^n(A)^{\sharp}$, there is an equality in~$\LL^n(A)$
$$
\varphi\big(\psi_1(f_1),\ldots,\psi_r(f_r)\big)=\phi(f_1,\ldots,f_r)\,.
$$
\end{lemma}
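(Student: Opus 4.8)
The plan is to reduce the identity to a statement about finite truncations, where everything becomes a formal-algebra identity that holds by definition of formal composition. Fix elements $f_1,\ldots,f_r\in\LL^n(A)^{\sharp}$. By Proposition~\ref{prop:analsharp}, each $f_i$ lies in $\LL^n(A)^{\sharp}$, and more importantly so does each $\psi_i(f_i)$, since $\psi_i$ has zero constant term and $\LL^n(A)^{\sharp}$ is closed under applying power series without constant term (this was noted right after Definition~\ref{defin:addsharp}). Hence both sides of the claimed equality are genuinely convergent series in $\LL^n(A)$: the right hand side because $\phi$ has zero constant term (being the composition $\varphi(\psi_1,\ldots,\psi_r)$ with all $\psi_i$ having zero constant term) and $f_i\in\LL^n(A)^{\sharp}$, and the left hand side because $\varphi$ is applied to $\psi_1(f_1),\ldots,\psi_r(f_r)\in\LL^n(A)^{\sharp}$.

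Next I would compare the partial sums. Write $\varphi=\sum_{\alpha}c_\alpha x^\alpha$ and, for each multi-index $\alpha=(\alpha_1,\ldots,\alpha_r)$, expand the monomial $\psi_1(f_1)^{\alpha_1}\cdots\psi_r(f_r)^{\alpha_r}$. Using Remark~\ref{rmk:multtop} repeatedly (which legitimizes multiplying convergent series term by term and rearranging, because convergence in $\LL^n(A)$ is unconditional by Lemma~\ref{lemma:top}(ii) and the discussion of convergence of power series), the left hand side $\varphi(\psi_1(f_1),\ldots,\psi_r(f_r))$ equals the sum over all $\alpha$ and all choices of monomials from each $\psi_i^{\alpha_i}$ of the corresponding products of coefficients times a monomial in the $f_i$'s. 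Grouping these contributions by the total degree in each $f_i$ yields exactly $\sum_\beta d_\beta f_1^{\beta_1}\cdots f_r^{\beta_r}$ where $d_\beta$ is the coefficient of $y^\beta$ in the formal power series $\phi=\varphi(\psi_1,\ldots,\psi_r)$ — because this regrouping is precisely what the definition of formal composition of power series encodes. This last regrouping is an infinite rearrangement, so it again needs unconditional convergence, which we have.

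The one technical point that needs care — and which I expect to be the main (mild) obstacle — is justifying that the double/iterated rearrangement above is legitimate in the topology of $\LL^n(A)$, since this topology is subtle for $n\geqslant 2$ and $\LL^n(A)$ is not a topological ring. This is exactly what the cited references \cite[Ch.\,4, Sec.\,5, Theor.\,2]{BS} and \cite[Lem.\,1.2]{FVZ} handle, and the tools are already in place: Lemma~\ref{lemma:seriesconvdom}(ii) gives that the pairwise product of two Cauchy sequences is Cauchy with the expected limit, and Remark~\ref{rmk:multtop} packages this for convergent series, including the equality of the two iterated sums. So the argument is: reduce to checking that for every open neighborhood $U$ of zero, all but finitely many of the rearranged terms lie in $U$, which follows from the fact that the terms of the original (non-rearranged) convergent double series tend to zero, together with the fact that the neighborhoods $U_{m,\{V_j\}}$ are subgroups. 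Formally this is an induction on $n$ paralleling the proof of Proposition~\ref{prop:analsharp}, with the base case $n=0$ being the classical statement about formal composition of power series over the discrete ring $A$. Once the rearrangement is justified, the identity $\varphi(\psi_1(f_1),\ldots,\psi_r(f_r))=\phi(f_1,\ldots,f_r)$ is immediate from matching coefficients.
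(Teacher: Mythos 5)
Your proposal is correct and follows essentially the same route as the paper: the paper's proof is just a pointer to Remark~\ref{rmk:multtop} together with the rearrangement argument of~\cite[Ch.\,4, Sec.\,5, Theor.\,2]{BS} (cf.~\cite[Lem.\,1.2]{FVZ}), namely expand everything into one multiply indexed family of terms, observe that the terms tend to zero because the base of neighborhoods of zero consists of $A$-submodules, and regroup to match the coefficients of the formal composition, which is exactly what you do. The only small inaccuracy is your parenthetical claim that closure of $\LL^n(A)^{\sharp}$ under substitution of power series without constant term is ``noted right after Definition~\ref{defin:addsharp}'' --- what is noted there is only convergence of $\varphi(f_1,\ldots,f_r)$; the stronger closure statement is true (it can be checked via Proposition~\ref{prop:analsharp} and the decomposition of a series into its positive and non-positive parts) but requires a short justification, and in any case your main summability-and-rearrangement argument does not depend on it.
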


\subsection{Differential forms}
\label{dif-forms}

Consider absolute K\"ahler differentials $\Omega^1_{\LL^n(A)}$ for the ring~$\LL^n(A)$. By $K\subset \Omega^1_{\LL^n(A)}$ denote the~$\LL^n(A)$-submodule generated by all elements $df-\sum\limits_{i=1}^n\frac{\partial{f}}{\partial{t_i}}dt_i$, where \mbox{$f\in\LL^n(A)$}. In particular, $K$ contains the elements $da$, where $a\in A$.

\begin{defin}\label{defin:tildeforms}
Define the quotient
$$
\widetilde{\Omega}^1_{\LL^n(A)}:=\Omega^1_{\LL^n(A)}/K\,.
$$
\end{defin}

It is easily shown that $\widetilde{\Omega}^1_{\LL^n(A)}$ is a free module over the ring $\LL^n(A)$ with the basis $dt_1,\ldots,dt_n$. Further, put
$\widetilde{\Omega}^i_{\LL^n(A)}:=\bigwedge_{\LL^n(A)}^{i}\widetilde{\Omega}^1_{\LL^n(A)}$.

\medskip

One checks directly that the de Rham differential $d\colon\Omega^1_{\LL^n(A)}\to \Omega^2_{\LL^n(A)}$ sends $K$ to~\mbox{$K\wedge\Omega^1_{\LL^n(A)}$}. Therefore we have a well-defined de Rham differential \mbox{$d\colon \widetilde{\Omega}^i_{\LL^n(A)}\to\widetilde{\Omega}^{i+1}_{\LL^n(A)}$} for any $i\geqslant 0$ (which we denote by the same letter ``$d$''),
and, obviously, the following diagram is commutative:
$$
\begin{CD}
{\Omega}^i_{\LL^n(A)}  @>{d}>>  {\Omega}^{i+1}_{\LL^n(A)}  \\
 @VV V @VVV \\
 \widetilde{\Omega}^i_{\LL^n(A)} @>{d}>> \widetilde{\Omega}^{i+1}_{\LL^n(A)}
\end{CD}
$$
where the vertical arrows are the natural maps.

\medskip

The topology on $\LL^n(A)$ defined in Subsection~\ref{subsect:topology} naturally induces a topology on each free $\LL^n(A)$-module~$\widetilde{\Omega}^i_{\LL^n(A)}$, $i\geqslant 0$. One easily checks that the de Rham differential is continuous with respect to this topology. Therefore for any element $f\in\LL^n(A)^{\sharp}$ and a power series $\varphi\in A[[x]]$, there is an equality in $\widetilde{\Omega}^1_{\LL^n(A)}$
\begin{equation}\label{eq:varphi}
\frac{\partial\varphi}{\partial x}(f)df=d(\varphi(f))\,.
\end{equation}

Let $\log(1+x):=\sum\limits_{i\geqslant 1}(-1)^{i+1}\frac{x^i}{i}$ and $\exp(x):=\sum\limits_{i\geqslant 0}\frac{x^i}{i!}$ be the usual power series from the ring $\Q[[x]]$. Let $\Li_2(x):=\sum\limits_{i\geqslant 1}\frac{x^i}{i^2}   \in \Q[[x]]$ be the dilogarithm.

The next lemma is directly implied by formula~\eqref{eq:varphi}.

\begin{lemma} \label{dif-form}
Let $A$ be a $\Q$-algebra. There are the following equalities in $\widetilde{\Omega}^1_{\LL^n(A)}$.
\begin{itemize}
\item[(i)]
For any $f\in \LL^n(A)^{\sharp}$, there is an equality \mbox{$\frac{df}{1+f}= \frac{d(1+f)}{1+f}=d\log(1+f)$}.
\item[(ii)]
For any $f\in\LL^n(A)^{\sharp}\cap \LL^n(A)^*$, there is an equality
$$
\log(1-f)\frac{df}{f}=d\,(-\Li_2(f))\,.
$$
\item[(iii)]
For any $f\in\LL^n(A)^{\sharp}$ and $\psi\in A[[x]]^*$, there exists $\varphi\in A[[x]]$ such that
$$
\log(1-f)\frac{df}{\psi(f)}=d(\varphi(f))\,.
$$
\end{itemize}
\end{lemma}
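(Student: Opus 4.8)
The plan is to derive all three identities from equation~\eqref{eq:varphi}, which says that for $f\in\LL^n(A)^{\sharp}$ and $\varphi\in A[[x]]$ one has $\frac{\partial\varphi}{\partial x}(f)\,df = d(\varphi(f))$ in $\widetilde{\Omega}^1_{\LL^n(A)}$; the only work is to pick the right $\varphi$ in each case and to justify that the relevant formal series actually converge, which is exactly guaranteed by $f\in\LL^n(A)^{\sharp}$ together with Lemma~\ref{lemma:convdom}.

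For item~(i), I would take $\varphi(x)=\log(1+x)\in\Q[[x]]$, so that $\frac{\partial\varphi}{\partial x}(x)=\frac{1}{1+x}=\sum_{i\geqslant 0}(-1)^ix^i$. Since $f\in\LL^n(A)^{\sharp}$, the series $\sum_{i\geqslant 0}(-1)^if^i$ converges and equals $\frac{1}{1+f}$ (note $1+f$ is a unit because $f$ is topologically nilpotent, hence $1+f\in\LL^n(A)^*$), and $\log(1+f)$ converges as well; equation~\eqref{eq:varphi} then gives $\frac{df}{1+f}=d\log(1+f)$, while $\frac{d(1+f)}{1+f}=\frac{df}{1+f}$ is immediate from $d(1+f)=df$. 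For item~(ii), I would take $\varphi(x)=-\Li_2(x)=-\sum_{i\geqslant 1}\frac{x^i}{i^2}$, whose derivative is $-\sum_{i\geqslant 1}\frac{x^{i-1}}{i}=\frac{\log(1-x)}{x}$ (using $\log(1-x)=-\sum_{i\geqslant 1}\frac{x^i}{i}$). Applying equation~\eqref{eq:varphi} with this $\varphi$ yields $\frac{\log(1-f)}{f}\,df=d(-\Li_2(f))$; here the extra hypothesis $f\in\LL^n(A)^*$ is what makes $\frac{1}{f}$, and hence $\log(1-f)\frac{df}{f}$, a legitimate element of $\widetilde{\Omega}^1_{\LL^n(A)}$ rather than only a formal manipulation, and the fact that $\frac{\log(1-x)}{x}\in\Q[[x]]$ has no pole at $0$ ensures the series $\frac{\log(1-f)}{f}$ converges by $\LL^n(A)^{\sharp}$-membership.

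Item~(iii) is the one requiring a genuine argument rather than just a clever choice of a named function. Here $\psi\in A[[x]]^*$ means $\psi$ has invertible constant term, so $\frac{1}{\psi(x)}\in A[[x]]$, and therefore the formal series $\frac{\log(1-x)}{\psi(x)}=-\Big(\sum_{i\geqslant 1}\frac{x^i}{i}\Big)\cdot\frac{1}{\psi(x)}$ lies in $x\cdot A[[x]]\subset A[[x]]$, i.e.\ it has zero constant term. Consequently it admits a formal antiderivative: define $\varphi\in A[[x]]$ by $\varphi(x):=\int_0^x\frac{\log(1-u)}{\psi(u)}\,du$, the termwise antiderivative with zero constant term, which makes sense over any $A$ because integrating a monomial $c_ix^i$ to $\frac{c_i}{i+1}x^{i+1}$ introduces only denominators of the form $i+1$ — wait, this is over a general ring, so I should be careful: in fact this is not automatic over $\Z$. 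Let me instead note that the statement only asserts existence of such $\varphi\in A[[x]]$, and examine whether $\frac{\log(1-x)}{\psi(x)}$ is an exact formal derivative. Writing $\frac{\log(1-x)}{\psi(x)}=\sum_{i\geqslant 1}b_ix^i$ with $b_i\in A$, one needs $b_i=(i+1)c_{i+1}$ for some $c_{i+1}\in A$; this holds when $A$ is a $\Q$-algebra, which is the standing hypothesis of the lemma, so $\varphi(x):=\sum_{i\geqslant 1}\frac{b_i}{i+1}x^{i+1}$ is well-defined in $A[[x]]$ with $\frac{\partial\varphi}{\partial x}=\frac{\log(1-x)}{\psi(x)}$. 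Then equation~\eqref{eq:varphi} applied to this $\varphi$, using $f\in\LL^n(A)^{\sharp}$ (so that $\varphi(f)$ and $\frac{\log(1-f)}{\psi(f)}$ converge, the latter by Lemma~\ref{lemma:convdom} since $\frac{\log(1-x)}{\psi(x)}$ has zero constant term), gives $\log(1-f)\frac{df}{\psi(f)}=d(\varphi(f))$, as required.

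The only mild obstacle is purely bookkeeping: one must check at each step that the composite power series in question has the right vanishing of constant term so that Lemma~\ref{lemma:convdom} applies and substitution of $f\in\LL^n(A)^{\sharp}$ is legitimate, and that equality~\eqref{eq:varphi} — which is stated for a single power series $\varphi\in A[[x]]$ evaluated at $f$ — can be combined with the product/composition compatibility of Lemma~\ref{lemma:convdom} to identify $\frac{\partial\varphi}{\partial x}(f)\,df$ with the displayed left-hand sides $\frac{df}{1+f}$, $\log(1-f)\frac{df}{f}$, $\log(1-f)\frac{df}{\psi(f)}$. All of this is routine once the correct $\varphi$ is named, so there is no substantive difficulty beyond choosing $\varphi=\log(1+x)$, $\varphi=-\Li_2(x)$, and $\varphi=\int_0^x\frac{\log(1-u)}{\psi(u)}\,du$ respectively.
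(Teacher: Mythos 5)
Your proposal is correct and follows essentially the same route as the paper, which derives all three identities directly from formula~\eqref{eq:varphi} applied to suitable power series $\varphi\in A[[x]]$ (namely $\log(1+x)$, $-\Li_2(x)$, and a formal antiderivative of $\log(1-x)/\psi(x)$, the last existing because $A$ is a $\Q$-algebra). Your bookkeeping about convergence via Lemma~\ref{lemma:convdom} and the invertibility of $1+f$, $f$, and $\psi(f)$ is exactly the routine verification the paper leaves implicit.
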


\medskip

There is a homomorphism
\begin{equation}  \label{res}
\res\;:\; \widetilde{\Omega}^n_{\LL^n(A)}\lrto A\,,\qquad \mbox{$\sum\limits_{l\in\z^n}a_lt^{l}\cdot dt_1\wedge\ldots \wedge dt_n\longmapsto a_{-1\ldots-1}$}\,,
\end{equation}
called a {\it residue map}. For simplicity, we also use notation  ``$\res$'' for the map from $\Omega^n_{\LL^n(A)}$ to $A$ which is the composition of the natural map $\Omega^n_{\LL^n(A)}  \to \widetilde{\Omega}^n_{\LL^n(A)}$  with the map $\res$ defined by formula~\eqref{res}.

For any element $\eta\in\widetilde{\Omega}^{n-1}_{\LL^n(A)}$, we have $\res(d\eta)=0$. Moreover we have the following fact.

\begin{lemma}\label{lemma:cohom}
For any $\Q$-algebra $A$, there is an isomorphism
$$
\res\;:\;\widetilde{\Omega}^{n}_{\LL^n(A)}/d\widetilde{\Omega}^{n-1}_{\LL^n(A)}\stackrel{\sim}\longrightarrow A\,.
$$
\end{lemma}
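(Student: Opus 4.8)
The map $\res$ kills $d\widetilde{\Omega}^{n-1}_{\LL^n(A)}$ (as already noted), so it descends to a homomorphism $\widetilde{\Omega}^{n}_{\LL^n(A)}/d\widetilde{\Omega}^{n-1}_{\LL^n(A)}\to A$, which is clearly surjective since $\res(t^{-1}\cdots t_n^{-1}dt_1\wedge\ldots\wedge dt_n)=1$. The whole content is injectivity, i.e. that every monomial form $a_lt^l\,dt_1\wedge\ldots\wedge dt_n$ with $l\ne(-1,\ldots,-1)$ lies in $d\widetilde{\Omega}^{n-1}_{\LL^n(A)}$, and that the resulting presentation is compatible with infinite sums. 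The plan is an induction on $n$, peeling off the last variable $t_n$ using Lemma~\ref{lemma:top}$(iii)$ and the inverse-limit description of $\LL^{n-1}(A)[[t_n]]$.

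First, for $n=1$ the statement is the classical computation: $a_lt^l\,dt = d\!\left(\frac{a_l}{l+1}t^{l+1}\right)$ for $l\ne -1$ (here $\frac1{l+1}\in A$ since $A$ is a $\Q$-algebra), and one checks that for $f=\sum_l a_lt^l\in A((t))$ the antiderivative $\sum_{l\ne-1}\frac{a_l}{l+1}t^{l+1}$ again lies in $A((t))$, so that $f\,dt - \res(f\,dt)\,t^{-1}dt\in d\widetilde{\Omega}^0_{\LL(A)}$; injectivity is immediate because an element of $A((t))$ with vanishing coefficients is zero. For the induction step, write a general form in $\widetilde{\Omega}^n_{\LL^n(A)}$ as
$$
\omega = \sum_{i\in\Z}\bigl(\alpha_i + \beta_i\wedge dt_n\bigr),\qquad \alpha_i\in\widetilde{\Omega}^{n-1}_{\LL^{n-1}(A)}\cdot t_n^i\,,\quad \beta_i\in\widetilde{\Omega}^{n-2}_{\LL^{n-1}(A)}\cdot t_n^i\,,
$$
using that $\widetilde{\Omega}^1_{\LL^n(A)}$ is free on $dt_1,\ldots,dt_n$; only the top-degree part $\sum_i \gamma_i t_n^i\wedge dt_n$ with $\gamma_i\in\widetilde{\Omega}^{n-1}_{\LL^{n-1}(A)}$ matters for the residue. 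For $i\ne -1$ one has $\gamma_it_n^i\wedge dt_n = -\,d\!\left(\tfrac1{i+1}\gamma_i t_n^{i+1}\right) + \tfrac1{i+1}(d\gamma_i)\,t_n^{i+1}$, and the second term has no $dt_n$ and can be absorbed; iterating over all $i\ne-1$ and invoking continuity of $d$ together with Lemma~\ref{lemma:top}$(ii)$,$(iii)$ (so that the infinite antiderivative $\sum_{i\ne-1}\tfrac1{i+1}\gamma_i t_n^{i+1}$ still defines an element of $\widetilde{\Omega}^{n-1}_{\LL^n(A)}$) we reduce $\omega$ modulo $d\widetilde{\Omega}^{n-1}_{\LL^n(A)}$ to a form $\gamma_{-1}\,t_n^{-1}\wedge dt_n$ with $\gamma_{-1}\in\widetilde{\Omega}^{n-1}_{\LL^{n-1}(A)}$. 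Now apply the induction hypothesis to $\gamma_{-1}$: modulo $d\widetilde{\Omega}^{n-2}_{\LL^{n-1}(A)}$ it equals $\res(\gamma_{-1})\cdot t_1^{-1}\cdots t_{n-1}^{-1}dt_1\wedge\ldots\wedge dt_{n-1}$, and since $d$ on $\LL^{n-1}(A)$-forms extended by $t_n^{-1}dt_n$ commutes appropriately, this shows $\omega \equiv \res(\omega)\cdot t_1^{-1}\cdots t_n^{-1}dt_1\wedge\ldots\wedge dt_n \pmod{d\widetilde{\Omega}^{n-1}_{\LL^n(A)}}$, giving injectivity.

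The main obstacle is the bookkeeping of convergence: one must be sure that the term-by-term antiderivatives assembled over $i\ne -1$ (and, inside the induction, over the remaining variables) genuinely converge in $\widetilde{\Omega}^{n-1}_{\LL^n(A)}$ in the topology of Subsection~\ref{subsect:topology}, and that $d$ of the resulting series recovers $\omega$ up to the residue term — this is where continuity of the de Rham differential and Lemma~\ref{lemma:top}$(iii)$ (the identification $\LL^{n-1}(A)[[t_n]]\simeq\varprojlim_m\LL^{n-1}(A[[t_n]]/(t_n^m))$ as topological groups) do the real work, letting us check the identity modulo each $t_n^m$ and pass to the limit. Dividing by $i+1$ for all $i$ is harmless since $A\supset\Q$, but it is precisely what forces the hypothesis that $A$ be a $\Q$-algebra; over a general ring the cohomology $\widetilde{\Omega}^{n}/d\widetilde{\Omega}^{n-1}$ is much larger.
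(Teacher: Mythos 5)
Your proof is correct and rests on the same two ingredients as the paper's own (very terse) argument: the explicit primitive $\frac{(-1)^{i-1}}{l_i+1}\,t_1^{l_1}\cdots t_i^{l_i+1}\cdots t_n^{l_n}\,dt_1\wedge\ldots\wedge dt_{i-1}\wedge dt_{i+1}\wedge\ldots\wedge dt_n$ for any monomial with some exponent $l_i\ne -1$ (which is exactly where the $\Q$-algebra hypothesis enters) together with continuity of the de Rham differential to handle infinite sums. The paper simply records this monomial-level observation, while you organize the same computation as an induction on $n$, integrating in $t_n$ and recursing on the $t_n^{-1}$-coefficient, which is a mild repackaging that makes the convergence of the assembled primitives explicit rather than a genuinely different route.
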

\begin{proof}
The proof is based on the following observation: $d$ is continuous and if $l_i\ne -1$ for some $i$, $1\leqslant i\leqslant n$, then there is an equality
$$
t_1^{l_1}\ldots t_n^{l_n}dt_1\wedge\ldots\wedge dt_n=d\left(\frac{(-1)^{i-1}}{l_i+1} t_1^{l_1}\ldots t_i^{l_i+1}\ldots t_n^{l_n}dt_1\wedge\ldots dt_{i-1}\wedge dt_{i+1}\wedge\ldots\wedge dt_n\right)\,.
$$
\end{proof}

\section{Iterated loop group of $\gm$}\label{multiplicative}

\subsection{Iterated loop functors}\label{subsection:loopgroup}


\begin{defin}\label{defin:loopgroup}
For a functor $F$ (see Section~\ref{sect:notation}), a {\it loop functor} of $F$ is the functor~$\lo F$ given by the formula $LF(A):=F\big(A((t))\big)$ for a ring $A$. We call $\lo^n F:=\underbrace{\lo \ldots \lo}_n F$ an {\it $n$-iterated loop functor} of $F$  for a natural number $n$.  If $F$ is a group functor, then we also call $\lo F$ and~$\lo^n F$ a {\it loop group} and an {\it $n$-iterated loop group} of $F$, respectively.
\end{defin}

Explicitly, we have
$$
\lo^nF(A)=F\big(A((t_1))\ldots((t_n))\big)
$$
for a ring $A$. We have natural morphisms of functors
\begin{equation}   \label{loop-embed}
F \lrto \lo F\, , \qquad F\lrto L^n F
\end{equation}
that correspond to the maps
$$
F(A)  \lrto F(A((t))) \, , \qquad
F(A)\lrto F\big(A((t_1))\ldots((t_n))\big)
$$
given by constant series. For the additive group scheme $\ga = \Spec(\Z[x])$, we have that~$\lo^n\ga(A)$ is the additive group of the ring of iterated Laurent series $\LL^n(A)$. We are interested in the $n$-iterated loop group~$\lo^n\gm$ of the multiplicative group scheme~${\gm= \Spec(\Z[x,x^{-1}])}$. Explicitly, we have  that~$\lo^n\gm(A)$ is the multiplicative group $\LL^n(A)^*$ of invertible elements in the ring~$\LL^n(A)$.

\subsection{Decomposition}\label{subsection:decomp}

In order to describe the $n$-iterated loop group $\lo^n\gm$, we consider several group subfunctors in it. First we have an embedding of group functors
\begin{equation}\label{eq:constsubgr}
\gm\hookrightarrow\lo^n\gm
\end{equation}
given by constant series.

\begin{defin}\label{defin:uz}
Let $\uz$ be the group functor defined as follows: given a ring $A$, we have that $\uz(A)$ is the group of locally constant functions on $\Spec(A)$ with values in~$\z$.
\end{defin}

In other words, $\uz$ is the constant sheaf with respect to the Zariski topology associated with the group $\z$. Since affine schemes are quasi-compact, given a ring $A$, an element $\underline{l}\in\uz(A)$ determines a decomposition into a finite product of rings
$$
\mbox{$A\simeq\prod\limits_{i=1}^N A_i$}
$$
and a collection of integers $l_i$, $1\leqslant i\leqslant N$, such that for every $i$, the restriction of $\underline{l}$ to~$\Spec(A_i)$ is the constant function whose value equals~$l_i$.

Analogous facts hold for the group functor $\uz^n$. We have an embedding of group functors
\begin{equation}\label{eq:discrsubgr}
\uz^n\hookrightarrow\lo^n\gm
\end{equation}
that sends $\underline{l}\in\uz^n(A)$ to $t^{\underline l}$ for a ring $A$, where we put
$$
\mbox{$t^{\underline l}:=(t_1^{l_{11}}\cdot\ldots\cdot t_n^{l_{n1}},\,\ldots,\,t_1^{l_{1M}}\cdot\ldots\cdot t_n^{l_{nM}})\in \LL^n(A)\simeq\prod\limits_{i=1}^M \LL^n(A_i)$}
$$
and a decomposition $A\simeq\prod\limits_{i=1}^M A_i$ is such that for every $i$, $1\leqslant i\leqslant M$, the restriction of~$\underline{l}$ to $\Spec(A_i)$ is the constant function whose value equals $(l_{1i},\ldots,l_{ni})\in\z^n$.

\medskip

Further, consider the following {\it lexicographical order} on $\z^n$: put $(l_1,\ldots,l_n)\leqslant (l'_1,\ldots,l'_n)$ if and only if either $l_n < l'_n$ or $l_n=l'_n$ and $(l_1, \ldots, l_{n-1}) \leqslant (l'_1, \ldots, l'_{n-1})$. Note that the order is invariant under translations on the group $\z^n$. We abbreviate $(0,\ldots,0)$ to $0$. Define the group functors
\begin{equation}\label{eq:v+}
\vv_{n,+}(A):=\Big\{\mbox{$1+\sum\limits_{0<l\in\z^n} a_{l}t^{l}\;\mid\;  \sum\limits_{l>0} a_{l}t^{l} \in \LL^n(A) $} \Big\}\,,
\end{equation}
\begin{equation}\label{eq:v-}
\vv_{n,-}(A):=\Big\{\mbox{$1+\sum\limits_{0>l\in\z^n} a_{l}t^{l}\;\mid\; \sum\limits_{l<0} a_lt^{l} \in \Nil\big(\LL^n(A)\big)$}\Big\}\,.
\end{equation}
The group structure on $\vv_{n,+}$ and $\vv_{n,-}$ is given by the product of iterated Laurent series and we have embeddings of group functors
\begin{equation}\label{eq:contsubgr}
\vv_{n,+}\hookrightarrow\lo^n\gm\,,\qquad \vv_{n,-}\hookrightarrow\lo^n\gm\,.
\end{equation}

\begin{prop}\label{prop-decomp}
Embeddings~\eqref{eq:constsubgr},~\eqref{eq:discrsubgr}, and~\eqref{eq:contsubgr} induce an isomorphism of group functors
\begin{equation}\label{eq:decomCC}
\uz^n\times\gm\times\vv_{n,+}\times\vv_{n,-}\simeq\lo^n\gm\,.
\end{equation}
\end{prop}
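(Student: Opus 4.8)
The plan is to prove the decomposition by induction on $n$, peeling off the variable $t_n$ at each step and reducing to the analogous statement for $\LL^{n-1}(A)$. For the base case $n=1$ this is essentially the classical result of Contou-Carr\`ere, which says that any invertible Laurent series over $A$ (assuming for clarity that $A$ is connected, i.e.\ $\uz(A)=\z$) can be written uniquely as $t^{l}\cdot a\cdot u_{+}\cdot u_{-}$ with $l\in\z$, $a\in A^*$, $u_{+}\in 1+t A[[t]]$, and $u_{-}$ a unit whose negative-degree part is nilpotent; over a general $A$ one first splits $\Spec(A)$ into the clopen pieces on which the valuation $\nu$ is constant. The key algebraic input for $n=1$ is that if $g=\sum_{l\geqslant m}a_l t^l\in A((t))^*$ then, after splitting, $a_{l_0}\in A^*$ for the relevant $l_0=\nu(g)$ and $a_l\in\Nil(A)$ for $l<l_0$ (this is recalled in the introduction of the excerpt); then $t^{-l_0}a_{l_0}^{-1}g$ lies in $1+\big(\sum_{l<0}\Nil(A)t^l\big)+t A[[t]]$, and one separates the part with negative powers of $t$ (which is a nilpotent element, hence a unit times an element of $\vv_{1,-}$) from the part in $1+tA[[t]]=\vv_{1,+}$ by a straightforward successive-approximation argument, using that the topology on $A((t))$ is complete (Lemma~\ref{lemma:top}$(ii)$). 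Uniqueness is immediate by comparing valuations and leading coefficients.

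For the induction step, write an element $f\in\LL^n(A)^*$ as $f=\sum_{i}g_i t_n^i$ with $g_i\in\LL^{n-1}(A)$. By Lemma~\ref{lemma:top}$(i)$ applied to $\{f^{-N}\}$ or directly to the unit $f$, after splitting $\Spec(A)$ into clopen pieces we may assume there is a well-defined integer $i_0$ — the ``$t_n$-valuation'' of $f$ — such that $g_{i_0}\in\LL^{n-1}(A)^*$ and $g_i\in\Nil\big(\LL^{n-1}(A)\big)$ for $i<i_0$; the argument is exactly parallel to the $n=1$ case, reading $\LL^{n-1}(A)$ in place of $A$ and using that $\LL^{n-1}(A)$ carries a complete topology with subgroup neighborhoods. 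Dividing by $t_n^{i_0}g_{i_0}$ reduces to the case $f\in 1+\big(\sum_{i<0}\Nil(\LL^{n-1}(A))t_n^i\big)+t_n\LL^{n-1}(A)[[t_n]]+\LL^{n-1}(A)^{\sharp,0}$; one then factors off the negative-$t_n$-power part as a nilpotent element of $\LL^n(A)$ (which contributes to $\vv_{n,-}$, using Remark~\ref{rmk:nilpseries}), and recursively factors the remaining piece in $1+t_n\LL^{n-1}(A)[[t_n]]$ together with the ``$t_n^0$'' coefficient $g_0$ by applying the induction hypothesis to $g_0\in\LL^{n-1}(A)^*$ and then absorbing higher $t_n$-coefficients into $\vv_{n,+}$ via successive approximation in the complete topology of $\LL^n(A)$. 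Assembling the factors and keeping track of how $\uz^{n-1}\subset\uz^n$, $\gm$, $\vv_{n-1,\pm}$ sit inside the corresponding functors for $\LL^n$ gives the asserted map; functoriality in $A$ is clear since every step was canonical after the clopen splitting, and the splitting itself is functorial.

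The main obstacle is the bookkeeping in the induction step: unlike the one-dimensional case, an element of $1+t_n\LL^{n-1}(A)[[t_n]]$ need \emph{not} be expressible as a convergent infinite product of elementary factors $1-u_{l}t^{l}$ over a general ring (this is precisely the difficulty flagged in the introduction for $n=2$), so one cannot prove the decomposition by naively multiplying out Artin--Hasse-type factors. The remedy is to avoid infinite products entirely and argue only with the subgroup structure of the topology: at each stage of the approximation one subtracts off a correction living in a smaller neighborhood $U_{m,\{V_j\}}$, and completeness (Lemma~\ref{lemma:top}$(ii)$, together with Lemma~\ref{lemma:seriesconvdom} to control products of partial limits) guarantees the process converges to an honest element of $\vv_{n,+}$ or $\vv_{n,-}$. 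Checking uniqueness also requires a little care: one compares, on each clopen piece, first the $\uz^n$-component (by looking at the lexicographically minimal exponent with a unit coefficient), then the $\gm$-component (the leading coefficient), then the $\vv_{n,-}$- and $\vv_{n,+}$-components by reducing modulo nilpotents and modulo $t_n\LL^{n-1}(A)[[t_n]]$ respectively and invoking the inductive uniqueness.
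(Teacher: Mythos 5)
Your strategy is essentially the mirror image of the paper's proof: the paper also argues by induction on $n$ with the classical Contou-Carr\`ere decomposition (\cite[Lem.\,1.3]{CC1}, \cite[Lem.\,0.8]{CC2}) as the base case, but instead of peeling off the outermost variable $t_n$ element-by-element, it applies the loop functor to the $(n-1)$-dimensional decomposition (equivalently, applies the induction hypothesis over the base ring $A((t_1))$), decomposes the resulting factor $\gm\big(A((t_1))\big)=L\gm(A)$ by the base case, and merges the plus- and minus-parts through the isomorphisms $\vv_{1,\pm}\times\lo\vv_{n-1,\pm}\simeq\vv_{n,\pm}$ of~\eqref{eq:isomiter}, which are verified by a one-line subgroup lemma (if $P\cap Q=\{1\}$ and $P\cdot(Q-1)=Q-1$, then $P\times Q\simeq P+Q-1$). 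That functorial packaging is what the paper's route buys: no successive approximation, no appeal to completeness of the topology, and uniqueness comes for free from the bijectivity of these explicit maps. Your outer-variable-first version is workable — it amounts to applying the one-dimensional decomposition over the base ring $B=\LL^{n-1}(A)$ and the induction hypothesis to the $t_n$-degree-zero unit coefficient, followed by the mirror-image merging isomorphisms — but the approximation machinery you invoke is exactly the bookkeeping the paper's formulation avoids.

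Two points in your write-up are genuinely under-justified. First, the ``$t_n$-valuation'' $i_0$ you extract is a priori only a locally constant function on $\Spec(B)$ with $B=\LL^{n-1}(A)$, since you are invoking the one-dimensional statement over $B$; your phrase ``after splitting $\Spec(A)$ into clopen pieces'' silently assumes that all idempotents (equivalently, locally constant $\z$-valued functions) of $\LL^{n-1}(A)$ come from $A$. This is true but not formal: it is precisely the isomorphism $\uz\simeq\lo\uz$ of \cite[Lem.\,3.2]{OZ1}, which the paper invokes explicitly, and without it your construction identifies $\lo^n\gm(A)$ with $\uz\big(\LL^{n-1}(A)\big)\times\cdots$ rather than with the stated functor $\uz^n(A)\times\cdots$. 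Second, Remark~\ref{rmk:nilpseries} goes the wrong way for your purposes: it says a nilpotent series has nilpotent coefficients, and the converse is false in general, so it cannot be the source of the nilpotence (as an element of $\LL^n(A)$) of the negative-$t_n$ part needed for membership in $\vv_{n,-}$. That nilpotence comes instead from the one-dimensional decomposition over $B$ itself (the finitely many coefficients $g_i$, $i<i_0$, are nilpotent elements of $B$, so the finite tail $\sum_{i<i_0}g_it_n^i$ is nilpotent), combined with the $\vv_{n-1,-}$-component of the inductive decomposition of the degree-zero coefficient. Finally, tidy up the induction step itself: the undefined set ``$\LL^{n-1}(A)^{\sharp,0}$'' and the switch between $g_{i_0}$ and $g_0$ obscure the fact that the induction hypothesis should be applied to the $t_n$-degree-zero unit coefficient, after which the two merging isomorphisms (proved, e.g., by the paper's subgroup lemma) finish the argument.
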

\begin{proof}
We use induction on $n$. The base $n=1$ is proved by Contou-Carr\`ere~\cite[Lem.\,1.3]{CC1},~\cite[Lem.\,0.8]{CC2}. For the induction step, we consider loop functors of all functors in formula~\eqref{eq:decomCC} with $n$ being replaced by~$n-1$. Then we use the isomorphism  $ \uz \simeq \lo\uz$ induced by morphisms~\eqref{loop-embed} and  proved by the second named author and Zhu~\cite[Lem.\,3.2]{OZ1}, and we use also the isomorphisms
\begin{equation}\label{eq:isomiter}
\vv_{1,+}\times\lo\vv_{{n-1},+}\simeq\vv_{n,+}\,,\qquad \vv_{1,-}\times\lo\vv_{n-1,-}\simeq\vv_{n,-}\,.
\end{equation}
 Here, given a ring $A$, elements in $\vv_{1,+}(A)$ and $\vv_{1,-}(A)$ are Laurent series in $t_1$ with coefficients in $A$ that satisfy conditions from~\eqref{eq:v+} and~\eqref{eq:v-}, respectively, with $n=1$. Similarly, elements in $\lo\vv_{n-1,+}(A)$ and~$\lo\vv_{n-1,-}(A)$ are iterated Laurent series in $t_2,\ldots,t_n$ with coefficients in $A((t_1))$ that satisfy conditions from~\eqref{eq:v+} and~\eqref{eq:v-} with $n$ being replaced by $n-1$ and $A$ being replaced by $A((t_1))$. Isomorphisms~\eqref{eq:isomiter} can be checked directly with the help of the following obvious fact: given a ring~$B$ and subgroups $P,Q\subset B^*$ such that $P\cap Q=\{1\}$ and $P\cdot (Q-1)=Q-1$, we have the isomorphism
$$
P\times Q\stackrel{\sim}\longrightarrow P+Q-1\,,\qquad (p,q)\longmapsto p\cdot q=p+p\cdot(q-1)\,.
$$
We apply this fact to $B=\LL^n(A)$, $P=\vv_{1,+}(A)$, \mbox{$Q=\lo\vv_{n-1,+}(A)$} and \mbox{$P=\vv_{1,-}(A)$}, \mbox{$Q=\lo\vv_{n-1,-}(A)$}.
\end{proof}

\begin{rmk}
For $n=2$, Proposition~\ref{prop-decomp} was proved in~\cite[\S\,3.1]{OZ1}.
\end{rmk}

Decomposition~\eqref{eq:decomCC} defines projections
\begin{equation}\label{eq:projections}
\nu\;:\;\lo^n\gm\lrto\uz^n\,,\qquad \pi\;:\;\lo^n\gm\lrto\gm\,.
\end{equation}

\begin{examp}\label{examp:projections}
\hspace{0cm}
\begin{itemize}
\item[(i)]
Suppose that for an invertible iterated Laurent series \mbox{$f=\sum\limits_{l\in\z^n} a_lt^{l}\in\lo^n\gm(A)$}, one has that $\nu(f)\in \z^n\subset \uz^n(A)$, that is, $\nu(f)$ is a constant function on $\Spec(A)$. For instance, this is true when $\Spec(A)$ is connected. Then $f$ has an invertible coefficient $a_l\in A^*$ for some~$l\in\z^n$ and $\nu(f)\in\z^n$ is the smallest index of an invertible coefficient with respect to the lexicographical order on $\z^n$. Moreover, all coefficients of~$f$ with smaller indices than $\nu(f)$ are nilpotent elements in $A$.
\item[(ii)]
If $A=k$ is a field and $n=1$, then $\nu$ is the natural discrete valuation on the field~$\LL(A)=k((t))$ and $\pi$ sends a non-zero series to the first non-zero coefficient.
\item[(iii)]
Let $\varepsilon$ be a formal variable that satisfies $\varepsilon^2=0$, take $A=\Z[\varepsilon]$, and consider the invertible Laurent series in $\LL(A)$
$$
f=(\varepsilon t^{-1}+1)\cdot (1+t)=\varepsilon t^{-1}+(1+\varepsilon)+t\,.
$$
Then $\nu(f)=0$ and $\pi(f)=1$, but the constant term of $f$ is not $1$.
\end{itemize}
\end{examp}

\subsection{Special subgroups} \label{sp-sub}

We have group functors $\Nil$ and $1+\Nil$, where the group structures are given by the sum and the product of elements in a ring, respectively. In particular, there is an embedding of group functors $(1+\Nil)\hookrightarrow\gm$.

\begin{defin}\label{defin:specialmult}
Define the following group subfunctors of $\lo^n\gm$ (see formulas~\eqref{eq:v+},~\eqref{eq:v-}, and~\eqref{eq:projections}):
$$
(\lo^n\gm)^{0}:=\Ker(\nu)=\gm\times\vv_{n,+}\times\vv_{n,-}\,,
$$
$$
(\lo^n\gm)^{\sharp}:=(1+\Nil)\times\vv_{n,+}\times\vv_{n,-}\,.
$$
\end{defin}

Evidently, $(\lo^n\gm)^{\sharp}$ is embedded into $(\lo^n\gm)^0$, its intersection with $\gm$ is the group functor $1+\Nil$,  and we have that $(\lo^n\gm)^0=\gm\cdot(\lo^n\gm)^{\sharp}$. By Proposition~\ref{prop-decomp}, we have decompositions
\begin{equation}\label{eq:decommult}
\uz^n\times \big(\gm\cdot(\lo^n\gm)^{\sharp}\big)=\uz^n\times(\lo^n\gm)^0\simeq\lo^n\gm\,.
\end{equation}

Here is a more explicit description of the group functors $(\lo^n\gm)^{0}$ and $(\lo^n\gm)^{\sharp}$.

\begin{lemma}  \label{expl-form}
Let $A$ be a ring.
\begin{itemize}
\item[(i)]
For any element $f\in(\lo^n\gm)^{0}(A)$,  the constant term of~$f$ is invertible and its class in $A^*/\big(1+\Nil(A)\big)$ is equal to the class of $\pi(f)\in A^*$ (cf. Example~\ref{examp:projections}(iii)).
\item[(ii)]
There is an equality of group functors
$$
(\lo^n\gm)^{0}(A)=\Big\{\mbox{$1+\sum\limits_{l\in\z^n} a_{l}t^{l}\;\mid\;
{ 1+a_0\in A^*,\,\sum\limits_{l>0} a_{l}t^{l} \in \LL^n(A),}\,\sum\limits_{l< 0} a_lt^{l} \in \Nil\big(\LL^n (A)\big)$}\Big\}\,.
$$
\item[(iii)]
There is an equality of group functors
$$
(\lo^n\gm)^{\sharp}(A)=\Big\{\mbox{$1+\sum\limits_{l\in\z^n} a_{l}t^{l}\;\mid\;
{ a_0\in \Nil(A),\,\sum\limits_{l > 0} a_{l}t^{l} \in \LL^n(A),}\,\sum\limits_{l<0} a_lt^{l} \in \Nil\big(\LL^n (A)\big)$}\Big\}\,.
$$
\end{itemize}
\end{lemma}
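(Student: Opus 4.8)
The plan is to prove the three items by unwinding the definitions via the decomposition~\eqref{eq:decommult} together with the explicit descriptions of $\vv_{n,+}$, $\vv_{n,-}$ in~\eqref{eq:v+},~\eqref{eq:v-} and of $(\lo^n\gm)^0$, $(\lo^n\gm)^{\sharp}$ in Definition~\ref{defin:specialmult}. The only subtlety is that the projection $\pi$ does not in general pick out the constant term of an element (Example~\ref{examp:projections}(iii)), so the constant-term statements must be extracted from the factorization, not read off directly. I would treat item~(i) first, then deduce (ii) and (iii) as consequences, since once (i) is available the inclusions ``$\subseteq$'' and ``$\supseteq$'' in (ii), (iii) become routine bookkeeping with nilpotents.

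For item~(i): given $f\in(\lo^n\gm)^0(A)=\gm\times\vv_{n,+}\times\vv_{n,-}$, write $f=c\cdot u_+\cdot u_-$ with $c\in A^*$, $u_+\in\vv_{n,+}(A)$, $u_-\in\vv_{n,-}(A)$. Each of $u_+$ and $u_-$ has constant term $1$ by~\eqref{eq:v+},~\eqref{eq:v-}. Hence the constant term $a_0+1$ of $f$ equals $c\cdot(1+\alpha)$ where $\alpha$ is a sum of products of coefficients of $u_+$ and $u_-$; since all negative-index coefficients of $u_-$ are nilpotent (and, writing things out, only finitely many of them contribute to the coefficient of $t^0$, by the shape of $\z^n_\lambda$), $\alpha\in\Nil(A)$, so the constant term of $f$ is $c\cdot(1+\alpha)\in A^*$ with class $c\bmod(1+\Nil(A))$. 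On the other hand, by Proposition~\ref{prop-decomp} the projection $\pi$ sends $f$ to $c$, so $\pi(f)=c$ has the same class in $A^*/(1+\Nil(A))$ — this gives~(i). The one point needing care here is the claim that only finitely many terms contribute to a fixed coefficient of the product $u_+\cdot u_-$; this follows from the description of the index sets $\z^n_\lambda$ in Subsection~\ref{subsect:Laurent} (together with Remark~\ref{rmk:nilpseries} to see nilpotence propagates through coefficients), and I would state it as a short lemma or simply invoke the well-definedness of the product in $\LL^n(A)$.

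For items~(ii) and~(iii): by~\eqref{eq:decommult}, $f\in(\lo^n\gm)^0(A)$ iff $\nu(f)=0$, i.e. iff $f$ lies in the image of $\gm\times\vv_{n,+}\times\vv_{n,-}$, which by the computation above forces the shape $f=1+\sum_{l}a_lt^l$ with $\sum_{l>0}a_lt^l\in\LL^n(A)$, $\sum_{l<0}a_lt^l\in\Nil(\LL^n(A))$, and $1+a_0\in A^*$ by~(i). Conversely, given $f$ of that form, one writes $f=c\cdot\big(1+\text{(nilpotent negative part scaled)}\big)\cdot\big(\text{positive part}\big)$ after factoring out the unit $c:=1+a_0$ up to a $(1+\Nil)$-correction, and checks the two factors land in $\vv_{n,-}$ and $\vv_{n,+}$ respectively; the relevant nilpotence is preserved since $\Nil(\LL^n(A))$ is an ideal and inversion of $1+\text{nilpotent}$ stays $1+\text{nilpotent}$. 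This proves~(ii). For~(iii) one repeats the same argument restricting the $\gm$-factor to $1+\Nil$, using $(\lo^n\gm)^\sharp=(1+\Nil)\times\vv_{n,+}\times\vv_{n,-}$: the condition ``$1+a_0\in A^*$'' is upgraded to ``$a_0\in\Nil(A)$'' precisely because the scalar factor $c$ is now required to lie in $1+\Nil(A)$, and (i) identifies $c\bmod(1+\Nil(A))$ with the class of $\pi(f)$, which is trivial on $(\lo^n\gm)^\sharp$.

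The main obstacle I anticipate is purely organizational rather than conceptual: carefully matching the constant term of $f$ with the scalar coming out of the decomposition~\eqref{eq:decomCC}, i.e. controlling the nilpotent discrepancy $\alpha$ above and making sure the factorization one writes by hand in the ``$\supseteq$'' directions of (ii), (iii) actually has its factors in $\vv_{n,+}$, $\vv_{n,-}$ with the correct convergence/nilpotence conditions. Everything else is a direct translation through Proposition~\ref{prop-decomp} and Definition~\ref{defin:specialmult}.
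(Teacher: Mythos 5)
Your overall route is the paper's: everything is read off from the decomposition of Proposition~\ref{prop-decomp}, and your item~(i) (open brackets in $f=c\,u_+u_-$ and observe that the constant term is $c(1+\alpha)$ with $\alpha$ nilpotent) is exactly the paper's one-line proof. But your item~(ii) has two genuine gaps, at precisely the two points the paper's argument is built around. First, for the inclusion $(\lo^n\gm)^{0}(A)\subseteq\mathrm{RHS}$ you must show that the negative tail $\sum_{l<0}a_lt^l$ of $f$ is nilpotent \emph{as a series}, and Remark~\ref{rmk:nilpseries} only gives nilpotence of its coefficients, which is strictly weaker: the example following Remark~\ref{rmk:nilpseries} (or its negatively supported variant $t_2^{-1}\sum_{l\geqslant 1}\varepsilon_lt_1^l$ for $n=2$) has all coefficients nilpotent but is not nilpotent. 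The paper handles this with an auxiliary fact, proved by induction on $n$: a series is nilpotent if and only if its constant term, its positive part and its negative part are all nilpotent. One applies the ``only if'' direction to $c\,u_+(u_--1)\in\Nil\big(\LL^n(A)\big)$, whose negative part coincides with that of $f$. You never state this fact, and the coefficient-wise reasoning you gesture at cannot close the argument.

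Second, for the reverse inclusion your factorization $f=c\cdot\big(1+\text{nilpotent negative part}\big)\cdot\big(\text{positive part}\big)$ is false as written: the product of the two proposed factors differs from $c^{-1}f$ by the cross term $(\text{negative part})\cdot(\text{positive part})$, which is supported in all degrees, so ``checking the two factors land in $\vv_{n,-}$ and $\vv_{n,+}$'' cannot succeed --- those factors simply do not multiply to $f$, and repairing this by hand amounts to re-proving the decomposition. The paper avoids constructing a factorization: one first checks that every element of the right-hand side is invertible (e.g.\ $f=(1+a_0+P)\bigl(1+(1+a_0+P)^{-1}N\bigr)$, where $1+a_0+P$ is invertible by an easy induction on $n$ and the second factor is $1$ plus a nilpotent), and then that the whole right-hand side lies in a single coset of $(\gm\times\vv_{n,+}\times\vv_{n,-})(A)$ inside $\lo^n\gm(A)$, using that $\uz^n(A)$ is a transversal for these cosets by Proposition~\ref{prop-decomp}; since $1$ belongs to the right-hand side, that coset is the trivial one. (Equivalently, $\nu(f)=0$ because $\nu$ is insensitive to nilpotents: reduce modulo $\Nil(A)$, where $f$ visibly lies in $\gm\cdot\vv_{n,+}$.) You flag this step as merely ``organizational'', but it is where the actual argument is missing. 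With these two repairs, your deduction of (iii) from (i) and (ii) is fine and agrees with the paper.
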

\begin{proof}
$(i)$
Open brackets in decomposition~\eqref{eq:decomCC}.

$(ii)$
Note that an iterated Laurent series ${\sum\limits_{l\in\z^n}b_lt^l\in\LL^n(A)}$ is nilpotent if and only if the constant term~\mbox{$b_0\in \Nil(A)$ and the iterated Laurent series $\sum\limits_{l>0}b_lt^l$ and $\sum\limits_{l<0}b_lt^l$ are  nilpotent} (this is easily proved by induction on $n$ with the help of Remark~\ref{rmk:nilpseries}). Using this, one shows that the left hand side is contained in the right hand side. Then  one checks that the right hand side is contained in a coset of~$(\gm\times\vv_{n,+}\times\vv_{n,-})(A)$ in~$\lo^n\gm(A)$, since, by Proposition~\ref{prop-decomp}, the subgroup $\uz^n(A)$ is a transversal for the cosets of {${ (\gm\times\vv_{n,+}\times\vv_{n,-})  (A)}$}.

$(iii)$ This follows directly from items~$(i)$ and~$(ii)$.
\end{proof}

\quash{We will also use the following subfunctor in $L^n\gm$:
\begin{equation}\label{eq:bekar}
(\lo^n\gm)^{\natural}:=\vv_{n,+}\times\vv_{n,-}\,.
\end{equation}
Explicitly, for any ring $A$, we have that $(\lo^n\gm)^{\natural}(A)$ is the set of all iterated Laurent series in $(L^n\gm)^0(A)$ such that there constant term is equal to one. Note that $(\lo^n\gm)^{\natural}$ is not a group functor. We have decompositions of functors (which do not respect a group structure)
\begin{equation}\label{eq:decombekar}
\uz^n\times\gm\times(\lo^n\gm)^{\natural}\simeq\lo^n\gm\,,\\
\gm\times(\lo^n\gm)^{\natural}\simeq(\lo^n\gm)^0\,,\\
(1+\Nil)\times(\lo^n\gm)^{\natural}\simeq(\lo^n\gm)^{\sharp}\,.
\end{equation}}

\begin{defin}\label{defin:sharpaddfunctor}
Define the following group subfunctor of $\lo^n\ga$ (see~Definition~\ref{defin:addsharp} and Proposition~\ref{prop:analsharp}):
$$
(\lo^n\ga)^{\sharp}(A):=\LL^n(A)^{\sharp}=\Big\{\mbox{$\sum\limits_{l\in\z^n} a_{l}t^{l}\;\mid\;
\sum\limits_{l >0} a_{l}t^{l} \in \LL^n(A)
,\,\sum\limits_{l\leqslant 0} a_lt^{l} \in \Nil\big(\LL^n(A)\big)$}\Big\}\,.
$$
\end{defin}

Obviously, there is an isomorphism of functors $(\lo^n\gm)^{\sharp}\simeq(\lo^n\ga)^{\sharp}$, $f\mapsto f-1$, which does not respect the group structure. However over $\Q$ there is a group isomorphism, given in the following proposition.
\quash{Namely, let $\log(1+x):=\sum\limits_{i\geqslant 1}(-1)^{i+1}\frac{x^i}{i}$ and $\exp(x):=\sum\limits_{i\geqslant 0}\frac{x^i}{i!}$ be the usual power series from the ring $\Q[[x]]$.}

\begin{prop}\label{log-map}
There is a well-defined isomorphism of group functors
$$
\log\;:\; (\lo^n\gm)^{\sharp}_{\Q}\stackrel{\sim}\longrightarrow (\lo^n\ga)^{\sharp}_{\Q}
$$
whose inverse is given by $\exp$.
\end{prop}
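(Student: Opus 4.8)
The plan is to construct the maps $\log$ and $\exp$ termwise via the standard power series, check that they are well-defined morphisms of functors into the correct targets, and then verify that $\log$ is a group homomorphism and that $\exp$ is its two-sided inverse. Over a $\Q$-algebra $A$, for $f\in(\lo^n\gm)^{\sharp}(A)$ write $f=1+g$ with $g\in\LL^n(A)^{\sharp}$ by the isomorphism $f\mapsto f-1$ noted just above; by Definition~\ref{defin:addsharp} the sequence $\{g^i\}$ tends to zero in $\LL^n(A)$, so the series $\log(1+g)=\sum_{i\geqslant 1}(-1)^{i+1}g^i/i$ converges in $\LL^n(A)$ by Lemma~\ref{lemma:top}$(ii)$ (the base of neighborhoods of zero consists of subgroups). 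Using Proposition~\ref{prop:analsharp}, or rather the explicit description in Lemma~\ref{expl-form}$(iii)$, one checks that $\log(f)$ again lies in $(\lo^n\ga)^{\sharp}(A)$: the positive-degree part is an arbitrary element of $\LL^n(A)$, while the degree-$\leqslant 0$ part of $g$ is nilpotent, hence so is that of each $g^i$ and of the (convergent) sum, since $\Nil(\LL^n(A))$ is closed under the relevant operations by Remark~\ref{rmk:nilpseries}. Symmetrically, $\exp(h)=\sum_{i\geqslant 0}h^i/i!$ converges for $h\in\LL^n(A)^{\sharp}$ and $\exp(h)-1\in\LL^n(A)^{\sharp}$, so $\exp$ lands in $(\lo^n\gm)^{\sharp}(A)$. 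Functoriality in $A$ is immediate since both maps are given by the same universal formal power series and ring homomorphisms are continuous for the respective topologies.

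The next step is to verify the two identities $\log(fg)=\log(f)+\log(g)$ for $f,g\in(\lo^n\gm)^{\sharp}(A)$ and $\exp(\log(f))=f$, $\log(\exp(h))=h$. The clean way is to invoke Lemma~\ref{lemma:convdom}: the additivity of $\log$ on a product follows from the classical formal power series identity $\log\big((1+x)(1+y)\big)=\log(1+x)+\log(1+y)$ in $\Q[[x,y]]$, applied with the two iterated Laurent series $g_1=f-1$ and $g_2=g-1$, both of which lie in $\LL^n(A)^{\sharp}$; one needs the convergence/reordering facts of Remark~\ref{rmk:multtop} to identify the value of the formally composed series with the actual sum, which is exactly what Lemma~\ref{lemma:convdom} provides. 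The mutual inverseness of $\exp$ and $\log$ follows the same way from the formal identities $\exp(\log(1+x))=1+x$ and $\log(\exp(x))=x$ in $\Q[[x]]$, again transported to $\LL^n(A)^{\sharp}$ via Lemma~\ref{lemma:convdom}. That these formal identities hold in $\Q[[x]]$ (resp.\ $\Q[[x,y]]$) is standard and requires no argument here.

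The only genuine subtlety, and the place I would be most careful, is the bookkeeping that all these series actually converge in the rather unusual topology of $\LL^n(A)$ and that the targets are respected. Concretely: one must know that $\LL^n(A)^{\sharp}$ is stable under the formal substitutions appearing in Lemma~\ref{lemma:convdom} (it is, being a subgroup containing $\Nil(\LL^n(A))$ by Proposition~\ref{prop:analsharp} and its proof), and that the degree-splitting ``positive part / non-positive part'' used to describe $(\lo^n\gm)^{\sharp}$ and $(\lo^n\ga)^{\sharp}$ behaves well under multiplication and infinite summation — this is precisely the content of Remark~\ref{rmk:nilpseries} together with Lemma~\ref{lemma:seriesconvdom}$(ii)$. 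Once these containments are in place, everything else is a transcription of classical formal-power-series identities, and the proposition follows. I would therefore organize the write-up as: (1) convergence and well-definedness of $\log$ and $\exp$ with correct targets, citing Definition~\ref{defin:addsharp}, Lemma~\ref{lemma:top}, Proposition~\ref{prop:analsharp}; (2) the homomorphism property and inverseness, citing Lemma~\ref{lemma:convdom} and Remark~\ref{rmk:multtop}; (3) a one-line remark on functoriality.
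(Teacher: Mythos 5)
Your proposal takes essentially the same route as the paper, whose entire proof consists of invoking Lemma~\ref{lemma:convdom} together with the formal identities $\exp(\log(1+x))=1+x$, $\log(\exp(x))=x$, $\log\big((1+x)(1+y)\big)=\log(1+x)+\log(1+y)$, $\exp(x+y)=\exp(x)\exp(y)$ in $\Q[[x]]$ and $\Q[[x,y]]$, the convergence and target bookkeeping you add being left implicit there. One small caution: your inference that the degree-$\leqslant 0$ part of the convergent sum is nilpotent because ``$\Nil(\LL^n(A))$ is closed under the relevant operations'' is not literally valid (the paper's example after Remark~\ref{rmk:nilpseries} shows a convergent sum of nilpotents need not be nilpotent); the conclusion is nevertheless true and is fixed in one line, e.g.\ by writing $f=(1+g_+)(1+h)$ with $h\in\Nil\big(\LL^n(A)\big)$ so that the series for $\log(1+h)$ terminates, or by checking directly from Definition~\ref{defin:addsharp} via Lemma~\ref{lemma:convdom} that the powers $\log(f)^i$ tend to zero.
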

\begin{proof}
This follows directly from Lemma~\ref{lemma:convdom}, because there are equalities of power series
$$
\exp\big(\log(1+x)\big)=1+x\,,\qquad \log\big(\exp(x)\big)=x\,,
$$
$$
\log(1+x+y+xy)=\log(1+x)+\log(1+y)\,,\qquad\exp(x+y)=\exp(x)\cdot\exp(y)\,.
$$
\end{proof}

\section{Auxiliary results on ind-affine schemes}
\label{representab}

Let $R$ be a ring.

\subsection{Ind-schemes}\label{subsect:indschemes}

Recall some general notions related to ind-schemes. Given a category $\Cc$, one has the category of ind-objects in $\Cc$,~\cite{GV},~\cite{AM}. Explicitly, an ind-object in $\Cc$ is given by a directed partially ordered set $I$, a collection of objects~$C_i$ in~$\Cc$ for all $i\in I$, and a collection of compatible morphisms $C_i\to C_j$ for all $i\leqslant j$. Such an ind-object is denoted by $\mbox{``$\varinjlim\limits_{i\in I}$''}C_i$.

\medskip

Ind-objects in the category of schemes are called {\it ind-schemes}. An {\it ind-affine scheme} is an ind-scheme $X$ which is isomorphic to $\mbox{``$\varinjlim\limits_{i\in I}$''}X_i$ with all $X_i$, $i\in I$, being affine schemes. Similarly, we have the notions of ind-schemes and ind-affine schemes over $R$.

Given an ind-scheme $X$, we denote the functor that sends a ring $A$ to the set ${X(A):=\Hom\big(\Spec(A),X\big)}$ also by $X$. Note that $X(A)$ equals $\varinjlim\limits_{i\in I}X_i(A)$ for an ind-scheme $X=\mbox{``$\varinjlim\limits_{i\in I}$''}X_i$. A morphism between ind-schemes is the same as a morphism between the corresponding functors on the category of rings.

\medskip

For example, a discussion after Definition~\ref{defin:uz} implies that the group functor $\uz$ is represented by the ind-affine scheme $\mbox{``$\varinjlim\limits_{N\geqslant 0}$''}\,\coprod\limits_{-N\leqslant i\leqslant N}\Spec(\Z)$\,.

\medskip

An {\it ind-closed subscheme of a scheme} $V$ is an ind-object in the category of closed subschemes of $V$ with morphisms being embeddings between closed subschemes. Explicitly, an ind-closed subscheme of $V$ is given by a directed partially ordered set $I$ and a collection of closed subschemes $X_i\subset V$ for all $i\in I$ such that $X_i\subset X_j$ if~$i\leqslant j$. In particular, an ind-closed subscheme in an affine scheme is an ind-affine scheme.

An {\it ind-closed subscheme of an ind-scheme} $Y\simeq \mbox{``$\varinjlim\limits_{j\in J}$''}Y_j$ is an ind-scheme ${X\simeq \mbox{``$\varinjlim\limits_{i\in I}$''}X_i}$ together with a morphism of ind-schemes $X\to Y$ such that for any $i\in I$, there is $j\in J$ and there is a commutative diagram
$$
\begin{CD}
X_i  @>>>  Y_j  \\
 @VV V @VVV \\
X @>>> Y
\end{CD}
$$
with the top horizontal arrow being a closed embedding of schemes and the vertical arrows being the natural morphisms.

In particular, if $Y$ is an ind-closed subscheme of a scheme $V$, then an ind-closed subscheme of $Y$ is the same as an ind-closed subscheme $X\simeq \mbox{``$\varinjlim\limits_{i\in I}$''}X_i$ of $V$ such that for any $i\in I$, there is $j\in J$ that satisfies $X_i\subset Y_j$. Equivalently, we have a morphism $X\to Y$ in the category of ind-closed subschemes of $V$.

\medskip

Given two ind-closed subschemes ${X\simeq \mbox{``$\varinjlim\limits_{i\in I}$''}X_i}$ and ${Y\simeq\mbox{``$\varinjlim\limits_{j\in J}$''}Y_j}$ of a scheme $V$, {\it the intersection} $X\cap Y$ is the ind-closed subscheme $\mbox{``$\varinjlim\limits_{(i,j)\in I\times J}$''}X_i\cap Y_j$ of $V$. Here we take a schematic intersection of two closed subschemes of $V$, that is, the intersection is given by the ideal sheaf generated by the ideal sheaves of two closed subschemes. More generally, one can also consider the intersection of two ind-closed subschemes in an ind-scheme, though we will not use it.

\medskip

Given an ind-scheme $X$, the ring of {\it regular functions} on $X$ is defined by the formula $\OO(X):=\Hom(X,\Ab^1)$. In particular, $V=\Spec\big(\OO(V)\big)$ for an affine scheme $V$. Explicitly, for an ind-scheme $X\simeq \mbox{``$\varinjlim\limits_{i\in I}$''}X_i$, there is an isomorphism \mbox{$\OO(X)\simeq\varprojlim\limits_{i\in I}\OO(X_i)$}. This isomorphism is well-behaved with respect to morphisms of ind-schemes. A morphism of ind-schemes $\alpha\colon X\to Y$ induces a homomorphism of rings $\alpha^*\colon\OO(Y)\to\OO(X)$. For an ind-scheme $X$ over $R$, the ring $\OO(X)$ is naturally an $R$-algebra.

\medskip

One represents regular functions on a closed subscheme of an affine space by polynomials in coordinates. This is no more true for an ind-closed subscheme of an affine space. In this case, it is natural to represent regular functions by power series in coordinates. For example, for the ind-closed subscheme $X=\mbox{``$\varinjlim\limits_{d\in \N}$''}\Spec\big(\Z[x]/(x^d)\big)$ of $\Ab^1=\Spec\big(\Z[x]\big)$, we have an isomorphism $\OO(X)\simeq \Z[[x]]$. In Subsection~\ref{subsect:thickindcone}, we generalize this for a certain class of ind-closed subschemes of ind-affine spaces.

\subsection{Algebraic convergence of power series}\label{subsect:convalg}

Let us introduce some notation concerning polynomials and power series in infinitely many variables. Let $M$ be a (possibly, infinite) set. By~$R[M]$ denote the algebra of polynomials over $R$ in formal variables $x_m$ that correspond bijectively to elements $m\in M$. We also denote this algebra by \mbox{$R[x_m;\,m\in M]$} if needed to specify the formal variables. Note that the set of monomials in $x_m$, $m\in M$, is in bijection with the $d$-th symmetric power $\Sym^d(M)$ of the set $M$, that is, with the set of unordered $d$-tuples of elements in~$M$. Thus there is an isomorphism
$$
\mbox{$R[M]\simeq \bigoplus\limits_{d\geqslant 0}R^{\oplus\Sym^d(M)}$}\,,
$$
where for a set $P$, by $R^{\oplus P}$ we denote the group of all functions with finite support from~$P$ to~$R$.

We say that the affine scheme $\Ab^M_R:=\Spec\big(R[M]\big)$ is an {\it affine space} over $R$. Thus the formal variables~$x_m$,~$m\in M$, are interpreted as coordinates on the affine space $\Ab^M_R$. For short, we denote $\Ab^M_R$ as $\Ab^M$ if it is clear over which ring the affine space is considered. In what follows, we consider affine spaces over $R$. Given a subset \mbox{$M'\subset M$}, we have a closed embedding $\Ab^{M'}\subset \Ab^M$ such that the corresponding epimorphism \mbox{$R[M]\to R[M']$} is identical on $M'$ and vanishes on $M\smallsetminus M'$. Let \mbox{$(M)\subset R[M]$} denote the ideal generated by all $x_m$, $m\in M$.

\begin{defin}\label{defin:powerseries}
An algebra of {\it power series over $R$ in formal variables $x_m$, $m\in M$,} is defined by the formula
$$
R[[M]]:=\varprojlim\limits_{(M',d)}R[M']/(M')^d\,,
$$
where $M'$ runs over all finite subsets of $M$ and $d\in\N$.
\end{defin}

We also denote this algebra by \mbox{$R[[x_m;\,m\in M]]$}. If~$M$ is at most countable, then elements in $R[[M]]$ are countable sums of pair-wise different monomials in $x_m$, $m\in M$, with coefficients in $R$. For example, the infinite sums $\sum\limits_{m\in M}x_m$, $\sum\limits_{d\geqslant 0}x^d$, and $\sum\limits_{m\in M,\,d\geqslant 0}x_m^d$ are power series, while the infinite sum $x+2x+3x+\ldots$ is not. For a general set $M$, elements in~$R[[M]]$ are transfinite sums of such monomials. In any case, there are isomorphisms
\begin{equation}\label{eq:series}
\mbox{$R[[M]]\simeq \prod\limits_{d\geqslant 0}\varprojlim\limits_{M'}\,R^{\oplus\Sym^d(M')}\simeq
\prod\limits_{d\geqslant 0}R^{\Sym^d(M)}$}\,,
\end{equation}
where, as above, $M'$ runs over all finite subsets of $M$ and for a set $P$, by $R^{P}$ we denote the group of all functions from $P$ to $R$. Thus a power series $\varphi\in R[[M]]$ is decomposed into a countable sum of {\it homogenous power series}: \mbox{$\varphi=\sum\limits_{d\geqslant 0}\varphi_d$}, $\varphi_d\in R^{\Sym^d(M)}$. A {\it support of a power series $\varphi$} is the set of monomials that have a non-zero coefficient in~$\varphi$. In other words, the support is a subset of $\coprod\limits_{d\geqslant 0}\Sym^d(M)$ that corresponds to non-zero coefficients in $\varphi$ with respect to decomposition~\eqref{eq:series}.

\medskip

\begin{defin}\label{defin-conv}
\hspace{0cm}
\begin{itemize}
\item[(i)]
Let $A$ be an $R$-algebra and let $d\geqslant 0$ be a natural number. A {\it homogenous power series $\varphi_d\in R^{\Sym^d(M)}$ converges algebraically at an $A$-point} $p\in\Ab^M(A)=A^M$ if all but finitely many monomials in the support of $\varphi_d$ vanish at $p$. If this holds, then~$\varphi_d(p)$ is a well-defined element in the algebra $A$.
\item[(ii)]
An arbitrary {\it power series $\varphi\in R[[M]]$ converges algebraically at an $A$-point} $p$ as in item~(i) if all its homogenous components $\varphi_d$, $d\geqslant 0$, converge algebraically at $p$ and there is $d_0\in\N$ such that $\varphi_d(p)=0$ when $d\geqslant d_0$. If this holds, then $\varphi(p)=\sum\limits_{d<d_0}\varphi_d(p)$ is a well-defined element in the algebra~$A$.
\item[(iii)]
A power series $\varphi$ {\it converges algebraically on an ind-closed subscheme} $X\subset\Ab^M$ if $\varphi$ converges algebraically at all its $A$-points for any $R$-algebra $A$. By~$\Ac(X)$ denote the set of all power series in $R[[M]]$ that converge algebraically on~$X$.
\end{itemize}
\end{defin}

\begin{examp}\label{examp:convalg}
\hspace{0cm}
\begin{itemize}
\item[(o)]
Any polynomial converges algebraically at any point.
\item[(i)]
Any power series converges algebraically at a point with finitely many non-zero coordinates all of which are nilpotent.
\item[(ii)]
The series $\sum\limits_{m\in M}x_m$ converges algebraically at a point if and only if all but finitely many coordinates of the point are equal to zero.
\item[(iii)]
The series $\sum\limits_{d\geqslant 0}(x_1^d-x_2^d)$ converges algebraically (and, actually, vanishes) at the point \mbox{$(1,1)\in\Ab^2$}, while the series $\sum\limits_{d\geqslant 0}(-1)^dx_1^d$ does not.
\item[(iv)]
For a closed subscheme $V\subset \Ab^M$, a power series converges algebraically on $V$ if and only if it converges algebraically at the point ${p\in \Ab^M\big(\OO(V)\big)}$ that corresponds to the restriction homomorphism ${\OO(\Ab^M)\to \OO(V)}$.
\item[(v)]
For a ind-closed subscheme $X=\mbox{``$\varinjlim\limits_{i\in I}$''}X_i$ of $\Ab^M$, a power series converges algebraically on $X$ if and only if it converges algebraically at all points ${p_i\in \Ab^M\big(\OO(X_i)\big)}$ that correspond to the restriction homomorphisms ${\OO(\Ab^M)\to \OO(X_i)}$, $i\in I$.
\end{itemize}
\end{examp}

Given an ind-closed subscheme $X\subset\Ab^M$, the set $\Ac(X)$ is an $R$-algebra and we have a canonical homomorphism of $R$-algebras
\begin{equation}\label{eq:evalmap}
\Ac(X)\lrto\OO(X)
\end{equation}
given by evaluation of algebraically convergent series at points of $X$. This homomorphism being an isomorphism means that any regular function on $X$ is uniquely expanded as an algebraically convergent power series in $x_m$, $m\in M$. For example, this holds for the affine space $\Ab^M$.

\subsection{Ind-affine spaces}\label{subsection:indaffspace}

As above, we continue considering affine spaces over $R$. We shall work with ind-closed subchemes not only in affine spaces but also in ind-affine spaces. For this goal, we need to introduce one more notion. First, recall that an ind-set is an ind-object in the category of sets.

\begin{defin}\label{defin:strictset}
A {\it strict ind-set} is an ind-set $\mbox{``$\varinjlim\limits_{i\in I}$''}E_i$ such that all structure maps $E_i\to E_j$, $i\leqslant j$, are injective. Given a strict ind-set $E=\mbox{``$\varinjlim\limits_{i\in I}$''}E_i$, by $\overline{E}$ denote the set~$\bigcup\limits_{i\in I}E_i$ and by $E_f$ denote the strict ind-set formed by all finite subsets of~$\overline{E}$.
\end{defin}

In particular, any set is also a strict ind-set.

\medskip

In what follows, $E=\mbox{``$\varinjlim\limits_{i\in I}$''}E_i$ is a strict ind-set. We call the ind-affine scheme ${\Ab^E:=\mbox{``$\varinjlim\limits_{i\in I}$''}\Ab^{E_i}}$ an {\it ind-affine space}. Clearly, $\Ab^E$ is an ind-closed subscheme of the affine space $\Ab^{\overline{E}}$ and there is an isomorphism of $R$-algebras
\begin{equation}\label{eq:isomAE}
\mbox{$\OO(\Ab^E)\simeq\varprojlim\limits_{i\in I}\bigoplus\limits_{d\geqslant 0}R^{\oplus\Sym^d(E_i)}$}\,.
\end{equation}

\medskip

\begin{defin}\label{defin:hat}
Given a set $M$ and a natural number $d$, define the following closed subscheme of $\Ab^M$:
$$
\mbox{$\Ab^M_{(d)}:=\Spec\big(R[M]/(M)^{d}\big)$}\,.
$$
Define also the following ind-closed subscheme of the ind-affine space $\Ab^E$:
$$
\widehat{\Ab}^E:=\mbox{``$\varinjlim\limits_{(i,d)}$''}\Ab^{E_i}_{(d)}\,,
$$
where \mbox{$(i,d)\in I\times \N$}.
\end{defin}

In particular, for a set $M$, we have that $\widehat{\Ab}^M=\mbox{``$\varinjlim\limits_{d\in\N}$''}\Ab^{M}_{(d)}$ and there is an isomorphism of $R$-algebras
$$
\mbox{$\OO(\widehat{\Ab}^M)\simeq\prod\limits_{d\geqslant 0}R^{\oplus\Sym^d(M)}$}\,.
$$
Further, we have the equalities
\begin{equation}\label{eq:Ehat}
\widehat{\Ab}^E=\mbox{``$\varinjlim\limits_{i\in I}$''}\widehat{\Ab}^{E_i}=\mbox{``$\varinjlim\limits_{d\geqslant 0}$''}\Ab^{E}_{(d)}\,,
\end{equation}
where for each natural number $d$, we put $\Ab^E_{(d)}:=\mbox{``$\varinjlim\limits_{i\in I}$''}\Ab^{E_i}_{(d)}$, and there are isomorphisms of $R$-algebras
\begin{equation}\label{eq:functionas}
\mbox{$\OO(\widehat{\Ab}^E)\simeq\varprojlim\limits_{i\in I}\prod\limits_{d\geqslant 0}R^{\oplus\Sym^d(E_i)}\simeq
\prod\limits_{d\geqslant 0}\varprojlim\limits_{i\in I}R^{\oplus\Sym^d(E_i)}$}\,.
\end{equation}
Also, $\Ab^{E_f}$ is an ind-closed subscheme of the ind-affine scheme $\Ab^{E}$ and there is an isomorphism of $R$-algebras
\begin{equation}\label{eq:Ef}
\OO(\widehat{\Ab}^{E_f})\simeq R[[\overline{E}]]\,.
\end{equation}
Explicitly, given an $R$-algebra $A$, a point $p\in \Ab^{\overline{E}}(A)=A^{\overline{E}}$ belongs to the ind-closed subscheme $\Ab^{E_f}\subset \Ab^{\overline{E}}$ if and only if $p$ has finitely many non-zero coordinates all of which are nilpotent.

\begin{rmk}\label{rmk:exampAO}
Formulas~\eqref{eq:isomAE} and~\eqref{eq:functionas} imply that $\OO(\Ab^E)$ and $\OO(\widehat{\Ab}^E)$ are subalgebras of the algebra of power series $R[[\overline{E}]]$. Moreover, there are equalities $\Ac(\Ab^E)=\OO(\Ab^E)$ and $\Ac(\widehat{\Ab}^E)=\OO(\widehat{\Ab}^E)$, where $\Ac(\Ab^E)$ and $\Ac(\widehat{\Ab}^E)$ are defined by means of the embeddings of~$\Ab^E$ and $\widehat{\Ab}^E$ into the affine space $\Ab^{\overline{E}}$. Thus the canonical homomorphism~\eqref{eq:evalmap} is an isomorphism for the ind-closed subschemes $\Ab^E$ and $\widehat{\Ab}^E$ of the affine space $\Ab^{\overline{E}}$.
\end{rmk}

\subsection{Thick ind-cones}\label{subsect:thickindcone}

As above, we continue considering affine spaces over $R$, $M$ is a set, and $E$ is a strict ind-set.

\begin{defin}\label{defin:indcone}
A closed subscheme $V\subset \Ab^M$ is a {\it cone} if $V$ corresponds to a homogenous ideal in $R[M]=R[x_m;\,m\in M]$, where all $x_m$ have degree~$1$. An ind-closed subscheme $X\subset\Ab^E$ is an {\it ind-cone} if there is an isomorphism $X\simeq\mbox{``$\varinjlim\limits_{i\in I}$''}X_i$ such that $X_i\subset \Ab^{\overline{E}}$ is a cone for all $i\in I$.
\end{defin}

One can show that $X$ as above is a cone if and only if $X$ is invariant under homotheties, that is, for any $R$-algebra $A$ and an element $a\in A$, the subset $X(A)\subset \Ab^{\overline{E}}(A)=A^{\overline{E}}$ is preserved under multiplication of all coordinates by $a$.

\medskip

The following lemma contains a crucial property of cones, which is of utmost importance for the main results of the paper.

\begin{lemma}\label{lemma-injcone}
Let $V\subset\Ab^M$ be a cone.
\begin{itemize}
\item[(i)]
The restriction homomorphism $\theta_V\colon \OO(V)\to \OO(V\cap \widehat{\Ab}^M)$ is injective.
\item[(ii)]
The following commutative diagram is a Cartesian square:
$$
\begin{CD}
\Ac(V)\cap\OO(\widehat{\Ab}^M)@>>> \OO(\widehat{\Ab}^M) \\
@VV V @VV{{\xi}_V}V \\
\OO(V) @>{\theta_V}>> \OO(V\cap \widehat{\Ab}^M)\,,
\end{CD}
$$
where the intersection in the left upper corner is taken in $R[[M]]$, the intersection in the right bottom corner is taken among ind-closed subschemes of $\Ab^M$, and $\xi_V$ is the restriction homomorphism.
\end{itemize}
\end{lemma}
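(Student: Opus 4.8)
The plan is to unwind all rings of functions via the grading coming from the cone structure. Write $\a\subset R[M]$ for the homogeneous ideal of $V$ and $S:=R[M]/\a=\OO(V)$, so that $S=\bigoplus_{k\geqslant 0}S_k$ with $S_k=R[M]_k/\a_k$, $S_0=R$, and $S$ is generated over $R$ in degree $1$. The one structural fact I would establish at the outset is that the image of $(M)^d$ in $S$ equals $S_{\geqslant d}:=\bigoplus_{k\geqslant d}S_k$: the inclusion $\subseteq$ is a degree count, and $\supseteq$ uses that $S$ is generated in degree $1$, so any degree-$k$ monomial with $k\geqslant d$ factors through a degree-$d$ monomial. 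Hence $\OO(V\cap\Ab^M_{(d)})=R[M]/(\a+(M)^d)\simeq\bigoplus_{k<d}S_k$, and passing to $\varprojlim_d$ gives $\OO(V\cap\widehat\Ab^M)\simeq\prod_{k\geqslant 0}S_k$, under which $\theta_V$ becomes the tautological inclusion $\bigoplus_k S_k\hookrightarrow\prod_k S_k$. This proves~(i). The same computation with $\a=0$ gives $\OO(\widehat\Ab^M)\simeq\prod_{k\geqslant 0}R[M]_k$ --- crucially, each homogeneous component of an element here is an honest polynomial --- and under these identifications $\xi_V$ is the degreewise reduction $(\psi_k)_k\mapsto(\psi_k\bmod\a_k)_k$.

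For~(ii) I would first compute the fibre product $P:=\OO(\widehat\Ab^M)\times_{\OO(V\cap\widehat\Ab^M)}\OO(V)$ by hand: a pair $\big((\psi_k)_k,(s_k)_k\big)$ lies in $P$ iff $\psi_k\bmod\a_k=s_k$ for every $k$, and since $s\in\bigoplus_k S_k$ vanishes in all large degrees, this forces $\psi_k\in\a_k$ for all but finitely many $k$ and then determines $s$ from $\psi$. So $P$ is canonically the set of $\psi\in\OO(\widehat\Ab^M)$ (viewed inside $R[[M]]$ via Remark~\ref{rmk:exampAO}) such that $\psi_k\in\a_k$ for almost all $k$, with projection to $\OO(V)$ given by $\psi\mapsto\sum_k(\psi_k\bmod\a_k)$.

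It then remains to identify this set with $\Ac(V)\cap\OO(\widehat\Ab^M)$ and to match the two maps to $\OO(V)$. By Example~\ref{examp:convalg}(iv), $\Ac(V)$ consists of the power series converging algebraically at the universal point $p\in\Ab^M(S)$ with coordinates the images $\bar x_m\in S$. For $\psi\in\OO(\widehat\Ab^M)$ each $\psi_k$ has finite support, so it converges at $p$ automatically and $\psi_k(p)$ is just the image of $\psi_k$ in $S$, namely $\psi_k\bmod\a_k\in S_k$; hence $\psi$ converges algebraically on $V$ exactly when $\psi_k\bmod\a_k=0$ for almost all $k$, which is precisely the condition cutting out $P$, and in that case the evaluation homomorphism~\eqref{eq:evalmap} sends $\psi$ to $\sum_k\psi_k(p)=\sum_k(\psi_k\bmod\a_k)\in\OO(V)$, matching the projection $P\to\OO(V)$. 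Commutativity of the square is immediate from the explicit descriptions of $\theta_V$ and $\xi_V$, so the square is Cartesian.

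I do not anticipate a real difficulty: the lemma is essentially a dictionary between the several incarnations of these function rings (inverse limits of truncations versus power series), and the proof is the corresponding dictionary-check. The single point that genuinely must be watched is that elements of $\OO(\widehat\Ab^M)$ have polynomial homogeneous components --- this is what makes algebraic convergence on the \emph{cone} $V$ automatic degree by degree, so that the only surviving constraint is eventual vanishing modulo $\a$; homogeneity of $\a$ (equivalently, that $V$ is a cone rather than an arbitrary closed subscheme) is used repeatedly and is what makes the clean description $\OO(V\cap\widehat\Ab^M)\simeq\prod_k S_k$ available.
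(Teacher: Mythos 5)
Your proof is correct and follows essentially the same route as the paper's: grade $\OO(V)$ by the cone structure, identify $\OO(V\cap\widehat{\Ab}^M)$ with the product of the graded pieces so that $\theta_V$ becomes the inclusion of the direct sum into the product (this is part~(i)), and then use the fact that homogeneous components of elements of $\OO(\widehat{\Ab}^M)$ are honest polynomials to reduce algebraic convergence on $V$ to eventual vanishing of the components evaluated at the universal point of $V$, which by part~(i) is exactly the condition that $\xi_V(\varphi)$ lies in the image of $\theta_V$. The only inessential slip is the claim $S_0=R$: a homogeneous ideal may have a nonzero degree-zero part, so $S_0$ is in general a quotient of $R$, but this is nowhere used in your argument.
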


\begin{rmk}
Explicitly, Lemma~\ref{lemma-injcone}$(ii)$ claims that a regular function $\varphi\in \OO(\widehat{\Ab}^M)$ viewed as a power series in $R[[M]]$ (see Remark~\ref{rmk:exampAO}) converges algebraically on~$V$ if and only if the restriction of~$\varphi$ to $V\cap \widehat{\Ab}^M$ extends to a regular function on $V$. Also, by Lemma~\ref{lemma-injcone}$(i)$, the latter regular function on $V$ is unique if it exists.
\end{rmk}

\begin{proof}[Proof of Lemma~\ref{lemma-injcone}]
$(i)$ Let $I_V\subset R[M]$ be the ideal that defines the closed subscheme $V\subset {\Ab}^M$. Since~$V$ is a cone, the ring $\OO(V) = R[M]/I_V$ is graded, that is, we have a decomposition ${\OO(V)\simeq\bigoplus\limits_{d\geqslant 0}A_d}$. Therefore the ring
$$
\OO(V\cap \widehat{\Ab}^M)\simeq \varprojlim_{d\in\N}\OO(V\cap\Ab^M_{(d)})\simeq
\varprojlim_{d\in\N} R[M]/ \big(I_V + (M)^d\big)
$$
is isomorphic to $\prod\limits_{d\geqslant 0}A_d$ and the homomorphism $\theta_V$ is nothing but the natural embedding \mbox{$\bigoplus\limits_{d\geqslant 0}A_d\hookrightarrow \prod\limits_{d\geqslant 0}A_d$}.

$(ii)$ Consider a series $\varphi\in\OO(\widehat{\Ab}^M)\subset R[[M]]$. Let $p\in\Ab^M\big(\OO(V)\big)$ be the point that corresponds to the restriction homomorphism $\OO(\Ab^M)\to \OO(V)$. By Example~\ref{examp:convalg}(iv), $\varphi$ converges algebraically on $V$ if and only if it converges algebraically at $p$. We see that each homogenous component $\varphi_d\in R^{\oplus\Sym^d(M)}$ of $\varphi$ converges algebraically at $p$. Hence~$\varphi$ converges algebraically at $p$ if and only if \mbox{$\varphi_d(p)\in A_d$} equals zero for sufficiently large~$d$. By item~$(i)$, the latter is equivalent to the fact that $\xi_V(\varphi)=\varphi(p) \in \prod\limits_{d\geqslant 0}A_d $ is in the image of the ring $\OO(V)\simeq\bigoplus\limits_{d\geqslant 0}A_d$ under the map $\theta_V$. This proves the lemma.
\end{proof}

\medskip

\begin{defin}\label{defin:dset}
An ind-closed subscheme $X\subset\Ab^E$ is {\it thick} if $X$ contains the ind-closed subscheme~$\widehat{\Ab}^E\subset\Ab^E$ (see Definition~\ref{defin:hat}).
\end{defin}

In particular,~$\Ab^{E_f}$ is thick in $\Ab^E$ if and only if $E_f\simeq E$.

\medskip

Here is the main property of thick ind-cones, which generalizes Remark~\ref{rmk:exampAO}.

\begin{prop}\label{prop-injectiveseries}
Let $X\subset\Ab^E$ be a thick ind-cone. Then the canonical homomorphism \mbox{$\Ac(X)\to\OO(X)$} is an isomorphism (see formula~\eqref{eq:evalmap}), where~$\Ac(X)$ is defined by means of the embedding of $X$ into the affine space $\Ab^{\overline{E}}$.
\end{prop}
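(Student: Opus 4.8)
The plan is to establish injectivity and surjectivity of the evaluation homomorphism $\Ac(X)\to\OO(X)$ of~\eqref{eq:evalmap} separately, reducing the hard direction to the case of an affine cone handled in Lemma~\ref{lemma-injcone}. Write $X\simeq\mbox{``$\varinjlim\limits_{i\in I}$''}X_i$ with each $X_i\subset\Ab^{\overline{E}}$ a cone. Since $X$ is an ind-closed subscheme of $\Ab^E$, for every $i$ there is an index $k=k(i)$ with $X_i\subset\Ab^{E_k}\subset\Ab^{\overline{E}}$; as $X_i$ is cut out by a homogeneous ideal, it is in fact a cone in the affine space $\Ab^{E_k}$, and this is the space to which Lemma~\ref{lemma-injcone} will be applied.

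For injectivity I would use thickness directly: since $\widehat{\Ab}^E\subset X$, restriction along this ind-closed embedding gives a homomorphism $\OO(X)\to\OO(\widehat{\Ab}^E)$, and by functoriality of evaluation the composite $\Ac(X)\to\OO(X)\to\OO(\widehat{\Ab}^E)$ is the evaluation map $\Ac(\widehat{\Ab}^E)\to\OO(\widehat{\Ab}^E)$ restricted to the subalgebra $\Ac(X)\subset\Ac(\widehat{\Ab}^E)\subset R[[\overline{E}]]$. By Remark~\ref{rmk:exampAO} this evaluation map is an isomorphism, hence injective, hence so is $\Ac(X)\to\OO(X)$.

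For surjectivity, take $\psi\in\OO(X)=\varprojlim_{i}\OO(X_i)$ and let $\varphi\in\OO(\widehat{\Ab}^E)\subset R[[\overline{E}]]$ be its image under the restriction $\OO(X)\to\OO(\widehat{\Ab}^E)$; I claim $\varphi\in\Ac(X)$ and that $\varphi$ evaluates to $\psi$. Fix $i$, put $k=k(i)$, let $\varphi_i\in\OO(\widehat{\Ab}^{E_k})\subset R[[E_k]]$ be the restriction of $\varphi$ to $\widehat{\Ab}^{E_k}$, and let $\psi_i\in\OO(X_i)$ be the $i$-th component of $\psi$. Because all the maps in sight come from restricting the single element $\psi$, one checks that $\xi_{X_i}(\varphi_i)=\theta_{X_i}(\psi_i)$ in $\OO(X_i\cap\widehat{\Ab}^{E_k})$ --- both equal the restriction of $\psi$ to the schematic intersection $X_i\cap\widehat{\Ab}^{E_k}$. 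The Cartesian square of Lemma~\ref{lemma-injcone}(ii), applied to the cone $X_i\subset\Ab^{E_k}$, then places $\varphi_i$ in $\Ac(X_i)\cap\OO(\widehat{\Ab}^{E_k})$ and identifies its evaluation on $X_i$ with $\psi_i$. Finally, every $A$-point of $X$ factors through some $X_i$, hence through $\Ab^{E_k}$, so its coordinates indexed by $\overline{E}\smallsetminus E_k$ vanish; thus every monomial of $\varphi$ involving such a coordinate vanishes at the point, the purely-$E_k$ part of $\varphi$ is exactly $\varphi_i$, and algebraic convergence of $\varphi_i$ at the point yields algebraic convergence of $\varphi$ there with the same value. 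By Example~\ref{examp:convalg}(v) this gives $\varphi\in\Ac(X)$, and the same computation of values shows that $\varphi$ evaluates to $(\psi_i)_i=\psi$.

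The main obstacle is the surjectivity step, and more precisely the bookkeeping of affine spaces in it: one cannot simply work inside the ambient $\Ab^{\overline{E}}$, because a homogeneous component of a function on $\widehat{\Ab}^E$ may have infinite support and so $\OO(\widehat{\Ab}^E)$ need not embed into $\OO(\widehat{\Ab}^{\overline{E}})$; one must first descend to the smaller affine spaces $\Ab^{E_k}$ in which the cones $X_i$ actually live before invoking Lemma~\ref{lemma-injcone}, and then verify that algebraic convergence on each $X_i$ propagates to the colimit $X$. The rest is a routine diagram chase through Lemma~\ref{lemma-injcone}, Remark~\ref{rmk:exampAO}, and the identifications of Subsection~\ref{subsection:indaffspace}.
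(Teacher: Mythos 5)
Your proof is correct and takes essentially the same route as the paper's: reduce to cones $X_i$ sitting inside coordinate subspaces $\Ab^{E_{k(i)}}$, apply the Cartesian square of Lemma~\ref{lemma-injcone} to each of them, and use thickness through $\widehat{\Ab}^E$ (your injectivity step is exactly the observation recorded in Remark~\ref{rmk:inverse}). The only difference is organizational: the paper passes to the inverse limit of the Cartesian squares and concludes formally, while you run the same argument element-by-element, which has the merit of making explicit the convergence-propagation step (monomials involving variables outside $E_{k(i)}$ vanish at points of $X_i$) that the paper absorbs into the asserted identification $\varprojlim_i\bigl(\Ac(X_i)\cap\OO(\widehat{\Ab}^{E_i})\bigr)\simeq\Ac(X)\cap\OO(\widehat{\Ab}^E)$.
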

\begin{proof}
We may assume that $E\simeq \mbox{``$\varinjlim\limits_{i\in I}$''}E_i$, $X\simeq\mbox{``$\varinjlim\limits_{i\in I}$''}X_i$, and $X_i$ is a cone in the affine space $\Ab^{E_i}$ for any~$i\in I$. For each $i\in I$, apply Lemma~\ref{lemma-injcone} to the cone $X_i\subset\Ab^{E_i}$ and get a Cartesian square
$$
\begin{CD}
\Ac(X_i)\cap\OO(\widehat{\Ab}^{E_i})@>>> \OO(\widehat{\Ab}^{E_i}) \\
@VV V @VVV \\
\OO(X_i) @>>> \OO(X_i\cap \widehat{\Ab}^{E_i})\,.
\end{CD}
$$
By formula~\eqref{eq:Ehat}, we have the isomorphisms
$$
\varprojlim\limits_{i\in I}\Ac(X_i)\cap \OO(\widehat{\Ab}^{E_i})\simeq \Ac(X)\cap \OO(\widehat{\Ab}^E)\,,\qquad
\varprojlim\limits_{i\in I}\OO(\widehat{\Ab}^{E_i})\simeq\OO(\widehat{\Ab}^E)\,.
$$
Evidently, $\mbox{``$\varinjlim\limits_{i\in I}$''}X_i\cap \widehat{\Ab}^{E_i}\simeq X\cap \widehat{\Ab}^E$. Thus,
passing to the inverse limit over $i\in I$ and using that inverse limits are left exact, we obtain a Cartesian square
$$
\begin{CD}
\Ac(X)\cap\OO(\widehat{\Ab}^E)@>>> \OO(\widehat{\Ab}^E) \\
@VV V @VVV \\
\OO(X) @>>> \OO(X\cap \widehat{\Ab}^E)\,.
\end{CD}
$$
Since $X$ is thick in $\Ab^E$, we have the equality $\widehat{\Ab}^E\cap X=\widehat{\Ab}^E$ and the restriction homomorphism $\OO(\widehat{\Ab}^E)\to\OO(X\cap \widehat{\Ab}^E)$ is an isomorphism. Also, since $\widehat{\Ab}^{E}\subset X$, there is an equality \mbox{$\Ac(X)\cap\Ac(\widehat{\Ab}^E)=\Ac(X)$}. Since \mbox{$\Ac(\widehat{\Ab}^E)=\OO(\widehat{\Ab}^E)$} (see Remark~\ref{rmk:exampAO}), this implies the equality \mbox{$\Ac(X)\cap\OO(\widehat{\Ab}^E)=\Ac(X)$}. All together, this proves the proposition.
\end{proof}

\begin{rmk}\label{rmk:inverse}
It is easy to check that the composition of the inverse to the isomorphism $\Ac(X)\stackrel{\sim}\lrto \OO(X)$ from Proposition~\ref{prop-injectiveseries} with the embedding $\Ac(X)\subset R[[\overline{E}]]$ coincides with the restriction of functions to the ind-closed subscheme $\widehat{\Ab}^{E_f}\subset \widehat{\Ab}^E\subset X$ (see formula~\eqref{eq:Ef}).
\end{rmk}

\medskip

We will also use the following simple properties of thick ind-cones.

\begin{lemma}\label{lemma:decomposethickcone}
Let $E_1$, $E_2$ be strict ind-sets and let $X_1\subset \Ab^{E_1}$, $X_2\subset \Ab^{E_2}$ be ind-closed subschemes of the corresponding ind-affine spaces over $R$. Put
$$
\mbox{$E:=E_1\coprod E_2\,,\qquad X:=X_1\times X_2\subset \Ab^{E_1}\times\Ab^{E_2}\simeq\Ab^{E}$}\,.
$$
Then $X\subset \Ab^E$ is thick (respectively, is an ind-cone) if and only if $X_1\subset \Ab^{E_1}$ and $X_2\subset \Ab^{E_2}$ are thick (respectively, are ind-cones).
\end{lemma}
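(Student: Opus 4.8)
The plan is to reduce both assertions to the corresponding elementary facts about closed subschemes of affine spaces, after which one takes the ind-limit over the defining directed sets. First I would unwind the hypothesis: write $E_1\simeq\mbox{``$\varinjlim\limits_{i\in I_1}$''}E_{1,i}$, $E_2\simeq\mbox{``$\varinjlim\limits_{j\in I_2}$''}E_{2,j}$, and pick presentations $X_1\simeq\mbox{``$\varinjlim\limits_{i\in I_1}$''}X_{1,i}$, $X_2\simeq\mbox{``$\varinjlim\limits_{j\in I_2}$''}X_{2,j}$ as ind-closed subschemes of the affine spaces $\Ab^{\overline{E_1}}$, $\Ab^{\overline{E_2}}$. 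Then $E=E_1\coprod E_2\simeq\mbox{``$\varinjlim\limits_{(i,j)}$''}(E_{1,i}\coprod E_{2,j})$ over the directed set $I_1\times I_2$, and $\overline{E}=\overline{E_1}\coprod\overline{E_2}$, so $\Ab^{\overline{E}}\simeq\Ab^{\overline{E_1}}\times\Ab^{\overline{E_2}}$ canonically; the product $X=X_1\times X_2$ is the ind-closed subscheme $\mbox{``$\varinjlim\limits_{(i,j)}$''}(X_{1,i}\times X_{2,j})$ of $\Ab^{\overline{E}}$, and likewise $\Ab^E\simeq\mbox{``$\varinjlim\limits_{(i,j)}$''}\Ab^{E_{1,i}\coprod E_{2,j}}$.

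For the ind-cone assertion, the key point is the scheme-level statement: if $V_1\subset\Ab^{M_1}$ and $V_2\subset\Ab^{M_2}$ are closed subschemes, then $V_1\times V_2\subset\Ab^{M_1}\times\Ab^{M_2}=\Ab^{M_1\coprod M_2}$ is a cone if and only if $V_1$ and $V_2$ are cones. The cleanest way to see this is via the homothety characterization recorded after Definition~\ref{defin:indcone}: $V_1\times V_2$ is invariant under simultaneous scaling of all coordinates by any $a\in A$ precisely when each factor is (the "only if" direction uses that one may recover scaling on $V_i$ by scaling on $V_1\times V_2$ after fixing, say, a point of the other factor; the "if" direction is immediate). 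Applying this with $V_1=X_{1,i}$, $V_2=X_{2,j}$ gives that $X_{1,i}\times X_{2,j}$ is a cone in $\Ab^{\overline E}$ for all $(i,j)$ in one direction, and in the other direction, if $X$ is an ind-cone then after passing to a cofinal presentation each $X_{1,i}$ (resp.\ $X_{2,j}$) is a cone. One small caveat: "$X$ is an ind-cone" only guarantees \emph{some} presentation by cones, so for the forward implication I would either invoke that any ind-closed subscheme which is an ind-cone admits a presentation by cones cofinal in any given presentation, or argue directly that the closure of $X_{1,i}\times X_{2,j}$ inside a cone $X_k$ forces $X_{1,i}$ and $X_{2,j}$ to be cut out by homogeneous ideals.

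For the thickness assertion, the relevant elementary fact is that $\widehat{\Ab}^{E}=\widehat{\Ab}^{E_1\coprod E_2}$ identifies, under $\Ab^{E}\simeq\Ab^{E_1}\times\Ab^{E_2}$, with $\widehat{\Ab}^{E_1}\times\widehat{\Ab}^{E_2}$; at the level of finite pieces this is just $\Ab^{M_1\coprod M_2}_{(d)}\subset\Ab^{M_1}_{(d)}\times\Ab^{M_2}_{(d)}$ together with the reverse containment $\Ab^{M_1}_{(d)}\times\Ab^{M_2}_{(d)}\subset\Ab^{M_1\coprod M_2}_{(2d)}$, so the two ind-schemes coincide after passing to the cofinal system. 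Granting this, $\widehat{\Ab}^{E}\subset X=X_1\times X_2$ is equivalent to $\widehat{\Ab}^{E_1}\times\widehat{\Ab}^{E_2}\subset X_1\times X_2$, and using that these are ind-closed subschemes of a product, the latter holds if and only if $\widehat{\Ab}^{E_1}\subset X_1$ and $\widehat{\Ab}^{E_2}\subset X_2$ (the "if" direction is formal; the "only if" direction follows by restricting the containment along the inclusions $\widehat{\Ab}^{E_1}\times\{\text{pt}\}\hookrightarrow\widehat{\Ab}^{E_1}\times\widehat{\Ab}^{E_2}$ using any point of $\widehat{\Ab}^{E_2}$, e.g.\ the origin, and similarly with the factors swapped).

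The main obstacle, such as it is, is bookkeeping rather than mathematics: one must be careful that "ind-cone" and "thick" are properties of the ind-closed subscheme, not of a chosen presentation, so each implication should be phrased in terms that are manifestly presentation-independent or reduced to a cofinal subsystem. Concretely, the only place needing a genuine (if short) argument is the forward direction of the ind-cone claim, where one shows that a factor of an ind-cone is again an ind-cone; everything else is a direct comparison of the combinatorial data $E_1\coprod E_2$ with the pair $(E_1,E_2)$ together with the functoriality of $\OO(-)$ on ind-schemes.
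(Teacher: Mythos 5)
Most of your plan coincides with the paper's proof and is correct: the identification $\widehat{\Ab}^{E}\simeq\widehat{\Ab}^{E_1}\times\widehat{\Ab}^{E_2}$ (which you justify, with more detail than the paper, via the mutual containments $\Ab^{M_1\coprod M_2}_{(d)}\subset\Ab^{M_1}_{(d)}\times\Ab^{M_2}_{(d)}\subset\Ab^{M_1\coprod M_2}_{(2d)}$) gives thickness of $X$ from thickness of the factors; products of cones are cones, giving the other ``if'' direction; and your restriction along $\widehat{\Ab}^{E_1}\times\{0\}$ is exactly the paper's move of intersecting with the coordinate subscheme $\Ab^{E_a}\subset\Ab^{E}$, which settles the ``only if'' direction for thickness.

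The genuine gap is in the ``only if'' direction for ind-cones, precisely the step you flagged. Being an ind-cone only asserts the existence of \emph{some} presentation of $X=X_1\times X_2$ by cones, and neither of your two proposed repairs is valid. For (a): there is in general no cone presentation cofinal \emph{inside a given presentation}; a given presentation of an ind-cone may contain no cones at all. For instance, $\widehat{\Ab}^1$ is also presented by the chain $\Spec\big(\Z[x]/(2x^d+x^{d+1},\,x^{2d})\big)$, $d\geqslant 2$, which is mutually cofinal with the standard cone presentation $\Spec\big(\Z[x]/(x^d)\big)$ but none of whose terms is a cone (the ideal $(2x^d+x^{d+1},x^{2d})$ is not homogeneous for $d\geqslant 2$). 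For (b): the containment $X_{1,i}\times X_{2,j}\subset C$ in a cone $C$ forces nothing, since every closed subscheme of a cone, homogeneous or not, is contained in a cone; so it cannot ``force $X_{1,i}$ and $X_{2,j}$ to be cut out by homogeneous ideals'' — and indeed they need not be: only $X_1$ and $X_2$ as ind-closed subschemes must admit some cone presentation. The correct argument, which is the paper's, is the same coordinate-slice device you already used for thickness, but applied to a chosen cone presentation $\{C_k\}$ of $X$ rather than to the product presentation: each schematic intersection $C_k\cap\Ab^{E_{a,i}}$ is again a cone, since its ideal is the homogeneous ideal of $C_k$ plus the ideal generated by the coordinates of the complementary block; and these intersections present $X_a$, because $X_a=X\cap\Ab^{E_a}$ as ind-closed subschemes (here one uses that the other factor contains the origin, which holds in the situation at hand). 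With this replacement for the forward cone implication, your argument goes through and agrees with the paper's.
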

\begin{proof}
For each $a=1,2$, we have equalities
$$
X_a=X\cap \Ab^{E_a}\,,\qquad \widehat{\Ab}^{E_a}=\widehat{\Ab}^{E}\cap \Ab^{E_a}
$$
between ind-closed subschemes of $\Ab^E$, which proves one implication. For the other implication, use the isomorphism
$\widehat{\Ab}^{E_1}\times \widehat{\Ab}^{E_2}\simeq \widehat{\Ab}^E$.
\end{proof}

The proof of the following statement is straightforward.

\begin{lemma}\label{lemma:extescalthickcone}
Let $X\subset \Ab^E$ be an ind-closed subset and $R\to S$ be a homomorphism of rings. If $X$ is thick (respectively, is an ind-cone) in $\Ab^E$, then $X_{S}$ is thick (respectively, is an ind-cone) in $\Ab_{S}^E$.
\end{lemma}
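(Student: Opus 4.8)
The plan is to push everything through base change along $R\to S$, using only that $-\otimes_R S$ commutes with the directed colimits defining ind-schemes and the subschemes $\widehat{\Ab}^{(-)}$, that it preserves closed immersions (hence the relation ``is an ind-closed subscheme of''), and that it preserves gradings, since $S$ sits in degree zero.

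First I would fix a presentation $X\simeq\mbox{``$\varinjlim\limits_{i\in I}$''}X_i$ with each $X_i$ a closed subscheme of some $\Ab^{E_{j(i)}}_R$, and observe that $X_S\simeq\mbox{``$\varinjlim\limits_{i\in I}$''}(X_i)_S$ with $(X_i)_S\hookrightarrow\Ab^{E_{j(i)}}_S$ again a closed immersion; thus $X_S$ is an ind-closed subscheme of $\Ab^E_S$, and it remains to propagate the two extra conditions. For the ind-cone case, by Definition~\ref{defin:indcone} I may take the presentation so that each $X_i\subset\Ab^{\overline{E}}_R$ is cut out by a homogeneous ideal $I_i\subset R[\overline{E}]$ with all variables in degree $1$; since $R[\overline{E}]\to S[\overline{E}]$ is a homomorphism of graded rings which is the identity on variables and $S$ lives in degree $0$, the extended ideal satisfies $I_i\cdot S[\overline{E}]=\bigoplus_{d\geqslant 0}(I_i)_d\cdot S$, so $(X_i)_S$ is again a cone and $X_S$ is an ind-cone in $\Ab^E_S$. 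For the thick case, from $\Ab^{E_i}_{(d)}=\Spec\big(R[E_i]/(E_i)^d\big)$ one gets $(\Ab^{E_i}_{(d)})_S=\Spec\big(S[E_i]/(E_i)^d\big)$, hence $(\widehat{\Ab}^E_R)_S\simeq\widehat{\Ab}^E_S$ compatibly with the inclusion into $\Ab^E$; applying $-\otimes_R S$ to the inclusion $\widehat{\Ab}^E_R\subset X$ of ind-closed subschemes then yields $\widehat{\Ab}^E_S\subset X_S$, which is exactly thickness of $X_S$.

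The only point deserving any care — and it is the ``straightforward'' obstacle the authors allude to — is justifying that base change is compatible with the colimit over the pairs $(i,d)$ in the construction of $\widehat{\Ab}^E$ and respects the ``ind-closed subscheme of'' relation, together with the (cofinal) refinement of presentations needed to make a given ind-cone simultaneously visibly an ind-closed subscheme of $\Ab^E$. Both reduce to the stability of closed immersions under base change and the commutation of $-\otimes_R S$ with directed colimits, so no genuine difficulty arises.
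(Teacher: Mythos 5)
Your argument is correct: base change along $R\to S$ preserves closed immersions and ideal containments, sends homogeneous ideals to homogeneous ideals (the map $R[\overline{E}]\to S[\overline{E}]$ being graded with the variables in degree $1$), and identifies $\bigl(\widehat{\Ab}^E_R\bigr)_S$ with $\widehat{\Ab}^E_S$, which is all that is needed. The paper gives no argument for this lemma, declaring the proof straightforward, and your write-up supplies exactly the routine verification the authors had in mind, so there is nothing to compare beyond noting the agreement.
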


\medskip

Here are further remarkable properties of functions on thick ind-cones.

\begin{prop}\label{prop:generdenis}
Let $X\subset \Ab^E$ be a thick ind-cone and let $\alpha,\alpha'\colon X\to Y$ be two morphisms to an affine scheme $Y$ over $R$. Suppose that for any $R$-algebra $A$ and any point \mbox{$p\in X(A)\subset \Ab^{\overline{E}}(A)$} with finitely many non-zero coordinates all of which are nilpotent, there is an equality $\alpha(p)=\alpha'(p)\in Y(A)$. Then $\alpha=\alpha'$.
\end{prop}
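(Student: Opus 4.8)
The plan is to reduce the equality of two morphisms $\alpha,\alpha'\colon X\to Y$ to the injectivity of the canonical homomorphism $\Ac(X)\to\OO(X)$ established in Proposition~\ref{prop-injectiveseries}. Since $Y$ is affine over $R$, a morphism $X\to Y$ is the same as a homomorphism of $R$-algebras $\OO(Y)\to\OO(X)$, so it suffices to show that $\alpha^*$ and $(\alpha')^*$ agree on $\OO(Y)$, and for that it is enough to treat a single regular function $g\in\OO(Y)$; write $\varphi:=\alpha^*(g)$ and $\varphi':=(\alpha')^*(g)$, two elements of $\OO(X)$. By Proposition~\ref{prop-injectiveseries}, $\OO(X)\simeq\Ac(X)\subset R[[\overline E]]$, so $\varphi$ and $\varphi'$ are honest algebraically convergent power series in the coordinates $x_m$, $m\in\overline E$, and it suffices to prove they coincide as power series.

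The key step is then to pin down a power series by its values at points with finitely many nilpotent nonzero coordinates. By Remark~\ref{rmk:inverse}, the inverse isomorphism $\OO(X)\toiso\Ac(X)\subset R[[\overline E]]$ is realized by restricting functions along the ind-closed embedding $\widehat{\Ab}^{E_f}\subset\widehat{\Ab}^E\subset X$; here, by formula~\eqref{eq:Ef}, $\OO(\widehat{\Ab}^{E_f})\simeq R[[\overline E]]$ and, as recalled after Definition~\ref{defin:hat}, the $A$-points of $\Ab^{E_f}$ are exactly those $p\in\Ab^{\overline E}(A)$ with finitely many nonzero coordinates, all nilpotent. Since $\widehat{\Ab}^{E_f}\subset X$ (using that $X$ is thick), each such $p$ is an $A$-point of $X$, and the hypothesis gives $\alpha(p)=\alpha'(p)\in Y(A)$, hence $\varphi(p)=g(\alpha(p))=g(\alpha'(p))=\varphi'(p)$ for every such $p$ and every $R$-algebra $A$. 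In other words, the images of $\varphi$ and $\varphi'$ under $\OO(X)\to\OO(\widehat{\Ab}^{E_f})=R[[\overline E]]$ agree; but this map is precisely the inverse of the isomorphism $\Ac(X)\toiso\OO(X)$, so $\varphi=\varphi'$ in $\OO(X)$.

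Running this over all $g\in\OO(Y)$ gives $\alpha^*=(\alpha')^*$ as homomorphisms $\OO(Y)\to\OO(X)$, hence $\alpha=\alpha'$. The only point requiring a little care — and what I expect to be the main obstacle — is the bookkeeping identifying ``restriction to $\widehat{\Ab}^{E_f}$'' with the inverse of the expansion isomorphism: one must check that evaluating the power-series expansion of $\varphi$ at a point $p\in\widehat{\Ab}^{E_f}(A)$ genuinely recovers $\varphi(p)$, which is where algebraic convergence (Definition~\ref{defin-conv}) and Example~\ref{examp:convalg}(i) enter, guaranteeing that each homogeneous component has only finitely many nonvanishing monomials at $p$ and that all but finitely many components vanish. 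Once Proposition~\ref{prop-injectiveseries} and Remark~\ref{rmk:inverse} are in hand, this is essentially formal, so the proof is short.
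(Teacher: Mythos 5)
Your proof is correct and follows essentially the same route as the paper: reduce the statement about morphisms to an affine $Y$ to the case of a single regular function (the paper does this via a closed embedding $Y\subset\Ab^M$, you via $\Hom_R(X,Y)\simeq\Hom_{R\text{-alg}}(\OO(Y),\OO(X))$, which is the same reduction), and then conclude from Proposition~\ref{prop-injectiveseries} together with Remark~\ref{rmk:inverse}, i.e.\ from the injectivity of the restriction $\OO(X)\to\OO(\widehat{\Ab}^{E_f})\simeq R[[\overline{E}]]$, whose target is probed exactly by the points with finitely many nilpotent non-zero coordinates. The extra bookkeeping you supply is exactly what the paper leaves implicit, so no gap.
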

\begin{proof}
Using a closed embedding $Y\subset \Ab^M$ for a set $M$, we easily reduce the proposition to the case $Y=\Ab^1$. Then the statement follows directly from Proposition~\ref{prop-injectiveseries} and Remark~\ref{rmk:inverse}.
\end{proof}

\begin{prop}\label{lemma:analytic}
Let $X\subset \Ab^E$ be a thick ind-cone and let $R\subset S$ be an embedding of rings.
\begin{itemize}
\item[(i)]
The natural homomorphism $\OO(X)\to \OO(X_{S})$ is injective.
\item[(ii)]
Consider a power series $\varphi\in S[[\overline{E}]]$ and a regular function $\phi\in\OO(X)$. Suppose that for any $S$-algebra $A$ and any point \mbox{$p\in X(A)\subset \Ab^{\overline{E}}(A)$} with finitely many non-zero coordinates all of which are nilpotent, there is an equality
$$
\varphi(p)=\phi(p)\in A\,.
$$
Then $\varphi$ has coefficients from $R$, that is, $\varphi\in R[[\overline{E}]]$. Besides, the power series~$\varphi$ converges algebraically on $X$, that is, $\varphi\in\Ac(X)$, and $\varphi$ goes to $\phi$ under the canonical homomorphism $\Ac(X)\to\OO(X)$.
\end{itemize}
\end{prop}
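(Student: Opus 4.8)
The plan is to identify, via Proposition~\ref{prop-injectiveseries} and Remark~\ref{rmk:inverse}, the ring $\OO(X)$ with the subring $\Ac(X)$ of $R[[\overline{E}]]$, the identification being realized concretely as restriction of regular functions to the ind-closed subscheme $\widehat{\Ab}^{E_f}\subset\widehat{\Ab}^{E}\subset X$ (the inclusions using that $X$ is thick). In particular this restriction homomorphism $\OO(X)\to\OO(\widehat{\Ab}^{E_f})\simeq R[[\overline{E}]]$ (see formula~\eqref{eq:Ef}) is injective, and the same holds over $S$, since $X_{S}\subset\Ab^E_{S}$ is again a thick ind-cone by Lemma~\ref{lemma:extescalthickcone}.

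For part~(i), I would observe that the base change of $\widehat{\Ab}^{E_f}\subset X$ along $R\to S$ is $\widehat{\Ab}^{E_f}_{S}\subset X_{S}$, so that the natural homomorphism $\OO(X)\to\OO(X_{S})$ sits in a commutative square with the two restriction homomorphisms above and the coefficientwise map $R[[\overline{E}]]\to S[[\overline{E}]]$. The bottom map is injective because $R\subset S$ (use $R[[M]]\simeq\prod_{d\geqslant 0}R^{\Sym^d(M)}$, formula~\eqref{eq:series}), and both vertical restriction maps are injective by the previous paragraph; hence $\OO(X)\to\OO(X_{S})$ is injective.

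For part~(ii), the first step is to note that, for an $S$-algebra $A$, the points $p\in X(A)\subset\Ab^{\overline{E}}(A)$ with finitely many non-zero coordinates all of which are nilpotent are precisely the $A$-points of the ind-affine scheme $\widehat{\Ab}^{E_f}_{S}$ (an $A$-point of $\Ab^{F}_{(d)}$ being a point supported on the finite set $F$ whose coordinates generate an ideal with vanishing $d$-th power; recall also $\widehat{\Ab}^{E_f}\subset X$). Since $\widehat{\Ab}^{E_f}_{S}$ is ind-affine with $\OO(\widehat{\Ab}^{E_f}_{S})\simeq S[[\overline{E}]]$, a regular function on it is determined by its values at such points; hence the hypothesis $\varphi(p)=\phi(p)$ says exactly that $\varphi$, viewed in $S[[\overline{E}]]=\OO(\widehat{\Ab}^{E_f}_{S})$, coincides with the restriction to $\widehat{\Ab}^{E_f}_{S}$ of the image $\phi_{S}\in\OO(X_{S})$ of $\phi$.

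Finally, I would use that this restriction of $\phi_{S}$ is the base change along $R\to S$ of the restriction $\varphi_0$ of $\phi$ to $\widehat{\Ab}^{E_f}$, and that by Remark~\ref{rmk:inverse} this $\varphi_0$ lies in $\Ac(X)\subset R[[\overline{E}]]$ and maps to $\phi$ under the isomorphism $\Ac(X)\toiso\OO(X)$ of Proposition~\ref{prop-injectiveseries}. Thus $\varphi$ equals the image of $\varphi_0\in R[[\overline{E}]]$ under $R[[\overline{E}]]\hookrightarrow S[[\overline{E}]]$, so all its coefficients lie in $R$, i.e. $\varphi\in R[[\overline{E}]]$ and $\varphi=\varphi_0$; hence $\varphi\in\Ac(X)$ and $\varphi$ maps to $\phi$ under $\Ac(X)\to\OO(X)$, as claimed. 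The only point requiring genuine care is the bookkeeping that all the restriction and base-change homomorphisms assemble into the needed commutative squares, together with the standard fact that a regular function on the ind-affine scheme $\widehat{\Ab}^{E_f}_{S}$ is detected by its $A$-points (which reduces to evaluation at universal points of the affine pieces $\Ab^{F}_{(d)}$); everything else is a direct appeal to Proposition~\ref{prop-injectiveseries}, Remark~\ref{rmk:inverse}, and Lemma~\ref{lemma:extescalthickcone}.
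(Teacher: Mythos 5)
Your argument is correct and follows essentially the same route as the paper's own (very terse) proof: part~(i) is the injectivity of the restriction $\OO(X)\simeq\Ac(X)\subset R[[\overline{E}]]\hookrightarrow S[[\overline{E}]]$ via Proposition~\ref{prop-injectiveseries}, Remark~\ref{rmk:inverse}, and Lemma~\ref{lemma:extescalthickcone}, and part~(ii) is the identification of the hypothesis with the equality in $\OO(\widehat{\Ab}^{E_f}_S)\simeq S[[\overline{E}]]$ of $\varphi$ with the restriction of $\phi_S$, which the same two results convert into the stated conclusion. Your write-up merely makes explicit the base-change and restriction compatibilities that the paper leaves implicit.
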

\begin{proof}
$(i)$ Use Proposition~\ref{prop-injectiveseries}, Lemma~\ref{lemma:extescalthickcone}, and the fact that
the natural map from $\Ac(X)\subset R[[\overline{E}]]$ to $\Ac(X_{S})\subset S[[\overline{E}]]$ is injective.

$(ii)$ This follows directly from Proposition~\ref{prop-injectiveseries} and Remark~\ref{rmk:inverse}.
\end{proof}

An ind-affine scheme $Y$ over $R$ is {\it ind-flat over $R$} if there is an isomorphism ${Y\simeq\mbox{``$\varinjlim\limits_{j\in J}$''}Y_j}$ such that $Y_j$ is a flat affine scheme over $R$ for any $j\in J$. The following fact is a generalization of Proposition~\ref{lemma:analytic}$(i)$.

\begin{prop}\label{prop:geninj}
Let $X$ be a thick ind-cone in an ind-affine space over $R$, $Y$ be an ind-flat ind-affine scheme over $R$, and $Z$ be an affine scheme over $R$. Let $R\subset S$ be an embedding of rings. Then the natural map
$$
\Hom_R(X\times Y,Z)\lrto \Hom_{S}\big((X\times Y)_{S},Z_{S}\big)
$$
is injective.
\end{prop}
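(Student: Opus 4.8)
The plan is to reduce the statement to the injectivity of a single ring homomorphism, namely $\OO(X\times Y)\lrto\OO\big((X\times Y)_S\big)$, and then to prove that injectivity by restricting regular functions to the ``thin'' ind-closed subscheme $\widehat{\Ab}^E\times Y$, which is contained in $X\times Y$ precisely because $X$ is thick.

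First I would reduce to rings. Since $Z=\Spec(B)$ is affine over $R$, for any ind-affine scheme $W$ over $R$ one has $\Hom_R(W,Z)=\Hom_{R\text{-}\mathrm{alg}}(B,\OO(W))$, and, by the base-change--restriction adjunction, $\Hom_S(W_S,Z_S)=\Hom_{S\text{-}\mathrm{alg}}(B\otimes_R S,\OO(W_S))=\Hom_{R\text{-}\mathrm{alg}}(B,\OO(W_S))$; under these identifications the natural map of the proposition is postcomposition with the canonical homomorphism $\OO(W)\to\OO(W_S)$. As $\Hom_{R\text{-}\mathrm{alg}}(B,-)$ carries injections to injections, it suffices to prove that for $W=X\times Y$ the map $\OO(W)\to\OO(W_S)$ is injective.

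Write $X\simeq\mbox{``$\varinjlim\limits_{i}$''}X_i$ with each $X_i\subset\Ab^{E_i}$ a cone, so that $\OO(X_i)=\bigoplus_{e}A_{i,e}$ is graded, and $Y\simeq\mbox{``$\varinjlim\limits_{j}$''}Y_j$ with $F_j:=\OO(Y_j)$ flat over $R$. Consider the commutative square with vertices $\OO(X\times Y)$, $\OO((X\times Y)_S)$, $\OO(\widehat{\Ab}^E\times Y)$, $\OO((\widehat{\Ab}^E\times Y)_S)$, whose vertical maps are restriction along $\widehat{\Ab}^E\times Y\hookrightarrow X\times Y$ (defined since $\widehat{\Ab}^E\subset X$ by thickness, and likewise over $S$, using Lemma~\ref{lemma:extescalthickcone} to know $X_S$ is a thick ind-cone over $S$) and whose horizontal maps are base change along $R\subset S$. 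I claim both the left vertical map and the bottom horizontal map are injective, which by a diagram chase forces the top horizontal map to be injective, finishing the proof. For the left vertical map: by thickness $\widehat{\Ab}^E=X\cap\widehat{\Ab}^E\simeq\mbox{``$\varinjlim\limits_{i}$''}(X_i\cap\widehat{\Ab}^{E_i})$ as in the proof of Proposition~\ref{prop-injectiveseries}, so the restriction map is $\varprojlim_{i,j}$ of the maps $\OO(X_i\times Y_j)=\bigoplus_e(A_{i,e}\otimes_R F_j)\hookrightarrow\prod_e(A_{i,e}\otimes_R F_j)=\OO\big((X_i\cap\widehat{\Ab}^{E_i})\times Y_j\big)$, where homogeneity of the defining ideal of $X_i$ identifies $\OO(X_i\cap\Ab^{E_i}_{(d)})$ with the truncation $\bigoplus_{e<d}A_{i,e}$ and $\otimes_R F_j$ commutes with this direct sum; each such map is the tautological inclusion of a direct sum into the corresponding product (cf. Lemma~\ref{lemma-injcone}(i)), hence injective, and $\varprojlim$ is left exact. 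For the bottom horizontal map: using instead the presentation $\widehat{\Ab}^E\simeq\mbox{``$\varinjlim\limits_{i,d}$''}\Ab^{E_i}_{(d)}$ from Definition~\ref{defin:hat}, one has $\OO(\widehat{\Ab}^E\times Y)\simeq\varprojlim_{i,d,j}\OO(\Ab^{E_i}_{(d)})\otimes_R F_j$, and each $\OO(\Ab^{E_i}_{(d)})=R[E_i]/(E_i)^d$ is a free $R$-module (the monomials of degree ${<}d$ form a basis), so $\OO(\Ab^{E_i}_{(d)})\otimes_R F_j$ is a direct sum of copies of $F_j$, hence flat over $R$, hence injects into its base change along $R\subset S$; passing to $\varprojlim_{i,d,j}$ (left exact) gives the claim.

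The step I expect to require the most care is the bookkeeping with the two presentations of $\widehat{\Ab}^E$ and the order of the iterated inverse limits. One must keep the distinction between $\OO\big((X_i\cap\widehat{\Ab}^{E_i})\times Y_j\big)=\prod_e(A_{i,e}\otimes_R F_j)$ — an inverse limit of truncations, into which $\OO(X_i\times Y_j)$ visibly embeds with no hypothesis on $F_j$ — and $\big(\prod_e A_{i,e}\big)\otimes_R F_j$, for which such an embedding is not available in general; it is exactly this distinction that confines the use of flatness of $Y$ to the base-change step, where it is genuinely needed.
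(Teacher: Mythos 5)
Your proposal is correct, and it reaches the conclusion by a route that is organized differently from the paper's. The paper first reduces to $Z=\Ab^1$ (as you do, via an equivalent adjunction argument), then uses $\OO(X\times Y)\simeq\varprojlim_j\OO(X\times Y_j)$ to reduce to a single flat affine $Y=\Spec(A)$, and then makes the key move of changing the base ring: it identifies $\OO(X\times Y)\simeq\OO(X_A)$, notes that $A\to A\otimes_RS$ is injective by flatness, and quotes Proposition~\ref{lemma:analytic}$(i)$ for the thick ind-cone $X_A$ over $A$ (hence, implicitly, the whole $\Ac(X)\simeq\OO(X)$ machinery of Proposition~\ref{prop-injectiveseries}). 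You never change the base: you factor $\OO(X\times Y)\to\OO\big((X\times Y)_S\big)$ through $\OO(\widehat{\Ab}^E\times Y)$ and prove the two required injectivities by hand --- the restriction map via the graded ``direct sum into product'' mechanism of Lemma~\ref{lemma-injcone}$(i)$ tensored with $\OO(Y_j)$ (correctly observing that no flatness is needed there), and the base-change map via freeness of the truncations $R[E_i]/(E_i)^d$ together with flatness of $\OO(Y_j)$. The underlying mechanism (cone grading plus flatness of $Y$) is the same in both arguments, but your version is self-contained relative to Lemma~\ref{lemma-injcone}, avoids the algebraic-convergence formalism entirely, and isolates precisely where flatness of $Y$ is used, at the cost of redoing the graded computation with coefficients; the paper's version is shorter because the base-change trick lets it reuse already-established statements verbatim. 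Your cofinality identification of the two presentations of $\widehat{\Ab}^E$ (as $X\cap\widehat{\Ab}^E$ and as in Definition~\ref{defin:hat}) is exactly the point the paper also uses in the proof of Proposition~\ref{prop-injectiveseries}, so that step is sound.
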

\begin{proof}
First, using a closed embedding $Z\subset \Ab^M$ for a set $M$, we easily reduce the proposition to the case $Z=\Ab^1$. In other words, it is enough to prove that the natural map
$$
\OO(X\times Y)\lrto \OO\big((X\times Y)_{S}\big)
$$
is injective.

Let $Y\simeq\mbox{``$\varinjlim\limits_{j\in J}$''}Y_j$ be such that $Y_j$ is a flat affine scheme over $R$ for any $j\in J$. We have canonical isomorphisms
$$
\OO(X\times Y)\simeq\varprojlim\limits_{j\in J}\OO(X\times Y_j)\,,\qquad
\OO\big((X\times Y)_{S}\big)\simeq\varprojlim\limits_{j\in J}\OO\big((X\times Y_j)_{S}\big)\,.
$$
Therefore it is enough to consider the case when $Y$ is a flat affine scheme over $R$, because inverse limits are left exact.

Let $Y\simeq\Spec(A)$, where $A$ is a flat $R$-algebra. Put $B:=A\otimes_R S$. Clearly, we have canonical isomorphisms
$$
\OO(X\times Y)\simeq\OO(X_A)\,,\qquad \OO\big((X\times Y)_{S}\big)\simeq\OO(X_{B})\,.
$$
Since the homomorphism $R\to S$ is injective and $A$ is flat over $R$, the natural homomorphism $A\to B$ is injective as well. Thus we finish the proof by Lemma~\ref{lemma:extescalthickcone} and Proposition~\ref{lemma:analytic}$(i)$ applied to the embedding of rings $A\subset B$.
\end{proof}

\subsection{Connectedness}\label{subsect:connect}

\begin{defin}\label{defin:connect}
An ind-scheme $X$ over $R$ is {\it connected over $R$} if any idempotent in~$\OO(X)$ is the image of an idempotent in $R$ under the natural homomorphism of rings ${R\to\OO(X)}$. An ind-scheme $X$ over $R$ is {\it absolutely connected over $R$} if for any homomorphism of rings $R\to S$, the ind-scheme $X_S$ is connected over $S$.
\end{defin}

\begin{rmk}\label{rmk:connectred}
Idempotents in a ring $A$ correspond bijectively to decompositions of the topological space $\Spec(A)$ into a disjoint union of two closed subsets. Therefore any surjective homomorphism $A\to B$ such that all elements in the kernel are nilpotent induces a bijection between the sets of idempotent in $A$ and $B$. It follows that an affine scheme $X$ over $R$ is (absolutely) connected over $R$ if and only if $X_{R_{\rm red}}$ is (absolutely) connected over $R_{\rm red}$, where $R_{\rm red}$ is the quotient of $R$ over the nilradical $\Nil(R)$ (cf. the proof of~\cite[Lem.\,3.2]{OZ1}).
\end{rmk}

One easily checks that if $\Spec(R)$ is connected as a topological space, then an ind-scheme $X$ over $R$ is connected over $R$ if and only if any morphism \mbox{$X\to\Spec(R)\coprod\Spec(R)$} over $R$ factors through a copy of $\Spec(R)$. If $X$ is a scheme over $R$, then the latter is also equivalent to connectedness of $X$ as a topological space.

\begin{examp}\label{examp:connsimple}
\hspace{0cm}
\begin{itemize}
\item[(i)]
The affine line $\Ab^1$ is absolutely connected over $\z$. Indeed, by Remark~\ref{rmk:connectred}, it is enough to show that $\Ab^1_R$ is connected over $R$ for any ring $R$ without non-zero nilpotents. A non-zero idempotent in $\OO(\Ab^1_R)$ corresponds to a polynomial
$$
f=a_0+a_1x+\ldots+a_dx^d\in R[x]
$$
such that $f^2=f$, where $a_d\ne 0$. If $d>0$, then $a_d^2=0$, which contradicts the condition $a_d\ne 0$. This shows that $\Ab^1_R$ is connected over $R$. More generally, for any set $M$, the affine space~$\Ab^M$ is absolutely connected over $\z$ (the proof is similar).
\item[(ii)]
One shows similarly as in item~(i) that $\gm$ is absolutely connected over $\z$.
\end{itemize}
\end{examp}

\begin{rmk}\label{rmk:connect}
Let $X\simeq\mbox{``$\varinjlim\limits_{i\in I}$''}X_i$  be an ind-scheme over $R$. Suppose that for any $i\in I$, the scheme $X_i$ is connected over $R$ and the homomorphism of rings $R\to \OO(X_i)$ is injective (for example, the latter condition holds if $X_i(R)$ is not empty). Then $X$ is connected over $R$.
\end{rmk}

\medskip

The following simple lemma says that ind-cones are absolutely connected. Geometrically speaking, this holds because one can join any point on a cone with the vertex.

\begin{lemma}\label{lemma:coneconn}
Let $X$ be an ind-cone in an ind-affine space over $R$. Then~$X$ is absolutely connected over $R$.
\end{lemma}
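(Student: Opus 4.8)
The plan is to reduce, via the general machinery already set up, to the case of a single cone $V \subset \Ab^M$ over $R$, and to exhibit a concrete retraction of $V$ onto its vertex that lets me detect idempotents. First I would use Remark~\ref{rmk:connect}: if $X \simeq \mbox{``$\varinjlim\limits_{i\in I}$''}X_i$ is an ind-cone, then each $X_i$ is a cone $V_i \subset \Ab^{\overline{E}}$, and each $V_i$ contains the vertex (the origin), so $V_i(R) \ne \emptyset$ and hence $R \to \OO(V_i)$ is injective. By Remark~\ref{rmk:connect} it therefore suffices to prove that each cone $V \subset \Ab^M_R$ is connected over $R$. For absolute connectedness I also note that, by Lemma~\ref{lemma:extescalthickcone}, the base change $X_S$ of an ind-cone along any $R \to S$ is again an ind-cone over $S$, so it is enough to prove the statement ``connected over $R$'', not ``absolutely connected'', and then apply it to $X_S$.

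Next I would make the cone structure explicit. Let $I_V \subset R[M]$ be the homogeneous ideal defining $V$, so $\OO(V) = R[M]/I_V = \bigoplus_{d \geqslant 0} A_d$ is a graded $R$-algebra with $A_0 = R$ (the degree-zero part is $R$ since $I_V$ is homogeneous with no degree-zero elements). The grading gives a scaling action: concretely, for any $R$-algebra $A$ and $a \in A$, multiplication of all coordinates by $a$ preserves $V(A)$, and on functions this is the $R$-algebra endomorphism $h_a^* \colon \OO(V) \to \OO(V)[a]$ sending a homogeneous element $\varphi_d$ of degree $d$ to $a^d \varphi_d$. Now let $e = \sum_{d \geqslant 0} e_d \in \OO(V)$ be an idempotent, with $e_d \in A_d$ (a finite sum, as $e$ is a polynomial in the coordinates). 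Applying the homomorphism $R[M] \to R$ that sends all $x_m$ to $0$ — i.e. restricting along the vertex $\Spec(R) \hookrightarrow V$ — kills every $e_d$ with $d > 0$ and leaves $e_0 \in R$; moreover the composite $R \to \OO(V) \to R$ is the identity, so it carries the idempotent $e$ to the idempotent $e_0 \in R$. It remains to show $e$ is the image of $e_0$ under $R \to \OO(V)$, i.e. that $e = e_0$, i.e. that $e_d = 0$ for all $d > 0$.

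To see that, I would use the scaling action together with the universal scalar. Consider the polynomial ring $R[a]$ and the two $R[a]$-algebra homomorphisms from $\OO(V)\otimes_R R[a]$ to itself: one is induced by scaling, the other is the identity. The element $e$ is idempotent, hence so is $h_a^*(e) = \sum_d a^d e_d \in \OO(V)[a]$, and $h_a^*(e) - e = \sum_{d \geqslant 1}(a^d - 1)e_d$. The key point is that $e$ and $h_a^*(e)$ are both idempotents in the ring $\OO(V)[a]$ lying in the same residue class modulo the nilradical would not quite be automatic, so instead I argue as follows: reduce the whole statement, by Remark~\ref{rmk:connectred}, to the case where $R = R_{\rm red}$ has no nonzero nilpotents; then $\OO(V) = \bigoplus_d A_d$ — being a subring of $R[M]/I_V$ — still need not be reduced in general, so this alone is not enough. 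The honest route, which I expect to be the main obstacle to phrase cleanly, is the following: an idempotent $e$ satisfies $e(1-e) = 0$, an identity of homogeneous-component-wise finitely supported elements; comparing graded pieces of $e^2 = e$ gives $e_0^2 = e_0$, $e_0 e_1 + e_1 e_0 = e_1$ hence $2e_0 e_1 = e_1$, and more generally a triangular system. Working over the localization at the idempotent $e_0$ one may assume $e_0 = 1$ (splitting $\Spec R$), and then one shows inductively $e_d = 0$ for $d > 0$: from $e^2 = e$ in top degree $D$ (the largest with $e_D \ne 0$) one gets $e_D^2 = 0$ in $A_{2D} \subset R[M]/I_V$; but $A_{2D}$ is a submodule of the free $R$-module $R[M]/I_V$ and $e_D \in A_D$ is a polynomial whose leading coefficients square to zero — over reduced $R$ this forces $e_D = 0$, a contradiction unless $D = 0$. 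Thus $e = e_0 \in R$, proving $V$ is connected over $R$; combined with the first paragraph this gives that every ind-cone is absolutely connected over $R$, completing the proof. The delicate part is the final degree-induction killing the top homogeneous component; I expect it is cleanest to run it after splitting off $\Spec R$ by the idempotent $e_0$ and reducing to $R$ reduced via Remark~\ref{rmk:connectred}, exactly as in the cited proof of~\cite[Lem.\,3.2]{OZ1}.
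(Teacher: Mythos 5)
Your reductions coincide with the paper's: Lemma~\ref{lemma:extescalthickcone} reduces absolute connectedness to connectedness over $R$, and Remark~\ref{rmk:connect} (using the vertex as an $R$-point of each cone) reduces to a single cone $V\subset\Ab^M$. The genuine gap is in your final step, where you kill the top homogeneous component of the idempotent. From $e^2=e$ in degree $2D$ you do get $e_D^2=0$ in $A_{2D}$, but the deduction ``$A_{2D}$ is a submodule of the free $R$-module $R[M]/I_V$, so over reduced $R$ this forces $e_D=0$'' is false: $\OO(V)=R[M]/I_V$ is not a free $R$-module in general, its graded pieces $A_d=R[M]_d/(I_V)_d$ are quotients rather than submodules of the polynomial ring, and a homogeneous element of positive degree can square to zero without vanishing even over a field --- take $R=k$ and $I_V=(x^2)\subset k[x]$, which defines a cone with $x\neq0$ but $x^2=0$. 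Passing to $R_{\rm red}$ via Remark~\ref{rmk:connectred} does not repair this, because the nilpotents live in $\OO(V)$, not in $R$; so the top-degree induction as written does not go through.

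The fact you need --- every idempotent of a non-negatively graded ring is concentrated in degree zero --- is true, but the correct argument runs from the bottom, not the top: if $d\geqslant1$ is the \emph{least} positive degree with $e_d\neq0$, comparing degree-$d$ components of $e^2=e$ gives $2e_0e_d=e_d$, i.e.\ $(1-2e_0)\,e_d=0$; since $e_0$ is idempotent, $(1-2e_0)^2=1$, so $1-2e_0$ is a unit and $e_d=0$, a contradiction. This needs neither reducedness nor freeness, and it is exactly your relation $2e_0e_1=e_1$ read in the right direction. With that repair your proof becomes a valid, purely algebraic alternative to the paper's argument, which avoids grading computations altogether: the paper restricts the idempotent along the lines $\gamma\colon\Ab^1_A\to V_A$, $a\mapsto a\cdot x$, through the vertex, invokes connectedness of $\Ab^1_A$ (Example~\ref{examp:connsimple}(i)) to get $\phi(x)=\phi(0)$ for every $A$-point $x$, and applies this to the tautological point over $A=\OO(V)$.
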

\begin{proof}
By Lemma~\ref{lemma:extescalthickcone}, it is enough to prove that $X$ is connected over $R$. Any cone has the point $0:=(0,0,\ldots)$ over any ring. Therefore by Remark~\ref{rmk:connect}, it is enough to suppose that $X$ is a cone $V$ in an affine space $\Ab^M$ over~$R$, where $M$ is a set. Consider an idempotent $\phi\in\OO(V)$. Let $e\in R$ be the value of~$\phi$ at the point $0\in V(R)$. Then $e$ is also an idempotent. Let us show that $\phi$ is equal to the image of $e$ under the homomorphism $R\to \OO(V)$.

Let $A$ be an $R$-algebra and let $x\in V(A)\subset \Ab^M(A)=A^M$ be an $A$-point on $V$. Since~$V$ is a cone, we have a morphism of schemes over $A$
$$
\gamma\;:\;\Ab^1_A\lrto V_A\,,\qquad a\longmapsto a\cdot x\,,
$$
which is a line on the cone joining $0$ and $x$. By Example~\ref{examp:connsimple}(i), $\Ab^1_A$ is connected over $A$. Therefore the idempotent $\gamma^*(\phi)\in \OO(\Ab^1_A)$ is the image of an idempotent in $A$ under the homomorphism $A\to \OO(\Ab^1_A)$. In particular, the function $\gamma^*(\phi)$ on $\Ab^1_A$ is constant and we have the equalities in $A$:
$$
\phi(0)=\gamma^*(\phi)(0)=\gamma^*(\phi)(1)=\phi(x)\,.
$$
On the other hand, $\phi(0)$ is the image in $A$ of $e\in R$ under the homomorphism $R\to A$. All together, this proves that $\phi$ is the image of $e$ under the homomorphism $R\to \OO(V)$.
\end{proof}

\medskip

Recall that the group functor $\uz$ (see Definition~\ref{defin:uz}) is represented by an ind-affine scheme (see Subsection~\ref{subsect:indschemes}), which is clearly not connected over $\z$. We will use the following property of connected ind-schemes.

\begin{prop}\label{lemma:conntriv}
Let $X$ be a connected ind-scheme over $R$ and let $\alpha\colon X\to \uz_R$ be a morphism of ind-schemes over $R$. Suppose that there exists an $R$-point $x\in X(R)$ such that ${\alpha(x)=0}$. Then $\alpha=0$.
\end{prop}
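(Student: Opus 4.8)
The plan is to use the explicit ind-affine presentation of $\uz_R$ recalled in Subsection~\ref{subsect:indschemes}, namely that $\uz_R$ is the filtered union of the affine schemes $Y_N:=\coprod_{-N\leqslant i\leqslant N}\Spec(R)$, together with the connectedness hypothesis on $X$. Here $\OO(\uz_R)\simeq\varprojlim_N\OO(Y_N)\simeq\prod_{i\in\Z}R$; for each $i\in\Z$ let $\epsilon_i\in\OO(\uz_R)$ be the idempotent equal to $1$ in the $i$-th factor and to $0$ in all the other factors. The elementary fact underlying the argument is that a morphism $\gamma\colon\Spec(A)\to\uz_R$ over $R$ is the same as a decomposition $A\simeq\prod_i A_i$ into a finite product indexed by $\Z$ (with almost all $A_i$ zero), and that under this description $\gamma^*(\epsilon_i)\in A$ is precisely the idempotent cutting out the factor $A_i$. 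Consequently $\gamma$ corresponds to the neutral element $0\in\uz(A)$ if and only if $\gamma^*(\epsilon_0)=1$, equivalently $\gamma^*(\epsilon_i)=0$ for all $i\ne 0$.

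First I would pull these idempotents back to $X$: put $e_i:=\alpha^*(\epsilon_i)\in\OO(X)$, using the ring homomorphism $\alpha^*$ induced by $\alpha$. The $e_i$ are idempotents in $\OO(X)$, so by Definition~\ref{defin:connect} each $e_i$ is the image under the structure homomorphism $R\to\OO(X)$ of an idempotent $\tilde e_i\in R$. Next I would evaluate at the given point. Since $x\in X(R)$, the homomorphism $x^*\colon\OO(X)\to R$ is a section of $R\to\OO(X)$, whence $x^*(e_i)=\tilde e_i$; on the other hand $\alpha\circ x$ is the neutral element of $\uz(R)$, so by the criterion of the first paragraph $x^*(e_i)=(\alpha\circ x)^*(\epsilon_i)=\delta_{i,0}$. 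Comparing the two computations gives $\tilde e_0=1$ and $\tilde e_i=0$ for $i\ne 0$, hence $e_0=1$ and $e_i=0$ in $\OO(X)$.

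It then remains to conclude on the functor of points. For any $R$-algebra $A$ and any point $y\in X(A)$ one has $(\alpha\circ y)^*(\epsilon_i)=y^*\big(\alpha^*(\epsilon_i)\big)=y^*(e_i)=\delta_{i,0}$, so by the first paragraph the morphism $\alpha\circ y\colon\Spec(A)\to\uz_R$ corresponds to $0\in\uz(A)$. Since $A$ and $y$ are arbitrary and a morphism of ind-schemes is nothing but a morphism of the associated functors on rings, this yields $\alpha=0$.

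The only genuinely delicate point is the bookkeeping in the first paragraph: checking that $\OO(\uz_R)\simeq\prod_{i\in\Z}R$ and that $\gamma^*(\epsilon_i)$ really is the idempotent of the factor $A_i$, which rests on the fact that every morphism from an affine scheme to $\uz_R$ factors through some stage $Y_N$ because $\uz_R$ is a strict ind-scheme. One should also keep in mind the mild subtlety that $\alpha^*$ by itself does not determine $\alpha$, since $\uz_R$ is not affine; this is exactly why the proof is completed on the functor of points rather than purely with rings of regular functions. Everything else is a formal manipulation of ind-schemes as in Subsection~\ref{subsect:indschemes}.
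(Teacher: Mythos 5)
Your proof is correct and takes essentially the same route as the paper: both pull back the characteristic idempotent of the zero component of $\uz_R$, use connectedness to descend it to an idempotent of $R$, use the given point $x$ to identify that idempotent as $1$, and then conclude on the functor of points. The only cosmetic differences are that the paper works with the single idempotent $\epsilon_0$ and pins down the descended idempotent via a decomposition $R\simeq R_1\times R_2$ and a contradiction, whereas you evaluate directly along the section $x^*\colon\OO(X)\to R$.
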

\begin{proof}
Let $\phi\in\OO(\uz_R)$ be the characteristic function of the zero element, that is, $\phi$ equals one on the component $\Spec(\z)\subset \uz$ that corresponds to $0\in \uz(R)$ and $\phi$ equals zero on all the other components. In particular, $\phi$ is an idempotent. The idempotent $\alpha^*(\phi)\in \OO(X)$ is the image of an idempotent $e\in R$, because $X$ is connected over $R$. The idempotent $e\in R$ defines a decomposition $R\simeq R_1\times R_2$ such that the image of $\alpha_{R_1}\colon X_{R_1}\to \uz_{R_1}$ in $\uz_{R_1}$ is equal to zero and the image of $\alpha_{R_2}\colon X_{R_2}\to \uz_{R_2}$ in $\uz_{R_2}$ is contained in the union of all components in $\uz$ except for the component of the zero element. If $R_2$ is non-zero, then $x$ defines an $R_2$-point $x_2\in X_{R_2}(R_2)$ such that $\alpha_{R_2}(x_2)=0$, which gives a contradiction. Therefore, $R=R_1$, $e=1$, and $\alpha=0$.
\end{proof}

\subsection{Density}\label{subsection:density}

\begin{defin}\label{defin-dense}
An ind-closed subscheme $X$ of an ind-affine scheme $Y$ is {\it dense} if the natural homomorphism of rings $\OO(Y)\to \OO(X)$ is injective. An ind-closed subscheme $X$ of an ind-affine scheme $Y$ is {\it absolutely dense} if for any ring $R$, we have that $X_R$ is dense in $Y_R$.
\end{defin}

Note that (absolute) density is not a relative with respect to a base notion, that is, (absolute) density does not depend on a base ring over which given ind-schemes are considered.

If $Y$ is an affine scheme, then an ind-closed subscheme $X$ of $Y$ is dense if and only if there is no a closed affine subscheme $Z\subset Y$, $Z\ne Y$, such that $X\subset Z$. In particular, given a closed embedding of affine schemes $X\subset Y$, we have that $X$ is dense in $Y$ if and only if~$X=Y$.

\begin{lemma}\label{lemma:equivdense}
An ind-closed subscheme $X$ of an ind-affine scheme $Y$ is absolutely dense if and only if for any ind-scheme~$T$ and any affine scheme $Z$, the natural map
\begin{equation}\label{eq:injmap}
\Hom(Y\times T,Z)\lrto \Hom(X\times T,Z)
\end{equation}
is injective.
\end{lemma}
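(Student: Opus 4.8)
The statement is a standard "base-change" reformulation of density, and the plan is to prove the two implications separately, in each case reducing immediately to the affine target $Z=\Ab^1$ (since $Z$ embeds as a closed subscheme of some $\Ab^M$, and $\Hom(-,\Ab^M)=\OO(-)^M$, injectivity for $\Ab^1$ gives it for all affine $Z$), and to the case where $Y\simeq\Spec(B)$ is affine, $T\simeq\Spec(C)$ is affine, and $X\subset Y$ is a \emph{closed} subscheme $\Spec(B/\a)$ — because $Y\simeq\text{``}\varinjlim\text{''}Y_j$, $T\simeq\text{``}\varinjlim\text{''}T_k$, $X\simeq\text{``}\varinjlim\text{''}X_i$ with each $X_i$ closed in some $Y_{j(i)}$, and both $\Hom$-sets turn into inverse limits over these index sets of the corresponding $\Hom$-sets for affine pieces, while inverse limits are left exact and hence preserve injectivity. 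Under these reductions, the map~\eqref{eq:injmap} becomes the natural map $(B\otimes_{\Z}C)\to(B/\a)\otimes_{\Z}C$, i.e. tensoring the surjection $B\to B/\a$ with $C$ over $\Z$.

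For the "only if" direction: assume $X_R$ is dense in $Y_R$ for every ring $R$; I must show that for all $T,Z$ the map~\eqref{eq:injmap} is injective. Unwinding the reductions above, the hypothesis applied with $R=\Z$ and again with various base rings does not obviously suffice, so instead I apply the hypothesis with the base ring $R=C$: the ind-scheme $Y$, viewed over $C$ via the structure map, has $Y_C$ containing the ind-closed subscheme $X_C$, and density of $X_C$ in $Y_C$ says exactly that $\OO(Y_C)\to\OO(X_C)$ is injective. But $\OO(Y_C)=\OO(Y)\otimes_{\Z}C$ (for $Y=\Spec(B)$ this is $B\otimes_{\Z}C$, and in general it is the inverse limit of these, which is again $\OO(Y)\hat\otimes$-type object but the left-exactness argument reduces us to the affine case) and likewise $\OO(X_C)=\OO(X)\otimes_{\Z}C$ on affine pieces; thus $\OO(Y\times T)\to\OO(X\times T)$ is injective, which is~\eqref{eq:injmap} for $Z=\Ab^1$, and the general case follows as above. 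The one point to be careful about is the identification $\OO(Y\times T)\simeq\OO(Y_C)$ when $T=\Spec(C)$: this is just the tensor-product description of fibre products of affine schemes, compatible with the ind-limits, and is recorded implicitly in Subsection~\ref{subsect:indschemes}.

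For the "if" direction: assume~\eqref{eq:injmap} is injective for all $T$ and $Z$; I must show $X_R$ is dense in $Y_R$ for every ring $R$, i.e. $\OO(Y_R)\to\OO(X_R)$ is injective. Take $T=\Spec(R)$ (an affine scheme, hence an ind-scheme) with its $\Z$-structure, and $Z=\Ab^1$; then $Y\times T=Y\times_{\Z}\Spec(R)=Y_R$ as a $\Z$-scheme, $X\times T=X_R$, and~\eqref{eq:injmap} reads $\OO(Y_R)\to\OO(X_R)$ is injective, which is precisely density of $X_R$ in $Y_R$. This direction is essentially immediate once the bookkeeping is set up. The main obstacle in the whole proof is therefore not any direction in isolation but the careful commutation of the $\OO(-)$ functor with the three independent ind-limits (in $X$, $Y$, $T$) and with $-\otimes_{\Z}C$; this is handled uniformly by left-exactness of inverse limits together with the fact that $-\otimes_{\Z}C$ is right-exact, so one works with the surjections of affine coordinate rings and takes limits. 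No flatness of $C$ is needed because we only tensor a surjection, and surjectivity is preserved; injectivity of~\eqref{eq:injmap} is then read off directly — it does \emph{not} assert that $X\to Y$ is flat or that the ideal behaves well, only that the surjection $\OO(Y)\to\OO(X)$ stays "detected" after the base change, which is exactly what density after base change means.
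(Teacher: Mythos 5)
Your easy direction (injectivity of~\eqref{eq:injmap} implies absolute density) is exactly the paper's argument: take $T=\Spec(R)$, $Z=\Ab^1$ and use the identifications $\Hom(Y\times\Spec(R),\Ab^1)\simeq\OO(Y_R)$ and $\Hom(X\times\Spec(R),\Ab^1)\simeq\OO(X_R)$. The gap is in the other direction, in the reduction you perform before the argument starts. The lemma quantifies over an \emph{arbitrary} ind-scheme $T$, i.e.\ an ind-object in the category of all schemes; writing $T\simeq\mbox{``$\varinjlim\limits_{k}$''}T_k$ and using $\Hom(Y\times T,Z)\simeq\varprojlim\limits_k\Hom(Y\times T_k,Z)$ together with left exactness of inverse limits does reduce you to a single $T_k$, but that $T_k$ is a general scheme, not an affine one, so your step ``$T\simeq\Spec(C)$ is affine'' is unjustified. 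To finish along your lines you would still have to show that a morphism $Y\times T_k\to\Ab^1$ is detected on the pieces $Y\times U$ for $U$ running over an affine open cover of $T_k$, which needs a small Zariski argument on the test ring (an $A$-point of $T_k$ need not factor through a single $U$; one covers $\Spec(A)$ and uses that $A$ injects into the product of the corresponding localizations). The paper sidesteps all of this with a pointwise trick: if $\alpha\ne\beta\colon Y\times T\to\Ab^1$, choose a ring $R$ and a point $x\in T(R)$ with $\alpha_R(-,x)\ne\beta_R(-,x)$ in $\OO(Y_R)$; absolute density gives that their images in $\OO(X_R)$ differ, and these images are restrictions of $\alpha|_{X\times T}$ and $\beta|_{X\times T}$, so the latter differ. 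This handles arbitrary $T$ with no limit bookkeeping at all.

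Two further assertions in your reduction are false as stated, although your final argument does not really rely on them: (a) you cannot reduce the density hypothesis to a single closed affine piece $\Spec(B/\mathfrak{a})\subset\Spec(B)$ --- the two $\Hom$-sets are inverse limits over \emph{different} index sets, injectivity does not pass to the pieces, and a closed subscheme of an affine scheme is dense only if it equals the whole scheme, so the piecewise maps $\OO(Y_j\times T_k)\to\OO(X_i\times T_k)$ are typically not injective; (b) $\OO(Y_C)$ is not $\OO(Y)\otimes_{\Z}C$ in general for an ind-affine $Y$, since the inverse limit does not commute with the tensor product. What your argument actually uses, and what is correct, is only the identification $\Hom(Y\times\Spec(C),\Ab^1)\simeq\OO(Y_C)\simeq\varprojlim\limits_j\big(\OO(Y_j)\otimes_{\Z}C\big)$ together with the hypothesis that $X_C$ is dense in $Y_C$. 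With the non-affine-$T$ issue repaired (or replaced by the paper's pointwise argument), this yields a complete proof.
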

\begin{proof}
The proof uses similar ideas as in the proof of Proposition~\ref{prop:geninj}.

Suppose that the map~\eqref{eq:injmap} is injective for all $T$ and $Z$ as above. Given a ring~$R$, put ${T=\Spec(R)}$ and $Z=\Ab^1$. Then the isomorphisms $\Hom(X\times T,Z)\simeq \OO(X_R)$ and ${\Hom(Y\times T,Z)\simeq \OO(Y_R)}$ imply that $X$ is absolutely dense in $Y$.

Let us prove another implication. Using a closed embedding $Z\subset \Ab^M$ for a set $M$, we easily reduce the lemma to the case $Z=\Ab^1$. Let $\alpha,\,\beta\colon Y\times T\to \Ab^1$ be two morphisms of ind-schemes such that $\alpha\ne\beta$. There is a ring $R$ and an $R$-point $x\in T(R)$ such that the morphisms of ind-schemes over $R$
$$
\alpha_R(-,x),\,\beta_R(-,x)\;:\; Y_R\lrto\Ab^1_R
$$
are note equal. Since $X$ is absolutely dense in $Y$, the images of $\alpha_R(-,x)$ and~$\beta_R(-,x)$ under the natural homomorphism of rings $\OO(Y_R)\to \OO(X_R)$ are also not equal. Therefore the compositions of the initial morphisms $\alpha$ and $\beta$ with the embedding $X\subset Y$ are not equal as well.
\end{proof}

\medskip

\begin{examp}\label{examp:gmdense}
\hspace{0cm}
\begin{itemize}
\item[(i)]
Note that the group functor $\Nil$ is represented by the ind-affine scheme
$$
\widehat{\Ab}^1\simeq\mbox{``$\varinjlim\limits_{n\in \N}$''}\Spec\big(\Z[x]/(x^n)\big)\,,
$$
which is an ind-closed subscheme of the affine space $\Ab^1\simeq \Spec\big(\z[x]\big)$ representing the group functor $\ga$.
We have that $\Nil$ is absolutely dense in $\ga$. Indeed, for any ring $R$, the natural homomorphism of $R$-algebras
$$
\OO\big((\ga)_R\big)\simeq R[x] \lrto \OO(\Nil_R)\simeq R[[x]]
$$
is injective. Similarly, for any natural number $N$, the ind-closed subscheme $\Nil^{\times N}$ of the affine space $(\ga)^{\times N}\simeq \Ab^N$ is absolutely dense.
\item[(ii)]
The group subfunctor $1+\Nil\subset\gm$ is represented by an ind-closed subscheme of~$\gm$ and is absolutely dense in it. Indeed, for any ring $R$, the natural homomorphism of $R$-algebras
$$
\OO\big((\gm)_R\big)\simeq R[y,y^{-1}]\simeq R[x,(1+x)^{-1}] \lrto \OO\big((1+\Nil)_R\big)\simeq R[[x]]
$$
is injective, where $y=1+x$. Similarly, for any natural number $N$, the ind-closed subscheme ${(1+\Nil)^{\times N}}$ is absolutely dense in the affine scheme $(\gm)^{\times N}$.
\end{itemize}
\end{examp}

The following proposition generalizes Example~\ref{examp:gmdense}(i).

\begin{prop}\label{lemma:thickindconedense}
Let $E$ be a strict ind-set and let $X\subset\Ab^E$ be an ind-closed subscheme of the corresponding ind-affine space. Suppose that $\widehat{\Ab}^{E_f}\subset X$ (see Definitions~\ref{defin:strictset} and~\ref{defin:hat}). Then $X$ is absolutely dense in $\Ab^E$.
\end{prop}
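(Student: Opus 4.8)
The plan is to reduce the statement about absolute density to the already-established Proposition~\ref{prop-injectiveseries} concerning thick ind-cones, or rather to its underlying mechanism: the embedding of an ind-closed subscheme $\widehat{\Ab}^{E_f}$ forces injectivity of restriction homomorphisms because regular functions on $\widehat{\Ab}^{E_f}$ are all of $R[[\overline E]]$, cf. formula~\eqref{eq:Ef}. Concretely, absolute density of $X$ in $\Ab^E$ means that for every ring $R$, the natural homomorphism $\OO((\Ab^E)_R)\to\OO(X_R)$ is injective. By Lemma~\ref{lemma:extescalthickcone}, the hypothesis $\widehat{\Ab}^{E_f}\subset X$ is preserved under base change $R\to R'$ (indeed $\widehat{\Ab}^{E_f}$, $X$, and $\Ab^E$ all extend scalars termwise), so it suffices to prove plain density of $X$ in $\Ab^E$ over an arbitrary base ring $R$, and then invoke the result with $R$ playing the role of the base throughout.

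**First I would** record the chain of ind-closed subschemes $\widehat{\Ab}^{E_f}\subset X\subset \Ab^E$ inside the ambient affine space $\Ab^{\overline E}$. This yields a commutative triangle of restriction homomorphisms on rings of regular functions:
$$
\OO(\Ab^E)\lrto\OO(X)\lrto\OO(\widehat{\Ab}^{E_f})\,.
$$
Since a composition being injective forces the first map to be injective, it is enough to show that the composite $\OO(\Ab^E)\to\OO(\widehat{\Ab}^{E_f})$ is injective. By formula~\eqref{eq:isomAE}, $\OO(\Ab^E)\simeq\varprojlim_{i\in I}\bigoplus_{d\geqslant 0}R^{\oplus\Sym^d(E_i)}$, which is a subalgebra of $R[[\overline E]]$ (Remark~\ref{rmk:exampAO}), and by formula~\eqref{eq:Ef}, $\OO(\widehat{\Ab}^{E_f})\simeq R[[\overline E]]$; moreover the composite homomorphism is precisely the inclusion of this subalgebra into $R[[\overline E]]$, hence visibly injective. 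That completes the argument over the fixed base $R$, and since $R$ was arbitrary, $X$ is absolutely dense in $\Ab^E$.

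**The main point to check carefully** — and the only place where something could go wrong — is the identification of the composite $\OO(\Ab^E)\to\OO(\widehat{\Ab}^{E_f})$ with the tautological inclusion $\OO(\Ab^E)\subset R[[\overline E]]$. This requires unwinding the definitions: an element of $\OO(\Ab^E)=\varprojlim_i\OO(\Ab^{E_i})$ is a compatible system of polynomials, and its restriction to $\Ab^{E_f}_{(d)}\subset\Ab^{E_i}$ (for a finite subset $E_f$ of $\overline E$) is just that polynomial read modulo $(\overline E)^d$; passing to the limit over $(E_f,d)$ reassembles exactly the power series in $R[[\overline E]]$ obtained by ``formally expanding'' the compatible system, which is the standard embedding. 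One should also note that since $X$ sits between $\widehat{\Ab}^{E_f}$ and $\Ab^E$ as ind-closed subschemes of $\Ab^{\overline E}$, the maps in the triangle are genuinely the restriction maps and the triangle commutes on the nose. I do not anticipate any real obstacle here; the proof is a short diagram chase resting on formulas~\eqref{eq:isomAE}, \eqref{eq:Ef}, Remark~\ref{rmk:exampAO}, and Lemma~\ref{lemma:extescalthickcone}.
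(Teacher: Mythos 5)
Your proposal is correct and follows essentially the same route as the paper: the chain $\widehat{\Ab}^{E_f}\subset X\subset \Ab^E$ in $\Ab^{\overline{E}}$ gives the composite $\OO(\Ab^E)\to\OO(X)\to\OO(\widehat{\Ab}^{E_f})\simeq R[[\overline{E}]]$, which is injective by Remark~\ref{rmk:exampAO}, forcing injectivity of the first map, and the same argument over an arbitrary base ring gives absolute density. The only cosmetic difference is your appeal to Lemma~\ref{lemma:extescalthickcone} for base change (that lemma concerns thickness and ind-cones, but the underlying termwise extension of scalars is exactly what the paper's closing sentence uses implicitly), which does not affect correctness.
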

\begin{proof}
We have embeddings of ind-closed subschemes in~$\Ab^{\overline{E}}$
$$
\widehat{\Ab}^{E_f}\subset X\subset \Ab^E\,,
$$
which induce homomorphisms of rings
$$
\OO(\Ab^E)\lrto \OO(X)\lrto \OO(\widehat{\Ab}^{E_f})\simeq \Z[[\overline{E}]]\,.
$$
Since the composition of these homomorphisms is injective (see Remark~\ref{rmk:exampAO}), the first homomorphism is injective as well, whence $X$ is dense in $\Ab^E$. A similar argument applies to the embedding $X_R\subset \Ab^E_R$ over an arbitrary ring~$R$.
\end{proof}

In particular, Proposition~\ref{lemma:thickindconedense} implies that any thick ind-closed subscheme in $\Ab^E$ (see Definition~\ref{defin:dset}) is absolutely dense in it.

\section{Geometric properties of some iterated loop groups}\label{section:reprfunc}

\subsection{Ordered product of strict ind-sets}

We define an operation on strict ind-sets which allows us to treat representability of iterated loop functors. Recall that given a strict ind-set $E$, we have the associated set $\overline{E}$ (see Definition~\ref{defin:strictset}).

\quash{We use upper indices in order to distinguish different strict ind-sets, while lower indices belong to the corresponding partially ordered sets. For example, we consider two strict ind-sets $E^1$ and $E^2$ .The corresponding partially ordered sets are denoted by $I^1$ and $I^2$, respectively.}

\begin{defin}\label{defin:starindind}
Given strict ind-sets $E_1=\mbox{``$\varinjlim\limits_{i_1\in I_1}$''}E_{1,i_1}$ and $E_2=\mbox{``$\varinjlim\limits_{i_2\in I_2}$''}E_{2,i_2}$, their {\it ordered product}~$E_1*E_2$ is a strict ind-set given by the formula
$$
E_1*E_2:=\mbox{``$\varinjlim\limits_{\lambda\in \Lambda}$''}(E_1*E_2)_{\lambda}\,,
$$
where
$$
\Lambda:=\big\{(\lambda_1,\lambda_2)\;\mid\;\lambda_1\colon\overline{E}_2\to I_1,\quad \lambda_2\in I_2\big\}
$$
and for $\lambda=(\lambda_1,\lambda_2)\in \Lambda$, we put
$$
(E_1*E_2)_{\lambda}:=\big\{(e_1,e_2)\in\overline{E}_1\times\overline{E}_2\;\mid\; e_2\in E_{2,\lambda_2},\, e_1\in E_{1,\lambda_1(e_2)}\big\}\,.
$$
A partial order on $\Lambda$ is defined by ${\lambda=(\lambda_1,\lambda_2)\leqslant\lambda'=(\lambda'_1,\lambda'_2)}$ if and only if $\lambda_1\leqslant\lambda'_1$ and $\lambda_2\leqslant\lambda'_2$, where the order on functions is defined point-wise, that is, $\lambda_1\leqslant \lambda_1'$ if and only if $\lambda_1(e_2)\leqslant \lambda_1'(e_2)$ for all $e_2\in \overline{E}_2$.
\end{defin}

Note that $\overline{E_1*E_2}=\overline{E}_1\times\overline{E}_2$, that is, the strict ind-set $E_1*E_2$ is formed by subsets of the set $\overline{E}_1\times \overline{E}_2$.

\begin{examp}\label{examp:iterprod}
\hspace{0cm}
\begin{itemize}
\item[(i)]
Let $M$ be a set and let $E=\mbox{``$\varinjlim\limits_{i\in I}$''}E_i$ be a strict ind-set. Then there are canonical isomorphisms of strict ind-sets
$$
M*E\simeq\mbox{``$\varinjlim\limits_{i\in I}$''}M\times E_i\,,
$$
$$
E*M\simeq\mbox{``$\varinjlim\limits_{\lambda\in \Lambda}$''}(E*M)_{\lambda}\,,
$$
where $\Lambda:=\big\{\lambda\colon M\to I\}$ and $(E*M)_{\lambda}:=\big\{(e,m)\in\overline{E}\times M\;\mid\; e\in E_{\lambda(m)}\big\}$.
\item[(ii)]
If $E_1$ and $E_2$ are just sets, then by item~(i), the ordered product $E_1*E_2$ coincides with the Cartesian product $E_1\times E_2$.
\item[(iii)]
By $U$ denote the one element set. Then by item~(i), for any strict ind-set $E$, there are canonical isomorphisms $U*E\simeq E*U\simeq E$.
\end{itemize}
\end{examp}

\medskip

Clearly, the ordered product is functorial with respect to isomorphisms of strict ind-sets. One checks directly that the ordered product $E_1*E_2$ is functorial with respect to morphisms of strict ind-sets for the first argument, that is, any morphism of strict ind-sets $E_1\to E'_1$ defines in a natural way a morphism of strict ind-sets $E_1*E_2\to E'_1*E_2$. Also, the ordered product $E*M$ between a strict ind-set $E$ and a set $M$ is functorial with respect to injective maps of sets, that is, an injective map of sets $M\to M'$ defines in a natural way a morphism of strict ind-sets $E*M\to E*M'$.

\begin{rmk}\label{rmk:orderedprod}
One easily checks that for strict ind-sets $E_1$ and $E_2$, there is a canonical isomorphism of strict ind-sets
$$
E_1*E_2\simeq\varinjlim_{i_2\in I_2}E_1*E_{2,i_2}\,,
$$
where the limit in the right hand side is taken in the category of strict ind-sets.
\end{rmk}

However, the ordered product is not naturally functorial with respect to arbitrary morphisms of strict ind-sets as the following example shows.

\begin{examp}
Let $I$ be a directed partially ordered set without a final element (in particular,~$I$ is infinite) and let $E=\mbox{``$\varinjlim\limits_{i\in I}$''}E_i$ be a strict ind-set such that $E_i\ne E_j$ for~$i\ne j$. As above, let $U$ be the one element set. The unique map of sets $I\to U$ induces a projection ${p\colon\overline{E}\times I\to\overline{E}\times U\simeq \overline{E}}$. We claim that the projection $p$ does not extend to a morphism of strict ind-sets from $E*I$ to $E*U\simeq E$. Indeed, let $\lambda$ be the identical function from~$I$ to itself. Then the subset $(E*I)_{\lambda}$ of $\overline{E}\times I$ (see Example~\ref{examp:iterprod}(i)) is equal to
$$
\big\{(e,i)\in\overline{E}\times I\;\mid\;e\in E_i\big\}\,.
$$
In particular, $(E*I)_{\lambda}$ maps surjectively to~$\overline{E}$ under the projection $p$, while no $E_i$ maps surjectively to~$\overline{E}$ for $i\in I$. Therefore there are no morphisms of ind-sets from $E*I$ to~$E$.
\end{examp}

\medskip

The ordered product defines a monoidal structure on the category of strict ind-sets with isomorphisms. In other words, given three strict ind-sets ${E_1=\mbox{``$\varinjlim\limits_{i_1\in I_1}$''}E_{1,i_1}}$, ${E_2=\mbox{``$\varinjlim\limits_{i_2\in I_2}$''}E_{2,i_2}}$, and~${E_3=\mbox{``$\varinjlim\limits_{i_3\in I_3}$''}E_{3,i_3}}$, there is a functorial isomorphism
$$
(E_1*E_2)*E_3\simeq E_1*(E_2*E_3)
$$
that satisfies the standard coherence conditions (the unit object is the one element set~$U$). This holds because both $(E_1*E_2)*E_3$ and $E_1*(E_2*E_3)$ are functorially isomorphic to the strict ind-set $\mbox{``$\varinjlim\limits_{\delta\in \Delta}$''}(E_1*E_2*E_3)_{\delta}$, where
$$
\Delta:=\big\{(\delta_1,\delta_2,\delta_3)\;\mid\;\delta_1\colon \overline{E}_2\times\overline{E}_3\to I_1,\quad \delta_2\colon\overline{E}_3\to I_2,\quad \delta_3\in I_3 \big\}\,,
$$
and for $\delta=(\delta_1,\delta_2,\delta_3)\in\Delta$, we put
$$
(E_1*E_2*E_3)_{\delta}:=\big\{(e_1,e_2,e_3)\in\overline{E}_1\times\overline{E}_2\times
\overline{E}_3\;\mid\; e_3\in E_{3,\delta_3},\, e_2\in E_{2,\delta_2(e_3)},\, e_1\in E_{1,\delta_1(e_2,e_3)}\big\}\,.
$$
In particular, ordered powers of strict ind-sets are well-defined.

\begin{examp}\label{examp:D}
Define the following strict ind-set:
$$
D:=\mbox{``$\varinjlim\limits_{m\in \z}$''}\,\Z_{\geqslant -m}\,,
$$
where $\z_{\geqslant -m}$ denotes the set of all integers $l$ such that $l\geqslant -m$. Note that $\overline{D}=\Z$. We use notation from Subsection~\ref{subsect:Laurent}. The $n$-th ordered power~$D^{*n}$ of~$D$ is canonically isomorphic to the strict ind-set $\mbox{``$\varinjlim\limits_{\lambda\in \Lambda_n}$''}\,\z^n_{\lambda}$, where a partial order on~$\Lambda_n$ is defined as follows:
$$
\lambda=(\lambda_1,\ldots,\lambda_n)\leqslant \lambda'=(\lambda'_1,\ldots,\lambda'_n)
$$
if and only if $\lambda_1\geqslant \lambda_1',\ldots, \lambda_n\geqslant \lambda_n'$ (note the reverse order). Also, we have that $\overline{D^{*n}}=\z^n$.
\end{examp}

\medskip

Note that the functor $E\mapsto \overline{E}$ from the category of strict ind-sets with isomorphisms to the category of sets is naturally monoidal, where we consider the ordered product of strict ind-sets and the Cartesian product of sets.

We stress that the ordered product is not naturally commutative. More precisely, the above monoidal functor $E\mapsto \overline{E}$ is not symmetric, that is, given strict ind-sets $E_1$ and $E_2$, the canonical bijection $\overline{E}_1\times\overline{E}_2\simeq \overline{E}_2\times\overline{E}_1$ does not necessarily extend to an isomorphism of strict ind-sets between $E_1*E_2$ and $E_2*E_1$. For instance, this happens for the case in Example~\ref{examp:iterprod}(i) with general $M$ and $E$ or for the case~${E_1=E_2=D}$. This non-commutativity motivates the term ``ordered''.

\subsection{Representability of loop functors}\label{subsect:represent}

Let us discuss representability of loop functors (see Definition~\ref{defin:loopgroup}). Below, we consider affine spaces over $\Z$.

The functor $\lo \Ab^1\simeq\lo\ga$ is represented by an ind-affine space~$\Ab^D$ (see Subsection~\ref{subsection:indaffspace} and Example~\ref{examp:D}). Indeed, given a ring $A$, an $A$-point on $\Ab^D$ is given by a collection~$(a_l)_{l\in\z}$, where $a_l\in A$ and there is $m\in\z$ such that $a_l=0$ for $l<-m$. The point~$(a_l)_{l\in\z}$ corresponds to the Laurent series~$\sum\limits_{l\in\z} a_l t^l\in\LL(A)$.

\begin{prop}\label{lemma-repraffine}
\hspace{0cm}
\begin{itemize}
\item[(i)]
Given a strict ind-set $E$ (see Definition~\ref{defin:strictset}), the functor $\lo\Ab^E$ is represented by the ind-affine space~$\Ab^{D*E}$ (see Definition~\ref{defin:starindind}).
\item[(ii)]
Let $E$ be a strict ind-set and $X\subset \Ab^E$ be an ind-closed subscheme. Then the functor~$\lo X$ is represented by an ind-closed subscheme of $\Ab^{D*E}$ (cf.~\cite[\S\,1a]{PR}).
\item[(iii)]
In notation of item~$(ii)$, if $X$ is thick (respectively is an ind-cone) in $\Ab^E$, then $\lo X$ is thick (respectively, is an ind-cone) in $\Ab^{D*E}$ (see Definitions~\ref{defin:indcone} and \ref{defin:dset}).
\end{itemize}
\end{prop}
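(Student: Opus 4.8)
The plan is to prove the three parts in the order stated, since (i) is used in (ii) and (ii) is used in (iii). The one general principle used repeatedly is that the loop functor commutes with filtered colimits of functors: for $F=\mbox{``}\varinjlim_i F_i\mbox{''}$ one has $LF(A)=F(A((t)))=\varinjlim_i F_i(A((t)))$, so $LF=\mbox{``}\varinjlim_i LF_i\mbox{''}$. Combined with Remark~\ref{rmk:orderedprod}, which identifies $D*E$ with $\varinjlim_i D*E_i$ in strict ind-sets, this reduces part~(i) to the case $E=M$ a single set. In that case one simply unwinds the definitions: by Example~\ref{examp:iterprod}(i), $\Ab^{D*M}$ is the ind-affine space with charts $\Ab^{(D*M)_\lambda}$ indexed by functions $\lambda\colon M\to\Z$, and an $A$-point of the $\lambda$-th chart is a tuple $(x_{l,m})_{l\geqslant-\lambda(m)}$, i.e. a family $(f_m)_{m\in M}$ with $f_m=\sum_{l\geqslant-\lambda(m)}x_{l,m}t^l\in t^{-\lambda(m)}A[[t]]$. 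Since every Laurent series over $A$ has order bounded below, an arbitrary family $(f_m)\in A((t))^M=L\Ab^M(A)$ lies in exactly one such chart for a suitable $\lambda$, and these identifications are visibly compatible with the ind-structures. The case $M=U$ recovers the representability of $L\ga$ by $\Ab^D$.

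For part~(ii), write $X\simeq\mbox{``}\varinjlim_i X_i\mbox{''}$ with each $X_i$ a closed subscheme of some $\Ab^{E_{j(i)}}$, say $X_i=\Spec\big(\Z[E_{j(i)}]/I_i\big)$. For fixed $i$, on each chart $\Ab^{(D*E_{j(i)})_\lambda}$ of $\Ab^{D*E_{j(i)}}=L\Ab^{E_{j(i)}}$ with coordinates $x_{l,e}$, the subfunctor $LX_i$ is cut out by the equations obtained from each $g\in I_i$ by substituting $f_e=\sum_l x_{l,e}t^l$ into $g$ and setting every coefficient of every power of $t$ equal to zero; each such equation is a polynomial in the $x_{l,e}$. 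Thus $LX_i$ is an ind-closed subscheme of $\Ab^{D*E_{j(i)}}$, and the latter is an ind-closed subscheme of $\Ab^{D*E}$ because $D*E_{j(i)}$ is a sub-strict-ind-set of $D*E$ (here again Remark~\ref{rmk:orderedprod}). The closed embeddings $X_i\hookrightarrow X_{i'}$ induce ind-closed embeddings $LX_i\hookrightarrow LX_{i'}$, and $LX=\mbox{``}\varinjlim_i LX_i\mbox{''}$; combining the two colimits exhibits $LX$ as an ind-closed subscheme of $\Ab^{D*E}$.

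For part~(iii), keep the description from~(ii). If each $X_i$ is a cone, i.e. $I_i$ is generated by homogeneous polynomials in the degree-$1$ variables, then each of the defining equations of $LX_i$ above is homogeneous in the $x_{l,e}$ (the coefficient of $t^k$ in $g(f_\bullet)$, for $g$ homogeneous of degree $d$, is homogeneous of degree $d$ in the $x_{l,e}$), while each chart $\Ab^{(D*E_{j(i)})_\lambda}$ is itself a linear coordinate subspace of $\Ab^{\overline{D*E}}$; hence each scheme in the natural ind-presentation of $LX_i$ is a cone in $\Ab^{\overline{D*E}}$, so $LX_i$ is an ind-cone, and therefore so is $LX=\mbox{``}\varinjlim_i LX_i\mbox{''}$. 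For thickness, apply $L$ to the inclusion $\widehat{\Ab}^E\subset X$: since $L$ carries closed embeddings of affine schemes to ind-closed embeddings, this yields an ind-closed embedding $L\widehat{\Ab}^E\subset LX$, and it remains to check $\widehat{\Ab}^{D*E}\subset L\widehat{\Ab}^E$. Reducing once more to a set $M$, one must see that the chart $\Ab^{(D*M)_\lambda}_{(d)}$ of $\widehat{\Ab}^{D*M}$ lies inside $L\big(\Ab^M_{(d)}\big)$: a point $(x_{l,m})$ with all $d$-fold products of coordinates vanishing produces a family $(f_m)$ all of whose $d$-fold products vanish in $A((t))$, since each coefficient of such a product is a sum of $d$-fold products of the $x_{l,m}$; and this is an ind-closed embedding because on the $\lambda$-th chart the defining ideal of $L\big(\Ab^M_{(d)}\big)$ is contained in $\big((D*M)_\lambda\big)^d$. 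This gives $\widehat{\Ab}^{D*E}\subset L\widehat{\Ab}^E\subset LX$, so $LX$ is thick.

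I expect the main obstacle to be not any isolated deep point but the careful bookkeeping of the nested ind-structures: at each stage one must verify that the evident pointwise inclusions of functors are genuinely inclusions of \emph{ind-closed} subschemes, i.e. closed embeddings into individual affine charts compatible with all transition maps. This is most delicate in~(ii), where $LX_i$ is itself a colimit over the charts of $\Ab^{D*E_{j(i)}}$ so one is taking a colimit of colimits, and in the thickness step, where the level $(\lambda,d)$ of $\widehat{\Ab}^{D*M}$ must be matched against the right chart of $L\widehat{\Ab}^M$ via the ideal containment above.
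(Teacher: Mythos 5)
Your proposal is correct and follows essentially the same route as the paper: reduce via commutation of $L$ with direct limits and Remark~\ref{rmk:orderedprod} to a single set $M$, identify $L\Ab^M$ chartwise with $\Ab^{D*M}$, and for (iii) use that coefficients of $d$-fold products of Laurent series are degree-$d$ homogeneous in the coefficients together with the containment $\Ab^{D*M}_{(d)}\subset L\Ab^{M}_{(d)}$. The only (purely presentational) difference is in (ii), where the paper realizes a closed subscheme $V\subset\Ab^M$ as the fiber over the origin of a map $\Ab^M\to\Ab^{M'}$ and uses that $L$ commutes with fibred products, while you write out the same ind-closed subscheme explicitly by the chartwise coefficient equations obtained from the defining ideal.
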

\begin{proof}
$(i)$ One easily checks that the assignment $F\mapsto \lo F$ commutes with direct limits of functors. Therefore, by Remark~\ref{rmk:orderedprod}, we can assume that $E$ is a set $M$ and it is enough to show an isomorphism $\lo\Ab^M\simeq \Ab^{D*M}$. Given a ring $A$, an element from the set  $\lo\Ab^M(A)$ is the same as a collection $\{f_m\}$, ${m\in M}$, of Laurent series from $\LL(A)$. Clearly, such a collection defines a function $\lambda\colon M\to\Z$ and an $A$-point on the affine space $\Ab^{(D*M)_{\lambda}}$ and vice versa.

$(ii)$ Similarly as in item~$(i)$, we can assume that $X$ is a closed subscheme $V$ of an affine space $\Ab^M$, where $M$ is a set. Note that $V$ is the fiber over the point $(0,0,0,\ldots)$ of a morphism $\Ab^M\to\Ab^{M'}$, where~$M'$ is a set of equations of $V$ in $\Ab^M$. Since the assignment $F\mapsto \lo F$ commutes with fibred products of functors, we see by item~$(i)$ that~$\lo V$ is the fiber over the point $(0,0,0,\ldots)$ of the corresponding morphism between ind-affine spaces $\Ab^{D*M}\to \Ab^{D*M'}$. Hence the functor~$\lo V$ is represented by an ind-closed subscheme of~$\Ab^{D*M}$.

$(iii)$ Note that for Laurent series $f_1,\ldots,f_d$, the coefficients of their product $f_1\cdot\ldots\cdot f_d$ are degree $d$ homogenous polynomials in coefficients of $f_i$'s. This implies item~$(iii)$ for being an ind-cone. Further, for any set $M$, we have an embedding $\Ab^{D*M}_{(d)}\subset \lo \Ab^M_{(d)}$ in $\Ab^{D*M}\simeq\lo \Ab^M$, which gives item~$(iii)$ for being thick.
\end{proof}

\begin{examp}\label{examp:loopga}
Applying Proposition~\ref{lemma-repraffine}$(i)$ iteratively to $\Ab^1$, we obtain that ${\lo^n\Ab^1\simeq\lo^n\ga}$ is represented by the ind-affine space $\Ab^{D^{*n}}$ (see Example~\ref{examp:D}). This can be also shown directly using the isomorphisms
$$
D^{*n}\simeq \mbox{``$\varinjlim\limits_{\lambda\in \Lambda_n}$''}\,\z^n_{\lambda}\,,\qquad\overline{D^{*n}}\simeq\z^n\,,
$$
and the description of iterated Laurent series in $\LL^n(A)=\lo^n\ga(A)$ for a ring $A$ in terms of the sets $\z^n_{\lambda}$, $\lambda\in\Lambda_n$, given in Subsection~\ref{subsect:Laurent}.
\end{examp}

\subsection{Representability of $\lo^n\gm$ and its special subgroups}

Let us show that the group functor $\lo^n\gm$, its group subfunctors $(\lo^n\gm)^0$ and $(\lo^n\gm)^{\sharp}$ (see Definition~\ref{defin:specialmult}), and the group functor $(\lo^n\ga)^{\sharp}$ (see Definition~\ref{defin:sharpaddfunctor}) are represented by ind-affine schemes, which are close to be ind-flat over $\z$.

\medskip

Let $\z^n_{>0}$ be the set of all elements $l\in\z^n$ such that $l> 0$ (see Subsection~\ref{subsection:decomp} for the lexicographical order on $\z^n$) and define a strict ind-set $D^{*n}_{>0}:=D^{*n}\cap \z^n_{>0}$ (see Example~\ref{examp:D} for the strict ind-set $D^{*n}$). Similarly, define the sets $\z^n_{\geqslant 0}$, $\z^n_{< 0}$, $\z^n_{\leqslant 0}$ and the strict ind-sets $D^{*n}_{\geqslant 0}$, $D^{*n}_{<0}$, $D^{*n}_{\leqslant 0}$. Clearly, we have decompositions
$$
\mbox{$\z^n=\z^n_{>0}\coprod\z^n_{\leqslant 0}\,,\qquad D^{*n}=D^{*n}_{>0}\coprod D^{*n}_{\leqslant 0}$}\,.
$$

\begin{prop}\label{lemma:sharprepr}
\hspace{0cm}
\begin{itemize}
\item[(i)]
The functor $\vv_{n,+}$ (see formula~\eqref{eq:v+}) is represented by the ind-affine space $\Ab^{D^{*n}_{>0}}$ and the functor $\vv_{n,-}$ (see formula~\eqref{eq:v-}) is represented by a thick ind-cone in the ind-affine space $\Ab^{D^{*n}_{<0}}$.
\item[(ii)]
The isomorphic functors $(\lo^n\gm)^{\sharp}\simeq (\lo^n\ga)^{\sharp}$ are represented by a thick ind-cone in the ind-affine space $\Ab^{D^{*n}}\simeq \lo^n\ga$ (see Example~\ref{examp:loopga}).
\item[(iii)]
The functors $(\lo^n\gm)^{0}$ and $\lo^n\gm$ are represented by ind-affine schemes of type~${X\times X'}$, where $X$ is a thick ind-cone in an ind-affine space over $\z$ and $X'$ is an ind-affine scheme which is ind-flat over $\z$.
\end{itemize}
\end{prop}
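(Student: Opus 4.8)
The plan is to read the representing ind-schemes off the explicit descriptions of these functors (Definition~\ref{defin:specialmult}, Lemma~\ref{expl-form}, Proposition~\ref{prop:analsharp}) and then feed the resulting pieces into the structural results on thick ind-cones, chiefly Lemma~\ref{lemma:decomposethickcone} together with the trivial remark that an ind-affine space $\Ab^E$ is a thick ind-cone in itself: it contains $\widehat{\Ab}^E$, and each coordinate subspace $\Ab^{E_i}\subset\Ab^{\overline{E}}$ is cut out by a homogeneous (indeed degree-one) ideal, hence is a cone. The only substantive input is a lemma asserting that a "nilpotency locus" inside an ind-affine space is a thick ind-cone; granting it, all three items become formal consequences of the decompositions $\lo^n\gm\simeq\uz^n\times\gm\times\vv_{n,+}\times\vv_{n,-}$ and $(\lo^n\gm)^0=\gm\times\vv_{n,+}\times\vv_{n,-}$ of Proposition~\ref{prop-decomp}, of the coordinate splitting of $(\lo^n\ga)^{\sharp}$ furnished by Proposition~\ref{prop:analsharp}, and of the elementary fact that $\gm$ and $\uz$ are ind-flat over $\z$.

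First I would prove the key lemma. For a subset $S\subseteq\z^n$ let $E_S$ be the strict ind-set obtained as the direct limit of the sets $\z^n_\lambda\cap S$, $\lambda\in\Lambda_n$ (so $E_{\z^n}=D^{*n}$, $E_{\z^n_{>0}}=D^{*n}_{>0}$, and similarly for $\z^n_{<0}$, $\z^n_{\leqslant 0}$); the claim is that for any $S$ the subfunctor $N_{E_S}\subseteq\Ab^{E_S}$ sending an algebra $A$ to the set of points $(a_l)$ for which $g:=\sum_l a_l t^l$ is nilpotent in $\LL^n(A)$ is represented by a thick ind-cone in $\Ab^{E_S}$. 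That it is an ind-cone: write $N_{E_S}$ as the direct limit over pairs $(\lambda,N)$ of the closed subschemes $\{g^N=0\}$ of the affine spaces $\Ab^{\z^n_\lambda\cap S}$; the equations defining $\{g^N=0\}$ are the coefficients of $g^N$, which are polynomials in the $a_l$ homogeneous of degree $N$, so each $\{g^N=0\}$ is a cone in $\Ab^{\overline{E_S}}$. For thickness one checks $\widehat{\Ab}^{E_S}\subseteq N_{E_S}$, and in fact $\Ab^{\z^n_\lambda\cap S}_{(d)}\subseteq\{g^d=0\}$: an $A$-point of the left-hand side is a family $(a_l)$ all of whose degree-$d$ monomials vanish, while each coefficient of $g^d$ is a finite sum of degree-$d$ monomials in the $a_l$, the finiteness being exactly the property of the sets $\{(l_1,\dots,l_d)\in(\z^n_\lambda)^d\mid l_1+\dots+l_d=m\}$ that makes $\LL^n(A)$ a ring; hence $g^d=0$.

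Given the key lemma, item~(i) is a matter of inspection: $\vv_{n,+}$ is represented by $\Ab^{D^{*n}_{>0}}$ since an $A$-point of $\Ab^{D^{*n}_{>0}}$ is precisely a series $\sum_{l>0}a_l t^l\in\LL^n(A)$, and $\vv_{n,-}$, transported by $f\mapsto f-1$, is exactly $N_{D^{*n}_{<0}}$, a thick ind-cone in $\Ab^{D^{*n}_{<0}}$. (Alternatively, one can obtain (i) by induction on $n$ from the isomorphisms $\vv_{n,\pm}\simeq\vv_{1,\pm}\times\lo\vv_{n-1,\pm}$ of~\eqref{eq:isomiter} together with Proposition~\ref{lemma-repraffine}(iii).) For item~(ii), Proposition~\ref{prop:analsharp} and the coordinate splitting $\z^n=\z^n_{>0}\sqcup\z^n_{\leqslant 0}$, i.e. $\Ab^{D^{*n}}\simeq\Ab^{D^{*n}_{>0}}\times\Ab^{D^{*n}_{\leqslant 0}}$, identify $(\lo^n\ga)^{\sharp}$ with $\Ab^{D^{*n}_{>0}}\times N_{D^{*n}_{\leqslant 0}}$; both factors are thick ind-cones (the first in itself, the second by the key lemma), so Lemma~\ref{lemma:decomposethickcone} makes the product a thick ind-cone in $\Ab^{D^{*n}}\simeq\lo^n\ga$ (Example~\ref{examp:loopga}), and the isomorphism $(\lo^n\gm)^{\sharp}\simeq(\lo^n\ga)^{\sharp}$ is the one from Subsection~\ref{sp-sub}. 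For item~(iii), Definition~\ref{defin:specialmult} and Proposition~\ref{prop-decomp} give $(\lo^n\gm)^0\simeq\gm\times(\vv_{n,+}\times\vv_{n,-})$ and $\lo^n\gm\simeq(\uz^n\times\gm)\times(\vv_{n,+}\times\vv_{n,-})$; by item~(i) and Lemma~\ref{lemma:decomposethickcone} the factor $X:=\vv_{n,+}\times\vv_{n,-}$ is a thick ind-cone in an ind-affine space over $\z$, while $X'=\gm$ (respectively $X'=\uz^n\times\gm$) is ind-flat over $\z$, since $\gm=\Spec\z[x,x^{-1}]$ is flat and $\uz$ is a direct limit of finite disjoint unions of copies of $\Spec\z$, so that $\uz^n\times\gm$ is a direct limit of finite disjoint unions of copies of $\gm$.

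The main obstacle is the key lemma, and within it two non-formal points: organizing the nilpotency locus as a bona fide directed system of honest closed subschemes, so that "$g^N=0$ for some $N$" is genuinely expressed ind-schematically, and the thickness verification, which rests on the finiteness of the product-coefficient sums in $\LL^n$. This finiteness is precisely what separates the "nonpositive" side, where the nilpotency locus must be carved out by homogeneous equations and is a genuine cone, from the "positive" side $\vv_{n,+}$, which is a free ind-affine space carrying no cone condition at all. The remaining ingredients, namely the combinatorial identifications of the strict ind-sets $D^{*n}_{>0}$, $D^{*n}_{<0}$, $D^{*n}_{\leqslant 0}$ and of their coproduct decompositions inside $D^{*n}$, as well as the ind-flatness bookkeeping for $\uz^n\times\gm$, are routine given the formalism developed in Subsections~\ref{subsect:represent} and~\ref{subsect:thickindcone}.
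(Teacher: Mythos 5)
Your proof is correct, and it reaches the statement by a genuinely different route from the paper's. The paper does not formulate your key lemma on the nilpotency loci $N_{E_S}$; instead it starts from the single observation that $\Nil$ is the thick ind-cone $\widehat{\Ab}^1\subset\Ab^1$ and applies Proposition~\ref{lemma-repraffine}$(iii)$ (the loop functor preserves ind-cones and thickness) iteratively to conclude that $L^n\Nil$ is a thick ind-cone in $\Ab^{D^{*n}}$; it then splits $L^n\Nil=L^n\Nil_{\geqslant 0}\times L^n\Nil_{<0}$ using Remark~\ref{rmk:nilpseries} and Lemma~\ref{lemma:decomposethickcone}, and identifies $\vv_{n,-}\simeq L^n\Nil_{<0}$. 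Your key lemma proves the same homogeneity and thickness facts by hand (the coefficients of $g^N$ are degree-$N$ homogeneous, and $\Ab^{M}_{(d)}\subset\{g^d=0\}$ by the finiteness of the coefficient sums); this is precisely the computation underlying the paper's proof of Proposition~\ref{lemma-repraffine}$(iii)$, so your argument is more self-contained and works uniformly for any coordinate subset $S\subset\z^n$, while the paper's is shorter because it reuses a result already in stock. The remaining differences are cosmetic: in item $(ii)$ the paper splits along $<0$, $0$, $>0$ (i.e.\ $\vv_{n,-}\times(1+\Nil)\times\vv_{n,+}$ against $\Ab^{D^{*n}_{<0}}\times\Ab^1\times\Ab^{D^{*n}_{>0}}$) where you split along $\leqslant 0$ and $>0$ via Proposition~\ref{prop:analsharp}; and in item $(iii)$ the paper takes $X=\vv_{n,-}$ and absorbs $\vv_{n,+}\simeq\Ab^{D^{*n}_{>0}}$ into the ind-flat factor $X'=\vv_{n,+}\times\gm\times\uz^n$, whereas you take $X=\vv_{n,+}\times\vv_{n,-}$ and $X'=\gm$ (resp.\ $\uz^n\times\gm$); both choices satisfy the statement, since your opening remark that an ind-affine space is a thick ind-cone in itself is valid.
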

\begin{proof}
$(i)$ The assertion for $\vv_{n,+}$ follows directly from its definition. Let us prove the assertion for $\vv_{n,-}$. Recall that the functor $\Nil$ is represented by the ind-closed subscheme of $\Ab^1$
$$
\widehat{\Ab}^1\simeq\mbox{``$\varinjlim\limits_{d\in \N}$''}\Spec\big(\Z[x]/(x^d)\big)\,,
$$
which is definitely a thick ind-cone in $\Ab^1$. By Proposition~\ref{lemma-repraffine}$(iii)$ and Example~\ref{examp:loopga}, the functor $L^n\Nil$ is represented by a thick ind-cone in the ind-affine space $\lo^n\Ab^1\simeq\Ab^{D^{*n}}$ over $\z$.

Further, define $\lo^n\Nil_{\geqslant 0}$ as the intersection $\lo^n\Nil\cap \Ab^{D^{*n}_{\geqslant 0}}$ between ind-closed subschemes of $\Ab^{D^{*n}}\simeq \lo^n\ga$, and similarly for $\lo^n\Nil_{< 0}$. It follows from Remark~\ref{rmk:nilpseries} that there is an equality
$$
\mbox{$\lo^n\Nil=\lo^n\Nil_{\geqslant 0}\times\lo^n\Nil_{< 0}$}
$$
between ind-closed subschemes of $\Ab^{D^{*n}}=\Ab^{D^*_{\geqslant 0}}\times \Ab^{D^*_{< 0}}$. Now the required result follows from Lemma~\ref{lemma:decomposethickcone} and the isomorphism of functors $\vv_{n,-}\simeq L^n\Nil_{<0}$.

$(ii)$ By Definitions~\ref{defin:specialmult} and~\ref{defin:sharpaddfunctor}, we have the decompositions
$$
(\lo^n\gm)^{\sharp}\simeq \vv_{n,-}\times (1+\Nil)\times\vv_{n,+}\,,\qquad (\lo^n\ga)^{\sharp}\simeq \vv_{n,-}\times \Nil\times\vv_{n,+}\,.
$$
Also, we have the decomposition ${\Ab^{D^{*n}}=\Ab^{D^{*n}_{<0}}\times \Ab^1\times\Ab^{D^{*n}_{>0}}}$, where $\Ab^1$ in the middle corresponds to the element $0\in\z^n=\overline{D^{*n}}$. Thus we conclude by item~$(i)$ and Lemma~\ref{lemma:decomposethickcone}.

$(iii)$ By Definition~\ref{defin:specialmult}, we have the decomposition ${(\lo^n\gm)^{0}\simeq \vv_{n,-}\times\vv_{n,+}\times \gm}$ and by Proposition~\ref{prop-decomp}, we have the decomposition ${\lo^n\gm\simeq \vv_{n,-}\times\vv_{n,+}\times \gm\times\uz^n}$. Clearly, the ind-affine space $\vv_{n,+}\simeq\Ab^{D^{*n}_{>0}}$ and the ind-affine scheme $\uz$ (see Subsection~\ref{subsect:indschemes}) are ind-flat over $\z$. Hence the ind-affine scheme $\vv_{n,+}\times\gm\times\uz^n$ is ind-flat over $\z$ as well, which finishes the proof by item~$(i)$.
\end{proof}

\begin{rmk}
\hspace{0cm}
\begin{itemize}
\item[(i)]
Propositions~\ref{lemma:sharprepr}$(i)$ and~\ref{prop-injectiveseries} imply that the ring of regular functions ${\OO(\vv_{n,-})\simeq \Ac(\vv_{n,-})}$ is flat over $\Z$, or, equivalently, is torsion free over $\z$, because it is a subring of the ring of power series over $\Z$. On the other hand, Proposition~\ref{lemma:sharprepr}$(i)$ fixes a presentation of $\vv_{n,-}$ as an ind-affine scheme $\mbox{``$\varinjlim\limits_{i\in I}$''}X_i$, but the rings $\OO(X_i)$ are not flat over $\Z$, see a discussion after~\cite[Lem.\,3.4]{OZ1} (this flatness fails already for the case $n=1$).
\item[(ii)]
It is an interesting open question whether the ind-affine scheme $\vv_{n,-}$ is ind-flat over~$\z$, that is, whether there is another presentation $\mbox{``$\varinjlim\limits_{j\in J}$''}Y_j$ of~$\vv_{n,-}$ as an ind-affine scheme such that all rings $\OO(Y_j)$, $j\in J$, are flat over $\Z$ (note that the answer is positive for the case $n=1$, see, e.g.,~\cite[\S\,2]{OZ1}). This is the lack of an answer to this question that motivated us to develop the theory of thick ind-cones.
\end{itemize}
\end{rmk}

\medskip

Now we obtain a series of applications combining Proposition~\ref{lemma:sharprepr} with various properties of thick-ind cones from Section~\ref{representab}. Let $N$ be a natural number. Consider the set $M:=\coprod\limits_{i=1}^N \z^n$ and the corresponding set of formal variables $x_{i,l}$, where $1\leqslant i\leqslant N$, $l\in\z^n$, which are coordinates on the affine space $\Ab^M$ (see Subsection~\ref{subsect:convalg}). For short, by $X$ denote the ind-affine scheme $\big((\lo^n\ga)^{\sharp}\big)^{\times N}$. Thus $X$ is an ind-closed subscheme of the affine space~$\Ab^M$ such that a collection of iterated Laurent series $(f_1,\ldots,f_N)$ cooresponds to the point $(a_{i,l})$, where $f_i=\sum\limits_{l\in\z^N}a_{i,l} t^l$ for~${1\leqslant i\leqslant N}$.

\begin{theor}\label{prop-key}
Let $R\subset S$ be an embedding of rings. Consider a power series (see Definition~\ref{defin:powerseries})
$$
\varphi\in S[[x_{i,l};\,1\leqslant i\leqslant N,\,l\in\z^N]]
$$
and a regular function $\phi\in \OO(X_R)$. Suppose that for any \mbox{$S$-algebra}~$A$ and any collection of Laurent polynomials (not just series) $f_1,\ldots,f_N$ with nilpotent coefficients in~$A$, there is an equality
$$
\varphi(f_1,\ldots,f_N)=\phi(f_1,\ldots,f_N)\in A\,.
$$
Then $\varphi$ has coefficients in $R$, converges algebraically on $X_R$ (see Definition~\ref{defin-conv}(iii)), and goes to $\phi$ under the canonical homomorphism $\Ac(X_R)\to \OO(X_R)$ (see formula~\eqref{eq:evalmap}).
\end{theor}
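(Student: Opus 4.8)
The plan is to derive the theorem as a formal consequence of Proposition~\ref{lemma:analytic}(ii), after exhibiting $X=\big((\lo^n\ga)^{\sharp}\big)^{\times N}$ as a thick ind-cone and passing to the extension of scalars from $\z$ to $R$.

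First I would pin down the ambient ind-affine space. By Proposition~\ref{lemma:sharprepr}(ii) the functor $(\lo^n\ga)^{\sharp}$ is represented by a thick ind-cone in the ind-affine space $\Ab^{D^{*n}}$, where $\overline{D^{*n}}=\z^n$. Applying Lemma~\ref{lemma:decomposethickcone} inductively $N-1$ times, I obtain that $X$ is a thick ind-cone in the ind-affine space $\Ab^{E}$ with $E:=\coprod_{i=1}^{N}D^{*n}$, so that $\overline{E}=\coprod_{i=1}^{N}\z^n=M$; in particular the ind-closed subscheme $X\subset\Ab^{M}$ of the statement is this very thick ind-cone viewed inside $\Ab^{\overline{E}}$, and the notion of algebraic convergence on $X$ appearing in the conclusion is the one attached to this embedding. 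By Lemma~\ref{lemma:extescalthickcone}, $X_R$ is then a thick ind-cone in $\Ab^{E}_R$ over $R$.

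Next I would reconcile the two hypotheses. Regarding an $S$-algebra $A$ as an $R$-algebra via $R\subset S$, an $A$-point $p\in X_R(A)\subset\Ab^{\overline{E}}(A)=A^{M}$ with only finitely many non-zero coordinates, all of them nilpotent, is the same datum as a tuple $(f_1,\ldots,f_N)$ of Laurent polynomials in $t_1,\ldots,t_n$ with coefficients in $\Nil(A)$. Indeed, such a Laurent polynomial is a finite sum of nilpotent elements of the ring $\LL^n(A)$, hence is itself nilpotent, hence lies in $(\lo^n\ga)^{\sharp}(A)$ by Proposition~\ref{prop:analsharp} (which records the inclusion $\Nil(\LL^n(A))\subseteq\LL^n(A)^{\sharp}$), so that the tuple does define an $A$-point of $X_R$; conversely every such point of $X_R$ manifestly arises this way. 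Therefore the equality $\varphi(f_1,\ldots,f_N)=\phi(f_1,\ldots,f_N)$ assumed in the theorem, taken over all such tuples, is exactly the equality $\varphi(p)=\phi(p)\in A$ required in Proposition~\ref{lemma:analytic}(ii). Feeding this into Proposition~\ref{lemma:analytic}(ii), applied to the thick ind-cone $X_R$ over $R$ and the embedding $R\subset S$, one concludes at once that $\varphi$ has coefficients in $R$, that $\varphi$ converges algebraically on $X_R$, and that $\varphi$ goes to $\phi$ under the canonical homomorphism $\Ac(X_R)\to\OO(X_R)$.

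I do not anticipate a genuine obstacle: the substance has already been isolated in Section~\ref{representab} (Proposition~\ref{prop-injectiveseries} and its corollary Proposition~\ref{lemma:analytic}) and in Proposition~\ref{lemma:sharprepr}, so the theorem is a repackaging of these. The only point needing care is the bookkeeping identification $\overline{E}=M$ together with the translation between ``$A$-point of $X_R$ with finitely many non-zero nilpotent coordinates'' and ``tuple of Laurent polynomials with nilpotent coefficients'', which rests precisely on the inclusion $\Nil(\LL^n(A))\subseteq(\lo^n\ga)^{\sharp}(A)$ coming from Proposition~\ref{prop:analsharp}.
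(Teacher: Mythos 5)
Your proposal is correct and follows exactly the paper's own proof: Proposition~\ref{lemma:sharprepr}$(ii)$ plus Lemma~\ref{lemma:decomposethickcone} exhibit $X$ as a thick ind-cone in $\Ab^E$ with $E=\coprod_{i=1}^N D^{*n}$, Lemma~\ref{lemma:extescalthickcone} passes to $X_R$, and Proposition~\ref{lemma:analytic}$(ii)$ concludes. The only difference is that you spell out the (correct, and in the paper implicit) identification of points of $X_R$ with finitely many non-zero nilpotent coordinates with tuples of Laurent polynomials having nilpotent coefficients.
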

\begin{proof}
Define a strict ind-set $E:=\coprod\limits_{i=1}^ND^{*n}$. By Proposition~\ref{lemma:sharprepr}$(ii)$ and Lemma~\ref{lemma:decomposethickcone}, $X$ is a thick ind-cone in $\Ab^E$. By Lemma~\ref{lemma:extescalthickcone}, $X_R$ is a thick ind-cone in $\Ab^E_R$. Thus we conclude by Proposition~\ref{lemma:analytic}$(ii)$.
\end{proof}

\begin{rmk}
Combining Propositions~\ref{lemma:sharprepr}$(ii)$ and~\ref{prop:generdenis}, one also obtains a higher-dimensional generalization of~\cite[Prop.\,3.7]{OZ1}.
\end{rmk}

\medskip

\begin{theor}\label{cor:uniq}
Let $R\subset S$ be an embedding of rings, $Y$ be an ind-flat ind-affine scheme over $R$, $Z$ be an affine scheme over $R$, and $N$ be a natural number. Then the natural map
$$
\Hom_R\big((L^n\gm)_R^{\times N}\times Y,Z\big)\lrto \Hom_{S}\big((L^n\gm)^{\times N}_{S}\times Y_S,Z_{S}\big)
$$
is injective.
\end{theor}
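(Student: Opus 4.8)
The plan is to reduce the claim to Proposition~\ref{prop:geninj}. By Proposition~\ref{lemma:sharprepr}(iii) we may write $\lo^n\gm\simeq X\times X'$, where $X$ is a thick ind-cone in an ind-affine space over $\z$ and $X'$ is an ind-affine scheme that is ind-flat over $\z$. Taking $N$-fold Cartesian powers and rearranging factors gives an isomorphism of functors
$$
(\lo^n\gm)^{\times N}\simeq X^{\times N}\times (X')^{\times N}\,,
$$
and, restricting to $R$-algebras and multiplying by $Y$,
$$
(\lo^n\gm)_R^{\times N}\times Y\simeq (X_R)^{\times N}\times\big((X'_R)^{\times N}\times Y\big)\,.
$$

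First I would verify that $(X_R)^{\times N}$ is a thick ind-cone in an ind-affine space over $R$: writing $X\subset\Ab^E$, an $N$-fold application of Lemma~\ref{lemma:decomposethickcone} shows that $X^{\times N}$ is a thick ind-cone in $\Ab^{\coprod_{i=1}^{N}E}$ over $\z$, and then Lemma~\ref{lemma:extescalthickcone} shows that its base change $(X^{\times N})_R=(X_R)^{\times N}$ is a thick ind-cone over $R$. Next I would verify that $(X'_R)^{\times N}\times Y$ is an ind-flat ind-affine scheme over $R$: since $X'$ is ind-flat over $\z$, the base change $X'_R$ is ind-flat over $R$; a finite Cartesian product of ind-flat ind-affine schemes over $R$ is again one (present each factor as a filtered limit of flat affine schemes over $R$, form the product of the index sets, and use that a tensor product over $R$ of flat $R$-modules is flat); finally the extra factor $Y$ is ind-flat ind-affine over $R$ by hypothesis, so the whole product is.

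With these two observations in hand, the map in the statement is precisely the map of Proposition~\ref{prop:geninj} applied to the thick ind-cone $(X_R)^{\times N}$ (in the role of $X$), the ind-flat ind-affine scheme $(X'_R)^{\times N}\times Y$ (in the role of $Y$), the affine scheme $Z$, and the embedding $R\subset S$; hence it is injective. The only non-formal content is the pair of stability checks in the previous paragraph, namely closure of thick ind-cones and of ind-flat ind-affine schemes under finite products and under base change, and these are routine given Lemmas~\ref{lemma:decomposethickcone} and~\ref{lemma:extescalthickcone} together with the standard behaviour of flatness under tensor products. I do not expect a genuine obstacle here.
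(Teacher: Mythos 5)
Your proposal is correct and follows essentially the same route as the paper: decompose $(\lo^n\gm)^{\times N}$ via Proposition~\ref{lemma:sharprepr}$(iii)$ and Lemma~\ref{lemma:decomposethickcone} into a thick ind-cone times an ind-flat part, base change to $R$ via Lemma~\ref{lemma:extescalthickcone}, absorb $Y$ into the ind-flat factor, and conclude by Proposition~\ref{prop:geninj}. Your explicit stability checks for ind-flatness under products and base change are exactly what the paper leaves implicit in the phrase ``a similar ind-affine scheme over $R$''.
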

\begin{proof}
By Proposition~\ref{lemma:sharprepr}$(iii)$ and Lemma~\ref{lemma:decomposethickcone}, the functor $(L^n\gm)^{\times N}$ is represented by an ind-affine scheme of type~${X\times X'}$, where $X$ is a thick ind-cone in an ind-affine space over~$\z$ and $X'$ is an ind-flat ind-affine scheme over $\z$. By Lemma~\ref{lemma:extescalthickcone}, the functor $(L^n\gm)^{\times N}_R$ is represented by a similar ind-affine scheme over $R$. Thus we conclude by Proposition~\ref{prop:geninj}.
\end{proof}

\medskip

Here are further geometric properties of $\lo^n\ga$ and the special subgroups of $\lo^n\gm$.

\begin{prop}\label{prop:densegm}
Let $N$ be a natural number.
\begin{itemize}
\item[(i)]
The ind-closed subscheme ${\big((\lo^n\ga)^{\sharp}\big)^{\times N}\subset
(\lo^n\ga)^{\times N}}$ is absolutely dense.
\item[(ii)]
The ind-scheme $\big((\lo^n\ga)^{\sharp}\big)^{\times N}$ is absolutely connected over $\z$.
\item[(iii)]
The ind-closed subscheme
${\big((\lo^n\gm)^{\sharp}\big)^{\times N}\subset\big((\lo^n\gm)^0\big)^{\times N}}$ is absolutely dense.
\item[(iv)]
\label{lemma-geomgroups}
The ind-schemes $\big((\lo^n\gm)^{\sharp}\big)^{\times N}$ and $\big((\lo^n\gm)^{0}\big)^{\times N}$ are absolutely connected over $\Z$.
\end{itemize}
\end{prop}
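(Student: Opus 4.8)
The plan is to reduce all four assertions to the structural description of these ind-schemes furnished by Proposition~\ref{lemma:sharprepr} and then feed it into the density/connectedness toolbox of Sections~\ref{representab} and~\ref{section:reprfunc}. The guiding observation is that $(\lo^n\gm)^{\sharp}\simeq(\lo^n\ga)^{\sharp}$ is, as an ind-scheme, a product of the thick ind-cones $\vv_{n,-}$, $\Nil$ (i.e. $\widehat{\Ab}^1$), and $\vv_{n,+}$, while $(\lo^n\gm)^{0}$ is obtained from it by replacing the $1+\Nil$ factor with $\gm$; since $\gm$ is ind-flat but not an ind-cone, the $0$-case will need a separate argument.

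For item~(i): by Proposition~\ref{lemma:sharprepr}(ii), Lemma~\ref{lemma:decomposethickcone} and Example~\ref{examp:loopga}, the ind-scheme $\big((\lo^n\ga)^{\sharp}\big)^{\times N}$ is a thick ind-cone in the ind-affine space $\big(\Ab^{D^{*n}}\big)^{\times N}\simeq\Ab^{E}$, where $E:=\coprod_{i=1}^N D^{*n}$, and the latter ind-affine space represents $(\lo^n\ga)^{\times N}$. By the remark following Proposition~\ref{lemma:thickindconedense}, every thick ind-closed subscheme of an ind-affine space is absolutely dense, which is (i). For item~(ii): the same thick ind-cone is in particular an ind-cone, so Lemma~\ref{lemma:coneconn} gives that $\big((\lo^n\ga)^{\sharp}\big)^{\times N}$ is absolutely connected over $\z$.

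For item~(iii): I would use the decompositions $(\lo^n\gm)^{0}\simeq\gm\times\vv_{n,+}\times\vv_{n,-}$ and $(\lo^n\gm)^{\sharp}\simeq(1+\Nil)\times\vv_{n,+}\times\vv_{n,-}$ of Definition~\ref{defin:specialmult}. Reordering the Cartesian factors, the embedding in question becomes the product of $(1+\Nil)^{\times N}\subset\gm^{\times N}$ with the identity of the ind-affine scheme $W:=(\vv_{n,+}\times\vv_{n,-})^{\times N}$. Absolute density of $(1+\Nil)^{\times N}$ in $\gm^{\times N}$ is Example~\ref{examp:gmdense}(ii). To pass to the product with $W$, I would apply the criterion of Lemma~\ref{lemma:equivdense} to $(1+\Nil)^{\times N}\subset\gm^{\times N}$ with the ind-scheme $W\times T$ in place of $T$: for every ind-scheme $T$ and affine scheme $Z$ the map $\Hom(\gm^{\times N}\times(W\times T),Z)\to\Hom((1+\Nil)^{\times N}\times(W\times T),Z)$ is injective, which is exactly the criterion of Lemma~\ref{lemma:equivdense} for $(1+\Nil)^{\times N}\times W\subset\gm^{\times N}\times W$; so the latter, hence $\big((\lo^n\gm)^{\sharp}\big)^{\times N}\subset\big((\lo^n\gm)^{0}\big)^{\times N}$, is absolutely dense.

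For item~(iv): the ind-scheme $\big((\lo^n\gm)^{\sharp}\big)^{\times N}$ is isomorphic to $\big((\lo^n\ga)^{\sharp}\big)^{\times N}$ via $f\mapsto f-1$, so it is absolutely connected over $\z$ by item~(ii). For $\big((\lo^n\gm)^{0}\big)^{\times N}$ I would use that absolute connectedness propagates along absolutely dense ind-closed embeddings: if $X\subset Y$ is absolutely dense and $X$ is absolutely connected over $R$, then for every ring $S$ the restriction $\OO(Y_S)\to\OO(X_S)$ is injective, so an idempotent of $\OO(Y_S)$ is determined by its restriction to $\OO(X_S)$, which is idempotent and hence induced from an idempotent of $S$, forcing the idempotent of $\OO(Y_S)$ to be induced from $S$ as well. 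Combining this with item~(iii) yields absolute connectedness of $\big((\lo^n\gm)^{0}\big)^{\times N}$ over $\z$. The only mildly delicate point I anticipate is item~(iii): matching the factor decompositions correctly and extracting from Lemma~\ref{lemma:equivdense} the principle that an absolutely dense embedding remains absolutely dense after taking the product with a fixed ind-affine scheme; the rest is a direct invocation of the results cited above.
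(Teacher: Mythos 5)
Your proposal is correct and follows essentially the same route as the paper: thickness/ind-cone structure from Proposition~\ref{lemma:sharprepr} together with Proposition~\ref{lemma:thickindconedense} and Lemma~\ref{lemma:coneconn} for items~(i),(ii), the decomposition of Definition~\ref{defin:specialmult} with Example~\ref{examp:gmdense}(ii) and Lemma~\ref{lemma:equivdense} for item~(iii), and the reduction of item~(iv) to (ii) and (iii). Your explicit verification that absolute connectedness passes from an absolutely dense ind-closed subscheme to the ambient ind-scheme is exactly the step the paper leaves implicit, and it is argued correctly.
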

\begin{proof}
$(i)$ The ind-closed subscheme $(\lo^n\ga)^{\sharp}$ in $\lo^n\ga\simeq\Ab^{D^{*n}}$ contains the ind-closed subscheme $\Ab^{(D^{*n})_f}$ (see Definition~\ref{defin:strictset}), because $\Ab^{(D^{*n})_f}$ corresponds to Laurent polynomials with nilpotent coefficients. Thus by Proposition~\ref{lemma:thickindconedense}, $(\lo^n\ga)^{\sharp}$ is absolutely dense in $(\lo^n\ga)$. The case of an arbitrary $N$ is treated similarly.

$(ii)$ Clearly, the ind-closed subscheme $(\lo^n\ga)^{\sharp}$ in the ind-affine space $\lo^n\ga\simeq\Ab^{D^{*n}}$ is an ind-cone (see a discussion after Definition~\ref{defin:indcone} or use Proposition~\ref{lemma:sharprepr}$(ii)$). Thus we conclude by Lemma~\ref{lemma:coneconn}.

$(iii)$ Combine Definition~\ref{defin:specialmult} with Example~\ref{examp:gmdense}(ii) and Lemma~\ref{lemma:equivdense}.

$(iv)$ By item~$(iii)$, it is enough to prove that $\big((\lo^n\gm)^{\sharp}\big)^{\times N}$ is absolutely connected over~$\z$. This follows from item~$(ii)$ and the isomorphism of ind-schemes ${(\lo^n\gm)^{\sharp}\simeq (\lo^n\ga)^{\sharp}}$.
\end{proof}

\begin{rmk}
All notions from Subsections~\ref{subsect:connect} and~\ref{subsection:density} make sense when ind-schemes are replaced just by functors (see Section~\ref{sect:notation}). Moreover all statements from Subsections~\ref{subsect:connect} and~\ref{subsection:density} still hold in this generality. Thus, in fact, Proposition~\ref{prop:densegm} is based neither on representability of iterated loop groups, nor on the theory of thick ind-cones.
\end{rmk}

\subsection{Tangent space to $\lo^n\gm$}

Let us discuss the tangent space at the neutral element of the group ind-scheme~$\lo^n\gm$. In what follows we will also use tangent spaces to group functors, so we give a more general definition. In the sequel, let $\varepsilon$ be a formal variable that satisfies~$\varepsilon^2=0$.

\begin{defin}\label{defin:tangfunctor}
Given a group functor $F$ over a ring $R$ (see Section~\ref{sect:notation}), a {\it tangent space}~$TF$ to~$F$ is a group functor over $R$ defined by the formula
$$
TF(A):=\Ker\big(F\big(A[\varepsilon]\big)\to F(A)\big)\,,
$$
where $A$ is an $R$-algebra.
\end{defin}

In particular, given a morphism of group functors $\Phi\colon F'\to F$ over $R$, we have its {\it differential} $T\Phi\colon TF'\to TF$, which is also a morphism of group functors over $R$.

\begin{examp}\label{exam:tgm}
\hspace{0cm}
\begin{itemize}
\item[(i)]
There is an isomorphism of group functors ${T\ga\stackrel{\sim}\lrto\ga}$, ${a\,\varepsilon \mapsto a}$, where $a$ is an element in a ring $A$.
\item[(ii)]
There is an isomorphism of group functors ${T\gm\stackrel{\sim}\lrto\ga}$, ${1+a\,\varepsilon \mapsto a}$, where $a$ is an element in a ring $A$.
\end{itemize}
\end{examp}

\medskip

\begin{lemma}\label{lemma:TL}
\hspace{0cm}
\begin{itemize}
\item[(i)]
For any group functor $F$, there is a canonical isomorphism of group functors
$TLF\simeq LTF$.
\item[(ii)]
We have isomorphisms of group functors
$$
T(\lo^n\gm)^{\sharp}\simeq T(\lo^n\gm)^0\simeq T\lo^n\gm\simeq \lo^n\ga\,.
$$
\item[(iii)]
We have isomorphisms of group functors
$$
T(\lo^n\ga)^{\sharp}\simeq T(\lo^n\ga)\simeq \lo^n\ga\,.
$$
\end{itemize}
\end{lemma}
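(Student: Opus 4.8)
The plan is to reduce everything to a single base-change computation along $A\mapsto A[\varepsilon]$, combined with the product decompositions of the iterated loop groups from Section~\ref{multiplicative}.

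For part~(i), the only input is the canonical, functorial ring isomorphism $A[\varepsilon]((t))\simeq A((t))[\varepsilon]$: since $A[\varepsilon]=A\oplus A\varepsilon$ as an $A$-module, formation of Laurent series commutes with this finite direct sum, and the two ring structures agree because $\varepsilon^2=0$ on both sides. Granting this,
$$
TLF(A)=\Ker\!\big(F(A[\varepsilon]((t)))\to F(A((t)))\big)=\Ker\!\big(F(A((t))[\varepsilon])\to F(A((t)))\big)=TF(A((t)))=LTF(A),
$$
naturally in the $R$-algebra $A$ and compatibly with the group laws. Iterating gives $T\lo^nF\simeq\lo^nTF$ for all $n$; by Example~\ref{exam:tgm} this already yields the outer isomorphisms $T\lo^n\gm\simeq\lo^n\ga$ and $T\lo^n\ga\simeq\lo^n\ga$ occurring in parts~(ii) and~(iii).

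For part~(ii), it remains to compare $T(\lo^n\gm)^{\sharp}$, $T(\lo^n\gm)^0$ and $T\lo^n\gm$. Since $\Ker$, hence $T$, takes finite products of group functors to products, the decompositions $\lo^n\gm\simeq\uz^n\times(\lo^n\gm)^0$, $(\lo^n\gm)^0=\gm\times\vv_{n,+}\times\vv_{n,-}$, and $(\lo^n\gm)^{\sharp}=(1+\Nil)\times\vv_{n,+}\times\vv_{n,-}$ of Proposition~\ref{prop-decomp} and Definition~\ref{defin:specialmult} reduce the claim to two facts. First, $T\uz^n=0$: the nilpotent thickening $A[\varepsilon]\to A$ induces a homeomorphism on spectra, so locally constant $\z^n$-valued functions on $\Spec(A[\varepsilon])$ and $\Spec(A)$ coincide, i.e. $\uz^n(A[\varepsilon])\to\uz^n(A)$ is bijective. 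Second, the inclusion $1+\Nil\hookrightarrow\gm$ induces an isomorphism on tangent spaces: from $\Nil(A[\varepsilon])=\Nil(A)\oplus A\varepsilon$ one gets $\Ker\!\big((1+\Nil)(A[\varepsilon])\to(1+\Nil)(A)\big)=\{1+a\varepsilon:a\in A\}\simeq\ga(A)$, which is exactly $T\gm(A)$. Taking products of tangent spaces then identifies $T(\lo^n\gm)^{\sharp}$ with $T(\lo^n\gm)^0$, and $T(\lo^n\gm)^0$ with $T\lo^n\gm$.

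For part~(iii), it remains to see $T(\lo^n\ga)^{\sharp}\simeq T\lo^n\ga$. By the iterated form $\LL^n(A[\varepsilon])=\LL^n(A)\oplus\LL^n(A)\varepsilon$ of the isomorphism of part~(i), every element of $\Ker\!\big(\LL^n(A[\varepsilon])\to\LL^n(A)\big)$ has the form $g\varepsilon$ with $g\in\LL^n(A)$, and since $(g\varepsilon)^2=0$ its sequence of powers is eventually zero, hence tends to zero, so $g\varepsilon$ automatically lies in $\LL^n(A[\varepsilon])^{\sharp}$. Thus $T(\lo^n\ga)^{\sharp}(A)=\LL^n(A)\varepsilon=T\lo^n\ga(A)$, and the inclusion induces the desired isomorphism. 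The only point requiring genuine care — the mild obstacle of the proof — is precisely this observation that the conditions cutting out the special subgroups ($\sharp$ and $1+\Nil$) become vacuous on $\varepsilon$-multiples because their squares vanish, while dually $\uz^n$ has trivial tangent space since nilpotent thickenings do not alter the underlying space; everything else is bookkeeping with kernels and finite products.
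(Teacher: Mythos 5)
Your proof is correct. Part~(i) is the same argument as the paper's: the functorial isomorphism $A[\varepsilon]((t))\simeq A((t))[\varepsilon]$ (cf.\ Lemma~\ref{lemma:top}$(iii)$) plus the kernel computation, iterated to get $T\lo^nF\simeq\lo^nTF$. For parts~(ii) and~(iii) you take a somewhat different route: you feed the product decompositions of Proposition~\ref{prop-decomp} and Definition~\ref{defin:specialmult} through the (obvious) compatibility of $T$ with finite products, and then compare factor by factor, using $T\uz^n=0$ (nilpotent thickenings do not change $\Spec$) and $T(1+\Nil)\stackrel{\sim}\to T\gm$. The paper instead argues directly: an element $f\in T\lo^n\gm(A)$ is of the form $1+g\,\varepsilon$, so all its coefficients are nilpotent except the constant term, which lies in $1+A\,\varepsilon$, and Lemma~\ref{expl-form}$(iii)$ then puts $f$ into the smallest subgroup $(\lo^n\gm)^{\sharp}(A[\varepsilon])$ at once, giving both identifications simultaneously; the third isomorphism again comes from~(i) and $T\gm\simeq\ga$, and~(iii) is declared analogous. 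The underlying mechanism is the same in both treatments — the nilpotency conditions defining the special subgroups are vacuous on $\varepsilon$-multiples, exactly the point you flag — but your version trades the single appeal to the explicit description in Lemma~\ref{expl-form}$(iii)$ for a decomposition-based bookkeeping that makes explicit the auxiliary facts $T\uz^n=0$ and $T(1+\Nil)\simeq T\gm$, while the paper's version is slightly more economical and does not need the decomposition to be respected by $T$. Both yield the isomorphisms induced by the natural inclusions, which is what later applications (e.g.\ Proposition~\ref{lemma:difflngmga}) require.
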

\begin{proof}
Let $A$ be a ring.

$(i)$ We have an isomorphism of rings $A[\varepsilon]((t))\simeq A((t))[\varepsilon]$ (cf. Lemma~\ref{lemma:top}$(iii)$). This implies the following sequence of isomorphisms of groups
$$
TLF(A)\simeq \Ker\big(LF(A[\varepsilon])\to LF(A)\big)\simeq \Ker\big(F\big(A[\varepsilon]((t))\big)\to F\big(A((t))\big)\big)\simeq
$$
$$
\simeq \Ker\big(F\big(A((t))[\varepsilon]\big)\to F\big(A((t))\big)\big)\simeq LTF(A)\,.
$$

$(ii)$ Let $f\in T\lo^n\gm(A)$. Explicitly, we have that ${f\in \Ker\big(\lo^n\gm(A[\varepsilon])\to \lo^n\gm(A)\big)}$. Therefore all coefficients of $f$ are nilpotent except for the constant term, which belongs to $1+A\,\varepsilon\subset A[\varepsilon]^*$. Lemma~\ref{expl-form}$(iii)$ implies that $f\in (\lo^n\gm)^{\sharp}\big(A[\varepsilon]\big)$. This proves the first and the second isomorphisms in item~$(ii)$. To show the third isomorphism, use item~$(i)$ and the isomorphism of group functors $T\gm\simeq\ga$ (see Example~\ref{exam:tgm}(ii)).

$(iii)$ The proof is similar to item~$(ii)$.
\end{proof}

\begin{rmk}
The first and the second isomorphisms in Lemma~\ref{lemma:TL}$(ii)$ correspond to the facts that $(\lo^n\gm)^{\sharp}$ is absolutely dense in $(\lo^n\gm)^{0}$ (see Proposition~\ref{prop:densegm}$(iii)$) and that the quotient $\lo^n\gm/(\lo^n\gm)^{0}\simeq\uz^n$ is discrete (see formula~\eqref{eq:decommult} in Subsection~\ref{sp-sub}), respectively. The first isomorphism in Lemma~\ref{lemma:TL}$(iii)$ corresponds to the fact that~$(\lo^n\ga)^{\sharp}$ is absolutely dense in~$\lo^n\ga$ (see Proposition~\ref{prop:densegm}$(i)$).
\end{rmk}

\subsection{Characters of $(\lo^n\gm)^0$}

\begin{defin}\label{defin:char}
Given a group functor $F$ over a ring $R$, the group of {\it characters} of $F$ is defined by the formula ${{\rm X}(F):=\Hom^{gr}_R\big(F,(\gm)_R\big)}$ and the group of {\it linear functionals} on $F$ is defined by the formula ${{\rm X}^+(F):=\Hom^{gr}_R\big(F,(\ga)_R\big)}$.
\end{defin}

Recall the following well-known facts.

\begin{lemma}\label{lemma:diffgagm}
\hspace{0cm}
\begin{itemize}
\item[(i)]
For any ring $R$ of zero characteristic, there is an isomorphism of groups ${R\stackrel{\sim}\lrto{\rm X}^{+}\big((\ga)_R\big)}$, ${r\longmapsto \big(a\mapsto ar\big)}$, where $a$ is an element in an \mbox{$R$-algebra}~$A$.
\item[(ii)]
For any ring $R$, there is an isomorphism of groups
${\uz(R)\stackrel{\sim}\lrto{\rm X}\big((\gm)_R\big)}$, ${\underline{i}\longmapsto\big(a\mapsto a^{\underline{i}}\big)}$,
where $a$ is an invertible element in an $R$-algebra $A$.
\item[(iii)]
For any $\Q$-algebra $R$, there is an isomorphism of groups
${\Nil(R)\stackrel{\sim}\lrto{\rm X}\big((\ga)_R\big)}$, ${r\longmapsto\big(a\mapsto \exp(ar)\big)}$,
where $a$ is an element in an $R$-algebra $A$. The inverse isomorphism sends a character $\chi\colon (\ga)_R\to(\gm)_R$ to its differential (see item~$(i)$)
$$
T\chi\in\Hom^{gr}_R\big(T(\ga)_R,T(\gm)_R\big)\simeq \Hom^{gr}_R\big((\ga)_R,(\ga)_R\big)\simeq R\,,
$$
which is necessarily a nilpotent element in $R$.
\end{itemize}
\end{lemma}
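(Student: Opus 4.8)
The plan is to handle all three items by the same device. Since $\ga$ and $\gm$ are affine schemes, the Yoneda lemma identifies a morphism of functors over $R$ between any two of them with an element of a coordinate ring: one has $\Hom_R\big((\ga)_R,(\ga)_R\big)=R[x]$, $\Hom_R\big((\ga)_R,(\gm)_R\big)=R[x]^{*}$ and $\Hom_R\big((\gm)_R,(\gm)_R\big)=R[x,x^{-1}]^{*}$, the element being the image of the standard coordinate. Being a homomorphism of group functors translates into a functional equation coming from comultiplication: $P(x+y)=P(x)+P(y)$ and $P(0)=0$ in the first case, $f(x+y)=f(x)f(y)$ and $f(0)=1$ in the second, $f(xy)=f(x)f(y)$ and $f(1)=1$ in the third. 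In each case the group operation on $\Hom^{gr}$ corresponds to adding, resp.\ multiplying, these elements, so the maps written in the statement are obviously group homomorphisms and are injective (evaluate at the universal point). Hence the content of the lemma is surjectivity, i.e.\ solving the functional equation, which I would do case by case.

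For (i) I would expand $P=\sum_{i\ge 0}a_ix^i$: then $a_0=0$ from $P(0)=0$, and comparing the coefficient of $x^{i-1}y$ in $P(x+y)=P(x)+P(y)$ gives $i\,a_i=0$ for all $i\ge 2$, which forces $a_i=0$ for $i\ge 2$ because $R$ has characteristic zero; so $P=a_1x$, which is the claimed form. For (ii) I would write $f=\sum_n c_nx^n$ (a finite sum) and compare coefficients of the monomials $x^ny^m$ on both sides of $f(xy)=f(x)f(y)$; this shows $c_nc_m=0$ for $n\ne m$ and $c_n^2=c_n$, while $f(1)=1$ gives $\sum_n c_n=1$, so the $c_n$ form a finite complete orthogonal system of idempotents. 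Such a system is precisely the datum of a finite decomposition $R\simeq\prod_k R_k$ together with a tuple of integers, i.e.\ an element $\underline i\in\uz(R)$ (see the discussion after Definition~\ref{defin:uz}), and under this translation the unit $f$ becomes the character $a\mapsto a^{\underline i}$; this yields surjectivity.

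For (iii), given a character $f(x)\in R[x]^{*}$ over a $\Q$-algebra $R$ with $f(0)=1$ and $f(x+y)=f(x)f(y)$, I would set $r:=f'(0)$, differentiate the functional equation in $y$ and put $y=0$ to get $f'(x)=r\,f(x)$, and conclude $f(x)=\exp(rx)$ from $f(0)=1$ by uniqueness over a $\Q$-algebra (compare Taylor coefficients, or observe that $\exp(rx)\,f(-x)$ has vanishing derivative and value $1$ at the origin). Since $f$ is a polynomial, the coefficients $r^{m}/m!$ vanish for $m\gg 0$, so $r\in\Nil(R)$; conversely, for $r\in\Nil(R)$ the series $\exp(rx)$ is a polynomial, a unit of $R[x]$, and satisfies the functional equation by the formal identity $\exp(u+v)=\exp(u)\exp(v)$ (cf.\ Lemma~\ref{lemma:convdom}). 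This gives the asserted isomorphism. For the statement that the inverse is the differential, I would note that $f(x)=1+rx+\cdots$ yields $\chi(a\varepsilon)=1+ra\varepsilon$, so under the isomorphisms $T(\ga)_R\simeq(\ga)_R$, $T(\gm)_R\simeq(\ga)_R$ of Example~\ref{exam:tgm} and the identification in item~(i), the differential $T\chi$ corresponds exactly to the scalar $r$, which has just been shown to be nilpotent.

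These facts are entirely standard, so I do not expect a genuine obstacle; the only points needing a little attention are the bookkeeping with the orthogonal idempotents in (ii) and, in (iii), the equivalence between $\exp(rx)$ being a polynomial and $r$ being nilpotent, together with the compatibility of the exponential with the identifications of tangent spaces.
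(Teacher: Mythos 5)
Your proof is correct and follows essentially the same route as the paper: in (i) comparing binomial coefficients in $P(x+y)=P(x)+P(y)$, in (ii) extracting a finite complete orthogonal system of idempotents from $f(xy)=f(x)f(y)$, $f(1)=1$, and in (iii) arriving at $f=\exp(rx)$ with $r$ nilpotent because $f$ is a polynomial. The only cosmetic difference is in (iii), where the paper applies $\log$ to the functional equation and reuses the coefficient argument of (i) on $\log(f)$, while you differentiate the equation and solve the resulting formal identity $f'=rf$; over a $\Q$-algebra the two computations are equivalent.
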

\begin{proof}
$(i)$ A linear functional $(\ga)_R\to (\ga)_R$ is given by a polynomial $f\in R[x]$ such that $f(0)=0$ and $f(x+y)=f(x)+f(y)$. Thus we conclude by the binomial theorem.

$(ii)$ A character $(\gm)_R\to (\gm)_R$ is given by a Laurent polynomial $f=\sum_i a_i x^i$ in~$R[x,x^{-1}]^*$ such that $f(1)=1$ and $f(x)\cdot f(y)=f(x\cdot y)$. These identities immediately imply that the coefficients $a_i$ are orthogonal idempotents in $R$ and their sum is equal to~$1$.

$(iii)$ A character $(\ga)_R\to (\gm)_R$ is given by a polynomial $f\in R[x]^*$ such that $f(0)=1$ and $f(x)\cdot f(y)=f(x+y)$. Applying $\log$, we get an equality ${\log(f)(x)+\log(f)(y)=\log(f)(x+y)}$ between power series in $R[[x]]$. Similarly as in item~$(i)$, we obtain that $\log(f)(x)=rx$ for an element $r\in R$. Since $f=\exp\big(\log(f)\big)$ is a polynomial, we see that $r$ is nilpotent.
\end{proof}

\medskip

\begin{prop}\label{lemma:difflngmga}
Let $R$ be a $\Q$-algebra and let $\chi\in{\rm X}\big((\lo^n\gm)^{\sharp}_R\big)$ be a character. Then the subfunctor (see Lemma~\ref{lemma:TL}$(ii)$)
$$
(\lo^n\ga)^{\sharp}_R\subset (\lo^n\ga)_R\simeq T(\lo^n\gm)^{\sharp}_R
$$
is sent by $T\chi$ to the subfunctor
$$
\Nil_R\subset (\ga)_R\simeq T(\gm)_R
$$
and the following diagram commutes (see Proposition~\ref{log-map} for the map $\exp$):
$$
\begin{CD}
(L^n \ga)^{\sharp}_R @>{T\chi}>>  \Nil_R   \\
 @V\exp VV @ V \exp VV \\
  (L^n\gm)^{\sharp}_R  @>{\chi}>> (\gm)_R\,.
  \end{CD}
$$
\end{prop}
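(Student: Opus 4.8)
The plan is to reduce the statement to the analogous fact for one-variable characters of $\gm$ over a $\Q$-algebra, namely Lemma~\ref{lemma:diffgagm}(iii), by using the density and connectedness properties of $(\lo^n\gm)^{\sharp}$ established in Proposition~\ref{prop:densegm} together with the thick ind-cone machinery of Section~\ref{representab}. First I would observe that, by Lemma~\ref{lemma:TL}(ii), the differential $T\chi$ is a morphism of group functors $(\lo^n\ga)_R \to (\ga)_R$, so it suffices to show two things: (a) $T\chi$ lands in the subfunctor $(\lo^n\ga)^{\sharp}_R \subset (\lo^n\ga)_R$ gets sent into $\Nil_R$; and (b) the square commutes. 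For (a), fix an $R$-algebra $A$ and an element $f \in (\lo^n\ga)^{\sharp}(A)$. Then $1 + \varepsilon f \in (\lo^n\gm)^{\sharp}(A[\varepsilon])$ by Lemma~\ref{expl-form}(iii) (its constant term is $1+\varepsilon a_0$ with $\varepsilon a_0$ nilpotent, and the positive/negative parts are as required), and $\chi(1+\varepsilon f) = 1 + \varepsilon\, T\chi(f) \in (\gm)(A[\varepsilon])$. I would need to argue that $T\chi(f) \in \Nil(A)$. The cleanest route: it is enough to treat the universal case, i.e.\ $A = \OO\big((\lo^n\ga)^{\sharp}_R\big)$ with $f$ the universal point; since $(\lo^n\ga)^{\sharp}$ is a thick ind-cone (Proposition~\ref{lemma:sharprepr}(ii)), its ring of functions is a ring of algebraically convergent power series, and one checks, by restricting to the dense sub-ind-scheme $\widehat{\Ab}^{(D^{*n})_f}$ of Laurent polynomials with nilpotent coefficients (Remark~\ref{rmk:inverse}, Proposition~\ref{lemma:thickindconedense}), that $T\chi$ applied to the universal element is itself "nilpotent" in the appropriate sense — more precisely, $\exp\big(\chi\big)$ of a point with nilpotent coordinates is polynomial, forcing the relevant values to be nilpotent, exactly as in Lemma~\ref{lemma:diffgagm}(iii).

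For (b), I would exploit that $\exp\colon (\lo^n\ga)^{\sharp}_R \xrightarrow{\sim} (\lo^n\gm)^{\sharp}_R$ is an isomorphism of group functors (Proposition~\ref{log-map}), so both $\chi \circ \exp$ and $\exp \circ T\chi$ are morphisms of ind-schemes from $(\lo^n\ga)^{\sharp}_R$ to $(\gm)_R$. By Proposition~\ref{prop:generdenis} (applied via the thick ind-cone structure of $(\lo^n\ga)^{\sharp}$), it suffices to verify the equality $\chi\big(\exp(f)\big) = \exp\big(T\chi(f)\big)$ on $A$-points $f$ with finitely many non-zero, nilpotent coordinates — that is, on Laurent polynomials $f$ with nilpotent coefficients. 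For such an $f$, consider the one-parameter subgroup: the map $\gamma_f\colon (\ga)_A \to (\lo^n\gm)^{\sharp}_A$, $a \mapsto \exp(a f)$, is a morphism of group functors (this is where the power-series identity $\exp(af)\exp(bf)=\exp((a+b)f)$ is used, valid because $f$ — hence $af$ — has nilpotent coordinates so all series converge algebraically). Composing with $\chi$ gives a character $\chi \circ \gamma_f\colon (\ga)_A \to (\gm)_A$, to which Lemma~\ref{lemma:diffgagm}(iii) applies: it equals $a \mapsto \exp(a r)$ where $r$ is its differential, namely $r = T\chi(f) \in \Nil(A)$. Evaluating at $a = 1$ yields $\chi(\exp(f)) = \exp(T\chi(f))$, which is precisely the desired commutativity on the dense set of test points. (This also re-confirms $T\chi(f) \in \Nil(A)$ for polynomial nilpotent $f$, completing (a) by density.)

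The main obstacle I anticipate is the careful bookkeeping around "nilpotence" of $T\chi$ on the universal point: the ring $\OO\big((\lo^n\ga)^{\sharp}_R\big)$ is not Noetherian, so $\Nil$ of it is subtle, and one cannot naively say "$T\chi$ of the universal element is nilpotent" — rather, $T\chi$ as a morphism of functors factors through $\Nil_R$, which is a statement about all $A$-points simultaneously. The correct formulation is: the composite $(\lo^n\ga)^{\sharp}_R \xrightarrow{T\chi} (\ga)_R$ is, after passing through $\exp$, a character valued in $\gm$; restricting to the dense ind-closed subscheme of nilpotent Laurent polynomials and invoking the one-parameter-subgroup argument shows that on that dense locus $T\chi$ factors through $\Nil$; then Proposition~\ref{prop:generdenis} (uniqueness of morphisms from a thick ind-cone determined on nilpotent-coordinate points, applied to the two morphisms $(\lo^n\ga)^{\sharp}_R \to (\ga)_R$ given by $T\chi$ and by the composite $(\lo^n\ga)^{\sharp}_R \to \Nil_R \hookrightarrow (\ga)_R$ — noting the latter is well-defined because $\Nil$ is absolutely dense in $\ga$ and both agree on nilpotent points) upgrades this to the factorization of $T\chi$ through $\Nil_R$ on all of $(\lo^n\ga)^{\sharp}_R$. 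Once that factorization is in hand, the commutative square follows from the same density argument as above. Everything else is a routine combination of Proposition~\ref{log-map}, Lemma~\ref{lemma:TL}, Lemma~\ref{lemma:diffgagm}(iii), Proposition~\ref{prop:generdenis}, and Proposition~\ref{prop:densegm}.
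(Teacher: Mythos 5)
Your core computation --- the one-parameter subgroup $b\mapsto\chi\big(\exp(bf)\big)$ combined with Lemma~\ref{lemma:diffgagm}$(iii)$ --- is exactly the engine of the paper's proof, but the scaffolding you build around it has a genuine gap and is in fact unnecessary. The gap is in your step (a): you establish that $(T\chi)(f)$ is nilpotent only for Laurent polynomials $f$ with nilpotent coefficients, and then invoke Proposition~\ref{prop:generdenis} to ``upgrade'' this to a factorization of $T\chi$ through $\Nil_R$. But Proposition~\ref{prop:generdenis} only says that two \emph{given} morphisms from a thick ind-cone to an affine scheme coincide if they agree on nilpotent-coordinate points; to apply it you would need to already possess a morphism $(\lo^n\ga)^{\sharp}_R\to\Nil_R$ agreeing with $T\chi$ there, and the existence of such a morphism is precisely what is to be proved. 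Your parenthetical ``well-defined because $\Nil$ is absolutely dense in $\ga$ and both agree on nilpotent points'' is circular: density yields uniqueness of morphisms into affine schemes, not existence of a factorization through an ind-closed subfunctor. Indeed, ``factors through $\Nil$ on a dense ind-closed subscheme'' does not imply ``factors through $\Nil$'': the identity of $(\ga)_R$ agrees with the inclusion $\Nil_R\hookrightarrow(\ga)_R$ on the absolutely dense subscheme $\Nil_R$, yet does not factor through $\Nil_R$. Since your part (b) needs (a) even to make sense of $\exp\circ T\chi$, the proof as written does not close.

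The fix is to drop the restriction to nilpotent Laurent polynomials, which is where you gave up generality for no reason: for any $R$-algebra $A$, any $g\in(\lo^n\ga)^{\sharp}(A)$, any $A$-algebra $B$ and $b\in B$, one has $bg\in(\lo^n\ga)^{\sharp}(B)$ (the part of $bg$ with indices $\leqslant 0$ is $b$ times a nilpotent element of $\LL^n(B)$), so $\exp(bg)$ converges and $b\mapsto\chi\big(\exp(bg)\big)$ is already a character of $(\ga)_A$ by Proposition~\ref{log-map}; its differential is $(T\chi)(g)$ because $\exp(h\,\varepsilon)=1+h\,\varepsilon$, and Lemma~\ref{lemma:diffgagm}$(iii)$ applied over $A$ then gives, for every such $g$ at once, both $(T\chi)(g)\in\Nil(A)$ and $\chi\big(\exp(g)\big)=\exp\big((T\chi)(g)\big)$. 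This is the paper's proof; no density, connectedness, or thick ind-cone input is needed for this proposition.
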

\begin{proof}
Let $A$ be an $R$-algebra and consider an iterated Laurent series $g\in (L^n \ga)^{\sharp}(A)$. We obtain a character
$$
\kappa\;:\;(\ga)_A\stackrel{g}\lrto(\lo^n\ga)^{\sharp}_A\stackrel{\exp}\lrto(\lo^n\gm)^{\sharp}_A\stackrel{\chi}\lrto(\gm)_A\,,
$$
where the first morphism of group ind-schemes sends an element $b$ from an $A$-algebra $B$ to the iterated Laurent series $bg\in \LL^n(B)$. Thus we have that $\kappa(b)=\chi\big(\exp(bg)\big)\in B^*$. Let us find the differential~$T\kappa\in \Nil(A)$ (see Example~\ref{exam:tgm} and Lemma~\ref{lemma:diffgagm}$(i)$).

Recall that by Example~\ref{exam:tgm} and Lemma~\ref{lemma:TL}, we have isomorphisms
$$
T\ga\simeq\ga\,,\qquad T(\lo^n\ga)^{\sharp}\simeq\lo^n\ga\,,\qquad T(\lo^n\gm)^{\sharp}\simeq\lo^n\ga\,,\qquad T\gm\simeq\ga\,.
$$
It is easily seen that under these isomorphisms $Tg$ corresponds to $g$ and $T\exp$ corresponds to the identity morphism. Indeed, for any ring~$C$ and any iterated Laurent series ${h\in\lo^n\ga(C)}$, we have an equality ${\exp(h\,\varepsilon )=1+h\,\varepsilon}$ in $\lo^n\gm\big(C[\varepsilon]\big)$, where $\varepsilon^2=0$. It follows that the differential~$T\kappa$ corresponds to the element $(T\chi)(g)\in A$. By Lemma~\ref{lemma:diffgagm}$(iii)$ applied over the ring $A$, we see that the element~$(T\chi)(g)$ is nilpotent and there is an equality $\kappa(1)=\exp\big((T\chi)(g)\big)$. On the other hand, ${\kappa(1)=\chi\big(\exp(g)\big)}$, which proves the proposition.
\end{proof}

\begin{rmk}
Proposition~\ref{lemma:difflngmga} is a particular case of an algebraic version of the well-known analytic statement that a homomorphism between Lie groups commutes with its differential through the exponential map.
\end{rmk}

\begin{prop}\label{prop:charactgm}
Let $R$ be a $\Q$-algebra. Then the natural homomorphism of groups
$$
{\rm X}\big((\lo^n\gm)^0_R\big)\lrto{\rm X}^+\big((\lo^n\ga)_{R}\big)\,,\qquad \chi\longmapsto T\chi\,,
$$
is injective. In addition, for any character $\chi\colon (\lo^n\gm)^0_R\to(\gm)_R$, an $R$-algebra $A$, and an element
${f\in (\lo^n\gm)^{\sharp}(A)}$, there is an equality in $A^*$
$$
\chi(f)=\exp(a)\,,
$$
where a nilpotent element $a\in\Nil(A)$ is defined by the equality in~$A[\varepsilon]^*$
$$
\chi\big(1+\log(f)\,\varepsilon\big)=1+a\,\varepsilon\,.
$$
\end{prop}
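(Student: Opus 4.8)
The plan is to deduce both assertions from Proposition~\ref{lemma:difflngmga}, applied to the restriction of $\chi$ to the group subfunctor $(\lo^n\gm)^{\sharp}_R\subset(\lo^n\gm)^0_R$, together with the fact that $(\lo^n\gm)^{\sharp}$ is absolutely dense in $(\lo^n\gm)^0$ (Proposition~\ref{prop:densegm}(iii)). I use the identifications $T(\lo^n\gm)^{\sharp}_R\simeq(\lo^n\ga)_R$ and $T(\gm)_R\simeq(\ga)_R$ from Lemma~\ref{lemma:TL}(ii) and Example~\ref{exam:tgm}(ii): under the first, a tangent vector $1+h\,\varepsilon$ corresponds to $h$, and under the second, $1+a\,\varepsilon$ corresponds to $a$. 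In particular, for a morphism of group functors $\psi$ from $(\lo^n\gm)^{\sharp}_R$ to $(\gm)_R$ and an element $h$ of $\lo^n\ga(A)$, the value $(T\psi)(h)\in A$ is read off from the identity $\psi(1+h\,\varepsilon)=1+(T\psi)(h)\,\varepsilon$ in $A[\varepsilon]^*$; here one checks, as in the proof of Proposition~\ref{lemma:difflngmga}, that $1+h\,\varepsilon$ indeed lies in $(\lo^n\gm)^{\sharp}(A[\varepsilon])$, which is immediate since $\varepsilon^2=0$ and by Lemma~\ref{expl-form}(iii). The same applies with $(\lo^n\gm)^0_R$ in place of $(\lo^n\gm)^{\sharp}_R$.

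To establish the explicit formula, I would fix an $R$-algebra $A$ and $f\in(\lo^n\gm)^{\sharp}(A)$, and put $g:=\log(f)\in(\lo^n\ga)^{\sharp}(A)$, which is well defined by Proposition~\ref{log-map}. Restricting $\chi$ to $(\lo^n\gm)^{\sharp}_R$ and applying Proposition~\ref{lemma:difflngmga}, the element $(T\chi)(g)$ lies in $\Nil(A)$ and
$$
\chi(f)=\chi\big(\exp(g)\big)=\exp\big((T\chi)(g)\big)\,.
$$
By the discussion in the previous paragraph, $(T\chi)(g)$ is precisely the element $a$ determined by $\chi(1+\log(f)\,\varepsilon)=1+a\,\varepsilon$, so $a\in\Nil(A)$ and $\chi(f)=\exp(a)$, as required.

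For the injectivity of $\chi\mapsto T\chi$, I would suppose $T\chi=0$ as an element of ${\rm X}^{+}\big((\lo^n\ga)_R\big)$. Then for every $R$-algebra $A$ and every $f\in(\lo^n\gm)^{\sharp}(A)$ the element $a$ above vanishes, whence $\chi(f)=\exp(0)=1$ by the formula just proved; thus $\chi$ restricts to the trivial character on $(\lo^n\gm)^{\sharp}_R$. Since $(\lo^n\gm)^{\sharp}$ is absolutely dense in $(\lo^n\gm)^0$, the restriction homomorphism $\OO\big((\lo^n\gm)^0_R\big)\to\OO\big((\lo^n\gm)^{\sharp}_R\big)$ is injective (both functors are represented by ind-affine schemes by Proposition~\ref{lemma:sharprepr}). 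A character of $(\lo^n\gm)^0_R$ is determined by the image in $\OO\big((\lo^n\gm)^0_R\big)$ of the coordinate function on $(\gm)_R=\Spec\big(\Z[x,x^{-1}]\big)$, and this image maps to $1$, so it equals $1$ and $\chi$ is trivial. Hence the kernel of $\chi\mapsto T\chi$ is trivial.

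There is no serious obstacle here, since the statement essentially repackages Proposition~\ref{lemma:difflngmga}. The only points requiring attention are the bookkeeping of the tangent-space identifications needed to recognize $(T\chi)(\log f)$ as the element $a$ named in the statement, and the harmless verification that $1+\log(f)\,\varepsilon$ lands in $(\lo^n\gm)^{\sharp}$ (respectively $(\lo^n\gm)^0$) over $A[\varepsilon]$, both of which follow from $\varepsilon^2=0$ and Lemma~\ref{expl-form}(iii).
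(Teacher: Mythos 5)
Your proposal is correct and follows essentially the same route as the paper: restrict to the absolutely dense subfunctor $(\lo^n\gm)^{\sharp}_R$ (Proposition~\ref{prop:densegm}$(iii)$) and then use Propositions~\ref{log-map} and~\ref{lemma:difflngmga} to recover $\chi$ from $T\chi$ via $\exp$ and $\log$. The only cosmetic difference is that you argue the injectivity of the character-restriction step directly through the pullback of the coordinate function on $(\gm)_R$, where the paper invokes Lemma~\ref{lemma:equivdense}; the bookkeeping of tangent-space identifications you spell out is exactly what the paper leaves implicit.
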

\begin{proof}
By Proposition~\ref{prop:densegm}$(iii)$ and Lemma~\ref{lemma:equivdense}, the restriction map
$$
{\rm X}\big((\lo^n\gm)^0_R\big)\lrto{\rm X}\big((\lo^n\gm)^{\sharp}_R\big)
$$
is injective. Now we conclude by Propositions~\ref{log-map} and~\ref{lemma:difflngmga}.
\end{proof}

\quash{
\begin{prop}\label{prop:rigid}
For any ring $R$, any multilinear morphism of functors
$$
(\gm)_R\times\big((\lo^n\gm)^{\sharp}\big)^{\times n}_R\lrto(\gm)_R
$$
is identically one.
\end{prop}
\begin{proof}
A multilinear morphism as above defines a morphism of functors
$$
\big((\lo^n\gm)^{\sharp}\big)^{\times n}_R\to \underline{\Hom}_R\big((\gm)_R,(\gm)_R\big)\simeq\uz_R\,,
$$
where for functors $F$ and $F'$ over $R$, by $\underline{\Hom}_R(F',F)$ we denote the functor over $R$ that sends an $R$-algebra $A$ to $\Hom_A(F'_A,F_A)$. Moreover, this morphism of functors sends the point $(1,\ldots,1)\in \big((\lo^n\gm)^{\sharp}\big)^{\times n}(R)$ to $0$. Thus Propositions~\ref{prop:densegm}$(iv)$ and~\ref{lemma:conntriv} imply the needed statement.
\end{proof}
}

\subsection{Linear functionals on $L^n\Omega^i$}

Define the following morphism of group functors:
$$
\Delta\;:\;\lo^n\ga\lrto \lo^n\Omega^1\,,\quad f\longmapsto
\mbox{$df-\sum\limits_{i=1}^n\frac{\partial f}{\partial t_i}dt_i$}\,,
$$
where $f\in\lo^n\ga(A)=\LL^n(A)$ for a ring $A$.

\begin{lemma}\label{lemma:vanish}
Let $R$ be a ring, $\Phi\colon(\lo^n\Omega^1)_R\to\Ab^1_R$ be a morphism of functors over $R$ (which does not necessarily respect the group structure), and let $\phi\in\OO\big((\lo^n\ga)_R\big)$ be the composition $\Phi\circ\Delta_R$. Then for any $R$-algebra $A$ and an iterated Laurent series $f\in\LL^n(A)$, there is an equality $\phi(f)=\phi(0)$.
\end{lemma}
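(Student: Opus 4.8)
The plan is to reduce, by functoriality, to a single ``universal'' iterated Laurent series over a polynomial ring, and then to exploit the fact that $\phi$ factors through $\Delta$ in order to exhibit a large translation symmetry of $\phi$.

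\textbf{Reduction.} Given an $R$-algebra $A$ and $f=\sum_{l\in S}a_lt^l\in\LL^n(A)$ with $S=\z^n_\lambda$ containing the support of $f$, put $A_0:=R[x_l;\,l\in S]$, $f_0:=\sum_{l\in S}x_lt^l\in\LL^n(A_0)$, and let $\rho\colon A_0\to A$ send $x_l\mapsto a_l$. Since $f=(\lo^n\ga)(\rho)(f_0)$ and $0=(\lo^n\ga)(\rho)(0)$, functoriality of $\phi$ gives $\phi(f)=\rho(\phi(f_0))$ and $\phi(0)=\rho(\phi(0))$, so it is enough to prove $\phi(f_0)=\phi(0)$ in $A_0$. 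By formula~\eqref{eq:isomAE} and Example~\ref{examp:loopga} one has $\OO(\lo^n\ga)\simeq\varprojlim_\lambda R[x_l;\,l\in\z^n_\lambda]$, and as $f_0$ lies at level $\lambda$ the element $P:=\phi(f_0)$ is a genuine polynomial in $R[x_l;\,l\in S]$ whose constant term is $\phi(0)$. So we must show $P\in R$.

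\textbf{Translation invariance.} Because $\phi=\Phi\circ\Delta_R$, the value $\phi(g)$ depends on $g$ only through $\Delta(g)$, and by the very definition of $\Delta$ one has $\Delta(\sum_lb_lt^l)=\sum_lt^l\,db_l$. Hence, for any family $c_l\in\ker\bigl(d\colon A_0\to\Omega^1_{A_0/\z}\bigr)$, $l\in S$, we get $\Delta\bigl(f_0+\sum_lc_lt^l\bigr)=\Delta(f_0)$, so $\phi\bigl(f_0+\sum_lc_lt^l\bigr)=\phi(f_0)$; by functoriality this reads $P(x_l+c_l;\,l\in S)=P(x_l;\,l\in S)$. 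Thus $P$ is invariant under translating its variables by an arbitrary tuple of elements of $\ker(d)$. Always $\z\cdot1\subset\ker(d)$, and moreover $x_l^{p^N}\in\ker(d)$ for every $l$ whenever $p^N=0$ in $R$, since $d(x_l^{p^N})=p^Nx_l^{p^N-1}\,dx_l=0$.

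\textbf{Finishing and main obstacle.} When $R$ is a $\Q$-algebra, or more generally $\z$-torsion-free, invariance of $P$ under all integer translations of its variables already forces $P\in R$: freezing all variables but one and applying finite differences, the leading coefficient of $P$ in that variable is annihilated by a nonzero factorial and hence vanishes, so $P$ has degree $0$ in each variable. When $p^N\cdot1_R=0$ one uses instead the substitutions $x_l\mapsto x_l+x_l^{p^N}$: since $p^N>1$ these strictly raise the degree in $x_l$, so invariance again forces degree $0$ in each variable. In either case $P\in R$, i.e.\ $\phi(f_0)=\phi(0)$, whence $\phi(f)=\phi(0)$. The real work is in this last step for an arbitrary base ring $R$: one must glue the integer‑translation argument (valid for $\z$‑torsion‑free $R$) with the Frobenius‑type substitutions available modulo $p$, via a reduction involving the nilradical of $R$ — elementary, but the bookkeeping is the only delicate point of the proof.
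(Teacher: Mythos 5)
Your reduction to the universal point and the translation symmetry are fine as far as they go: $P:=\phi_\lambda\in R[x_l]$ involves only finitely many variables, and for \emph{finitely} many $c_l\in\ker\bigl(d\colon A_0\to\Omega^1_{A_0}\bigr)$ one indeed gets $\Delta\bigl(f_0+\sum_l c_lt^l\bigr)=\Delta(f_0)$ and hence $P(x+c)=P(x)$. (Your displayed identity $\Delta\bigl(\sum_l b_lt^l\bigr)=\sum_l t^l\,db_l$ is only valid for finite sums --- for genuine series $d$ does not commute with the summation, which is precisely why the lemma has content; since you only ever translate finitely many coordinates this is harmless, but it should be said.) The genuine gap is the last step for an arbitrary base ring $R$: the invariance you have established is simply too weak to force $P\in R$, and the missing case is not bookkeeping. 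Concretely, take $p$ odd and $R=\Z[\epsilon]/(\epsilon^2,\,p\epsilon)$, which is neither torsion free over $\Z$ nor killed by any prime power. Here $\Omega^1_R\cong\F_p\,d\epsilon$ and $\ker\bigl(d\colon R\to\Omega^1_R\bigr)=\Z\cdot 1_R$, and one checks that $\ker\bigl(d\colon A_0\to\Omega^1_{A_0}\bigr)=\Z$ for the polynomial ring $A_0=R[x_l]$: the only translations your argument supplies are by integers. But the non-constant polynomial $P=\epsilon\,(x_l^p-x_l)$ is invariant under every integer translation (the intermediate binomial terms die because $p\epsilon=0$, and $\epsilon(m^p-m)=0$ by Fermat), so translation-invariance by elements of $\ker(d)$ does not imply constancy over such $R$. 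Moreover $R/\Nil(R)=\Z$ is torsion free, so any ``reduction involving the nilradical'' handles only $P\bmod\epsilon$ and loses exactly the component where the obstruction lives. To rule out such $P$ one must use more of the naturality of $\Phi$ than translations of the universal point; that is a missing idea, not bookkeeping.

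For comparison, the paper's proof uses the finite-variable observation quite differently and with no case analysis on $R$: since $\phi_\lambda$ depends only on the coordinates in a finite set $T$, it replaces $f$ by its truncation $f'=\sum_{l\in T}a_lt^l$ (so that $\phi(f)=\phi(f')$) and then concludes from the assertion $\Delta(f')=\Delta(0)=0$. Note that by your own computation $\Delta(f')=\sum_{l\in T}t^l\,da_l$, so the entire weight of the lemma sits in how these terms are disposed of; your translation trick is an attempt to substitute for exactly this point, and it is that substitute which breaks down for general $R$. So your route diverges from the paper's at the decisive step, and as written it does not close.
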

\begin{proof}
We use notation from Subsection~\ref{subsect:Laurent}. Recall from Example~\ref{examp:loopga} that $\lo^n\ga$ is represented by the ind-affine space ${\Ab^{D^{*n}}\simeq\mbox{``$\varinjlim\limits_{\lambda\in \Lambda_n}$''}\,\Ab^{\z^n_{\lambda}}}$. Let $\lambda\in\Lambda_n$ be such that ${f\in \Ab^{\z^n_{\lambda}}(A)}$, that is, we have that $f=\sum\limits_{l\in\z^n_{\lambda}}a_lt^l$, where~$a_l\in A$. The restriction of $\phi$ to the affine space~$(\Ab^{\z^n_{\lambda}})_R$ over $R$ is given by a polynomial $\phi_{\lambda}\in R[\z^n_{\lambda}]$. Let $T\subset\z^n_{\lambda}$ be a finite subset such that the polynomial $\phi_{\lambda}$ depends only on variables that correspond to elements in~$T$. Let $f'=\sum\limits_{l\in\z^n_{\lambda}}a'_lt^l$ be a Laurent polynomial such that $a'_l=a_l$ if $l\in T$ and $a'_l=0$ if $l\notin T$. By construction, we have that $\phi(f)=\phi(f')$. On the other hand, one easily checks that $\Delta(f')=\Delta(0)=0$, whence $\phi(f')=\phi(0)$. This finishes the proof.
\end{proof}

For an integer $i\geqslant 0$, by $\widetilde{L^n\Omega}{}^i$ denote the group functor that sends a ring $A$ to~$\widetilde{\Omega}^i_{\LL^n(A)}$ (see Definition~\ref{defin:tildeforms}). Note that we have a canonical morphism of group functors ${L^n\Omega^i\to \widetilde{L^n\Omega}{}^i}$.

\begin{prop}\label{lemma:algiterforms}
For any ring $R$ and an integer $i\geqslant 0$, the natural homomorphism of groups
$$
{\rm X}^+\big((\widetilde{\lo^n\Omega}{}^i)_R\big)\lrto
{\rm X}^+\big((\lo^n\Omega^i)_R\big)
$$
is an isomorphism.
\end{prop}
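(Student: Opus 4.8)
The plan is to establish the two directions separately. Injectivity of the pull-back $\pi^{*}\colon {\rm X}^{+}\big((\widetilde{\lo^{n}\Omega}{}^{i})_{R}\big)\to{\rm X}^{+}\big((\lo^{n}\Omega^{i})_{R}\big)$ along the canonical morphism $\pi\colon\lo^{n}\Omega^{i}\to\widetilde{\lo^{n}\Omega}{}^{i}$ is immediate: for every ring $A$ the map $\Omega^{i}_{\LL^{n}(A)}\to\widetilde{\Omega}^{i}_{\LL^{n}(A)}$ is surjective, so $\pi$ is an epimorphism of functors and precomposition with it is injective. The content is surjectivity: I must show that an arbitrary $\psi\in\Hom^{gr}_{R}\big((\lo^{n}\Omega^{i})_{R},(\ga)_{R}\big)$ factors through $\pi$, equivalently that for every $R$-algebra $A$ the functional $\psi$ annihilates the kernel of $\Omega^{i}_{\LL^{n}(A)}\to\widetilde{\Omega}^{i}_{\LL^{n}(A)}$, which is $K\wedge\Omega^{i-1}_{\LL^{n}(A)}$ (with $K$ as in Definition~\ref{defin:tildeforms}; I use the standard fact that the kernel of the $i$-th exterior power of a split surjection $\Omega^{1}\to\widetilde{\Omega}^{1}$ is $K\wedge\Omega^{i-1}$). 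I will prove this by induction on $i$, simultaneously for all base rings.

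\emph{Base case $i=1$.} Here the kernel is $K_{A}$, generated as an abelian group by the elements $h\,\Delta(f)$ with $h,f\in\LL^{n}(A)$, where $\Delta(f)=df-\sum_{j}\frac{\partial f}{\partial t_{j}}dt_{j}$ is the morphism introduced before Lemma~\ref{lemma:vanish}. Applying Lemma~\ref{lemma:vanish} with $\Phi=\psi$ already gives $\psi\big(\Delta(g)\big)=0$ for all $g$; I must upgrade this to the whole $\LL^{n}(A)$-module generated by these elements. For that I consider the morphism of functors $\nu\colon(\lo^{n}\ga)_{R}\times(\lo^{n}\ga)_{R}\to(\ga)_{R}$, $\nu(h,f)=\psi\big(h\,\Delta(f)\big)$, which is biadditive since $\Delta$ and $\wedge$-multiplication are additive. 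Restricted to a finite-dimensional affine-space piece of $\lo^{n}\ga\simeq\Ab^{D^{\ast n}}$ (Example~\ref{examp:loopga}) it is a biadditive polynomial, hence determined by a matrix of coefficients $c_{l,m}$; the translation identity $\Delta(t^{r}g)=t^{r}\Delta(g)$ yields $\nu(t^{r}h,t^{-r}f)=\psi\big(h\,\Delta(f)\big)=\nu(h,f)$, forcing $c_{l+r,m-r}=c_{l,m}$, so $c_{l,m}$ depends only on $l+m$; finally $\nu(t^{r},f)=\psi\big(\Delta(t^{r}f)\big)=0$ for all $r$ forces all these coefficients to vanish, i.e. $\nu\equiv0$. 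Thus $\psi$ kills $K_{A}$, and the base case follows.

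\emph{Inductive step $i\geqslant2$}, assuming the statement for $i-1$ over every ring. Fix $\psi$ as above. The group $K\wedge\Omega^{i-1}_{\LL^{n}(A)}$ is generated by elements $\Delta(f)\wedge\eta$ with $f\in\LL^{n}(A)$, $\eta\in\Omega^{i-1}_{\LL^{n}(A)}$. Using the $\LL^{n}(A)$-linear splitting of $\Omega^{1}_{\LL^{n}(A)}\to\widetilde{\Omega}^{1}_{\LL^{n}(A)}$ that sends $dt_{j}$ to $dt_{j}$, I write $\Omega^{i-1}_{\LL^{n}(A)}=\widetilde{\Omega}^{i-1}_{\LL^{n}(A)}\oplus\big(K\wedge\Omega^{i-2}_{\LL^{n}(A)}\big)$, so $\eta=\eta_{1}+\eta_{2}$ with $\eta_{1}=\sum_{|J|=i-1}h_{J}\,dt_{J}$ and $\eta_{2}$ in $K\wedge\Omega^{i-2}_{\LL^{n}(A)}=\Ker\big(\Omega^{i-1}_{\LL^{n}(A)}\to\widetilde{\Omega}^{i-1}_{\LL^{n}(A)}\big)$. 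For the $\eta_{2}$-part: for each $R$-algebra $A$ and each $f\in\lo^{n}\ga(A)$ the assignment $\eta\mapsto\psi\big(\Delta(f)\wedge\eta\big)$ is a morphism of group functors $(\lo^{n}\Omega^{i-1})_{A}\to(\ga)_{A}$, so by the inductive hypothesis it factors through $(\widetilde{\lo^{n}\Omega}{}^{i-1})_{A}$ and in particular vanishes on $\Ker\big(\Omega^{i-1}_{\LL^{n}(A)}\to\widetilde{\Omega}^{i-1}_{\LL^{n}(A)}\big)$; hence $\psi\big(\Delta(f)\wedge\eta_{2}\big)=0$. For the $\eta_{1}$-part: for each multi-index $J$ the map $\alpha\mapsto\psi(\alpha\wedge dt_{J})$ is a linear functional on $(\lo^{n}\Omega^{1})_{R}$, so by the base case it factors through $\widetilde{\lo^{n}\Omega}{}^{1}$ and annihilates $K$; since $h_{J}\,\Delta(f)\in K_{A}$ this gives $\psi\big((h_{J}\Delta(f))\wedge dt_{J}\big)=0$, and summing over $J$ yields $\psi\big(\Delta(f)\wedge\eta_{1}\big)=0$. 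Therefore $\psi\big(\Delta(f)\wedge\eta\big)=0$, so $\psi$ annihilates $K\wedge\Omega^{i-1}_{\LL^{n}(A)}$, factors through $\pi$, and the induction is complete.

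The main obstacle is the base case $i=1$, and precisely the passage from ``$\psi$ kills the image $\Delta(\lo^{n}\ga)$'', which is exactly Lemma~\ref{lemma:vanish}, to ``$\psi$ kills the entire $\LL^{n}(A)$-submodule $K$ generated by that image''. The rigidity computation above (biadditivity, translation-equivariance, and $\psi\circ\Delta=0$) is what closes this gap; one should be careful with additive-but-not-linear functorial maps in positive characteristic, where it is essential that $\psi$ already annihilates $\Delta(t^{r}f)$ for \emph{all} $r\in\z^{n}$ and not only for $r=0$.
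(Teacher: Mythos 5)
Your reduction of the statement to showing that $\psi$ kills $K\wedge\Omega^{i-1}_{\LL^n(A)}$, and your inductive step for $i\geqslant 2$, are fine; the genuine problem is exactly where you locate it, in the base case $i=1$, and the rigidity computation you give there does not close it. The assertion that the biadditive functorial map $\nu(h,f)=\psi\big(h\,\Delta(f)\big)$, restricted to a piece of $\lo^n\ga\times\lo^n\ga$, is ``determined by a matrix of coefficients $c_{l,m}$'' is precisely the claim that it is bilinear in the coordinates, and this fails over a general base ring: whenever $R$ has torsion, additive polynomial maps need not be linear (for instance $x\mapsto cx^p$ with $pc=0$). Worse, the three properties your argument actually uses --- biadditivity, the translation identity $\nu(t^rh,t^{-r}f)=\nu(h,f)$, and $\nu(t^r,f)=0$ for all $r$ --- do not imply $\nu\equiv 0$. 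Over $R=\F_p$, writing $h=\sum_l x_lt^l$ and $f=\sum_m y_mt^m$ with $l,m\in\z^n$, the assignment $\nu_0(h,f):=\sum_{l+m=0}(x_l^p-x_l)\,y_m$ is a well-defined biadditive morphism of functors (on each pair of pieces $\Ab^{\z^n_\lambda}\times\Ab^{\z^n_\mu}$ the sum is finite, and the polynomials are compatible with the embeddings of pieces); it satisfies $\nu_0(t^rh,t^{-r}f)=\nu_0(h,f)$ and $\nu_0(t^r,f)=0$ for every $r\in\z^n$ and every $f$ over every $\F_p$-algebra, and yet $\nu_0\neq 0$ (take $h$ the constant series $u$ over $\F_p[u]$ and $f=1$). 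So no argument using only those three facts can prove the base case; this is exactly the additive-but-not-linear phenomenon you flag in your last sentence, but your computation only treats the bilinear part.

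The missing idea --- and the paper's proof --- is that Lemma~\ref{lemma:vanish} is stated for morphisms of functors $\Phi\colon(\lo^n\Omega^1)_R\to\Ab^1_R$ that need not respect the group structure. You applied it only with $\Phi=\psi$, which is why you were left trying to pass from the image of $\Delta$ to the submodule $K$ it generates. Instead, fix an $R$-algebra $A$, an element $h\in\LL^n(A)$ and a form $\eta\in\Omega^{i-1}_{\LL^n(A)}$, and apply the lemma over the base ring $A$ to the (non-additive, functorial) map $\Phi(\omega):=\psi\big(h\,\omega\wedge\eta\big)$; it gives $\psi\big(h\,\Delta(f)\wedge\eta\big)=\Phi\big(\Delta(f)\big)=\Phi\big(\Delta(0)\big)=\psi(0)=0$ for every $f\in\LL^n(A)$. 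Since for $i\geqslant 1$ the kernel of $\Omega^i_{\LL^n(A)}\to\widetilde{\Omega}^i_{\LL^n(A)}$ is $K\wedge\Omega^{i-1}_{\LL^n(A)}$, which is generated as an abelian group by elements of the form $h\,\Delta(f)\wedge\eta$, the functional $\psi$ factors through $(\widetilde{\lo^n\Omega}{}^i)_R$; injectivity and the case $i=0$ are immediate. This settles all $i$ at once, so neither the induction nor the problematic rigidity argument in the base case is needed.
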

\begin{proof}
This follows directly from Definition~\ref{defin:tildeforms} and Lemma~\ref{lemma:vanish}.
\end{proof}

\section{Auxiliary results from algebraic $K$-theory}\label{sect:Ktheory}

Let $m\geqslant 0$ be an integer.

\subsection{Milnor $K$-groups and algebraic $K$-groups}

\label{Milnor-K-group}

 We will use the following version of Milnor $K$-groups for rings.

\begin{defin}\label{defin:Milnor}
The {\it $m$-th Milnor $K$-group} $K_m^M(A)$ of a ring $A$ is the quotient group of the group~$(A^*)^{\otimes\,m}$ by the subgroup generated by all elements of type
\begin{equation}\label{eq:Steintensor}
a_1\otimes\ldots\otimes a_i\otimes a\otimes(1-a)\otimes a_{i+3}\otimes\ldots\otimes a_m\,,
\end{equation}
which are called {\it Steinberg relations}.
\end{defin}

Notice that in tensor~\eqref{eq:Steintensor}, the multiples $a$ and $1-a$ come one after another.
As usual, we denote by~$\{a_1,\ldots,a_m\}$ and we call a {\it symbol} the class in $K^M_m(A)$ of a tensor \mbox{$a_1\otimes\ldots\otimes a_m$}, $a_i\in R^*$.

\medskip

Clearly, $K_m^M$ is a group functor (see Section~\ref{sect:notation}). By $\Omega^m$ denote the group functor that sends a ring~$A$ to the group of $m$-th absolute K\"ahler differentials $\Omega^m_A$. It is easily checked that there is a morphism of group functors
$$
d\log\;:\; K_m^M\lrto \Omega^m\,,\qquad \{a_1,\ldots,a_m\}\longmapsto \frac{da_1}{a_1}\wedge\ldots\wedge\frac{da_m}{a_m}\,.
$$

\medskip

We also have algebraic $K$-groups $K_m(A)$, $m\geqslant 0$, which are functorial with respect to $A$. There are canonical decompositions of group functors
$$
K_0\simeq \uz\times\widetilde{K}_0\,,\qquad K_1\simeq \gm\times SK_1\,.
$$
Let $\rk\colon K_0\to\uz$ and $\det\colon K_1\to \gm$ be the corresponding projections. Explicitly, they are defined by taking the rank of a finitely generated projective module and by taking the determinant of a matrix, respectively.

\medskip

Loday~\cite{L} constructed a graded-commutative product between algebraic $K$-groups, which is functorial with respect to $A$:
$$
K_i(A)\otimes_{\mathbb{Z}} K_j(A)\lrto K_{i+j}(A)\,,\qquad  \alpha\otimes \beta \longmapsto \alpha\cdot \beta\,,\qquad i,j\geqslant 0\,.
$$
The Loday product between the group subfunctors $\gm\subset K_1$ factors through the Milnor $K$-groups (see, e.g.,~\cite[\S\S\,1,\,2]{S}), that is, we have a morphism of group functors
$$
K^M_m\lrto K_m\,,\qquad \{a_1,\ldots,a_m\}\longmapsto a_1\cdot\ldots\cdot a_m\,,
$$
where $a_i\in A^*$ for a ring $A$.

\subsection{Tangent space to Milnor $K$-groups}\label{tangent-space}

Let us state the main result in~\cite{GOMilnor}. First recall some notions from op.cit. Let $\varepsilon$ be a formal variable that satisfies $\varepsilon^2=0$. See Definition~\ref{defin:tangfunctor} for the tangent space~$TF$ to a group functor~$F$.

\begin{examp}
For any ring $A$, we have that $d(\varepsilon^2)=2\varepsilon d\varepsilon=0$ in $\Omega^1_{A[\varepsilon]}$ and a calculation shows that there is an isomorphism of $A$-modules
\begin{equation}\label{eq:decomforms}
T\Omega^{m+1}(A)\simeq \big(\varepsilon\,\Omega^{m+1}_A\big)\oplus \big(d\varepsilon\wedge\Omega^m_A\big)\oplus \left(\big(\varepsilon d\varepsilon\wedge\Omega^m_A\big)/2\big(\varepsilon d\varepsilon\wedge\Omega^m_A\big) \right)\,.
\end{equation}
\end{examp}

Following a construction of Bloch~\cite{Blo}, we give the following definition.

\begin{defin}\label{defin:Blochmap}
Let
$$
Bl\;:\;TK^M_{m+1}\lrto \Omega^{m}
$$
be the composition of the morphism of group functors $T(d\log)\colon TK^M_{m+1}\to T\Omega^{m+1}$ and the projection to the direct summand $\Omega^{m}\simeq d\varepsilon\wedge\Omega^m$ in decomposition~\eqref{eq:decomforms}.
\end{defin}

For instance, given a collection of invertible elements $a_1,\ldots,a_{m}\in A^*$ and an element $b\in A$, the map $Bl$ sends the symbol $\{1+b\,\varepsilon,a_1,\ldots,a_m\}$ to the differential form \mbox{$b\,\frac{da_1}{a_1}\wedge\ldots \wedge \frac{da_m}{a_m}$}.

\medskip

It turns out that the homomorphism of groups $Bl\colon TK^M_{m+1}(A)\to \Omega^m_A$ is an isomorphism when a ring $A$ has sufficiently many invertible elements in the following sense (see Definition~3.1 from the paper of Morrow~\cite{Mor}).

\begin{defin}\label{defin:wstable}
Given a natural number $k\geqslant 2$, a ring $A$ is called {\it weakly $k$-fold stable} if for any collection of elements $a_1,\ldots,a_{k-1}\in A$, there is an invertible element $a\in A^*$ such that the elements $a_1+a,\ldots,a_{k-1}+a$ are invertible in $A$.
\end{defin}

\begin{rmk}\label{exam:wstable}
For any ring $A$, the ring of Laurent series $\LL(A)=A((t))$ is weakly $k$-fold stable for all $k\geqslant 2$. Actually, an invertible element $a$ in Definition~\ref{defin:wstable} can be taken of the form~$t^i$ for a suitable~\mbox{$i\in \Z$}. \end{rmk}

For the following result see~\cite[Theor.\,2.9]{GOMilnor} (see also Example~\ref{exam:tgm}(ii) for the case $m=0$).

\begin{theor}\label{thm:tangentMilnor}
Let $A$ be a weakly $5$-fold stable ring such that $\frac{1}{2}\in A$. Then for any~$m\geqslant 0$, the homomorphism $Bl\colon TK^M_{m+1}(A)\to\Omega_A^{m}$ is an isomorphism.
\end{theor}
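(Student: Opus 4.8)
This is Theorem~2.9 of~\cite{GOMilnor}, and the plan is to follow the strategy used there. Since $Bl$ is already constructed, the idea is to produce an explicit inverse homomorphism and check that the two are mutually inverse. Concretely, I would define
$$
\lambda\colon\Omega^m_A\lrto TK^M_{m+1}(A),\qquad b\,\frac{du_1}{u_1}\wedge\ldots\wedge\frac{du_m}{u_m}\longmapsto\{1+b\varepsilon,u_1,\ldots,u_m\}
$$
for $b\in A$ and $u_1,\ldots,u_m\in A^*$, noting that by the explicit description of $Bl$ recalled after Definition~\ref{defin:Blochmap} one has $Bl\circ\lambda=\mathrm{id}$ on these generators. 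Thus, once $\lambda$ is known to be a well-defined homomorphism, it is automatically injective and $Bl$ is surjective; and if moreover the symbols $\{1+b\varepsilon,u_1,\ldots,u_m\}$ generate $TK^M_{m+1}(A)$, then $\lambda\circ Bl=\mathrm{id}$ as well and both maps are isomorphisms. The base case $m=0$ is $T\gm\simeq\ga=\Omega^0$, i.e.\ Example~\ref{exam:tgm}(ii).

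For the generation statement I would use the splitting $A[\varepsilon]^*\simeq A^*\times(1+A\varepsilon)$ together with multilinearity of Milnor symbols to rewrite an arbitrary element of $K^M_{m+1}(A[\varepsilon])$ as a sum of symbols having at most one entry in $1+A\varepsilon$. The symbols with all entries in $A^*$ come from $K^M_{m+1}(A)$ and hence die in $TK^M_{m+1}(A)$, which is by definition the kernel of the surjection $K^M_{m+1}(A[\varepsilon])\to K^M_{m+1}(A)$ split by $A\hookrightarrow A[\varepsilon]$; so only the symbols $\{1+b\varepsilon,u_1,\ldots,u_m\}$ survive, and one checks that a unit appearing in several slots can be absorbed by bilinearity, leaving exactly the claimed generators.

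The main work is the well-definedness of $\lambda$: I must verify that the defining relations of $\Omega^m_A$ are sent to relations in $TK^M_{m+1}(A)$. Additivity in $b$ is immediate from $(1+b\varepsilon)(1+b'\varepsilon)=1+(b+b')\varepsilon$ (since $\varepsilon^2=0$); the Leibniz rule $d\log(uu')=d\log u+d\log u'$ corresponds to multilinearity of symbols; and alternation reduces, via the standard identities $\{v,-v\}=0$ and $\{v,v\}=\{v,-1\}$, to the vanishing of $\{1+b\varepsilon,\ldots,-1\}$, which holds because $1+b\varepsilon=(1+\tfrac b2\varepsilon)^2$ forces this class to equal $2\{1+\tfrac b2\varepsilon,\ldots,-1\}$, and $2\{x,\ldots,-1\}=\{x,\ldots,(-1)^2\}=0$. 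This is exactly where $\tfrac12\in A$ is used. The genuinely delicate point is that one must know these are all the relations among the $d\log$-generators of $\Omega^m_A$ — i.e.\ that the presentation of $\Omega^m_A$ by such generators and relations is correct — and that the Steinberg relations in Milnor $K$-theory impose nothing beyond what $\lambda$ already accounts for on the tangent space. This is where weak $5$-fold stability enters: it lets one write an arbitrary exact form $b\,df$ in terms of $d\log$'s of units (for any $f$ there is a unit $c$ with $f+c$ and $c$ invertible, so $df=(f+c)\,d\log(f+c)-c\,d\log c$, and then one expands multilinearly), and, crucially, it provides enough units to run the chains of Steinberg manipulations — in the spirit of the classical computations of Bloch and Van der Kallen — needed to lift each syzygy among the $d\log$-generators to a relation among the corresponding symbols.

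I expect this last verification to be the hard part: matching the de Rham-type syzygies in $\Omega^m_A$ with consequences of the Steinberg relations, and doing so uniformly in $m$ with only a bounded stability hypothesis (the bound $5$ rather than something growing with $m$ reflects that each elementary step in the Steinberg chain needs to avoid only a fixed number of values). It is convenient to organise the induction on $m$ using the product $K^M_{m+1}=K^M_m\cdot\gm$ and the compatibility of $Bl$ and $\lambda$ with this product, reducing the multilinear bookkeeping in each step to the case $m=1$ handled by a Van der Kallen-type argument.
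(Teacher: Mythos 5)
You should first note that the paper does not prove Theorem~\ref{thm:tangentMilnor} at all: it is imported verbatim from the authors' companion paper (the citation \cite{GOMilnor}, Theorem~2.9), so there is no in-paper argument to compare you with, and your text has to stand on its own as a reconstruction of that external proof. As a plan it identifies the right shape of the argument: an explicit inverse $\lambda$ defined on $d\log$-generators, the identity $Bl\circ\lambda=\mathrm{id}$ on generators, the use of $\tfrac12\in A$ via $1+b\varepsilon=(1+\tfrac b2\varepsilon)^2$ to kill $\{1+b\varepsilon,\ldots,-1\}$, and weak $5$-fold stability (Definition~\ref{defin:wstable}) both to write every form in terms of units and to apply the antisymmetry identities of Lemma~\ref{lemma:Morrow} (which do apply over $A[\varepsilon]$, since a common unit $c\in A^*$ chosen for the constant terms works for $A[\varepsilon]$ as well). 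But as a proof it has two genuine gaps, and they are exactly where the content of the theorem lies.

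First, the generation step is asserted, not proved. Expanding an element of $TK^M_{m+1}(A)$ multilinearly along the splitting $A[\varepsilon]^*\simeq A^*\times(1+A\varepsilon)$ produces symbols with \emph{arbitrarily many} entries in $1+A\varepsilon$, not ``at most one''; a symbol such as $\{1+a\varepsilon,1+b\varepsilon,u_2,\ldots,u_m\}$ lies in the kernel of $Bl$ (its $d\log$ only contributes $\varepsilon\,d\varepsilon$-terms, which are projected away in Definition~\ref{defin:Blochmap}), so injectivity forces one to actually prove that such symbols vanish or reduce to the standard generators --- a nontrivial Steinberg/stability computation your sketch never addresses, and one that cannot be absorbed into ``bilinearity''. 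Second, the well-definedness of $\lambda$ is deferred rather than established. The relations among the generators $b\,\frac{du_1}{u_1}\wedge\ldots\wedge\frac{du_m}{u_m}$ of $\Omega^m_A$ are not exhausted by additivity in $b$, multiplicativity in the $u_i$, and alternation: all consequences of $d(ab)=a\,db+b\,da$ intervene, for instance the identities forced by $du=d(u+c)-dc$, such as $bu\,\frac{du}{u}=b(u+c)\,\frac{d(u+c)}{u+c}-bc\,\frac{dc}{c}$, and one must verify the corresponding identity $\{1+bu\varepsilon,u,\vec w\,\}=\{1+b(u+c)\varepsilon,u+c,\vec w\,\}-\{1+bc\varepsilon,c,\vec w\,\}$ in $TK^M_{m+1}(A)$, uniformly in $m$ and with only the fixed stability bound. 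You explicitly label this ``the hard part'' and leave it to ``Bloch--Van der Kallen-type'' manipulations; since that verification (together with the reduction of multi-$\varepsilon$ symbols) is the actual theorem, the proposal as written is an outline of the cited proof rather than a proof.
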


Combining Remark~\ref{exam:wstable}, Theorem~\ref{thm:tangentMilnor}, and Lemma~\ref{lemma:TL}$(i)$, we obtain the following isomorphism, which is crucial for our main results.

\begin{prop}\label{prop:LBl}
There is an isomorphism of group functors
$$
(TL^nK^M_{m+1})_{\z[\frac{1}{2}]}\stackrel{\sim}\lrto (L^n\Omega^m)_{\z[\frac{1}{2}]}\,.
$$
For any $\z[\frac{1}{2}]$-algebra $A$, a collection $f_1,\ldots,f_m\in\LL(A)^*$ of invertible iterated Laurent series, and an element $g\in\LL^n(A)$, this isomorphism sends a symbol $\{1+g\,\varepsilon,f_1,\ldots,f_m\}$ to the differential form ${g\,\frac{df_1}{f_1}\wedge\ldots\wedge\frac{df_m}{f_m}}$.
\end{prop}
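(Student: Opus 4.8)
The plan is to deduce Proposition~\ref{prop:LBl} directly from the ingredients assembled in the excerpt: Theorem~\ref{thm:tangentMilnor} on the tangent space to Milnor $K$-groups, Remark~\ref{exam:wstable} on weak $k$-fold stability of Laurent series, and Lemma~\ref{lemma:TL}$(i)$ on the commutation of the tangent functor with the loop functor. First I would record that by Remark~\ref{exam:wstable}, for any $\z[\tfrac12]$-algebra $A$, the ring $A((t_1))\ldots((t_n)) = \LL^n(A)$ is weakly $5$-fold stable (applying the remark $n$ times, or noting directly that a suitable power of $t_n$ works), and it contains $\tfrac12$. Hence Theorem~\ref{thm:tangentMilnor} applies to $B := \LL^n(A)$ with $m$ in the role of the parameter, yielding an isomorphism of groups $Bl\colon TK^M_{m+1}\big(\LL^n(A)\big)\stackrel{\sim}\lrto \Omega^m_{\LL^n(A)}$.

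Next I would package this functorially. The functor $A\mapsto K^M_{m+1}\big(\LL^n(A)\big)$ is by definition $L^nK^M_{m+1}$. Applying Lemma~\ref{lemma:TL}$(i)$ repeatedly (i.e. $n$ times) gives a canonical isomorphism of group functors $TL^nK^M_{m+1}\simeq L^nTK^M_{m+1}$, and similarly $TL^n\Omega^{m}\simeq L^nT\Omega^m$ or, more to the point, one can identify $L^n\Omega^m$ as the functor $A\mapsto \Omega^m_{\LL^n(A)}$. Since the isomorphism $Bl$ of Theorem~\ref{thm:tangentMilnor} is natural in the ring — it is built from the morphism of group functors $T(d\log)$ and the canonical projection in decomposition~\eqref{eq:decomforms}, both of which are functorial — evaluating it on $B=\LL^n(A)$ for all $\z[\tfrac12]$-algebras $A$ simultaneously produces an isomorphism of group functors over $\z[\tfrac12]$
$$
(TL^nK^M_{m+1})_{\z[\frac12]}\;\simeq\;(L^nTK^M_{m+1})_{\z[\frac12]}\;\stackrel{Bl}{\simeq}\;(L^n\Omega^m)_{\z[\frac12]}\,,
$$
which is the asserted isomorphism. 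I would emphasize that no independent verification of injectivity or surjectivity is needed, since these are already contained in Theorem~\ref{thm:tangentMilnor}.

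It remains to check the explicit formula on symbols. Here I would simply trace through the definitions: given invertible iterated Laurent series $f_1,\ldots,f_m\in\LL^n(A)^*$ and $g\in\LL^n(A)$, the element $\{1+g\,\varepsilon, f_1,\ldots,f_m\}$ lies in $TK^M_{m+1}\big(\LL^n(A)\big)$ because it maps to $\{f_1,\ldots,f_m\}$ under reduction modulo $\varepsilon$. By the computation recorded just after Definition~\ref{defin:Blochmap} (the map $Bl$ sends $\{1+b\,\varepsilon,a_1,\ldots,a_m\}$ to $b\,\tfrac{da_1}{a_1}\wedge\ldots\wedge\tfrac{da_m}{a_m}$), applied with $b = g$ and $a_i = f_i$ in the ring $\LL^n(A)$, we get that the symbol is sent to $g\,\tfrac{df_1}{f_1}\wedge\ldots\wedge\tfrac{df_m}{f_m}\in\Omega^m_{\LL^n(A)} = L^n\Omega^m(A)$, as claimed.

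The proof is essentially a bookkeeping exercise, so there is no serious obstacle; the one point requiring a line of care is verifying that $\LL^n(A)$ is weakly $5$-fold stable for every $\z[\tfrac12]$-algebra $A$ — this follows by induction on $n$ from Remark~\ref{exam:wstable}, using that if $B$ is any ring then $B((t))$ is weakly $k$-fold stable, so in particular $\LL^n(A) = \LL^{n-1}(A)((t_n))$ inherits the property. Everything else is the naturality of the constructions in Theorem~\ref{thm:tangentMilnor} and Lemma~\ref{lemma:TL}$(i)$, which is already established in the references.
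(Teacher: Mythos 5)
Your proof is correct and follows exactly the paper's own route: the paper obtains Proposition~\ref{prop:LBl} precisely by combining Remark~\ref{exam:wstable} (weak $5$-fold stability of iterated Laurent series), Theorem~\ref{thm:tangentMilnor} applied to $B=\LL^n(A)$, and Lemma~\ref{lemma:TL}$(i)$ iterated $n$ times, together with functoriality of $Bl$ and the computation recorded after Definition~\ref{defin:Blochmap} — which is what you spell out. The only cosmetic slip is your claim that $\{1+g\,\varepsilon,f_1,\ldots,f_m\}$ reduces to $\{f_1,\ldots,f_m\}$ modulo $\varepsilon$: the reduction is the symbol $\{1,f_1,\ldots,f_m\}=0$ in $K^M_{m+1}\big(\LL^n(A)\big)$, which is exactly what is needed for membership in the tangent space, so the conclusion stands.
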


\medskip

We will also use the following fact about Milnor $K$-groups, which is proved in~\cite[Lem.\,3.6]{Mor} with a method of Nesterenko and Suslin from~\cite[Lem.\,3.2]{NS} (see also Lemma~2.2 from the paper of Kerz~\cite{Ker} and~\cite[Lem.\,3.5]{GOMilnor}).

\begin{lemma}\label{lemma:Morrow}
Let $A$ be a weakly $5$-fold stable ring. Then for all elements $a,b\in A^*$, there are equalities in $K_2^M(A)$
$$
\{a,b\}=-\{b,a\}\,,\qquad \{a,a\}=\{-1,a\}\,.
$$
\end{lemma}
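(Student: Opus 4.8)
Call a unit $u\in A^*$ \emph{good} if $1-u\in A^*$ as well. The plan is to establish both identities first for good units by purely formal manipulation of the Steinberg relation, and then to bootstrap to arbitrary units using weak $5$-fold stability. The formal part: for a good unit $u$ both $\{u,1-u\}$ and $\{1-u,u\}$ vanish in $K_2^M(A)$ — the first is the defining Steinberg relation, the second is the Steinberg relation applied to $1-u$, since $1-(1-u)=u$. Using the elementary identity $-u=(1-u)(1-u^{-1})^{-1}$ (valid for any unit) together with $\{u,1-u^{-1}\}=-\{u^{-1},1-u^{-1}\}=0$ — here $1-u^{-1}=(u-1)u^{-1}$ is a unit because $u$ is good, and we used bilinearity and Steinberg — we get, by bilinearity in the second argument,
$\{u,-u\}=\{u,1-u\}-\{u,1-u^{-1}\}=0$ for every good unit $u$.

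Next I would note that good units generate $A^*$; in fact every $a\in A^*$ is a product of two good units. Indeed, by weak $5$-fold stability applied to the elements $0,-1,-a$ there is a unit $u$ with $u-1,\,u-a\in A^*$; then $u$ is good, and $au^{-1}$ is good because $1-au^{-1}=(u-a)u^{-1}\in A^*$, and $a=u\cdot(au^{-1})$.

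Now suppose, for the moment, that skew-symmetry $\{a,b\}=-\{b,a\}$ holds for all $a,b\in A^*$. Then the bilinearity identity $\{ab,-ab\}=\{a,-a\}+\{b,-b\}+\{a,b\}+\{b,a\}$ together with skew-symmetry shows that $a\mapsto\{a,-a\}$ is a homomorphism $A^*\to K_2^M(A)$; it vanishes on good units by the first paragraph, hence it vanishes on all of $A^*$ by the second paragraph. Finally $0=\{a,-a\}=\{a,-1\}+\{a,a\}$ gives $\{a,a\}=-\{a,-1\}=\{-1,a\}$, using skew-symmetry once more. So it remains only to prove skew-symmetry.

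This last step is where weak $5$-fold stability is genuinely needed: the Steinberg relation alone yields only relations among symbols of good units, and the naive reductions of $\{a,b\}+\{b,a\}$ are circular (for instance $\{a,b\}+\{b,a\}=\{ab,-ab\}$ once $\{a,-a\}=\{b,-b\}=0$, but $ab$ need not be good, and re-expressing $\{ab,-ab\}$ leads back to the same shape). Following Nesterenko--Suslin~\cite[Lem.\,3.2]{NS} (see also~\cite[Lem.\,3.6]{Mor}), given $a,b\in A^*$ I would use weak $5$-fold stability applied to a suitable four-element set of shifts (e.g.\ $-1,-a^{-1},-b^{-1},-(ab)^{-1}$) to produce a unit $u$ for which $u$, $1-u$, $1-ua$, $1-ub$, $1-uab$ are simultaneously units; then $u$, $ua$, $ub$, $uab$ (and their inverses) are all good units. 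Expanding $\{a,b\}+\{b,a\}$ by bilinearity through the substitutions $a=u^{-1}(ua)$, $b=u^{-1}(ub)$ and invoking repeatedly the vanishing $\{v,-v\}=0$ for good $v$ and the two-sided Steinberg relations $\{v,1-v\}=\{1-v,v\}=0$ from the first paragraph, all the terms involving the uncontrolled elements $a,b$ individually cancel and one is left with $0$. Carrying out this cancellation cleanly is the only nontrivial part of the argument; everything else above is formal, and the choice of the auxiliary unit $u$ is exactly what forces the hypothesis to be weak $5$-fold (rather than, say, $3$- or $4$-fold) stability.
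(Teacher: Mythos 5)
There is a genuine gap, and it sits exactly at the centre of the lemma. Your formal preliminaries are fine: $\{u,-u\}=0$ for units $u$ with $1-u\in A^*$ (via $-u=(1-u)(1-u^{-1})^{-1}$ and the two Steinberg relations), the factorization of an arbitrary unit as a product of two such ``good'' units using stability, and the deduction of $\{a,a\}=\{-1,a\}$ \emph{assuming} skew-symmetry. But skew-symmetry $\{a,b\}=-\{b,a\}$ --- the first assertion of the lemma, on which your whole second half depends --- is never proved; it is deferred to a sketch claiming that after one stability choice ``all the terms cancel.'' If one actually expands $\{a,b\}+\{b,a\}$ along your substitutions $a=u^{-1}(ua)$, $b=u^{-1}(ub)$, the result is $2\{u^{-1},u^{-1}\}$ plus the three sums $\{u^{-1},ua\}+\{ua,u^{-1}\}$, $\{u^{-1},ub\}+\{ub,u^{-1}\}$, $\{ua,ub\}+\{ub,ua\}$. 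The only mechanism you have for killing such a sum is the identity $\{x,y\}+\{y,x\}=\{xy,-xy\}-\{x,-x\}-\{y,-y\}$ together with $\{v,-v\}=0$ for good $v$, which requires $x$, $y$ \emph{and} $xy$ to be good; but for the first two sums the products are $a$ and $b$ themselves, which are the uncontrolled units you started with, and for the third it is $u^2ab$, while the diagonal term $2\{u^{-1},u^{-1}\}$ does not vanish from anything you have established. So the sketched cancellation is circular in precisely the way you warn against in your own third paragraph, and no amount of re-choosing the shifts in the stability hypothesis fixes this without a genuinely different manipulation.

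For comparison: the paper does not prove this lemma at all; it cites Morrow~\cite[Lem.\,3.6]{Mor}, whose argument follows the method of Nesterenko--Suslin~\cite[Lem.\,3.2]{NS} (see also Kerz~\cite[Lem.\,2.2]{Ker} and~\cite[Lem.\,3.5]{GOMilnor}). The nontrivial content of those references is exactly the step you have left open --- establishing $\{a,-a\}=0$ (equivalently skew-symmetry) for \emph{all} units of a weakly $5$-fold stable ring, not just the good ones --- and it is done by a more careful chain of identities than the expansion you propose. To complete your write-up you must either reproduce that argument or cite it; as it stands, your proposal proves only the reduction of the second identity to the first, not the lemma.
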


\subsection{Boundary map for algebraic $K$-groups}\label{subsect:bound}

Let $A$ be an arbitrary ring. We recall the construction of a boundary map (see below for the corresponding references)
$$
\partial_{m+1}\;:\;K_{m+1}\big(A((t))\big)\lrto K_{m}(A)\,.
$$
We show that the map $\partial_{m+1}$ is functorial with respect to $A$, that is, $\partial_{m+1}$ is a morphism of group functors from $LK_{m+1}$ to $K_{m}$ (see Definition~\ref{defin:loopgroup}). Explicitly, this means that for any homomorphism of rings $A\to B$, the following diagram is commutative:
$$
\begin{CD}
K_{m+1}\big(A((t))\big) @>{\partial_{{m+1},A}}>> K_m(A) \\
@VV V @VVV \\
K_{m+1}\big(B((t))\big) @>{\partial_{{m+1}, B}}>> K_m(B)\,,
\end{CD}
$$
where we use the indices ``$A$'' and ``$B$'' in order to specify the maps $\partial_{m+1}$ for the corresponding rings.

\medskip

Let $\Hc(A)$ be the exact category of $A[[t]]$-modules that admit a resolution of length one by finitely generated projective $A[[t]]$-modules and are annihilated by a power of the element~$t$.
A result of Gersten~\cite{Ger} implies the existence of a boundary map ${\tilde\partial_{m+1}\colon K_{m+1}\big(A((t))\big)\to K_m\big(\Hc(A)\big)}$. Gersten's result was later generalized by Grayson~\cite{Gr} (for a more detailed exposition see~\cite[Theor.\,9.1]{S}). In order to show functoriality of $\tilde\partial_{m+1}$, we use the construction from~\cite{Gr}, which we explain below.

\begin{rmk}
A resolution of a module in $\Hc(A)$ is an example of a perfect complex of $R[[t]]$-modules with support on the closed subscheme $\Spec(A)\subset \Spec\big(A[[t]]\big)$ given by the equation $t=0$. Perfect complexes of this type are crucial in the work of Thomason--Trobaugh~\cite{TT}. Results of the latter paper are used by Musicantov and Yom Din~\cite[\S\,4.1]{MY} in order to compare the boundary map for $K$-groups in low degrees with the valuation, the tame symbol, and the residue map when $A$ is a field.
\end{rmk}

Let us introduce some more notation. Given a ring $R$, let $\Pc(R)$ be the exact category of finitely generated projective $R$-modules and let $\Pc^1(R)$ be the exact category of \mbox{$R$-modules} that admit a resolution of length one by finitely generated projective \mbox{$R$-modules}. Let $\Vc(A)$ be the exact category of finitely generated projective \mbox{$A((t))$-modules} that are localizations of finitely generated projective $A[[t]]$-modules. Let~$\Vc^1(A)$ be the exact category of $A((t))$-modules that admit a resolution of length one by $A((t))$-modules in~$\Vc(A)$. By $\rm BQ$ denote the classifying space of the $\rm Q$-construction of an exact category.

By Quillen's resolution theorem~\cite[\S\,4, Theor.\,3, Cor.\,1]{Q}, the embedding of exact categories $\Vc(A)\to\Vc^1(A)$ induces a homotopy equivalence ${\rm BQ}\Vc(A)\to{\rm BQ}\Vc^1(A)$, whence we obtain isomorphisms of $K$-groups
$$
K_m\big(\Vc(A)\big)\simeq K_m\big(\Vc^1(A)\big)\,.
$$
Note that all exact sequences in the category $\Pc\big(A((t))\big)$ split. Besides, the subcategory $\Vc(A)$ is cofinal in $\Pc\big(A((t))\big)$. In other words, any module in $\Pc\big(A((t))\big)$ is a direct summand of a module in~$\Vc(A)$, being a direct summand of a finitely generated free $A((t))$-module. Therefore by~\cite[Prop.\,1.1, 1.3]{Ger}, the corresponding map ${{\rm BQ}\Vc(A)\to{\rm BQ}\Pc\big(A((t))\big)}$ is homotopy equivalent to a covering and we obtain isomorphisms of $K$-groups $$
K_m\big(\Vc(A)\big)\simeq K_m\big(A((t))\big)\,,\qquad m\geqslant 1\,.
$$
(The map $K_0\big(\Vc(A)\big)\to K_0\big(A((t))\big)$ is injective but not surjective in general.)
Finally, consider the following diagram of exact categories:
\begin{equation}\label{eq:seqcats}
\Hc(A)\lrto \Pc^1\big(A[[t]]\big)\lrto \Vc^1(A)\,.
\end{equation}
The point is that~\eqref{eq:seqcats} induces a homotopy fibration
$$
{\rm BQ}\Hc(A)\lrto {\rm BQ}\Pc^1\big(A[[t]]\big)\lrto {\rm BQ}\Vc^1(A)\,.
$$
This statement is essentially ``the Localization Theorem for projective modules'' in~\cite{Gr} and follows from the beginning and the end of the proof of this theorem. Consequently we obtain a long exact sequence of homotopy groups and, in particular, we get a boundary map ${K_{m+1}\big(\Vc^1(A)\big)\to K_m\big(\Hc(A)\big)}$. Applying the above isomorphisms between $K$-groups, we obtain the map
$$
\tilde{\partial}_{m+1}\;:\; K_{m+1}\big(A((t))\big)\lrto K_m\big(\Hc(A)\big)\,.
$$

\medskip

We need the following simple lemma.

\begin{lemma}\label{lemma:tor}
Given a homomorphism of rings $A\to B$, for any $A[[t]]$-module $M$ in~$\Hc(A)$, we have that
$\Tor_1^{A[[t]]}\big(M,B[[t]]\big)=0$.
\end{lemma}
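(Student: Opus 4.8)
The plan is to unwind the hypothesis that $M$ lies in $\Hc(A)$: by definition $M$ admits a resolution of length one by finitely generated projective $A[[t]]$-modules, say
$$
0\lrto P_1\lrto P_0\lrto M\lrto 0\,,
$$
with $P_0,P_1$ finitely generated projective over $A[[t]]$. Since projective modules are flat, this is a flat resolution of $M$, so I can compute $\Tor^{A[[t]]}_{*}\big(M,B[[t]]\big)$ from the two-term complex $P_1\otimes_{A[[t]]}B[[t]]\to P_0\otimes_{A[[t]]}B[[t]]$. In particular $\Tor^{A[[t]]}_i\big(M,B[[t]]\big)=0$ for $i\geqslant 2$ automatically, and $\Tor^{A[[t]]}_1\big(M,B[[t]]\big)$ is exactly the kernel of the map $P_1\otimes_{A[[t]]}B[[t]]\to P_0\otimes_{A[[t]]}B[[t]]$.

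The key point is then to identify $P_1\otimes_{A[[t]]}B[[t]]$ with $P_1\otimes_{A}B$ and to observe that the map $P_1\to P_0$, being injective with cokernel annihilated by a power of $t$, stays injective after tensoring with $B[[t]]$. Concretely, first I would use the natural isomorphism $A[[t]]\otimes_{A}B\simeq B[[t]]$ (which holds since $A[[t]]$ is flat, or just directly by writing down the obvious map and its inverse — though here one only needs the map itself). Next, the crucial geometric fact: $M$ is annihilated by $t^N$ for some $N$, so the injection $P_1\hookrightarrow P_0$ becomes an isomorphism after inverting $t$, i.e. $P_1[t^{-1}]\toiso P_0[t^{-1}]$. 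Equivalently, $P_1\hookrightarrow P_0$ factors as $P_1\xrightarrow{\;\sim\;}t^N P_0\hookrightarrow P_0$ up to identifications, or at least the cokernel is $t$-power torsion and finitely generated. Now tensor with $B[[t]]$: the module $\mathrm{coker}(P_1\otimes B[[t]]\to P_0\otimes B[[t]])$ is $M\otimes_{A[[t]]}B[[t]]$, which is still annihilated by $t^N$, hence the localization $(P_1\otimes B[[t]])[t^{-1}]\to (P_0\otimes B[[t]])[t^{-1}]$ is an isomorphism (localization is exact, and it kills the torsion cokernel). Therefore the kernel $K:=\Tor^{A[[t]]}_1(M,B[[t]])$ of $P_1\otimes B[[t]]\to P_0\otimes B[[t]]$ satisfies $K[t^{-1}]=0$, i.e. $K$ is $t$-power torsion; but $K$ is a submodule of the projective, hence $t$-torsion-free, $B[[t]]$-module $P_1\otimes_{A[[t]]}B[[t]]$. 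A $t$-power-torsion submodule of a $t$-torsion-free module is zero, so $K=0$.

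The main obstacle — really the only subtlety — is making precise that a $B[[t]]$-submodule of a finitely generated projective (hence torsion-free for $t$) $B[[t]]$-module on which some power of $t$ acts as zero must vanish; this is immediate once one notes that $P_1\otimes_{A[[t]]}B[[t]]$ is a direct summand of a free $B[[t]]$-module $B[[t]]^{\oplus r}$, and $t$ is a non-zero-divisor on $B[[t]]$, so $t^N x=0$ with $x$ in a free module forces $x=0$. One should also be slightly careful that "annihilated by a power of $t$" survives base change: if $t^N M=0$ then $t^N\cdot(M\otimes_{A[[t]]}B[[t]])=0$, which is clear since $t^N$ acts $B[[t]]$-linearly. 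Assembling these observations gives $\Tor^{A[[t]]}_1\big(M,B[[t]]\big)=0$, as claimed. (This vanishing is presumably then used to show that base change $A\to B$ sends the category $\Hc(A)$ to $\Hc(B)$ and is exact there, which in turn yields functoriality of $\tilde\partial_{m+1}$.)
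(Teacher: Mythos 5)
Your overall strategy coincides with the paper's: take a length-one resolution $0\to P_1\to P_0\to M\to 0$ by finitely generated projective $A[[t]]$-modules, observe that $\Tor_1^{A[[t]]}\big(M,B[[t]]\big)$ is the kernel of $\alpha'\colon P_1\otimes_{A[[t]]}B[[t]]\to P_0\otimes_{A[[t]]}B[[t]]$, show this kernel is killed by powers of $t$, and conclude that it vanishes because it sits inside a projective (hence $t$-torsion-free) $B[[t]]$-module and $t$ is not a zero divisor in $B[[t]]$. The last step and the general shape are exactly as in the paper.

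However, the justification you give at the crucial step is not valid as stated. You claim that the localized map $\big(P_1\otimes_{A[[t]]}B[[t]]\big)[t^{-1}]\to \big(P_0\otimes_{A[[t]]}B[[t]]\big)[t^{-1}]$ is an isomorphism ``because localization kills the torsion cokernel''. Killing the cokernel only gives surjectivity after inverting $t$; it says nothing about injectivity, and injectivity is precisely what you need to conclude that $K:=\Ker(\alpha')$ satisfies $K[t^{-1}]=0$ (a map of $B[[t]]$-modules can have $t$-power-torsion cokernel and a torsion-free kernel, e.g.\ $(a,b)\mapsto ta$ from $B[[t]]^{\oplus 2}$ to $B[[t]]$). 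The correct argument --- and the one the paper uses --- is to exploit the isomorphism you already recorded before base change: since $M$ is annihilated by a power of $t$, the map $\alpha\otimes_{A[[t]]}A((t))\colon P_1[t^{-1}]\to P_0[t^{-1}]$ is an isomorphism, hence so is its base change along $A((t))\to B((t))$; and this base change is identified with $\alpha'\otimes_{B[[t]]}B((t))$, i.e.\ with your localized map, via $\big(P_i\otimes_{A[[t]]}B[[t]]\big)[t^{-1}]\simeq P_i[t^{-1}]\otimes_{A((t))}B((t))$. With this one-line repair your proof becomes the paper's proof. A separate, minor point: the asserted isomorphism $A[[t]]\otimes_A B\simeq B[[t]]$ is false in general (take $A=\Z$, $B=\Q$: the image of $\Z[[t]]\otimes_\Z\Q$ consists of series with bounded denominators), but you never actually use it --- only the ring homomorphism $A[[t]]\to B[[t]]$ and the fact that base change preserves finitely generated projective modules are needed.
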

\begin{proof}
Let
$$
0\lrto P\stackrel{\alpha}\lrto Q\lrto M\lrto 0
$$
be a resolution of $M$ by finitely generated projective $A[[t]]$-module. We need to show that the map
$$
\alpha':=\alpha\otimes_{A[[t]]}B[[t]]\;:\; P\otimes_{A[[t]]}B[[t]]\lrto Q\otimes_{A[[t]]}B[[t]]
$$
is injective. Since $M$ is annihilated by a power of $t$, the map $\alpha\otimes_{A[[t]]}A((t))$ is an isomorphism, whence the map $\alpha\otimes_{A[[t]]}B((t))=\alpha'\otimes_{B[[t]]}B((t))$ is an isomorphism as well. Therefore all elements in the $B[[t]]$-module $\Ker(\alpha')$ are annihilated by powers of $t$. On the other hand, $\Ker(\alpha')$ is a submodule of the projective $B[[t]]$-module $P\otimes_{A[[t]]}B[[t]]$ and $t$ is not a zero divisor in the ring $B[[t]]$. Therefore, $\Ker(\alpha')=0$.
\end{proof}

\begin{prop}\label{prop:functtilde}
The map $\tilde\partial_{m+1}\colon K_{m+1}\big(A((t))\big)\to K_m\big(\Hc(A)\big)$ is functorial with respect to a ring~$A$.
\end{prop}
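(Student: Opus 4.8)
The plan is to show that every step in the construction of $\tilde\partial_{m+1}$ is natural in $A$, with the natural transformation induced throughout by base change $-\otimes_{A[[t]]}B[[t]]$ (and, on the Laurent side, $-\otimes_{A((t))}B((t))$) along a homomorphism $A\to B$. First I would record that base change defines \emph{exact} functors $\Pc(A[[t]])\to\Pc(B[[t]])$, $\Vc(A)\to\Vc(B)$, $\Pc\big(A((t))\big)\to\Pc\big(B((t))\big)$ and $\Hc(A)\to\Hc(B)$; for the first three this holds because finitely generated projective modules, and localizations of them, are flat, and for the last it is essentially the content of Lemma~\ref{lemma:tor}. These functors are compatible with all the structure functors entering the construction --- the cofinal embedding $\Vc(A)\hookrightarrow\Pc\big(A((t))\big)$ and the localization functor $\Pc(A[[t]])\to\Vc(A)$, $P\mapsto P\otimes_{A[[t]]}A((t))$ --- because iterated tensor products commute. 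Hence the identifications $K_m\big(\Vc(A)\big)\simeq K_m\big(A((t))\big)$ for $m\geqslant 1$ (from cofinality, cf.~\cite{Ger}) and $K_m\big(\Vc(A)\big)\simeq K_m\big(\Vc^1(A)\big)$ (from Quillen's resolution theorem) are functorial in $A$.

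The hard part is that base change is \emph{not} exact on $\Pc^1(A[[t]])$ or on $\Vc^1(A)$ --- for instance $M=\Z[[t]]/(2)$ lies in $\Pc^1(\Z[[t]])$ while $\Tor_1^{\Z[[t]]}\big(M,(\Z/2)[[t]]\big)\neq 0$ --- so one cannot base change the total space and the base of Grayson's homotopy fibration ${\rm BQ}\,\Hc(A)\to{\rm BQ}\,\Pc^1(A[[t]])\to{\rm BQ}\,\Vc^1(A)$ directly. I would circumvent this by invoking the resolution theorem to replace $\Pc^1(A[[t]])$ by $\Pc(A[[t]])$ and $\Vc^1(A)$ by $\Vc(A)$, obtaining an equivalent fibration whose total space and base now do carry the base-change functors above; the square relating the two localization functors commutes because localization and base change commute with each other. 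The one remaining point is the compatibility of base change with the fibre inclusion $\Hc(A)\hookrightarrow\Pc^1(A[[t]])$, which is not covered by the previous step because the modules in $\Hc(A)$ are $t$-power torsion rather than projective. To treat it I would introduce the exact category $\mathcal{R}(A)$ of short exact sequences $0\to P\to Q\to M\to 0$ with $P,Q$ finitely generated projective over $A[[t]]$ and $M\in\Hc(A)$; it carries exact functors $\mathcal{R}(A)\to\Pc(A[[t]])$ (to $P$ or to $Q$) and an exact functor $\mathcal{R}(A)\to\Hc(A)$, $[P\to Q\to M]\mapsto M$, the latter inducing a homotopy equivalence ${\rm BQ}\,\mathcal{R}(A)\to{\rm BQ}\,\Hc(A)$ since every object of $\Hc(A)$ admits such a resolution. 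Here Lemma~\ref{lemma:tor} enters once more and decisively: the base-changed sequence $0\to P\otimes_{A[[t]]}B[[t]]\to Q\otimes_{A[[t]]}B[[t]]\to M\otimes_{A[[t]]}B[[t]]\to 0$ stays exact, so base change is an exact functor $\mathcal{R}(A)\to\mathcal{R}(B)$ compatible with the functors to $\Hc$ and to $\Pc(A[[t]])$. Thus $\mathcal{R}$ bridges the fibre and the total space in a way visibly natural in $A$.

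Putting everything together, I expect a homotopy-commutative ladder from the (transported) fibration for $A$ to that for $B$, compatible with the functorial $K$-group identifications of the first paragraph; naturality of the connecting homomorphism of a homotopy fibration then gives that $\tilde\partial_{m+1}$, and hence $\partial_{m+1}$, commutes with the maps induced by $A\to B$, which is the assertion. The real work is concentrated in the resolution-theoretic detour and its base-change compatibility, resting on Lemma~\ref{lemma:tor}; the remainder is bookkeeping with functoriality of the $Q$-construction. (One could instead rework $\tilde\partial_{m+1}$ via perfect complexes supported on $\{t=0\}$ in the sense of Thomason--Trobaugh, on which derived base change is automatically exact, but that requires first identifying Grayson's $K_m\big(\Hc(A)\big)$ with the $K$-theory of such complexes.)
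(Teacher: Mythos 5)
You have correctly isolated the crux --- base change along $\varphi\colon A\to B$ is not exact on $\Pc^1\big(A[[t]]\big)$ or $\Vc^1(A)$, while Lemma~\ref{lemma:tor} makes it exact on $\Hc(A)$ --- but your repair diverges from the paper's and, as written, has two genuine gaps. The paper uses Kato's device: it keeps the shape of Grayson's diagram and merely shrinks the middle and right terms to the full subcategories $\Pc^1_{\varphi}\big(A[[t]]\big)$ and $\Vc^1_{\varphi}(A)$ of objects with vanishing $\Tor_1$ against $B[[t]]$, respectively $B((t))$. These subcategories still contain all the projectives, so Quillen's resolution theorem identifies their $K$-theory with that of $\Pc^1\big(A[[t]]\big)$ and $\Vc^1(A)$; they still contain $\Hc(A)$ --- this is exactly what Lemma~\ref{lemma:tor} is for --- so the fibre inclusion survives as an honest exact functor; and base change is exact on them by construction. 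One then has a strictly commutative diagram of exact categories and exact functors, mapping up by equivalences and down by base change, and functoriality of the connecting map in the long exact sequence of homotopy groups finishes the proof with no homotopy-coherence issues.

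Your route instead replaces the total space and base by $\Pc\big(A[[t]]\big)$ and $\Vc(A)$, which destroys the fibre inclusion, and then tries to rebuild it through the resolution category $\mathcal{R}(A)$. Two steps there are not established. First, the claimed homotopy equivalence ${\rm BQ}\,\mathcal{R}(A)\to{\rm BQ}\,\Hc(A)$ does not follow merely from the existence of resolutions; it requires a Theorem~A or additivity argument of its own. Second, and more seriously, the composite $\mathcal{R}(A)\to\Hc(A)\to\Pc^1\big(A[[t]]\big)$ is neither equal nor naturally isomorphic to either composite $\mathcal{R}(A)\to\Pc\big(A[[t]]\big)\subset\Pc^1\big(A[[t]]\big)$ sending a resolution to $P$ or to $Q$; on $K$-theory they are related only through the additivity theorem, via $[M]=[Q]-[P]$. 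So the ``homotopy-commutative ladder'' you expect does not exist with the maps you have written down, and proving that the connecting homomorphism you would extract from your construction coincides with Grayson's $\tilde\partial_{m+1}$ is precisely the hard content, not bookkeeping. These obstacles could in principle be overcome (your Thomason--Trobaugh aside is one way), but the argument as proposed is incomplete, whereas the paper's Tor-independent subcategories achieve the same goal with strictly commuting diagrams and only the resolution theorem as input.
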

\begin{proof}
We use Kato's idea from the proof of~\cite[\S\,2.1, Lem.\,2]{K1}. Observe that the second and the third categories in diagram~\eqref{eq:seqcats} are not functorial with respect to $A$. To overcome this, we define auxiliary exact subcategories. Let $\varphi\colon A\to B$ be a homomorphism of rings. Let $\Pc^1_{\varphi}\big(A[[t]]\big)$ be the exact category of $A[[t]]$-modules $M$ in $\Pc^1\big(A[[t]]\big)$ such that
$\Tor_1^{A[[t]]}\big(M, B[[t]]\big)=0$. Similarly, let $\Vc^1_{\varphi}(A)$ be the exact category of $A((t))$-modules~$N$ in~$\Vc^1(A)$ such that $\Tor_1^{A((t))}\big(N, B((t))\big)=0$. Lemma~\ref{lemma:tor} asserts that $\Hc(A)$ is a subcategory in $\Pc^1_{\varphi}\big(A[[t]]\big)$. Thus we have the following diagram of exact categories with arrows being exact functors:
$$
\begin{CD}
\Hc(A) @>>>  \Pc^1\big(A[[t]]\big) @>>>  \Vc^1(A) \\
@AAA @AAA @AAA \\
\Hc(A) @>>>  \Pc^1_{\varphi}\big(A[[t]]\big) @>>>  \Vc^1_{\varphi}(A) \\
@VVV @VVV @VVV \\
\Hc(B) @>>>  \Pc^1\big(B[[t]]\big) @>>>  \Vc^1(B) \\
\end{CD}
$$
By Quillen's resolution theorem, the up going arrows in this diagram induce homotopy equivalences between the corresponding $\rm BQ$-spaces. Therefore by functoriality of the boundary map in a long exact sequence of homotopy groups, we obtain functoriality for the map $\tilde\partial_{m+1}$.
\end{proof}

\medskip

Now let us construct a map from $K_m\big(\Hc(A)\big)$ to $K_m(A)$. The second named author and Zhu proved in~\cite[Prop.\,7.1]{OZ1} that any $A[[t]]$-module in the category $\Hc(A)$ is a finitely generated projective $A$-module (cf.~\cite[\S\,3.3]{G} and~\cite[\S\,2.1, Lem.\,1]{K1}). Thus we have an exact functor $\Hc(A)\to\Pc(A)$, which induces maps between $K$-groups
$$
I_m\;:\; K_m\big(\Hc(A)\big)\lrto K_m(A)\,.
$$
Let us make the following side remark.

\begin{rmk}
The exact category $\Hc(A)$ is equivalent to the exact category $\underline{\rm Nil}(A)$ from~\cite[p.\,236]{Gr}, whose objects are pairs $(M,f)$, where
$M$ is a finitely generated projective \mbox{$A$-module} and $f$ is a nilpotent
endomorphism of $M$. Indeed, if an $A[[t]]$-module~$M$ is in the category $\Hc(A)$, then, as mentioned above, $M$ is a finitely generated projective $A$-module and, by definition, $t$ acts on $M$ as a nilpotent endomorphism. Conversely, for any pair $(M,f)$ as above, we have an exact sequence of $A[[t]]$-modules
$$
0 \lrto M[[t]]  \stackrel{\alpha}{\lrto}  M[[t]]  \stackrel{\beta}{\lrto} M_f  \lrto 0\,,
$$
where $M_f$ is an $A[[t]]$-module $M$ with $t$ acting as $f$ and we put
$$
\alpha\Big(\,\mbox{$\sum\limits_{l\geqslant 0} a_l t^l$}\Big):=\mbox{$\sum\limits_{l\geqslant 0}\big(a_{l-1}-f(a_l)\big)t^l$}\,,
\qquad \beta\Big(\,\mbox{$\sum\limits_{l\geqslant 0} b_l t^l$}\Big):=\mbox{$\sum\limits_{l\geqslant 0} f^l(b_l)$}\,,
$$
where we put $a_{-1}:=0$. Note that if $\beta\Big(\sum\limits_{l\geqslant 0} b_l t^l\Big) =0$, then $\sum\limits_{l\geqslant 0} b_l t^l = \alpha\Big(\sum\limits_{l\geqslant 0} a_l t^l\Big)$, where
$a_l = \sum\limits_{j \geqslant 0} f^j (b_{l+j+1})$ for any integer $l \geqslant 0$. Since $f$ is nilpotent, the $A[[t]]$-module $M_f$ is from $\Hc(A)$.
\end{rmk}

We need the following simple lemma.

\begin{lemma}\label{lemma:commuteextscal}
Let $A\to B$ be a homomorphism of rings and let $M$ be a module over~$A[[t]]$ such that $t^l\cdot M=0$ for some integer $l\geqslant 0$. Then the natural homomorphism of \mbox{$B$-modules}
$M \otimes_A B \to M \otimes_{A[[t]]}B[[t]]$ is an isomorphism.
\end{lemma}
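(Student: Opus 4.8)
Let $A\to B$ be a homomorphism of rings and let $M$ be a module over $A[[t]]$ such that $t^l\cdot M=0$ for some integer $l\geqslant 0$. Then the natural homomorphism of $B$-modules $M\otimes_A B\to M\otimes_{A[[t]]}B[[t]]$ is an isomorphism.

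The plan is to exploit the fact that $M$, being $t^l$-torsion, is in fact a module over the quotient ring $A[[t]]/(t^l)=A[t]/(t^l)$, so that all three tensor products in sight can be rewritten as tensor products over this finite free $A$-algebra, where the computation becomes transparent. First I would observe that the natural map in question is $B$-linear and functorial in $M$, and that both sides commute with arbitrary direct sums and cokernels in the variable $M$ (tensor products are right exact and commute with colimits). Hence, by choosing a presentation of $M$ as a cokernel of a map between free $A[[t]]/(t^l)$-modules — note $M$ is a module over $A[[t]]/(t^l)$ precisely because $t^l M=0$ — it suffices to check the isomorphism when $M=A[[t]]/(t^l)=A[t]/(t^l)$, which is a free $A$-module of rank $l$ with basis $1,t,\dots,t^{l-1}$.

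For this base case I would compute both sides directly. On one side, $M\otimes_A B=\big(A[t]/(t^l)\big)\otimes_A B\simeq B[t]/(t^l)$, again free of rank $l$ over $B$. On the other side, $M\otimes_{A[[t]]}B[[t]]=\big(A[[t]]/(t^l)\big)\otimes_{A[[t]]}B[[t]]\simeq B[[t]]/(t^l B[[t]])=B[t]/(t^l)$, using that $t^l B[[t]]$ is the image of $(t^l)\subset A[[t]]$ and that $B[[t]]/t^l B[[t]]\simeq B[t]/(t^l)$. Under these identifications the natural map sends $\bar t^{\,i}\otimes b$ to $b\,\bar t^{\,i}$, i.e.\ it is the identity on $B[t]/(t^l)$, hence an isomorphism. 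Transporting this back through the presentation of a general $M$ via the five lemma (or simply by right-exactness applied to the two parallel right-exact sequences) yields the claim in general.

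I do not expect any serious obstacle here; the only point requiring a small argument is the reduction to the free case, which hinges on the evident but essential remark that $t^l M=0$ forces $M$ to be an $A[t]/(t^l)$-module and that the functor $N\mapsto N\otimes_{A[[t]]}B[[t]]$ restricted to such modules agrees with $N\mapsto N\otimes_{A[t]/(t^l)}B[t]/(t^l)$ — which is immediate from associativity of tensor product together with $\big(A[t]/(t^l)\big)\otimes_{A[[t]]}B[[t]]\simeq B[t]/(t^l)$. Once that identification is in place, the statement is just the standard fact that base change along $A\to B$ of a free module is free of the same rank, applied to $A[t]/(t^l)\to B[t]/(t^l)$.
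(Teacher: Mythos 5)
Your proof is correct, and the key computation is the same as in the paper: everything hinges on the cyclic module $T:=A[[t]]/(t^l)\simeq A[t]/(t^l)$, for which both $T\otimes_A B$ and $T\otimes_{A[[t]]}B[[t]]\simeq B[[t]]/t^lB[[t]]$ are identified with $B[t]/(t^l)$ compatibly with the natural map. Where you differ is in the passage from $T$ to a general $M$: you d\'evissage via a presentation of $M$ by free $A[t]/(t^l)$-modules, using that both functors are right exact and commute with arbitrary direct sums, and then invoke the five lemma. The paper instead avoids presentations altogether: since $t^lM=0$ gives $M\simeq M\otimes_{A[[t]]}T$, it writes the map as a composite of five associativity/base-change isomorphisms
$M\otimes_A B\simeq (M\otimes_{A[[t]]}T)\otimes_A B\simeq M\otimes_{A[[t]]}(T\otimes_A B)\simeq M\otimes_{A[[t]]}(T\otimes_{A[[t]]}B[[t]])\simeq (M\otimes_{A[[t]]}T)\otimes_{A[[t]]}B[[t]]\simeq M\otimes_{A[[t]]}B[[t]]$,
which settles the general case in one stroke. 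Your own closing remark --- that for $A[t]/(t^l)$-modules the functor $N\mapsto N\otimes_{A[[t]]}B[[t]]$ agrees with $N\mapsto N\otimes_{A[t]/(t^l)}B[t]/(t^l)$, and likewise for $N\mapsto N\otimes_A B$ --- is essentially the paper's mechanism in disguise, and taken seriously it makes your presentation step redundant: both sides are then canonically $M\otimes_{A[t]/(t^l)}B[t]/(t^l)$. So your route buys nothing beyond the paper's except familiarity of the d\'evissage template, while the paper's chain of canonical isomorphisms is shorter and requires no exactness or colimit bookkeeping; both are perfectly valid.
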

\begin{proof}
Define an $A[[t]]$-module $T:=A[[t]] / t^l A[[t]]$. Then the lemma holds for $T$, that is, the natural homomorphism of $B$-modules
$$
T \otimes_A B  \lrto  T \otimes_{A[[t]]} B[[t]]
$$
is an isomorphism. Indeed, both $B$-modules are isomorphic to the $B$-module ${B[[t]]/ t^l B[[t]]}$. To prove the lemma in the general case, we take the composition of the following isomorphisms of $B$-modules:
\begin{multline}  \nonumber
\hspace{-0.3cm}
M \otimes_A B \stackrel{\sim}\lrto (M \otimes_{A[[t]]} T)  \otimes_A B  \stackrel{\sim}\lrto   M \otimes_{A[[t]]} (T \otimes_A B)  \stackrel{\sim}\lrto \\  \stackrel{\sim}\lrto M \otimes_{A[[t]]}  (T \otimes_{A[[t]]} B[[t]])  \stackrel{\sim}\lrto (M \otimes_{A[[t]]}  T  )  \otimes_{A[[t]]}  B[[t]]  \stackrel{\sim}\lrto M \otimes_{A[[t]]} B[[t]]\,.
\end{multline}
\end{proof}

Lemma~\ref{lemma:commuteextscal} directly implies that the map $I_m$ is functorial with respect to $A$.

\medskip

Finally, define a boundary map for algebraic $K$-groups by the formula
\begin{equation}\label{eq:bounKtheory}
\partial_{m+1}:=I_m\circ\tilde\partial_{m+1}\;:\;K_{m+1}\big(A((t))\big)\lrto K_m(A)\,.
\end{equation}
Combining functoriality of $I_m$ together with functoriality of $\tilde\partial_{m+1}$ given by Proposition~\ref{prop:functtilde}, we obtain the following important result.

\begin{prop}  \label{nat-tr}
The boundary map $\partial_{m+1}$ is a morphism of group functors from~$LK_{m+1}$ to $K_{m}$.
\end{prop}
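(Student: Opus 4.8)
The plan is to unwind the definition $\partial_{m+1}=I_m\circ\tilde\partial_{m+1}$ from formula~\eqref{eq:bounKtheory} and to observe that each of the two factors is already a natural transformation of functors on the category of rings; naturality of the composite then follows by pasting the two resulting commutative squares on top of each other. So the present proposition is a purely formal consequence of work done above, and no new argument is needed.

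For the first factor, $\tilde\partial_{m+1}\colon K_{m+1}\big(A((t))\big)\to K_m\big(\Hc(A)\big)$, this is exactly the content of Proposition~\ref{prop:functtilde}: for a ring homomorphism $\varphi\colon A\to B$ one interpolates the two non-functorial categories $\Pc^1\big(A[[t]]\big)$ and $\Vc^1(A)$ appearing in the localization sequence~\eqref{eq:seqcats} by the auxiliary exact categories $\Pc^1_{\varphi}\big(A[[t]]\big)$ and $\Vc^1_{\varphi}(A)$ of modules whose first $\Tor$ against $B[[t]]$ (respectively $B((t))$) vanishes. Lemma~\ref{lemma:tor} places $\Hc(A)$ inside $\Pc^1_{\varphi}\big(A[[t]]\big)$, so one obtains a two-step tower of localization sequences; Quillen's resolution theorem forces the upward maps to be homotopy equivalences on the corresponding $\mathrm{BQ}$-spaces, and functoriality of the boundary map in the long exact sequences of homotopy groups delivers the first commutative square.

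For the second factor, $I_m\colon K_m\big(\Hc(A)\big)\to K_m(A)$, one checks that the exact functor $\Hc(A)\to\Pc(A)$ forgetting the $A[[t]]$-action down to the underlying finitely generated projective $A$-module is compatible with base change along $\varphi$. Indeed, each $M\in\Hc(A)$ satisfies $t^lM=0$ for some $l$, and Lemma~\ref{lemma:commuteextscal} then supplies a natural isomorphism $M\otimes_A B\simeq M\otimes_{A[[t]]}B[[t]]$ of $B$-modules; together with Lemma~\ref{lemma:tor} (which guarantees that $M\mapsto M\otimes_{A[[t]]}B[[t]]$ sends $\Hc(A)$ into $\Hc(B)$ and is exact there) this makes the square of exact categories
$$
\begin{CD}
\Hc(A) @>>> \Pc(A) \\
@VVV @VVV \\
\Hc(B) @>>> \Pc(B)
\end{CD}
$$
commute up to natural isomorphism. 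Applying $K_m$ yields the second commutative square, that is, the naturality of $I_m$.

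Stacking the $\tilde\partial_{m+1}$-square on top of the $I_m$-square produces the desired commutative diagram relating $\partial_{m+1,A}$ and $\partial_{m+1,B}$ over $A\to B$, so $\partial_{m+1}$ is a morphism of group functors $LK_{m+1}\to K_m$. The only real obstacle is already absorbed into Proposition~\ref{prop:functtilde}: the localization sequence of exact categories~\eqref{eq:seqcats} is not functorial on the nose in $A$, so one cannot merely ``apply $K$-theory to a commuting diagram of categories'' — the whole point of the auxiliary $\varphi$-categories and of Lemmas~\ref{lemma:tor} and~\ref{lemma:commuteextscal} is to repair this non-functoriality. With those in hand the rest is bookkeeping.
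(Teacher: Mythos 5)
Your proposal is correct and follows the paper's own argument: the paper likewise defines $\partial_{m+1}=I_m\circ\tilde\partial_{m+1}$, obtains functoriality of $\tilde\partial_{m+1}$ from Proposition~\ref{prop:functtilde} (Kato's trick with the auxiliary $\varphi$-categories and Lemma~\ref{lemma:tor}), and deduces functoriality of $I_m$ from Lemma~\ref{lemma:commuteextscal}, then composes the two commutative squares. No discrepancies to report.
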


\medskip

We will also need the following special property of the map $\partial_{m+1}$. By $\lambda_m\colon K_m\big(A[[t]]\big)\to K_m\big(A((t))\big)$ denote the map induced by the homomorphism of rings $A[[t]]\to A((t))$. By $\alpha\mapsto\bar\alpha$ denote the map $K_m\big(A[[t]]\big)\to K_m(A)$ induced by the homomorphism of rings ${A[[t]]\to A[[t]]/(t)\simeq A}$. It follows from~\cite[\S\,2.4, Prop.\,5]{K1} that for any element \mbox{$\alpha\in K_m\big(A[[t]]\big)$}, there is an equality in $K_m(A)$
\begin{equation}\label{projection-formula}
\partial_{m+1}\big(\lambda_m(\alpha)\cdot t\big)=\bar{\alpha}\,,
\end{equation}
where we consider $t$ as an element in $K_1\big(A((t))\big)$. In particular, we have that $\partial_1(t)=1$, where we consider $1$ as an element in $K_0(A)$.

\section{Main results}\label{int-CC-sym}\label{sect:main}

For short, we will use the following expression: given two sets $P$, $Q$, and a map $\alpha\colon P\to Q$, we say that ``an element $p\in P$ is uniquely determined by the element $\alpha(p)\in Q$'' if we have that $\alpha^{-1}\big(\alpha(p)\big)=\{p\}$. Thus the map $\alpha$ is injective if and only if this condition holds for any element $p\in P$. In the situations that follow, the sets $P$, $Q$, and the map~$\alpha$ will be clear from the context and they will not be mentioned explicitly.

\subsection{Additive symbol}\label{subsect:add}

First we discuss a simpler version of what we shall do later for the Contou-Carr\`ere symbol. The general plan in this simpler case is quite similar to that in the case of the Contou-Carr\`ere symbol.

\begin{defin}\label{def:addsymb}
An {\it additive symbol} $\nu_n$ is the following composition of morphisms of group functors (see formula~\eqref{eq:bounKtheory} and Proposition~\ref{nat-tr}):
$$
\begin{CD}
\nu_{n}\; : \; \lo^nK_{n}^M @>>> \lo^n K_{n} @>{\partial_1 \cdot \ldots \cdot \partial_n}>> K_0 @>{\rk}>> \uz\,.
\end{CD}
$$
\end{defin}

It turns out that all morphisms of group functors from $\lo^nK_{n}^M$ to $\uz$ can be described explicitly as follows. The symbol $\{t_1,\ldots,t_n\}$ defines a morphism of group functors
$$
\Xi\;:\;\uz\lrto\lo^nK^M_n\,,\qquad \underline{i}\longmapsto \underline{i}\cdot\{t_1,\ldots,t_n\}\,.
$$
Note that for any ring $R$, there is an isomorphism of groups $\Hom^{gr}_R(\uz_R,\uz_R)\simeq\uz(R)$.

\begin{prop}\label{prop-addsymb}
For any ring $R$, the natural homomorphism
$$
\Xi_R^*\;:\;\Hom^{gr}_R\big((\lo^nK^M_n)_R,\uz_R\big)\lrto \uz(R)\,,\qquad \Phi{\longmapsto} \Phi\circ \Xi_R\,,
$$
is an isomorphism.
\end{prop}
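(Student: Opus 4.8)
The plan is to prove surjectivity and injectivity of $\Xi_R^*$ separately.

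For surjectivity I would first compute the composition $\nu_n\circ\Xi$. Under the Loday product the symbol $\{t_1,\ldots,t_n\}$ goes to $t_1\cdot\ldots\cdot t_n\in K_n\big(\lo^n(A)\big)$, and since $t_1\cdot\ldots\cdot t_{n-1}$ is the image under $\lambda_{n-1}$ of the corresponding element of $K_{n-1}\big(\lo^{n-1}(A)[[t_n]]\big)$, the projection formula~\eqref{projection-formula} gives $\partial_n(t_1\cdot\ldots\cdot t_n)=t_1\cdot\ldots\cdot t_{n-1}\in K_{n-1}\big(\lo^{n-1}(A)\big)$; iterating and finally using $\partial_1(t_1)=1\in K_0(A)$ and $\rk$, one gets $\nu_n\big(\{t_1,\ldots,t_n\}\big)=1\in\uz(A)$. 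As $\nu_n$ is a morphism of group functors (Proposition~\ref{nat-tr}), this means $\nu_n\circ\Xi=\id_{\uz}$, so for any $\underline i\in\uz(R)\simeq\Hom^{gr}_R(\uz_R,\uz_R)$ the morphism $\underline i\circ(\nu_n)_R$ maps to $\underline i$ under $\Xi_R^*$.

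For injectivity, suppose $\Phi\colon(\lo^nK^M_n)_R\to\uz_R$ is a morphism of group functors with $\Phi\circ\Xi_R=0$; I must show that $\Phi_A$ kills every symbol $\{f_1,\ldots,f_n\}$, $f_i\in\lo^n(A)^*$, for every $R$-algebra $A$. The heart of the matter is that \emph{any} morphism of group functors $\Psi\colon(\lo^n\gm)^0_A\to\uz_A$ vanishes: by Proposition~\ref{lemma:sharprepr} it is a morphism of ind-schemes between representable functors, it sends the $A$-point $1$ to $0$, and $(\lo^n\gm)^0_A$ is connected over $A$ by Proposition~\ref{prop:densegm}, so Proposition~\ref{lemma:conntriv} applies. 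Fixing a slot $i_0$ and elements $g_j\in\lo^n(A)^*$ for $j\ne i_0$, the assignment $g\mapsto\Phi_A\big(\{g_1,\ldots,g,\ldots,g_n\}\big)$ (with $g$ in slot $i_0$) is a morphism of group functors $(\lo^n\gm)^0_A\to\uz_A$ by multilinearity of the symbol and functoriality of $\Phi$, hence is identically zero. So $\Phi_A$ annihilates every symbol having at least one argument in $(\lo^n\gm)^0(A)$.

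It then remains to reduce an arbitrary symbol to this case together with the hypothesis. Writing $f_i=t^{\underline l_i}\cdot h_i$ with $\underline l_i\in\uz^n(A)$ and $h_i\in(\lo^n\gm)^0(A)$ via Proposition~\ref{prop-decomp}, and expanding $\{f_1,\ldots,f_n\}$ by multilinearity, every resulting summand other than $\{t^{\underline l_1},\ldots,t^{\underline l_n}\}$ has an argument in $(\lo^n\gm)^0(A)$ and is killed by the previous step. For the remaining term, working locally on $\Spec(A)$ so that the $\underline l_i$ are honest integer vectors, multilinearity in the $\underline l_i$ together with the alternating property of the symbol — antisymmetry and $\{x,x\}=\{-1,x\}$, which hold in $K^M_n\big(\lo^n(A)\big)$ since $\lo^n(A)=\lo^{n-1}(A)((t_n))$ is weakly $5$-fold stable (Remark~\ref{exam:wstable}, Lemma~\ref{lemma:Morrow}) — give $\{t^{\underline l_1},\ldots,t^{\underline l_n}\}=\det(\underline l_1,\ldots,\underline l_n)\cdot\{t_1,\ldots,t_n\}$ modulo symbols having $-1\in\gm(A)\subset(\lo^n\gm)^0(A)$ in some slot. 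Those last symbols are killed by the previous step, while $\det(\underline l_1,\ldots,\underline l_n)\cdot\{t_1,\ldots,t_n\}=\Xi\big(\det(\underline l_1,\ldots,\underline l_n)\big)$ is killed by the hypothesis on $\Phi$. Hence $\Phi=0$, which completes the proof. I expect the main obstacle to be this last reduction: one must carefully organize the multilinear expansion and derive the alternating relations in $K^M_n$ from their $K^M_2$ counterparts, after which the connectedness of $(\lo^n\gm)^0$ does the remaining work.
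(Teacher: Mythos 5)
Your proposal is correct and follows essentially the same route as the paper: surjectivity via the projection formula giving $\nu_n\{t_1,\ldots,t_n\}=1$, and injectivity by killing symbols with an entry in $(\lo^n\gm)^0$ through connectedness (Propositions~\ref{prop:densegm} and~\ref{lemma:conntriv}), then reducing the remaining $\uz^n$-part to $\{t_1,\ldots,t_n\}$ via antisymmetry (Remark~\ref{exam:wstable}, Lemma~\ref{lemma:Morrow}), the decomposition of Proposition~\ref{prop-decomp}, and the determinant. The only cosmetic differences are that you treat every slot directly where the paper treats the first slot and invokes antisymmetry, and you spell out the determinant expansion that the paper phrases as factoring through $\det\circ\nu^{\times n}$.
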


Before we give a proof of Proposition~\ref{prop-addsymb}, let us discuss some of its corollaries. Explicitly, Proposition~\ref{prop-addsymb} means the following: suppose that the values of two morphisms of group functors $\Phi,\Phi'\colon (L^nK^M_n)_R\to \z_R$ coincide at the symbol $\{t_1,\ldots,t_n\}$. Then we have that $\Phi=\Phi'$.

By formula~\eqref{projection-formula} in Subsection~\ref{subsect:bound}, for any ring $A$, there is an equality ${\nu_{n}\{t_1,\ldots,t_n\}=1}$ in $\z(A)$.
Hence there is an equality
\begin{equation}\label{eq:evaladd}
\Xi^*(\nu_{n})=1\in\uz(\z)=\z\,.
\end{equation}
Thus Proposition~\ref{prop-addsymb} implies the following universal property of the additive symbol~$\nu_n$: for any morphism of group functors $\Phi\colon (L^nK^M_n)_R\to \uz_R$, there is a locally constant integer $\underline{i}\in\uz(R)$ such that $\Phi=\underline{i}\cdot\nu_n$.

\medskip

Now let us give the proof of Proposition~\ref{prop-addsymb}, which is based on the absolute connectedness of~$(\lo^n\gm)^0$ over $\z$ (see Proposition~\ref{prop:densegm}$(iv)$).

\begin{proof}[Proof of Proposition~\ref{prop-addsymb}]
Formula~\eqref{eq:evaladd} implies that the homomorphism $\Xi^*_R$ is surjective. Thus we need to show that $\Xi^*_R$ is injective. In other words, given a morphism of group functors $\Phi\colon(L^nK^M_n)_R\to \uz_R$, we need to show that $\Phi$ is uniquely determined by the composition $\Phi\circ\Xi_R$.

Consider the following composition of morphisms of functors:
$$
\widetilde{\Phi}\;:\; (L^n\gm)^{\times n}_R\lrto (\lo^nK^M_n)_R\stackrel{\Phi}\lrto \uz_R\,,
$$
where the first morphism sends a collection of invertible iterated Laurent series $(f_1,\ldots,f_{n})$ to the symbol $\{f_1,\ldots,f_{n}\}$. Clearly, $\Phi$ is uniquely determined by $\widetilde{\Phi}$.

Actually, $\widetilde{\Phi}$ is a morphism of ind-schemes over $R$. For any $R$-algebra $A$, a collection $f_2,\ldots,f_n\in L^n\gm(A)$ defines a morphism of ind-schemes over $A$
$$
\alpha\;:\;(\lo^n\gm)^0_A\lrto \uz_A\,,\qquad f\longmapsto \widetilde{\Phi}(f,f_2,\ldots,f_n)\,.
$$
By Proposition~\ref{prop:densegm}$(iv)$, the ind-scheme~$(\lo^n\gm)^0$  is absolutely connected over $\z$. Thus by Proposition~\ref{lemma:conntriv}, the morphism~$\alpha$ is equal to zero. Hence the restriction of $\widetilde{\Phi}$ to the ind-closed subscheme $(\lo^n\gm)^{0}_R\times (\lo^n\gm)^{\times (n-1)}_R$ is equal to zero as well.

By Remark~\ref{exam:wstable} and Lemma~\ref{lemma:Morrow}, symbols in $L^nK^M_n(A)=K^M_{n}\big(\LL^n(A)\big)$ are antisymmetric. Thus we obtain that $\widetilde{\Phi}$ factors through the composition of morphisms
\begin{equation}\label{eq:compmorphisms}
(\lo^n\gm)^{\times n}_R\stackrel{\nu^{\times n}}\lrto(\uz^n_R)^{\times n} \stackrel{\det}\lrto\uz_R\,,
\end{equation}
because by Definition~\ref{defin:specialmult}, there is an isomorphism $\nu\colon\lo^n\gm/(\lo^n\gm)^0\stackrel{\sim}\lrto \uz^n$. Here for any ring $A$, we consider elements in $\uz^n(A)$ as columns with (locally constant) integral entries and elements in $(\uz^n)^{\times n}(A)$ as $(n\times n)$-matrices with (locally constant) integral entries.

We obtain that $\widetilde{\Phi}$ is uniquely determined by its value at the collection $(t_1,\ldots,t_n)$, because this collection is sent to $1\in\uz_R(R)$ under the above composition of morphisms~\eqref{eq:compmorphisms}. This proves the proposition.
\end{proof}

\medskip

Note that in the course of the proof of Proposition~\ref{prop-addsymb} we have also obtained an explicit formula for the additive symbol $\nu_n$. Namely, for any ring $A$ and a collection ${f_1,\ldots,f_n\in \lo^n\gm(A)}$ of invertible iterated Laurent series, there is an equality
\begin{equation}\label{eq:explicadd}
\nu_n\{f_1,\ldots,f_n\}=\det\big(\nu(f_1),\ldots,\nu(f_n)\big)\,.
\end{equation}
Indeed, the proof of Proposition~\ref{prop-addsymb} implies that the generator of the cyclic group $\Hom^{gr}_R\big((\lo^nK^M_n)_R,\uz_R\big)\simeq \z$ is given by the right hand side of formula~\eqref{eq:explicadd}. On the other hand, as noticed before the proof, $\nu_n$ is this generator.

\medskip

Actually, one can define the morphism of group functors $\nu_n\colon \lo^nK^M_n\to\uz$ explicitly just by the right hand side of formula~\eqref{eq:explicadd}, not using the boundary map for \mbox{$K$-groups} and connectedness of $(\lo^n\gm)^0$. When following this approach, one needs to check directly the Steinberg relations, which is done explicitly as follows.

\begin{lemma}\label{lemma:Steindet}
Let $A$ be a ring and let $f_1,\ldots,f_n\in \lo^n\gm(A)$ be a collection of invertible iterated Laurent series such that $f_1+f_2=1$. Then there is an equality ${\det\big(\nu(f_1),\ldots,\nu(f_n)\big)=0}$.
\end{lemma}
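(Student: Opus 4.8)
The plan is to reduce the statement to the case of a field by evaluating at the points of $\Spec(A)$, and then to invoke the elementary ultrametric property of the iterated valuation.

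First I would observe that $\nu\colon \lo^n\gm\to \uz^n$ is a morphism of functors (see formula~\eqref{eq:projections}) and that $\det\colon (\uz^n)^{\times n}\to\uz$ is a morphism of functors as well, being given by a polynomial with integer coefficients in the matrix entries. Hence the element $\det\big(\nu(f_1),\ldots,\nu(f_n)\big)\in\uz(A)$, that is, a locally constant $\z$-valued function on $\Spec(A)$, is functorial in $A$. Such a function vanishes if and only if it vanishes at every point of $\Spec(A)$, so it is enough to check the equality after base change along the ring homomorphism $A\to\Frac(A/\p)$ for each prime ideal $\p\subset A$. The images $\bar f_1,\bar f_2$ of $f_1,f_2$ still lie in $\lo^n\gm\big(\Frac(A/\p)\big)$ and satisfy $\bar f_1+\bar f_2=1$, and over a field the spectrum is a single point, so $\nu(\bar f_i)\in\z^n$. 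Thus we are reduced to proving: if $K$ is a field and $g_1,\ldots,g_n\in K((t_1))\ldots((t_n))^*$ with $g_1+g_2=1$, then $\det\big(\nu(g_1),\ldots,\nu(g_n)\big)=0$ in $\z$.

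For this, recall that over a field $\nu$ is the iterated valuation of rank $n$ with values in $(\z^n,\leqslant)$ for the lexicographical order: for a nonzero $g=\sum a_l t^l$, the value $\nu(g)$ is the smallest index $l$ with $a_l\neq 0$. In particular $\nu(1)=0$, and $\nu(g+h)=\min\big(\nu(g),\nu(h)\big)$ whenever $\nu(g)\neq\nu(h)$, since then the term of smallest index in $g+h$ cannot be cancelled. Now distinguish two cases. If $\nu(g_1)=\nu(g_2)$, then the first two columns of the matrix $\big(\nu(g_1),\ldots,\nu(g_n)\big)$ coincide, so its determinant is zero. If $\nu(g_1)\neq\nu(g_2)$, then $0=\nu(1)=\nu(g_1+g_2)=\min\big(\nu(g_1),\nu(g_2)\big)$, so one of $\nu(g_1)$, $\nu(g_2)$ is the zero vector; the corresponding column of the matrix vanishes, and again the determinant is zero. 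This completes the argument.

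The proof is essentially routine and I do not expect a genuine obstacle: the only point requiring attention is the legitimacy of the reduction to residue fields, which rests on the functoriality of $\nu$ and of $\det$ together with the fact that a locally constant integer-valued function on a spectrum is detected by its values at points (alternatively, one may reduce by idempotents to the case where $\Spec(A)$ is connected, in which case $\nu(f_i)\in\z^n$ by Example~\ref{examp:projections}(i) and the same case analysis goes through, the nilpotent tails of the series not affecting $\nu$ of a sum whose summands have distinct valuations).
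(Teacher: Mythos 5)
Your proof is correct, and its main route differs slightly from the paper's. The paper does not pass to residue fields: it simply replaces $A$ (using the decomposition by idempotents) so that $\nu(f_1),\nu(1-f_1)$ lie in $\Z^n\subset\uz(A)^n$, and then runs the case analysis directly over the ring with nilpotents, citing Proposition~\ref{prop-decomp} and Example~\ref{examp:projections}(i): if $\nu(f_1)=0$ there is nothing to prove; if $\nu(f_1)>0$ then $\nu(1-f_1)=0$ (the constant term of $1-f_1$ is $1$ minus a nilpotent, hence invertible, while all lower coefficients are nilpotent); if $\nu(f_1)<0$ then $\nu(f_1)=\nu(1-f_1)$ and two columns coincide. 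Your argument instead reduces to the field case via the points of $\Spec(A)$, using that $\nu$ from~\eqref{eq:projections} and $\det$ are morphisms of functors and that an element of $\uz(A)$ vanishes if and only if it vanishes at every point; over a field you then quote the standard ultrametric property of the rank-$n$ lexicographic valuation. Both arguments rest on the same elementary dichotomy ($\nu(f_1)=\nu(f_2)$ gives equal columns, $\nu(f_1)\ne\nu(f_2)$ forces one valuation to be $0$); what your reduction buys is that you never have to argue about nilpotent tails, at the cost of the (correctly justified) functoriality step, whereas the paper's direct treatment is marginally shorter and is in fact the variant you sketch in your closing parenthesis.
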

\begin{proof}
Changing the ring $A$ if needed, without loss of generality, we may assume that~$\nu(f_1)$ and $\nu(1-f_1)$ belong to the subgroup $\Z^n \subset \uz(A)^n$. Further, we can assume that $\nu(f_1)\ne 0$ (otherwise, there is nothing to prove). If $\nu(f_1)>0$, then $\nu(1-f_1)=0$ and if $\nu(f_1)<0$, then $\nu(f_1)=\nu(1-f_1)$ (see Proposition~\ref{prop-decomp} and Example~\ref{examp:projections}(i)). This proves the lemma.
\end{proof}

\medskip

Let us mention one more property of the right hand side of formula~\eqref{eq:explicadd}, which we will use later. We have a natural morphism of group functors $\zeta\colon\uz\to\ga$. Note that if a ring $A$ has positive characteristic, then the map $\zeta\colon\uz(A)\to A$ is not injective.

\begin{prop}\label{prop-integer-res}
For any ring $A$ and a collection $f_1, \ldots, f_n  \in \lo^n\gm(A)$ of invertible iterated Laurent series, there is an equality in $A$
\begin{equation}  \label{integer-res}
\zeta\Big(\det\big(\nu(f_1), \ldots, \nu(f_n)\big)\Big) = \res \left(\frac{df_1}{f_1}  \wedge \ldots \wedge \frac{df_n}{f_n} \right)\,.
\end{equation}
\end{prop}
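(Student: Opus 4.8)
The plan is to reduce the identity to the special case of monomials by exploiting the decomposition of $\lo^n\gm$ from Proposition~\ref{prop-decomp} together with the bilinearity (in the multiplicative sense) of both sides. First note that the right-hand side of~\eqref{integer-res} is a morphism of functors $(\lo^n\gm)^{\times n}\to\ga$ which is multilinear, i.e.\ for each fixed slot it is additive when the group law in $\lo^n\gm$ is written multiplicatively and the group law in $\ga$ additively: indeed $\frac{d(fg)}{fg}=\frac{df}{f}+\frac{dg}{g}$, so $\res\big(\tfrac{df_1}{f_1}\wedge\ldots\wedge\tfrac{df_n}{f_n}\big)$ is additive in each $f_i$. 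The left-hand side, $\zeta\circ\det\circ(\nu^{\times n})$, is visibly multilinear as well since $\nu$ is a group homomorphism to $\uz^n$ and $\det$ is multilinear in the columns. Therefore, by the decomposition $\lo^n\gm\simeq\uz^n\times\gm\times\vv_{n,+}\times\vv_{n,-}$ of Proposition~\ref{prop-decomp}, it suffices to check~\eqref{integer-res} when each $f_i$ ranges over generators of these four factors, i.e.\ when each $f_i$ is either a monomial $t^{\underline{l}}$ with $\underline{l}\in\uz^n(A)$, an element of $\gm(A)=A^*$, or an element of $\vv_{n,+}(A)$ or $\vv_{n,-}(A)$ (these last two groups are generated by elements of the form $1-u t^{\underline{l}}$ in the appropriate completed sense, and $\res$ and $\nu$ both commute with the relevant limits).

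Next I would dispose of all cases except the all-monomial one. If some $f_i$ lies in $(\lo^n\gm)^0(A)=\gm\times\vv_{n,+}\times\vv_{n,-}$, then $\nu(f_i)=0$, so the left-hand side vanishes; I must then show the right-hand side vanishes too. For $f_i\in\gm(A)=A^*$ this is immediate since $\frac{df_i}{f_i}=0$ in $\widetilde\Omega^1_{\LL^n(A)}$ (constants have zero differential there). For $f_i\in\vv_{n,+}(A)$, after reducing to a single factor $1-ut^{\underline{l}}$ with $\underline{l}>0$ one computes $\frac{d(1-ut^{\underline{l}})}{1-ut^{\underline{l}}}=-\sum_{k\geqslant 1}u^k t^{k\underline{l}}\,d\log(t^{\underline l})$ (a convergent series in the topology of $\LL^n(A)$), and wedging with the other factors and extracting the coefficient of $t^{-1\ldots-1}dt_1\wedge\ldots\wedge dt_n$ gives zero because every monomial appearing has an index that is a positive multiple of $\underline l$ in slot, forcing total index $\geqslant 0$ componentwise after combining with the other terms; a clean way is to observe that the wedge $\frac{df_1}{f_1}\wedge\ldots\wedge\frac{df_n}{f_n}$ is $d$-exact here, or directly that the coefficient of $t^{-1,\ldots,-1}$ cannot be hit. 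The case $f_i\in\vv_{n,-}(A)$ (nilpotent coefficients, negative indices) is analogous, using that the series is genuinely nilpotent so the sum is finite. Thus both sides vanish whenever any argument has trivial valuation.

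Finally, the all-monomial case: take $f_i=t^{\underline{l}_i}$ with $\underline{l}_i=(l_{1i},\ldots,l_{ni})\in\Z^n$ (having reduced to $\Z^n\subset\uz^n$ by passing to connected components of $\Spec A$, using that $\zeta$, $\res$, $\nu$ are all compatible with such a decomposition). Then $\frac{df_i}{f_i}=\sum_{p=1}^n l_{pi}\,\frac{dt_p}{t_p}$, so
$$
\frac{df_1}{f_1}\wedge\ldots\wedge\frac{df_n}{f_n}=\det(l_{pi})_{p,i}\cdot\frac{dt_1}{t_1}\wedge\ldots\wedge\frac{dt_n}{t_n}=\det\big(\underline{l}_1,\ldots,\underline{l}_n\big)\cdot t^{-1,\ldots,-1}dt_1\wedge\ldots\wedge dt_n,
$$
whence $\res$ of this equals $\det(\underline l_1,\ldots,\underline l_n)\cdot 1_A$, which is exactly $\zeta\big(\det(\nu(f_1),\ldots,\nu(f_n))\big)$ since $\nu(t^{\underline l_i})=\underline l_i$. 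I expect the main obstacle to be the vanishing argument for the $\vv_{n,\pm}$ factors, specifically making the manipulation of convergent logarithmic series in the non-standard topology of $\LL^n(A)$ rigorous and checking that the $(-1,\ldots,-1)$-coefficient genuinely cannot appear; the cleanest route is probably to note that for $f_i\in(\lo^n\gm)^0$ over a $\Q$-algebra one has $\frac{df_i}{f_i}=d\log(f_i)$ with $\log(f_i)$ continuous, so the wedge is $d$ of something in $\widetilde\Omega^{n-1}$ and hence has zero residue by Lemma~\ref{lemma:cohom}, and then extend from $\Q$-algebras to all rings by the density/injectivity results (Proposition~\ref{prop:generdenis} or Theorem~\ref{prop-key}) — though since the statement is being used as input elsewhere, a direct elementary check avoiding $\Q$ is preferable.
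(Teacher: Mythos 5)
Your overall skeleton (multilinearity, the decomposition of Proposition~\ref{prop-decomp}, case analysis, and the determinant computation in the all\nobreakdash-monomial case) matches the paper, but the step you lean on for the $\vv_{n,\pm}$ factors has a genuine gap. Reducing to $f_i\in\vv_{n,\pm}(A)$ by multilinearity is fine; the problem is the further reduction to ``a single factor $1-ut^{\underline l}$'', which presupposes that an arbitrary element of $\vv_{n,\pm}(A)$ is a convergent infinite product of such factors over an arbitrary ring $A$. For $n\geqslant 2$ this is exactly the decomposition the paper flags as available for, e.g., Noetherian rings but unclear in general (see the discussion of $CC_2$ in the Introduction), so it cannot be asserted with ``$\res$ and $\nu$ commute with the relevant limits'' and no proof. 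Moreover, even for single factors, your stated reason for vanishing --- that every monomial occurring has total index $\geqslant 0$ componentwise --- is false once another argument has negative exponents (a $\vv_{n,-}$-factor or a monomial $t^{\underline m}$ with $\underline m<0$): exponent $(-1,\ldots,-1)$ does occur. The correct termwise reason is that a relation $\sum_i k_i l^{(i)}=0$ with all $k_i\geqslant 1$ forces the exponent vectors to be linearly dependent, so the accompanying determinant coefficient vanishes; you do not give this, and the $d$-exactness you invoke instead needs an antiderivative of logarithm/dilogarithm type, i.e.\ denominators, so it only works over $\Q$.

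Your closing ``cleanest route'' is essentially the paper's proof and is the way to go, with two corrections. The descent from $\Q$-algebras to all rings is done by Theorem~\ref{cor:uniq} applied to $R=\Z\subset S=\Q$: both sides of~\eqref{integer-res} are morphisms of ind-schemes $(\lo^n\gm)^{\times n}\to\ga$, and $(\lo^n\gm)^{\times n}$ is a product of a thick ind-cone with an ind-flat ind-affine scheme (Proposition~\ref{lemma:sharprepr}), so Proposition~\ref{prop:generdenis} and Theorem~\ref{prop-key}, which concern thick ind-cones and powers of $(\lo^n\ga)^{\sharp}$, do not apply directly. Over a $\Q$-algebra, $\log(f_1)$ is only defined on $(\lo^n\gm)^{\sharp}$ (Proposition~\ref{log-map}), not on all of $(\lo^n\gm)^0$, so one first uses decomposition~\eqref{eq:decommult} and multilinearity: the $\gm$-factor is trivial since $da=0$ in $\widetilde{\Omega}^1_{\LL^n(A)}$, and for $f_1\in(\lo^n\gm)^{\sharp}(A)$ one writes the form as $d\bigl(\log(f_1)\,\frac{df_2}{f_2}\wedge\ldots\wedge\frac{df_n}{f_n}\bigr)$ by Lemma~\ref{dif-form}$(i)$ and concludes by Lemma~\ref{lemma:cohom}. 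Finally, your worry that a $\Q$-free argument is needed because the statement is used elsewhere is unfounded: the paper uses this Proposition only over $\Q$-algebras (in Proposition~\ref{prop:explCC}), where the explicit argument above suffices, and Theorem~\ref{cor:uniq} is independent of it.
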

\begin{proof}
Both sides of formula~\eqref{integer-res} define morphisms of ind-schemes from $(L^n \gm)^{\times n}$ to~$\ga$. Therefore by Theorem~\ref{cor:uniq} applied to $R=\Z$ and $S=\Q$, it is enough to prove the proposition when $A$ is a $\Q$-algebra, which we assume from now on.

Both sides of formula~\eqref{integer-res} are multilinear alternating maps. Therefore we can assume that $f_i$ belong to subgroups of $\lo^n\gm(A)$ from decomposition~\eqref{eq:decommult} in Subsection~\ref{sp-sub}. The case~${f_1\in \gm(A)}$ is trivial. Suppose that $f_1 \in (\lo^n\gm)^{\sharp}(A)$. Then by Definition~\ref{defin:specialmult}, we have that $\nu(f_1)=0$ and thus, ${\zeta\Big(\det\big(\nu(f_1), \ldots, \nu(f_n)\big)\Big)=0}$. On the other hand, using Lemmas~\ref{dif-form}$(i)$ and~\ref{lemma:cohom}, we obtain the equalities
$$
\res \left(\frac{df_1}{f_1}   \wedge \ldots \wedge \frac{df_n}{f_n} \right) = \res \left( d \left( \log (f_1 )  \frac{df_2}{f_2}    \wedge \ldots \wedge \frac{df_n}{f_n}  \right)          \right)=0\,.
$$
It remains to consider the case when all $f_i$ are from the subgroup $\uz^n(A)\subset L^n\gm(A)$. By multilinearity and the alternating property, it is enough to consider the case ${f_1=t_1,\ldots,f_n=t_n}$. Clearly, we have the equalities
$$
\zeta\Big(\det\big(\nu(t_1), \ldots, \nu(t_n)\big)\Big) = 1= \res \left( \frac{dt_1}{t_1}  \wedge \ldots \wedge  \frac{dt_n}{t_n}  \right)  \,,
$$
which finishes the proof.
\end{proof}

\begin{rmk}
When $n=2$ and $A$ is a $\Q$-algebra, Proposition~\ref{prop-integer-res} coincides with~\cite[Lem.\,4.1]{OZ1}.
\end{rmk}

\subsection{Contou-Carr\`ere symbol}\label{subsect:key}

Here is our main object of study.

\begin{defin}\label{defin:CC}
A {\it Contou-Carr\`ere symbol $CC_n$} is the following composition of morphisms of group functors (see formula~\eqref{eq:bounKtheory} and Proposition~\ref{nat-tr}):
\begin{equation}\label{eq:eta1}
\begin{CD}
CC_n\;:\;\lo^nK_{n+1}^M
@>>> \lo^nK_{n+1} @>{\partial_2  \cdot \ldots \cdot \partial_{n+1}}>>  K_1
@>{\det}>> \gm\,.
\end{CD}
\end{equation}
\end{defin}

Thus $CC_n$ is a character of the group functor $L^nK^M_{n+1}$. In particular, for any ring $A$, we have a homomorphism of groups, which we denote similarly
$$
CC_n\;:\; K^M_{n+1}\big(A((t_1))\ldots((t_n))\big)\lrto A^*\,.
$$
It turns out that all characters of $L^nK^M_{n+1}$ can described explicitly as follows. Define a morphism of group functors
$$
\Theta\;:\;\gm\lrto L^nK^M_{n+1}\,,\qquad a\longmapsto \{a,t_1,\ldots,t_n\}\,,
$$
where $a\in A^*$ for a ring $A$. Recall that for any ring $R$, there is an isomorphism of groups ${\rm X}\big((\gm)_R\big)\simeq\uz(R)$ (see Definition~\ref{defin:char} and Lemma~\ref{lemma:diffgagm}$(ii)$).

\medskip

A ring $R$ is said to be {\it torsion free over $\Z$} if $R$ has no torsion as a $\Z$-module or, equivalently, if the natural homomorphism of rings $R\to R\otimes_{\Z}\Q$ is injective. In particular, if this holds, then the natural homomorphism of rings $\z\to R$ is also injective (however, the latter condition is not equivalent to being torsion free over $\z$). Note that $R$ is torsion free over $\z$ if and only if~$R$ is flat over $\Z$.

\begin{theor}\label{theor-key}
Suppose that a ring $R$ is torsion free over $\Z$. Then the natural homomorphism of groups
$$
\Theta_R^*\;:\;{\rm X}\big((L^nK^M_{n+1})_R\big)\lrto \uz(R)\,,\qquad\Phi\longmapsto \Phi\circ \Theta_R\,,
$$
is an isomorphism.
\end{theor}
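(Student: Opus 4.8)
The plan is to mimic the structure of the proof of Proposition~\ref{prop-addsymb}, but with two additional essential ingredients: a reduction from the base ring $R$ to the $\Q$-algebra $S:=R\otimes_{\z}\Q$ (using that $R\hookrightarrow S$ since $R$ is torsion free over $\z$), and the description of the tangent space to Milnor $K$-groups. First, by the projection formula~\eqref{projection-formula} one computes $CC_n\{a,t_1,\ldots,t_n\}=a$ for $a\in A^*$, which shows $\Theta_R^*(CC_n)=1\in\uz(R)$, hence $\Theta_R^*$ is surjective (and $CC_n$ is a generator of the image). So the whole content is injectivity: a character $\Phi\colon(L^nK^M_{n+1})_R\to(\gm)_R$ is uniquely determined by $\Phi\circ\Theta_R$. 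As in the additive case, precompose with the multilinear morphism $(L^n\gm)_R^{\times(n+1)}\to(L^nK^M_{n+1})_R$ sending $(f_1,\ldots,f_{n+1})$ to the symbol, obtaining $\widetilde\Phi\colon(L^n\gm)_R^{\times(n+1)}\to(\gm)_R$; it suffices to show $\widetilde\Phi$ is uniquely determined by its value at $(t_1,\ldots,t_n)$ viewed appropriately, using antisymmetry (Remark~\ref{exam:wstable} and Lemma~\ref{lemma:Morrow}) and the decomposition~\eqref{eq:decommult} to reduce to the ``essential'' block where one argument lies in $(L^n\gm)^{\sharp}$.

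The heart of the argument is to pin down the restriction of $\widetilde\Phi$ to $(L^n\gm)^{\sharp}_R\times(L^n\gm)_R^{\times n}$, or more precisely the character $\chi\colon(L^n\gm)^{\sharp}_R\to(\gm)_R$ obtained by fixing the last $n$ arguments to be $t_1,\ldots,t_n$ (or, by multilinearity and antisymmetry, arbitrary elements, but the case of $t_i$'s is what we need). Here is where the reduction to $S$ enters: by Theorem~\ref{cor:uniq} (applied with the ind-flat factor $Y$ a point, $Z=\gm$) the natural map on morphisms $(L^n\gm)_R^{\times(n+1)}\to(\gm)_R$ into the analogous $S$-morphisms is injective, so it is enough to determine $\widetilde\Phi_S$. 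Over the $\Q$-algebra $S$ we invoke Proposition~\ref{prop:charactgm}: a character of $(L^n\gm)^0_S$ is determined by its differential, which is a linear functional on $(L^n\ga)_S$, and $\chi(f)=\exp(a)$ where $1+a\varepsilon=T\chi(1+\log(f)\varepsilon)$. Thus everything comes down to computing the differential $T\Phi_S$ of the character $\Phi_S$ of $(L^nK^M_{n+1})_S$, i.e. a morphism $TL^nK^M_{n+1}\to T\gm\simeq\ga$ over $S$.

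Now apply Proposition~\ref{prop:LBl}: since $S$ is a $\Q$-algebra (in particular a $\z[\tfrac12]$-algebra) and $\LL^n(A)$ is weakly $5$-fold stable (Remark~\ref{exam:wstable}), there is an isomorphism $TL^nK^M_{n+1}\simeq L^n\Omega^n$ over $S$ sending $\{1+g\varepsilon,f_1,\ldots,f_n\}$ to $g\,\tfrac{df_1}{f_1}\wedge\cdots\wedge\tfrac{df_n}{f_n}$. Hence $T\Phi_S$ is (the value on such generators of) a linear functional $L^n\Omega^n\to\ga$ over $S$; by Proposition~\ref{lemma:algiterforms} this factors through the ``continuous'' forms $\widetilde{L^n\Omega}{}^n$, which is a free $\LL^n$-module on $dt_1\wedge\cdots\wedge dt_n$, and any such functional is obtained by composing with a character $L^n\ga\to\ga$; using again that $S$ is a $\Q$-algebra and arguing as in Lemma~\ref{lemma:diffgagm}, together with the constraint $\Theta_S^*\Phi_S$ being a fixed locally constant integer, one identifies the functional. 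The only candidate compatible with the normalization $\Theta^*$ is a multiple of the residue, so $T\Phi_S$ sends $\{1+g\varepsilon,f_1,\ldots,f_n\}$ to $\underline i\cdot\res\!\big(g\,\tfrac{df_1}{f_1}\wedge\cdots\wedge\tfrac{df_n}{f_n}\big)$, hence $\chi(f)=\exp\!\big(\underline i\cdot\res(\log(f)\tfrac{df_1}{f_1}\wedge\cdots\wedge\tfrac{df_n}{f_n})\big)$ on $(L^n\gm)^{\sharp}_S$ — this is $(CC_n)^{\underline i}_S$ by the explicit formula~\eqref{Q-n-dim} — and by density and the reduction back to $R$ one concludes $\Phi=(CC_n)^{\underline i}$, proving $\Theta_R^*$ injective.

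The main obstacle I expect is the final identification of the linear functional on $L^n\Omega^n$ (equivalently on the free module $\widetilde{L^n\Omega}{}^n$) that arises as $T\Phi_S$: one must show it is forced to be a scalar multiple of the residue and that this scalar is the locally constant integer $\Theta_S^*\Phi_S$. This requires combining Proposition~\ref{lemma:algiterforms} with a characterization of characters $L^n\ga\to\ga$ over a $\Q$-algebra (analogue of Lemma~\ref{lemma:diffgagm}(i),(iii) in the iterated loop setting), checking the closed-form bookkeeping $\exp/\log$, and verifying that the exact-form part of $\tfrac{df_1}{f_1}\wedge\cdots$ contributes nothing to the residue (Lemma~\ref{lemma:cohom}). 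Threading the several reductions — $R\rightsquigarrow S$ via Theorem~\ref{cor:uniq}, then $(L^n\gm)^0\rightsquigarrow(L^n\gm)^{\sharp}$ via density (Proposition~\ref{prop:densegm}(iii), Lemma~\ref{lemma:equivdense}), then passing to differentials via Proposition~\ref{prop:charactgm} — in the correct order, so that no circularity arises and the normalization is tracked throughout, is the delicate organizational point; the individual steps are each supplied by the cited results.
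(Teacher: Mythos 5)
Your overall architecture matches the paper's proof up to the reduction to differentials: surjectivity via Remark~\ref{examp:eta}; reduction from $R$ to $S=R\otimes_{\z}\Q$ using the thick ind-cone machinery (the paper packages this as Lemma~\ref{prop:overQ}, which rests on Theorem~\ref{cor:uniq}); reduction to symbols with first entry in $(\lo^n\gm)^{0}$ by antisymmetry (the paper's Lemma~\ref{lemma:udsharp}); passage from a character to its differential via Proposition~\ref{prop:charactgm}; and the identification $TL^nK^M_{n+1}\simeq L^n\Omega^n$ over $S$ via Proposition~\ref{prop:LBl}. The genuine gap is in your final identification of the resulting functional $\Psi\colon(L^n\Omega^n)_S\to(\ga)_S$. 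You assert that, after factoring through $\widetilde{L^n\Omega}{}^n$ by Proposition~\ref{lemma:algiterforms}, ``the only candidate compatible with the normalization $\Theta^*$ is a multiple of the residue.'' This is false as stated: functorial linear functionals on $(\lo^n\ga)_S$ are plentiful (for each $l\in\z^n$, taking the coefficient of $t^l$ already gives one), and the normalization $T(\Phi\circ\Theta_S)$ only constrains $\Psi$ on forms proportional to $\frac{dt_1}{t_1}\wedge\ldots\wedge\frac{dt_n}{t_n}$; it says nothing about the values of $\Psi$ on the other monomial forms. Lemma~\ref{lemma:diffgagm} classifies functionals on $(\ga)_R$ only, not on the iterated loop group $(\lo^n\ga)_S$, so ``arguing as in Lemma~\ref{lemma:diffgagm}'' does not close this.

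What is missing is precisely the paper's Step 3: one must prove that $\Psi$ vanishes on exact forms $dg_1\wedge\ldots\wedge dg_n$. The paper does this by restricting to $g_i\in(\lo^n\ga)^{\sharp}$ (legitimate by density, Proposition~\ref{prop:densegm}$(i)$), using $\frac{d\exp(g)}{\exp(g)}=dg$ to realize $dg_1\wedge\ldots\wedge dg_n$ as the image of the symbol $\{1+\varepsilon,\exp(g_1),\ldots,\exp(g_n)\}$ under the isomorphism of Proposition~\ref{prop:LBl}, and then invoking the constancy of $\widetilde{\Phi}$ on $(\gm)_R\times\big((\lo^n\gm)^0_R\big)^{\times n}$ (Lemma~\ref{lemma:trivgm}), which rests on the absolute connectedness of $(\lo^n\gm)^0$ (Propositions~\ref{prop:densegm}$(iv)$ and~\ref{lemma:conntriv}) --- an ingredient your outline never uses in the multiplicative setting, although you correctly noted its role in the additive case. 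Once $\Psi$ is known to kill exact forms, Lemma~\ref{lemma:cohom} (over a $\Q$-algebra the residue identifies $\widetilde{\Omega}^{n}/d\widetilde{\Omega}^{n-1}$ with the base) forces $\Psi$ to factor through $\res$, and only then does the normalization by $\Theta^*$ pin down $\Psi$, hence $\Phi$. Your remark that one should verify ``the exact-form part contributes nothing to the residue'' has the logic backwards: $\res(d\eta)=0$ is trivial; the needed statement is that $\Psi$ itself annihilates exact forms, and that is exactly what requires the connectedness argument.
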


Theorem~\ref{theor-key} is proved in Subsection~\ref{subsect:proofrigid} below. Combining Theorem~\ref{theor-key} with the theory of thick ind-cones (Theorem~\ref{cor:uniq}), we obtain the following explicit result.

\begin{corol}\label{corol:rigid}
Let
$$
\Psi,\Psi'\;:\; (L^n\gm)^{\times(n+1)}_R\lrto (\gm)_R
$$
be multilinear morphisms of functors over a ring $R$. Suppose the following conditions:
\begin{itemize}
\item[(i)]
There is an $R$-algebra $A_0$ and an element $a_0\in A_0^*$ such that $a_0$ is not a root of unit and there is an equality ${\Psi\{a_0,t_1,\ldots,t_n\}=\Psi'\{a_0,t_1,\ldots,t_n\}}$ in $A_0^*$.
\item[(ii)]
There is an embedding of rings $R\subset S$ such that $S$ is torsion free over $\z$ and the morphisms of functors $\Psi_S$ and $\Psi'_S$ satisfy the Steinberg relations, that is, $\Psi_S$ and $\Psi'_S$ are compositions of the natural multilinear morphism of functors ${(L^n\gm)^{\times(n+1)}_S\to (L^nK^M_{n+1})_S}$ with morphisms of group functors $\Phi_S,\Phi'_S\colon (L^nK^M_{n+1})_S\to (\gm)_S$.
\end{itemize}
Then we have that $\Psi=\Psi'$.
\end{corol}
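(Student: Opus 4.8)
\medskip
\noindent\textbf{Proof proposal.}
The plan is to transport the problem to the ring $S$, where hypothesis~(ii) makes the universal property of Theorem~\ref{theor-key} available, reducing the claim to the equality of two elements of $\uz(S)$; then to read these elements off from a single evaluation on the test ring $R[x,x^{-1}]$ and to finish using hypothesis~(i).

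First I would carry out the reduction over $S$. By~(ii) we have $\Psi_S=\Phi_S\circ q$ and $\Psi'_S=\Phi'_S\circ q$, where $q\colon (L^n\gm)^{\times(n+1)}_S\to (L^nK^M_{n+1})_S$ is the natural multilinear morphism; on each $S$-algebra $B$ the map $q$ is surjective (symbols generate Milnor $K$-groups), so $\Phi_S$ is uniquely determined by $\Psi_S$, and it suffices to prove $\Phi_S=\Phi'_S$. Indeed, $\Phi_S=\Phi'_S$ yields $\Psi_S=\Psi'_S$, and then $\Psi=\Psi'$ by Theorem~\ref{cor:uniq} applied to the embedding $R\subset S$ with $Y=\Spec(R)$, $Z=(\gm)_R$, and $N=n+1$; here one uses Proposition~\ref{lemma:sharprepr}(iii) together with Lemma~\ref{lemma:decomposethickcone} to see that $(L^n\gm)^{\times(n+1)}$ is representable by the product of a thick ind-cone and an ind-flat ind-affine scheme, as that theorem requires. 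Since $S$ is torsion free over $\Z$, Theorem~\ref{theor-key} applies to $S$: the characters $\Phi_S,\Phi'_S$ are determined by the locally constant integers $\underline i:=\Theta^*_S(\Phi_S)$ and $\underline i':=\Theta^*_S(\Phi'_S)$ in $\uz(S)$, and $\Phi_S=\Phi'_S$ if and only if $\underline i=\underline i'$. Concretely, by Lemma~\ref{lemma:diffgagm}(ii), for every $S$-algebra $B$ and every $b\in B^*$ we have $\Psi_S\{b,t_1,\ldots,t_n\}=\Phi_S\{b,t_1,\ldots,t_n\}=b^{\underline{i}}$, and similarly for $\Psi'_S$.

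Next I would pin down $\underline i$ and $\underline i'$ by a single evaluation. Set $u:=\Psi\{x,t_1,\ldots,t_n\}\in R[x,x^{-1}]^*$, evaluating the morphism $\Psi$ over $R$ on the $R$-algebra $R[x,x^{-1}]$, and likewise $u':=\Psi'\{x,t_1,\ldots,t_n\}$. By functoriality of $\Psi$ and the previous paragraph, the image of $u$ under the inclusion $R[x,x^{-1}]\hookrightarrow S[x,x^{-1}]$ equals $x^{\underline{i}}=\sum_m e_m x^m$, where $(e_m)_{m\in\Z}$ are the orthogonal idempotents of $S$ (almost all zero, summing to $1$) attached to $\underline i$. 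Comparing the coefficient of each $x^m$ and using $R\subset S$ forces $e_m\in R$ for all $m$, so $\underline i$ is the image of a well-defined $\underline{i}_R\in\uz(R)$, and similarly $\underline i'$ is the image of some $\underline{i}'_R\in\uz(R)$. Because $R\hookrightarrow S$, the map $\uz(R)\to\uz(S)$ is injective (a nonzero idempotent summand of $R$ maps to a nonzero summand of $S$), so $\underline i=\underline i'$ is equivalent to $\underline{i}_R=\underline{i}'_R$, i.e.\ to $u=u'$ in $R[x,x^{-1}]$. Now I would apply the $R$-algebra homomorphism $\theta\colon R[x,x^{-1}]\to A_0$, $x\mapsto a_0$: by functoriality $\theta(u)=\Psi\{a_0,t_1,\ldots,t_n\}$ and $\theta(u')=\Psi'\{a_0,t_1,\ldots,t_n\}$, so $\theta(u)=\theta(u')$ is exactly hypothesis~(i). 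Writing $u=\sum_m e_m x^m$ and $u'=\sum_m e'_m x^m$ and multiplying the identity $\sum_m\theta(e_m)a_0^m=\sum_m\theta(e'_m)a_0^m$ by the orthogonal idempotents $\theta(e_m)\theta(e'_{m'})$, one gets $\theta(e_m)\theta(e'_{m'})(a_0^m-a_0^{m'})=0$ for all $m,m'$; since $a_0$ is a unit and is not a root of unity, this annihilates $\theta(e_m)\theta(e'_{m'})$ whenever $m\neq m'$, whence $\theta(e_m)=\theta(e'_m)$ for all $m$ and therefore $u=u'$. Combined with the previous steps this gives $\underline i=\underline i'$, hence $\Phi_S=\Phi'_S$, hence $\Psi=\Psi'$.

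The step I expect to be the main obstacle is the final one: extracting the equality $u=u'$ over $R$ from the single equality $\theta(u)=\theta(u')$ in the $R$-algebra $A_0$. What makes ``$a_0$ is not a root of unity'' strong enough here is precisely the structural fact that the coefficients of $u$ and $u'$ are orthogonal idempotents, which in turn rests on the factorization over $S$ through $K^M$ together with Theorem~\ref{theor-key}; one must keep the idempotent bookkeeping coherent across the three rings $R\subset S$ and $A_0$, and make precise the sense in which ``not a root of unity'' is applied. Everything else in the argument is a formal consequence of Theorems~\ref{theor-key} and~\ref{cor:uniq}.
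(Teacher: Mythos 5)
Your proof follows essentially the same route as the paper's: condition (ii) together with Theorem~\ref{theor-key} over $S$ reduces everything to the agreement of the two characters $a\mapsto\Psi\{a,t_1,\ldots,t_n\}$ and $a\mapsto\Psi'\{a,t_1,\ldots,t_n\}$ (the paper phrases this as $\Psi\circ\Gamma_R=\Psi'\circ\Gamma_R$ with $\Gamma(a)=(a,t_1,\ldots,t_n)$, deduced from condition (i)), and Theorem~\ref{cor:uniq} then descends $\Psi_S=\Psi'_S$ to $\Psi=\Psi'$, exactly as you do. Your idempotent computation over $R[x,x^{-1}]$ is just an unpacking of the paper's one-sentence use of condition (i); note only that the inferences ``$\theta(e_m)\theta(e'_{m'})(a_0^{m}-a_0^{m'})=0$ forces $\theta(e_m)\theta(e'_{m'})=0$'' and ``$\theta(e_m)=\theta(e'_m)$ in $A_0$ gives $e_m=e'_m$ in $R$'' require reading ``not a root of unity'' in a component-wise (idempotent-robust) sense and some compatibility of idempotents of $R$ with $A_0$, but exactly the same tacit reading underlies the paper's own one-line claim, so your attempt matches the paper's proof in both substance and rigor.
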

\begin{proof}
Define a morphism of group functors
$$
\Gamma\;:\;\gm\lrto(\lo^n\gm)^{\times(n+1)}\,,\qquad a\longmapsto (a,t_1,\ldots,t_n)\,,
$$
where $a\in A^*$ for a ring $A$. Condition~$(i)$ implies the equality $\Psi\circ\Gamma_R=\Psi'\circ\Gamma_R$ between the endomorphisms of $(\gm)_R$. Using condition~$(ii)$ and taking the extension of scalars from $R$ to $S$, we obtain the equality $\Phi_S\circ \Theta_S=\Phi'_S\circ\Theta_S$. By Theorem~\ref{theor-key} applied over the ring $S$, we see that $\Phi_S=\Phi'_S$. This directly implies that $\Psi_S=\Psi'_S$. Finally, by Theorem~\ref{cor:uniq}, we get the required equality~$\Psi=\Psi'$.
\end{proof}

\medskip

\begin{rmk}\label{examp:eta}
By formula~\eqref{projection-formula} in Subsection~\ref{subsect:bound}, for any ring $A$ and an element $a\in A^*$, there is an equality $CC_{n}\{a,t_1,\ldots,t_n\}=a$ in $A^*$. Hence there is an equality
$$
\Theta^*(CC_{n})=1\in\uz(\z)=\z\,.
$$
\end{rmk}

Theorem~\ref{theor-key} with Remark~\ref{examp:eta} imply the following universal property of the Contou-Carr\`ere symbol.

\begin{theor}\label{theor:intCC}
Suppose that a ring $R$ is torsion free over $\z$. Then for any morphism of group functors $\Phi\colon (\lo^nK_{n+1}^M)_R\to(\gm)_R$, there is a locally constant integer $\underline{i}\in\uz(R)$ such that $\Phi=(CC_n)^{\underline{i}}$.
\end{theor}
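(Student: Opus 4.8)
The plan is to deduce the statement immediately from the key result, Theorem~\ref{theor-key}, together with the normalization recorded in Remark~\ref{examp:eta}. Since $\gm$ is abelian, ${\rm X}\big((\lo^nK^M_{n+1})_R\big)=\Hom^{gr}_R\big((\lo^nK^M_{n+1})_R,(\gm)_R\big)$ is an abelian group, and the $\Phi$ of the statement is by definition an element of it. First I would make precise the meaning of $(CC_n)^{\underline{i}}$ for a locally constant integer $\underline{i}\in\uz(R)$: as recalled after Definition~\ref{defin:uz}, such an $\underline{i}$ determines a decomposition $R\simeq\prod_{j=1}^{N}R_j$ and integers $i_j$ with $\underline{i}|_{\Spec(R_j)}=i_j$, and one sets $(CC_n)^{\underline{i}}$ to be the unique character of $(\lo^nK^M_{n+1})_R$ whose restriction to the category of $R_j$-algebras equals $\big((CC_n)_{R_j}\big)^{i_j}$ for every $j$. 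A routine check shows this is independent of the chosen decomposition and that $\underline{i}\mapsto(CC_n)^{\underline{i}}$ is a group homomorphism $\uz(R)\to{\rm X}\big((\lo^nK^M_{n+1})_R\big)$.

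Next I would compute $\Theta_R^*\big((CC_n)^{\underline{i}}\big)$. Working over each factor $R_j$ and using that $\Theta$ is a fixed morphism, one has $\big((CC_n)_{R_j}\big)^{i_j}\circ\Theta_{R_j}=\big((CC_n)_{R_j}\circ\Theta_{R_j}\big)^{i_j}$; by Remark~\ref{examp:eta} the character $(CC_n)_{R_j}\circ\Theta_{R_j}$ is the identity endomorphism of $(\gm)_{R_j}$, which corresponds to $1\in\uz(R_j)$ under the isomorphism ${\rm X}\big((\gm)_{R_j}\big)\simeq\uz(R_j)$ of Lemma~\ref{lemma:diffgagm}$(ii)$. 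Hence $\big((CC_n)_{R_j}\big)^{i_j}\circ\Theta_{R_j}$ corresponds to $i_j\in\uz(R_j)$, and gluing over $j$ gives $\Theta_R^*\big((CC_n)^{\underline{i}}\big)=\underline{i}$ in $\uz(R)$. In other words, $\underline{i}\mapsto(CC_n)^{\underline{i}}$ is a section of $\Theta_R^*$.

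Finally, given the morphism of group functors $\Phi\colon(\lo^nK^M_{n+1})_R\to(\gm)_R$, I would put $\underline{i}:=\Theta_R^*(\Phi)\in\uz(R)$, so that $\Theta_R^*\big((CC_n)^{\underline{i}}\big)=\underline{i}=\Theta_R^*(\Phi)$ by the previous step. Since $R$ is torsion free over $\z$, Theorem~\ref{theor-key} asserts that $\Theta_R^*$ is an isomorphism, in particular injective, whence $\Phi=(CC_n)^{\underline{i}}$, which is the claim. Thus there is essentially nothing substantive beyond bookkeeping: the whole weight of the statement sits in Theorem~\ref{theor-key}. The only point I would handle with some care — and the one I expect to be the mildest obstacle — is the passage between locally constant integers and the corresponding finite decompositions of $R$, used both to define $(CC_n)^{\underline{i}}$ and to verify $\Theta_R^*\big((CC_n)^{\underline{i}}\big)=\underline{i}$; but this is straightforward given the description of the functor $\uz$ following Definition~\ref{defin:uz}.
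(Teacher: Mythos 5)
Your proposal is correct and follows exactly the route the paper takes: the paper also deduces the theorem immediately from Theorem~\ref{theor-key} combined with the normalization $\Theta^*(CC_n)=1$ of Remark~\ref{examp:eta}, and your spelling out of the locally constant bookkeeping and of the fact that $\underline{i}\mapsto(CC_n)^{\underline{i}}$ is a section of $\Theta_R^*$ is just the detail the paper leaves implicit.
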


\medskip

Let us make a remark concerning another definitions of the Contou-Carr\`ere symbol. For simplicity, below we consider various multilinear morphisms to $\gm$ up to taking the inverse.

\begin{rmk}
\hspace{0cm}
\begin{itemize}
\item[(i)]
The classical one-dimensional symbol was constructed by Contou-Carr\`ere himself in~\cite{CC1} as an unexplicitly defined bilinear morphism of functors $(L\gm)^{\times 2}\to \gm$. Also, it was given in~\cite{CC1} an explicit formula for the one-dimensional symbol over $\Q$-algebras. Another, but equivalent, explicit formula over $\Q$-algebras was given by Deligne~\cite[\S\,2.9]{Del}. Beilinson, Bloch, and Esnault constructed in~\cite[\S\,3.1]{BBE} a bilinear morphism of functors $(L\gm)^{\times 2}\to\gm$ by using the commutator in a super central extension of the group functor~$L\gm$. It was shown in~\cite[Prop.\,3.3]{BBE} that this morphism coincides with the Contou-Carr\`ere symbol. Independently of the work~\cite{BBE}, Anderson and Pablos Romo~\cite{AP} studied in a similar way a particular case for Laurent series over Artinian rings. It was shown in~\cite[Theor.\,7.2]{OZ1} that the one-dimensional Contou-Carr\`ere symbol is equal to the symbol from Definition~\ref{defin:CC} with $n=1$. This fact can be proved differently by using Corollary~\ref{corol:rigid} and Remark~\ref{examp:eta}. With this approach, one also needs to check that the classical one-dimensional Contou-Carr\`ere symbol satisfies the Steinberg relations for Laurent series over \mbox{$\Q$-algebras}, which can be done explicitly as in the proof of Proposition~\ref{CC-Q-Milnor} below.
\item[(ii)]
In~\cite[Def.\,3.4]{OZ1} it was defined by an explicit formula a multilinear morphism of functors ${(L^2\gm)^{\times 3}_{\Q}\to (\gm)_{\Q}}$ on the category of $\Q$-algebras. Besides, in~\cite[\S\,5.3]{OZ1} it was defined a multilinear morphism of functors $(L^2\gm)^{\times 3}\to\gm$ using a commutator in a central extension of $L^2\gm$ by a Picard category. The equality between this morphism and the above morphism over $\Q$-algebras was proved in~\cite[Theor.\,5.9]{OZ1}. The equality between these morphisms and the symbol from Definition~\ref{defin:CC} with $n=2$ was proved in~\cite[Theor.\,7.2]{OZ1}. Similarly, as in item~(i), this equality can be also proved by using Corollary~\ref{corol:rigid}, Remark~\ref{examp:eta}, and the Steinberg property from Proposition~\ref{CC-Q-Milnor} below.
\item[(iii)]
Braunling, Groechenig, and Wolfson constructed in~\cite[Def.\,4.9]{BGW} a multilinear morphism of functors
${(\cdot, \ldots, \cdot )\colon (L^n \gm)^{\times (n+1)}\to\gm}$ using a commutator in a spectral extension of $L^n\gm$ from their previous work~\cite{BGW0}. It follows from the construction that the morphism $(\cdot,\ldots,\cdot)$ satisfies the Steinberg relations. Thus Corollary~\ref{corol:rigid} and Remark~\ref{examp:eta} imply that $(\cdot,\ldots,\cdot)$ is equal to the higher Contou-Carr\`ere symbol~$CC_n$ as defined in the present paper (see Definition~\ref{defin:CC}).
\end{itemize}
\end{rmk}

\subsection{Proof of Theorem~\ref{theor-key}}\label{subsect:proofrigid}

In this subsection we prove Theorem~\ref{theor-key}. We will uses several auxiliary results. In what follows $R$ is a ring and
$$
\Phi\;:\; (\lo^nK_{n+1}^M)_R\lrto(\gm)_R
$$
is a character. Define the composition of morphisms of functors
$$
\widetilde{\Phi}\;:\;(\lo^n\gm)^{\times (n+1)}_R\lrto (L^nK_M^{n+1})_R\stackrel{\Phi}\lrto (\gm)_R\,,
$$
where the first morphism sends a collection of invertible iterated Laurent series $(f_1,\ldots,f_{n+1})$ to the symbol $\{f_1,\ldots,f_{n+1}\}$. Actually, $\widetilde{\Phi}$ is a morphism of ind-affine schemes.

\medskip

The following result is based on the theory of thick ind-cones (see Proposition~\ref{prop:geninj} and Theorem~\ref{cor:uniq}).

\begin{lemma}\label{prop:overQ}
Suppose that a ring $R$ is torsion free over $\z$. Then any character $\Phi$ as above is uniquely determined by the morphism of group functors ${\Phi_{S}\colon (\lo^nK_{n+1}^M)_S\to(\gm)_S}$ over~$S$, where $S:=R\otimes_\z \Q$.
\end{lemma}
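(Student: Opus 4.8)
The statement to be proved is Lemma~\ref{prop:overQ}, and the plan is to reduce it directly to Theorem~\ref{cor:uniq} via the auxiliary morphism $\widetilde{\Phi}$ introduced just above it. Since $R$ is torsion free over $\z$, the natural homomorphism $R\to S:=R\otimes_\z\Q$ is injective, so we may regard $R$ as a subring of $S$ and apply the injectivity results of Section~\ref{section:reprfunc} to the embedding $R\subset S$.

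First I would observe that a character $\Phi$ is uniquely determined by the morphism of functors $\widetilde{\Phi}\colon(\lo^n\gm)^{\times(n+1)}_R\to(\gm)_R$. Indeed, for any $R$-algebra $A$ the group $\lo^nK^M_{n+1}(A)=K^M_{n+1}\big(\LL^n(A)\big)$ is generated, by Definition~\ref{defin:Milnor}, by symbols $\{f_1,\dots,f_{n+1}\}$ with $f_i\in\LL^n(A)^*$, and $\widetilde{\Phi}$ records precisely the values $\Phi_A\{f_1,\dots,f_{n+1}\}$; since $\Phi_A$ is a group homomorphism, these values determine $\Phi_A$, hence $\Phi$. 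The same reasoning over $S$ shows that $\Phi_S$ determines $\widetilde{\Phi_S}$, and by construction $\widetilde{\Phi_S}$ is nothing but the base change of $\widetilde{\Phi}$ along $R\to S$, that is, $\widetilde{\Phi_S}=\widetilde{\Phi}_S$.

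Next, since $(\lo^n\gm)^{\times(n+1)}$ is represented by an ind-affine scheme (Proposition~\ref{lemma:sharprepr}) and $\gm$ is an affine scheme, $\widetilde{\Phi}$ is an element of $\Hom_R\big((\lo^n\gm)^{\times(n+1)}_R,(\gm)_R\big)$. Applying Theorem~\ref{cor:uniq} with $N=n+1$, $Y=\Spec(R)$ (trivially ind-flat over $R$), and $Z=\gm$, the restriction map
$$
\Hom_R\big((\lo^n\gm)^{\times(n+1)}_R,(\gm)_R\big)\lrto\Hom_S\big((\lo^n\gm)^{\times(n+1)}_S,(\gm)_S\big)
$$
is injective. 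Hence $\widetilde{\Phi}$ is uniquely determined by $\widetilde{\Phi}_S=\widetilde{\Phi_S}$, which in turn is determined by $\Phi_S$. Chaining these implications, $\Phi$ is uniquely determined by $\Phi_S$, as claimed.

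The only points requiring a little care — and the ones I would spell out explicitly — are that symbols generate the relevant Milnor $K$-group, so that the passage $\Phi\mapsto\widetilde{\Phi}$ loses no information, and that forming $\widetilde{\Phi}$ commutes with extension of scalars; both are formal. The genuine input is Theorem~\ref{cor:uniq}, i.e.\ the theory of thick ind-cones applied to $\lo^n\gm$, where the real work has already been carried out, so no new obstacle arises in this lemma.
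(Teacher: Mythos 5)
Your proof is correct and follows essentially the same route as the paper: reduce $\Phi$ to the symbol map $\widetilde{\Phi}$ (symbols generate $K^M_{n+1}$), note that forming $\widetilde{\Phi}$ commutes with the base change $R\subset S$, and apply Theorem~\ref{cor:uniq} (here with $N=n+1$, $Y=\Spec(R)$, $Z=\gm$) to get injectivity of restriction to $S$-algebras. The paper's proof is just a terser version of exactly this argument.
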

\begin{proof}
Clearly, $\Phi$ is uniquely determined by $\widetilde{\Phi}$. Since $R$ is torsion free over~$\z$, the natural homomorphism of rings $R\to S$ is injective. Therefore by Theorem~\ref{cor:uniq}, the morphism of functors~$\widetilde{\Phi}$ is uniquely determined by the morphism of functors~$\widetilde{\Phi}_{S}$, whence $\Phi$ is uniquely determined by $\Phi_{S}$.
\end{proof}

\medskip

The following result is based on the antisymmetric property of symbols in the Milnor $K$-groups of rings of iterated Laurent series (see Remark~\ref{exam:wstable} and Lemma~\ref{lemma:Morrow}).

\begin{lemma}\label{lemma:udsharp}
For any ring $R$, we have that any character $\Phi$ as above is uniquely determined by the restriction of $\widetilde{\Phi}$ to the ind-closed subscheme ${(\lo^n\gm)^{0}_R\times (\lo^n\gm)^{\times n}_R\subset (\lo^n\gm)^{\times (n+1)}_R}$.
\end{lemma}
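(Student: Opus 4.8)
The plan is to reduce the problem from all $(n+1)$-tuples of invertible iterated Laurent series to those whose first entry already lies in $(\lo^n\gm)^0$, and then propagate this reduction along all the other slots using the decomposition of $\lo^n\gm$ from Proposition~\ref{prop-decomp} together with the antisymmetry of symbols in $K^M_{n+1}$ of $\LL^n(A)$. Concretely, since $\Phi$ is uniquely determined by the associated morphism of ind-schemes $\widetilde\Phi$ (it factors through the natural surjection $(\lo^n\gm)^{\times(n+1)}\to \lo^nK^M_{n+1}$, which is an epimorphism of group functors, so two characters agreeing on all symbols agree), it suffices to recover the value $\widetilde\Phi(f_1,\dots,f_{n+1})$ for arbitrary $f_i\in \lo^n\gm(A)$ from the values on tuples with $f_1\in(\lo^n\gm)^0$.

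First I would invoke the decomposition~\eqref{eq:decommult}: over any ring $A$, every $f_i\in\lo^n\gm(A)$ is, Zariski-locally on $\Spec(A)$, a product $t^{\underline l_i}\cdot g_i$ with $\underline l_i\in\z^n$ and $g_i\in(\lo^n\gm)^0(A)$ (and $\widetilde\Phi$, being a morphism of functors, respects such local decompositions). By multilinearity of the symbol $\{f_1,\dots,f_{n+1}\}$ in each argument, $\widetilde\Phi(f_1,\dots,f_{n+1})$ is a product of values $\widetilde\Phi(h_1,\dots,h_{n+1})$ where each $h_j$ is either a monomial $t^{\underline l}$ (in fact, by further multilinearity, one of $t_1,\dots,t_n$ to suitable powers) or an element $g_j\in(\lo^n\gm)^0(A)$. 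So it is enough to control $\widetilde\Phi$ on tuples each of whose entries is either some power of a single variable $t_k$ or lies in $(\lo^n\gm)^0$. Now I use antisymmetry: by Remark~\ref{exam:wstable}, $\LL^n(A)=A((t_1))\ldots((t_n))$ is weakly $5$-fold stable (indeed weakly $k$-fold stable for all $k$), so by Lemma~\ref{lemma:Morrow} symbols in $K^M_{n+1}(\LL^n(A))$ are alternating; hence $\widetilde\Phi$ is an alternating function of its $n+1$ arguments. If at least one entry $h_j$ lies in $(\lo^n\gm)^0$, then after permuting it to the first slot (at the cost of a sign, harmless since we only claim $\widetilde\Phi$ is \emph{determined}, and the sign is universal) the value is given by the prescribed restriction. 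The only tuples not yet covered are those with all $n+1$ entries among $t_1,\dots,t_n$ (with various exponents): but there are only $n$ distinct variables and $n+1$ slots, so by the pigeonhole principle two entries are powers of the same $t_k$; multilinearity reduces to the case where two entries are literally equal to $t_k$, and then $\{t_k,t_k,\dots\}=\{-1,t_k,\dots\}$ by Lemma~\ref{lemma:Morrow}, so this symbol equals one built from $-1\in\gm(A)\subset(\lo^n\gm)^0(A)$ together with powers of $t$'s, which again has an entry in $(\lo^n\gm)^0$ and is therefore covered. Combining, $\widetilde\Phi$ on every tuple is expressed as a product of values of its restriction to $(\lo^n\gm)^0_R\times(\lo^n\gm)^{\times n}_R$ (possibly with signs coming from reordering), so $\Phi$ is uniquely determined by that restriction.

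The main obstacle I anticipate is bookkeeping rather than conceptual: one must be careful that the reduction to "two entries equal $t_k$" genuinely reaches the subscheme $(\lo^n\gm)^0$ — i.e.\ that the residual factor $\{-1,t_{i_1},\dots,t_{i_n}\}$ really does have one argument (namely the constant series $-1$, which lies in $\gm\subset(\lo^n\gm)^0$) in the distinguished slot after at most one transposition — and that all the manipulations (multilinear expansion, the Zariski-local splitting from~\eqref{eq:decommult}, permutation of arguments) are compatible with $\widetilde\Phi$ being a morphism of functors, so that knowing $\widetilde\Phi$ on the prescribed ind-closed subscheme over \emph{all} $R$-algebras suffices. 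A secondary point worth stating carefully is that "uniquely determined" here means: if two characters $\Phi,\Phi'$ have $\widetilde\Phi$ and $\widetilde{\Phi'}$ agreeing on $(\lo^n\gm)^0_R\times(\lo^n\gm)^{\times n}_R$, then $\Phi=\Phi'$; the argument above shows their difference is trivial on all symbols, hence on $\lo^nK^M_{n+1}$ since symbols generate.
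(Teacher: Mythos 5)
Your proposal is correct and follows essentially the same route as the paper's proof: decompose via~\eqref{eq:decommult}, use multilinearity to reduce the purely monomial part to symbols in the variables $t_j$, apply the pigeonhole principle together with the antisymmetry of symbols from Remark~\ref{exam:wstable} and Lemma~\ref{lemma:Morrow}, and finish with $\{f,f\}=\{-1,f\}$ and $-1\in\gm\subset(\lo^n\gm)^0$. The only cosmetic difference is that the paper phrases the monomial case as the restriction of $\widetilde{\Phi}$ to $(\uz^n)^{\times(n+1)}_R$ rather than working Zariski-locally, but the content is identical.
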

\begin{proof}
By Remark~\ref{exam:wstable} and Lemma~\ref{lemma:Morrow}, symbols in $L^nK^M_{m}(A)=K^M_{m}\big(\LL^n(A)\big)$, $m\geqslant 2$, are antisymmetric. Thus decomposition~\eqref{eq:decommult} in Subsection~\ref{sp-sub} implies that $\Phi$ is uniquely determined by the restrictions of $\widetilde{\Phi}$ to the ind-closed subschemes ${(\lo^n\gm)^{0}_R\times (\lo^n\gm)^{\times n}_R}$ and ${(\uz^n)^{\times(n+1)}_R}$ of ${(\lo^n\gm)^{\times (n+1)}_R}$.

By multilinearity, we see that the restriction of $\widetilde{\Phi}$ to ${(\uz^n)^{\times(n+1)}_R}$ is uniquely determined by the values of $\Phi$ at symbols $\{f_1,\ldots,f_{n+1}\}$, where each $f_i$, $1\leqslant i\leqslant n+1$, is equal to some $t_j$, $1\leqslant j\leqslant n$. By Dirichlet's box principle, we have that $f_i=f_j$ for some $i\ne j$. By the above antisymmetric property of symbols, we may assume that $f_1=f_2$. By Remark~\ref{exam:wstable} and Lemma~\ref{lemma:Morrow} again, for any element $f\in\lo^n\gm(A)$, there is an equality $\{f,f\}=\{-1,f\}$ in $K_2^M\big(\LL^n(A)\big)$. Therefore, $\{f_1,f_2,\ldots,f_{n+1}\}=\{-1,f_2,\ldots,f_{n+1}\}$. Since $-1\in\gm(A)\subset (\lo^n\gm)^0(A)$, this proves the lemma.
\end{proof}

\medskip

The following result is based on the absolute connectedness of $(\lo^n\gm)^0$ over $\z$ (see Proposition~\ref{prop:densegm}$(iv)$).

\begin{lemma}\label{lemma:trivgm}
For any ring $R$ and any character $\Phi$ as above, the restriction of $\widetilde{\Phi}$ to the ind-closed subscheme ${(\gm)_R\times \big((\lo^n\gm)_R^0\big)^{\times n}\subset (\lo^n\gm)^{\times (n+1)}_R}$ is a constant morphism whose value is equal to $1\in \gm(R)$.
\end{lemma}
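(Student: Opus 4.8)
The plan is to deduce the statement from the absolute connectedness of $\big((\lo^n\gm)^0\big)^{\times n}$ over $\z$ (Proposition~\ref{prop:densegm}(iv)) together with the identification of characters of $\gm$ with locally constant integers (Lemma~\ref{lemma:diffgagm}(ii)).

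First I would restrict $\widetilde{\Phi}$ to the ind-closed subscheme $(\gm)_R\times\big((\lo^n\gm)^0_R\big)^{\times n}$ and curry in the first argument. For every $R$-algebra $A$ and every fixed collection $f_2,\ldots,f_{n+1}\in(\lo^n\gm)^0(A)$, the map $a\mapsto\widetilde{\Phi}(a,f_2,\ldots,f_{n+1})=\Phi\{a,f_2,\ldots,f_{n+1}\}$ is a character of $(\gm)_A$: this uses multilinearity of Milnor symbols in the first slot and the fact that $\Phi$ is a homomorphism of group functors. Hence currying produces a morphism of ind-schemes over $R$
$$
\mu\;:\;\big((\lo^n\gm)^0\big)^{\times n}_R\lrto\underline{\Hom}^{gr}_R\big((\gm)_R,(\gm)_R\big)\simeq\uz_R\,,
$$
where the last isomorphism is the internal version of Lemma~\ref{lemma:diffgagm}(ii), and by construction $\widetilde{\Phi}(a,f_2,\ldots,f_{n+1})=a^{\mu(f_2,\ldots,f_{n+1})}$ for all $a$.

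Next I would evaluate $\mu$ at the $R$-point $e:=(1,\ldots,1)\in\big((\lo^n\gm)^0\big)^{\times n}(R)$ given by the constant series. Since $1=1\cdot 1$ in the group of units, the symbol $\{a,1,\ldots,1\}$ vanishes in $K^M_{n+1}$, so $\widetilde{\Phi}(a,1,\ldots,1)=\Phi(0)=1=a^0$ and therefore $\mu(e)=0\in\uz(R)$. Now $\big((\lo^n\gm)^0\big)^{\times n}$ is absolutely connected over $\z$ by Proposition~\ref{prop:densegm}(iv), so $\big((\lo^n\gm)^0\big)^{\times n}_R$ is connected over $R$; applying Proposition~\ref{lemma:conntriv} to $\mu$ and the point $e$ gives $\mu=0$. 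Consequently $\widetilde{\Phi}(a,f_2,\ldots,f_{n+1})=a^0=1$ for all $a\in\gm$ and all $f_2,\ldots,f_{n+1}\in(\lo^n\gm)^0$, which is exactly the asserted constancy.

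Given the machinery already developed, the argument is essentially formal; the only point that needs care is the currying step, namely verifying that $\widetilde{\Phi}$, being multiplicative in its first argument with values in $\gm$, genuinely factors through $\uz_R$ via the internal $\underline{\Hom}^{gr}$ and that the resulting $\mu$ is a morphism of ind-schemes so that Proposition~\ref{lemma:conntriv} applies. I expect this bookkeeping, rather than any real difficulty, to be the main thing to get right, after which absolute connectedness does the rest.
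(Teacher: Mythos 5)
Your proposal is correct and follows essentially the same route as the paper: the paper also curries the first $\gm$-argument into a morphism $\big((\lo^n\gm)^0_R\big)^{\times n}\to\underline{\Hom}^{gr}_R\big((\gm)_R,(\gm)_R\big)\simeq\uz_R$ and then concludes by the absolute connectedness of $\big((\lo^n\gm)^0\big)^{\times n}$ (Proposition~\ref{prop:densegm}$(iv)$) together with Proposition~\ref{lemma:conntriv}. The only cosmetic difference is that you check vanishing at the unit point $(1,\ldots,1)$ via $\{a,1,\ldots,1\}=0$, whereas the paper gets this for free by noting that the curried map is a morphism of group ind-schemes.
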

\begin{proof}
By multilinearity, the restriction of $\widetilde{\Phi}$ to the ind-closed subscheme ${(\gm)_R\times \big((\lo^n\gm)_R^0\big)^{\times n}}$ defines a morphism of group ind-schemes
$$
\alpha\;:\;\big((\lo^n\gm)_R^0\big)^{\times n}\lrto \underline{\Hom}^{gr}_R\big((\gm)_R,(\gm)_R\big)\simeq \uz_R\,,
$$
where $\underline{\Hom}^{gr}$ is the internal Hom for group functors (see Section~\ref{sect:notation} and Lemma~\ref{lemma:diffgagm}$(ii)$). By Proposition~\ref{prop:densegm}$(iv)$, we have that  $\big((\lo^n\gm)_R^0\big)^{\times n}$ is a connected ind-scheme over~$R$. Thus by Proposition~\ref{lemma:conntriv}, we have that $\alpha=0$. This proves the lemma.
\end{proof}

\medskip

Now we are ready to prove Theorem~\ref{theor-key}.

\begin{proof}[Proof of Theorem~\ref{theor-key}]

\hspace{0cm}

\medskip

{\it Step 1.}
The existence of a functorial boundary map for algebraic $K$-groups (see Subsection~\ref{subsect:bound}) implies that the map~$\Theta_R^*$ is surjective. More precisely, this follows from Remark~\ref{examp:eta}. Thus we need to show that the map~$\Theta_R^*$ is injective, that is, that a morphism of group functors ${\Phi\colon(\lo^nK^M_{n+1})_R\to (\gm)_R}$ is uniquely determined by the composition~${\Phi\circ\Theta_R}$.

\medskip

{\it Step 2.}
By Lemma~\ref{prop:overQ}, we may assume that $R$ is a $\Q$-algebra, which we do from now on.

Let $A$ be an $R$-algebra and let $f_2,\ldots,f_{n+1}\in \lo^n\gm(A)$ be a collection of $n$ invertible iterated Laurent series. We obtain a character
$$
\chi\;:\;(\lo^n\gm)^{0}_A\lrto (\gm)_A\,,\qquad f\longmapsto \Phi\{f,f_2,\ldots,f_{n+1}\}\,.
$$
By Lemma~\ref{lemma:udsharp}, $\Phi$ is uniquely determined by all characters $\chi$ of this kind.

Since $R$ is a $\Q$-algebra, $A$ is a $\Q$-algebra as well. Thus a property of the exponential map from Proposition~\ref{prop:charactgm} implies that each character~$\chi$ as above is uniquely determined by its differential $T\chi$.  This implies that $\Phi$ is uniquely determined by its differential~$T\Phi\colon (TL^nK^M_{n+1})_R\to T(\gm)_R$.

Now we use the description of the tangent space to Milnor $K$-groups in terms of differential forms (see Subsection~\ref{tangent-space}). Let
$$
\Psi\colon(L^n\Omega^n)_R\lrto (\ga)_R
$$
be the morphism of group functors that corresponds to the differential $T\Phi$ under the isomorphism from Proposition~\ref{prop:LBl} and the isomorphism from Example~\ref{exam:tgm}(ii) (here we use that $2$ is invertible in the $\Q$-algebra $R$). What was said above implies that $\Phi$ is uniquely determined by~$\Psi$.

\medskip

{\it Step 3.} Using Lemma~\ref{lemma:trivgm}, we shall show in this step that $\Psi$ equals zero on exact differential forms, that is, that the following morphism of ind-affine schemes
$$
\psi\;:\;(\lo^n\ga)^{\times n}_R\lrto (\ga)_R\,,\qquad (g_1,\ldots,g_n)\longmapsto \Psi(dg_1\wedge\ldots\wedge dg_n)
$$
is equal to zero.

By definition, $\psi$ is a regular function in $\OO\big((\lo^n\ga)^{\times n}_R\big)$. By Proposition~\ref{prop:densegm}$(i)$, the function $\psi$ is uniquely determined by its restriction to the dense ind-closed subscheme ${\big((\lo^n\ga)^{\sharp}_R\big)^{\times n}\subset(\lo^n\ga)^{\times n}_R}$. Thus it is enough to show that for any $R$-algebra $A$ and elements ${g_1,\ldots,g_n\in (\lo^n\ga)^{\sharp}(A)}$, we have $\Psi(dg_1\wedge\ldots\wedge dg_n)=0$.

By Lemma~\ref{dif-form}$(i)$ and Proposition~\ref{log-map}, for any element $g\in (\lo^n\ga)^{\sharp}(A)$, there is an equality $\frac{d\exp(g)}{\exp(g)}=dg$. Therefore the isomorphism from Proposition~\ref{prop:LBl} sends the symbol $\{1+\varepsilon,\exp(g_1),\ldots,\exp(g_n)\}$ to the differential form $dg_1\wedge\ldots\wedge dg_{n}$, where $\varepsilon^2=0$. Together with Example~\ref{exam:tgm}(ii), this implies the following equality in the ring~$A[\varepsilon]^*$:
$$
1+\Psi(dg_1\wedge\ldots \wedge dg_n)\,\varepsilon=\Phi\{1+\varepsilon,\exp(g_1),\ldots,\exp(g_n)\}\,.
$$
Since $1+\varepsilon\in \gm\big(A[\varepsilon]\big)$ and $\exp(g_i)\in(\lo^n\gm)^{\sharp}(A)\subset(\lo^n\gm)^{0}\big(A[\varepsilon]\big)$ for any $i$, $1\leqslant i\leqslant n$, applying Lemma~\ref{lemma:trivgm}, we obtain the equality $\Psi(dg_1\wedge\ldots\wedge dg_n)=0$.

\medskip

{\it Step 4.}
By Proposition~\ref{lemma:algiterforms}, $\Psi$ factors uniquely through the natural morphism of group functors $(L^n\Omega^n)_R\to(\widetilde{L^n\Omega}{}^n)_R$. Combining Step 3 with Lemma~\ref{lemma:cohom}, we obtain that the morphism of group functors $\Psi\colon (L^n\Omega^n)_R\to (\ga)_R$ factors through the residue map ${\res\colon (L^n\Omega^n)_R\to (\ga)_R}$. In particular, $\Psi$ is uniquely determined by the composition
$$
\lambda\;:\;(\ga)_R\lrto (L^n\Omega^n)_R\stackrel{\Psi}\lrto (\ga)_R\,,
$$
where the first morphism sends an element $a\in A$ to the differential form $a\,\frac{dt_1}{t_1}\wedge\ldots\wedge\frac{dt_n}{t_n}$ for an $R$-algebra $A$.

The isomorphism from Proposition~\ref{prop:LBl} sends the symbol $\{1+a\,\varepsilon,t_1,\ldots,t_n\}$ to the differential form $a\,\frac{dt_1}{t_1}\wedge\ldots\wedge\frac{dt_n}{t_n}$. This implies that $\lambda=T\big(\Phi\circ\Theta_R\big)$, which finishes the proof of the theorem.
\end{proof}

\subsection{Explicit formula}    \label{CC-over-Q}

In the course of the proof of Theorem~\ref{theor-key} we have essentially obtained an explicit formula for the Contou-Carr\`ere symbol ${CC_n\colon K^M_{n+1}\big(\LL^n(A)\big)\to A^*}$ when~$A$ is a $\Q$-algebra. In order to give this explicit formula, we introduce the following auxiliary map.

\begin{prop}\label{lem-sign}
There is a unique multilinear antisymmetric (or, equivalently, symmetric) map
$$
\sgn  \; \colon \; (\z^n)^{\times (n+1)}\lrto \z/2\z
$$
such that for any $l\in \z^n$ and $r\in (\z^n)^{\times(n-1)}$, we have $\, \sgn(l,l,r)\equiv\det(l,r)\pmod{2}$.
\end{prop}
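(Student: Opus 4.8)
The plan is to produce $\sgn$ by an explicit formula and then check the two required properties, uniqueness being the easier half. For uniqueness, observe that a multilinear map on $(\z^n)^{\times(n+1)}$ is determined by its values on tuples of standard basis vectors $(e_{i_1},\dots,e_{i_{n+1}})$, $1\le i_k\le n$; by the pigeonhole principle any such tuple has a repeated index, and after using antisymmetry (which over $\z/2\z$ is the same as symmetry) we may assume the two equal entries are the first two, so the value is forced to be $\det(e_i,e_i,r)=0$ unless the repeated entry coincides with one of the entries of $r$ — in all cases the value is dictated by the condition $\sgn(l,l,r)\equiv\det(l,r)\pmod 2$ together with multilinearity and antisymmetry. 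Hence at most one such map exists.

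For existence, the natural candidate is
$$
\sgn(l_0,l_1,\dots,l_n):=\sum_{0\le i<j\le n}\det(l_i,l_j,\,l_0,\dots,\widehat{l_i},\dots,\widehat{l_j},\dots,l_n)\bmod 2,
$$
where the hats denote omission and the remaining $n-1$ vectors are listed in the order inherited from $l_0,\dots,l_n$; equivalently, one may phrase this as $\sum_{i<j}\det$ of the matrix obtained by moving $l_i,l_j$ to the front. This is visibly multilinear in each $l_k$ since each summand is. I would then verify antisymmetry: swapping $l_a$ and $l_b$ permutes the index pairs $\{i,j\}$ and, for each summand, either leaves it invariant or changes the determinant by a transposition of rows, i.e. by a sign $-1\equiv 1\pmod 2$; a bookkeeping of how the pairs and the internal orderings transform shows the whole sum is unchanged mod $2$. (Alternatively, and perhaps more cleanly, I would exhibit $\sgn$ as the mod-$2$ reduction of a single determinant-type expression whose antisymmetry is manifest, e.g. relate it to $\det$ applied after a fixed antisymmetrization; the point is just to get \emph{some} symmetric multilinear representative of the right diagonal values.) Finally, evaluating the candidate at $(l,l,r)$: all pairs $\{i,j\}$ not containing the two leading slots contribute a determinant with a repeated row, hence $0$; the single pair consisting of the two leading slots contributes $\det(l,l,r)=0$ as well — so this naive candidate gives $0$, which is \emph{wrong}. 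This shows the representative must be chosen more carefully, and pinning down the correct one is the crux.

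Concretely, the main obstacle is producing a symmetric multilinear map whose value on the ``doubled'' tuples $(l,l,r)$ is exactly $\det(l,r)\bmod 2$ rather than $0$; over a field of characteristic $\ne 2$ no such map exists (the value on $(l,l,r)$ of a symmetric multilinear map is forced to vanish by swapping the two $l$'s), so the construction genuinely uses that we are in $\z/2\z$. The right approach: choose \emph{any} $\z/2\z$-valued multilinear map $\beta$ on $(\z^n)^{\times(n+1)}$ with $\beta(l,l,r)\equiv\det(l,r)$ — for instance fix a total order on a basis and set $\beta(e_{i_0},\dots,e_{i_n})$ to be $1$ iff the first two indices coincide, $i_0=i_1$, and $\{i_0\}\cup\{i_2,\dots,i_n\}$ exhausts $\{1,\dots,n\}$ with the appropriate sign of the induced permutation reduced mod $2$, extended multilinearly — and then symmetrize: $\sgn:=\sum_{\sigma\in S_{n+1}}\beta\circ\sigma$. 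Because $|S_{n+1}|$ is even for $n\ge1$... no: that kills it too, so instead one takes $\sgn(l_0,\dots,l_n):=\sum_{i<j}\beta(l_i,l_j,l_0,\dots,\widehat{l_i},\dots,\widehat{l_j},\dots,l_n)$ with $\beta$ chosen as above; now the pair $\{i,j\}=\{0,1\}$ on a doubled tuple contributes $\beta(l,l,r)=\det(l,r)$, every other pair contributes a $\beta$ of a tuple whose first two entries are distinct basis-like vectors, hence $0$ by the defining property of $\beta$, and multilinearity propagates this from basis vectors to all of $\z^n$. Antisymmetry of the resulting $\sgn$ then follows from a direct check on basis tuples using that transpositions act by $\pm1\equiv1\pmod 2$. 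I expect the technical heart of the write-up to be verifying well-definedness and antisymmetry of this $\beta$, and I would present the final formula together with the verification that $\sgn(l,l,r)\equiv\det(l,r)\pmod 2$ as the key computation, with uniqueness dispatched first by the pigeonhole argument above.
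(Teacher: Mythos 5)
Your overall strategy coincides with the paper's: uniqueness by the pigeonhole argument on basis tuples combined with symmetry, and existence by an explicit formula. In fact your construction, once unwound, \emph{is} the paper's formula: your $\beta$, extended multilinearly, equals $\beta(l_0,l_1,l_2,\ldots,l_n)\equiv\det(l_0\cdot l_1,\,l_2,\ldots,l_n)\pmod{2}$, where $l_0\cdot l_1$ is the coordinate-wise product, so your $\sgn(l_0,\ldots,l_n)=\sum_{i<j}\beta(l_i,l_j,\ldots)$ is exactly the Vostokov--Fesenko expression $\sum_{i<j}\det(l_0,\ldots,\widehat{l_i},\ldots,\widehat{l_j},\ldots,l_n,\,l_i\cdot l_j)\bmod 2$ that the paper quotes and then verifies. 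Your remark that no such map can exist over a field of characteristic $\neq 2$ is a correct and useful observation.

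There is, however, a genuine flaw in your verification of the key identity $\sgn(l,l,r)\equiv\det(l,r)\pmod 2$. It is not true that on a doubled tuple every pair other than $\{0,1\}$ contributes $0$: for $j\geqslant 2$ the pair $\{0,j\}$ contributes $\beta(l,r_{j-1},l,\ldots)=\det(l\cdot r_{j-1},\,l,\ldots)$, which is in general nonzero. The correct bookkeeping is that the $\{0,j\}$ and $\{1,j\}$ contributions are literally equal (since $l_0=l_1=l$) and therefore cancel mod $2$, while every pair contained in $r$ vanishes because its determinant has the column $l$ repeated; only the pair $\{0,1\}$ survives and gives $\det(l\cdot l,r)\equiv\det(l,r)$ because $a^2\equiv a\pmod 2$ coordinate-wise. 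Relatedly, ``multilinearity propagates this from basis vectors to all of $\z^n$'' is not sufficient here: the assignment $l\mapsto\sgn(l,l,r)$ is quadratic, not linear, in $l$, so if you argue on basis tuples you must additionally use symmetry to kill the cross terms $l_al_b\big(\sgn(e_a,e_b,r)+\sgn(e_b,e_a,r)\big)\equiv 0\pmod 2$ and the congruence $l_a^2\equiv l_a\pmod 2$ for the diagonal ones. Either repair is immediate, and with it your proof becomes the paper's. Two smaller slips: your first ``naive candidate'' and the expression $\det(e_i,e_i,r)$ in the uniqueness sketch are determinants of $n+1$ vectors in $\z^n$, which are not defined (you discard the former anyway, and the latter is just notational).
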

\begin{proof}
The above properties define uniquely the values of the map $\sgn$ on the elements of the standard basis in $(\z^n)^{\otimes(n+1)}$ by Dirichlet's box principle. This gives uniqueness of the map $\sgn$. Existence is implied by the following explicit formula of Vostokov and Fesenko, which we take from~\cite[Intr.]{P2}:
\begin{equation}\label{eq:Parshin}
\mbox{$\sgn(l_1,\ldots,l_{n+1})\equiv\sum\limits_{1 \leqslant i <j \leqslant n+1}  \det(l_1,\ldots,\hat{l}_i,\ldots,\hat{l}_j,\ldots,l_{n+1},l_i\cdot l_j) \pmod{2} \,,$}
\end{equation}
where $l_i\cdot l_j$ is the coordinate-wise product of the elements $l_i$ and $l_j$ in $\z^n$ and the hat symbol corresponds to omitting an element. One easily checks that this map satisfies all conditions of the proposition.
\end{proof}

\begin{rmk}
Another explicit formula for the map $\sgn$ is given by Khovanskii~\cite{Kh} (see also~\cite[Prop.\,11]{O1} for the case $n=2$):
\begin{multline*}
\mbox{$\sgn(l_1, \ldots, l_{n+1}) \equiv 1 + \sum\limits_{i=1}^{n+1} \det(l_1, \ldots, \hat{l}_i,\ldots, l_{n+1}) +  $}\\ \mbox{$ +\prod\limits_{i=1}^{n+1} \big(1 + \det(l_1, \ldots,\hat{l}_i, \ldots, l_{n+1})\big)  \pmod{2}$}
\end{multline*}
An equality between this formula and formula~\eqref{eq:Parshin} is proved in~\cite[$\S$\,1]{Kh}.
\end{rmk}

\medskip

Now we give an explicit formula for the Contou-Carr\`ere symbol over $\Q$-algebras.

\begin{theor}\label{prop-CCrational}
Let $A$ be a ring. The map
$$
\lo^n\gm(A)^{\times (n+1)}\lrto \gm(A)\,,\qquad (f_1,\ldots,f_{n+1})\longmapsto CC_n\{f_1,\ldots,f_{n+1}\}\,,
$$
is a multilinear antisymmetric map that satisfies the following conditions:
\begin{itemize}
\item[(i)]
If $A$ is a $\Q$-algebra and $f_1\in (\lo^n\gm)^{\sharp}(A)$, then
\begin{equation}  \label{exp-log-form}
CC_n\{f_1,f_2,\ldots,f_{n+1}\}=\exp\,\res\left(\log(f_1)\,\frac{df_2}{f_2}\wedge\ldots\wedge
\frac{df_{n+1}}{f_{n+1}}\right)\,,
\end{equation}
where the series $\exp$ is applied to a nilpotent element of the ring $A$.
\item[(ii)]
If $f_1\in A^*$, then (cf. formula~\eqref{eq:explicadd})
\begin{equation}  \label{form-nu}
CC_n\{f_1,f_2\ldots,f_{n+1}\}=f_1^{\,\det(\nu(f_2),\,\ldots,\,\nu(f_{n+1}))}\,.
\end{equation}
\item[(iii)]
If $f_1=t^{\underline{l}_1},\ldots, f_{n+1}=t^{\underline{l}_{n+1}}$, where $\underline{l}_i\in\uz^n(A)$, $1 \leqslant i \leqslant n+1$, then
\begin{equation}  \label{form-sign}
CC_n\{f_1,\ldots,f_{n+1}\}=(-1)^{\,\sgn(\underline{l}_1,\,\ldots,\,\underline{l}_{n+1})}\,.
\end{equation}
\end{itemize}
\end{theor}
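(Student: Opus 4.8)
The plan is: first dispose of multilinearity and antisymmetry, then prove (i) by reading off the computation already carried out in the proof of Theorem~\ref{theor-key}, then deduce (ii) from a character‑rigidity argument combined with Proposition~\ref{prop-addsymb}, and finally deduce (iii) from (ii). Multilinearity is immediate: $CC_n$ is a morphism of group functors out of $\lo^nK^M_{n+1}$ (Definition~\ref{defin:CC}) and $(f_1,\dots,f_{n+1})\mapsto\{f_1,\dots,f_{n+1}\}$ is multilinear by the construction of Milnor $K$‑groups. For antisymmetry I would note that $\LL^n(A)=\LL^{n-1}(A)((t_n))$ is weakly $5$‑fold stable by iterating Remark~\ref{exam:wstable}; then Lemma~\ref{lemma:Morrow} shows symbols in $K^M_2\big(\LL^n(A)\big)$, hence in $K^M_{n+1}\big(\LL^n(A)\big)$, change sign under transposition of two entries, so $(f_i)\mapsto CC_n\{f_i\}$ is antisymmetric.

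For part~(i) I would extract the statement directly from the proof of Theorem~\ref{theor-key} applied to $\Phi=CC_n$ over a $\Q$‑algebra $A$. In the notation of that proof, the morphism $\Psi\colon (L^n\Omega^n)_A\to(\ga)_A$ attached to $T\Phi$ factors through the residue map (Steps~3--4, using Proposition~\ref{lemma:algiterforms} and Lemma~\ref{lemma:cohom}), and since $\Theta^*(CC_n)=1$ (Remark~\ref{examp:eta}) the scalar there is $1$, i.e.\ $\Psi=\res$. Now fix $f_1\in(\lo^n\gm)^{\sharp}(A)$ and $f_2,\dots,f_{n+1}\in\lo^n\gm(A)$, and consider the character $\chi:=CC_n\{-,f_2,\dots,f_{n+1}\}$ of $(\lo^n\gm)^0_A$. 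By Proposition~\ref{prop:charactgm}, $\chi(f_1)=\exp(a)$ for the nilpotent $a\in\Nil(A)$ determined by $\chi\big(1+\log(f_1)\varepsilon\big)=1+a\varepsilon$; the symbol $\{1+\log(f_1)\varepsilon,f_2,\dots,f_{n+1}\}$ lies in $TK^M_{n+1}\big(\LL^n(A)\big)$ and corresponds under Proposition~\ref{prop:LBl} to the form $\log(f_1)\,\frac{df_2}{f_2}\wedge\dots\wedge\frac{df_{n+1}}{f_{n+1}}$, on which $\Psi=\res$ yields exactly the right‑hand side of~\eqref{exp-log-form}.

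For part~(ii): for $f_1\in A^{*}$ the assignment $f_1\mapsto CC_n\{f_1,f_2,\dots,f_{n+1}\}$ is a character of $(\gm)_A$, hence equals $f_1\mapsto f_1^{\rho}$ for a unique $\rho=\rho(f_2,\dots,f_{n+1})\in\uz(A)$ by Lemma~\ref{lemma:diffgagm}(ii). Uniqueness of $\rho$ shows that $\rho\colon(\lo^n\gm)^{\times n}\to\uz$ is a morphism of functors which is multilinear and annihilates Steinberg relations, both inherited from $\lo^nK^M_{n+1}$; therefore it factors through a morphism of group functors $\bar\rho\colon\lo^nK^M_n\to\uz$. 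By Remark~\ref{examp:eta} one has $\bar\rho\{t_1,\dots,t_n\}=1=\nu_n\{t_1,\dots,t_n\}$, so Proposition~\ref{prop-addsymb} forces $\bar\rho=\nu_n$, and then~\eqref{eq:explicadd} gives~\eqref{form-nu}. For part~(iii): decomposing $\Spec(A)$ so that each $\underline l_i$ is constant and using multilinearity of both sides in $(\underline l_1,\dots,\underline l_{n+1})\in(\z^n)^{\times(n+1)}$, it suffices to evaluate $CC_n\{t_{j_1},\dots,t_{j_{n+1}}\}$ for $j_i\in\{1,\dots,n\}$. By the box principle $j_p=j_q$ for some $p\ne q$; moving these two entries to the front by antisymmetry, applying $\{x,x\}=\{-1,x\}$ (Lemma~\ref{lemma:Morrow}) and then part~(ii) with $f_1=-1$, one gets $CC_n\{t_{j_1},\dots,t_{j_{n+1}}\}=(-1)^{\det(e_{j_p},\,(e_{j_k})_{k\ne p,q})}$. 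Since over $\z/2\z$ the symbol $\sgn$ is insensitive to reordering of its arguments, its defining property in Proposition~\ref{lem-sign} gives $\sgn(e_{j_1},\dots,e_{j_{n+1}})\equiv\det(e_{j_p},(e_{j_k})_{k\ne p,q})\pmod 2$, and the two sides of~\eqref{form-sign} coincide.

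The substantive step is part~(i): the delicate point is to keep precise track of the chain of identifications — $TK^M_{n+1}\simeq L^n\Omega^n$ from Proposition~\ref{prop:LBl}, $T\gm\simeq\ga$ from Example~\ref{exam:tgm}(ii), the passage to continuous forms, and the exponential bookkeeping of Proposition~\ref{prop:charactgm} — in order to conclude that $\Psi$ equals the residue map \emph{on the nose}, i.e.\ with constant $1$. Once this is in place, parts~(ii) and~(iii) are short formal consequences, requiring no further geometry.
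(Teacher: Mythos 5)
Your proposal is correct and takes essentially the same route as the paper's own proof: antisymmetry via weak $5$-fold stability and Lemma~\ref{lemma:Morrow}; part~(i) by re-running the proof of Theorem~\ref{theor-key} for $\Phi=CC_n$ using Propositions~\ref{prop:charactgm} and~\ref{prop:LBl} with the normalization $\Theta^*(CC_n)=1$ from Remark~\ref{examp:eta}; part~(ii) via rigidity of characters of $\gm$ (the paper phrases this as $\underline{\Hom}^{gr}(\gm,\gm)\simeq\uz$) combined with Proposition~\ref{prop-addsymb} and formula~\eqref{eq:explicadd}; and part~(iii) via the box principle, $\{f,f\}=\{-1,f\}$, part~(ii), and Proposition~\ref{lem-sign}. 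The only cosmetic difference is that in~(iii) you evaluate on standard basis tuples instead of invoking the uniqueness clause of Proposition~\ref{lem-sign} on the whole restriction to $(\uz^n)^{\times(n+1)}$, which is an equivalent bookkeeping of the same ingredients.
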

\begin{proof}
Obviously, the map $CC_n$ is multilinear. It is antisymmetric because of the antisymmetric property for symbols in the Milnor $K$-group $K^M_{n+1}\big(\LL^n(A)\big)$ (see Remark~\ref{exam:wstable} and Lemma~\ref{lemma:Morrow}).

$(i)$ We will refer to steps of the proof of Theorem~\ref{theor-key}. Let $\varepsilon$ be a formal variable that satisfies $\varepsilon^2=0$. Step 2 together with the Proposition~\ref{prop:charactgm} imply that there is an equality ${CC_n\{f_1,\ldots,f_n\}=\exp(a)}$, where $a\in\Nil(A)$ is a nilpotent element that satisfies ${CC_n\{1+\log(f_1)\,\varepsilon,f_2,\ldots,f_n\}=1+a\,\varepsilon}$. The isomorphism from Proposition~\ref{prop:LBl} sends the symbol $\{1+\log(f_1)\,\varepsilon,f_2,\ldots,f_n\}$ to the differential form $\log(f_1)\,\frac{df_2}{f_2}\wedge\ldots\wedge\frac{f_{n+1}}{f_{n+1}}$. Therefore there is an equality
$$
CC_n\{1+\log(f_1)\,\varepsilon,f_2,\ldots,f_n\}=
1+\Psi\left(\log(f_1)\,\frac{df_2}{f_2}\wedge\ldots\wedge\frac{df_{n+1}}{f_{n+1}}\right)\,\varepsilon\,,
$$
where $\Psi\colon (L^n\Omega)_{\Q}\to (\ga)_{\Q}$ corresponds to the differential $T(CC_n)_{\Q}$ of $(CC_n)_{\Q}$ similarly as it corresponds to~$T\Phi$ in Step 2. Finally, Step 4 together with Remark~\ref{examp:eta} imply that there is an equality $$
\Psi\left(\log(f_1)\,\frac{df_2}{f_2}\wedge\ldots\wedge\frac{df_{n+1}}{f_{n+1}}\right)=
\res\left(\log(f_1)\,\frac{df_2}{f_2}\wedge\ldots\wedge\frac{df_{n+1}}{f_{n+1}}\right)\,.
$$

$(ii)$ The Contou-Carr\`ere symbol defines a morphism of group functors (see Section~\ref{sect:notation} and Lemma~\ref{lemma:diffgagm}$(ii)$)
$$
L^nK^M_n\lrto \underline{\Hom}^{gr}(\gm,\gm)\simeq\uz\,,\qquad \{f_2,\ldots,f_{n+1}\}\longmapsto \big(a\mapsto CC_n\{a,f_2,\ldots,f_{n+1}\}\big)\,,
$$
where $f_2,\ldots,f_{n+1}\in\lo^n\gm(R)$ for a ring $R$ and $a\in A^*$ for an $R$-algebra $A$. Thus we conclude by Propositions~\ref{prop-addsymb}, a discussion after its proof, and Remark~\ref{examp:eta}.

$(iii)$ The restriction of $CC_n$ to $(\uz^n)^{\times (n+1)}\subset(\lo^n\gm)^{\times (n+1)}$ is a multilinear antisymmetric morphism from $(\uz^n)^{\times (n+1)}$ to $\gm$. It follows from Dirichlet's box principle that it takes values in $\uz/2\uz\subset \gm$. Let $\underline{l}_1,\ldots\underline{l}_{n+1}\in\uz^n(A)$ for a ring $A$ be such that $\underline{l}_1=\underline{l}_2$. By Remark~\ref{exam:wstable} and Lemma~\ref{lemma:Morrow}, there is an equality in $K^M_{n+1}\big(\LL^n(A)\big)$
$$
\{t^{\underline{l}_1},t^{\underline{l}_2},\ldots,t^{\underline{l}_{n+1}}\}=
\{-1,t^{\underline{l}_2},\ldots,t^{\underline{l}_{n+1}}\}\,.
$$
Combining this with item~$(ii)$, we see that $CC_n\{t^{\underline{l}_1},t^{\underline{l}_2},\ldots,t^{\underline{l}_{n+1}}\}=
(-1)^{\,\det(\underline{l}_2,\,\ldots,\,\underline{l}_{n+1})}$. Thus we conclude by Propostion~\ref{lem-sign}.
\end{proof}

\begin{rmk}
For $n=1$, the right hand sides of formulas~\eqref{exp-log-form},~\eqref{form-nu}, and~\eqref{form-sign} from Theorem~\ref{prop-CCrational} appeared in~\cite[\S\,2.9]{Del}. (More precisely, formula from~\cite[\S\,2.9]{Del} gives an inverse map to the one given by formulas~\eqref{exp-log-form},~\eqref{form-nu}, and~\eqref{form-sign}.) Also, another, but equivalent, formula appeared in~\cite[\S\,6]{CC2} and (with a misprint) in~\cite{CC1}.
\end{rmk}

\medskip

Here are examples of the calculation of the Contou-Carr\`ere symbol, which are based on the explicit formula from Theorem~\ref{prop-CCrational} and on some descend arguments from $\Q$ to~$\Z$.

\begin{prop}
Let $A$ be a ring and $\varepsilon$ be a formal variable that satisfies $\varepsilon^{2}=0$. Define a ring $B:=A[\varepsilon]$. Then for any collection of invertible iterated Laurent series $f_1,\ldots,f_{n}\in \lo^n\gm(A)=\LL^n(A)^*$ and an iterated Laurent series $g\in \LL^n(A)$, we have an equality in $B^*$
\begin{equation}  \label{residue0}
CC_n \{1+g\,\varepsilon,f_1,\ldots,f_n\}=1+\mathop{\rm res}\left(g\,\frac{df_1}{f_1}\wedge\ldots\wedge\frac{df_n}{f_n}\right)\varepsilon\,,
\end{equation}
where the symbol $CC_n$ is applied to elements of the group $K^M_{n+1}\big(\LL^n(B)\big)$.
\end{prop}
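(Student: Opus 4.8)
The plan is to establish~\eqref{residue0} first for $\Q$-algebras, where it will drop out of the explicit formula in Theorem~\ref{prop-CCrational}(i), and then to descend to an arbitrary ring $A$ using Theorem~\ref{cor:uniq}.

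First I would note that both sides of~\eqref{residue0} lie in $1+\varepsilon A\subset B^{*}$. Indeed, under the homomorphism $B\to A$ with $\varepsilon\mapsto 0$ the left-hand side goes to $CC_n\{1,f_1,\ldots,f_n\}$, which is $1$ because $\{1,f_1,\ldots,f_n\}$ vanishes in $K^M_{n+1}$, and the right-hand side visibly goes to $1$. Hence we may write $CC_n\{1+g\,\varepsilon,f_1,\ldots,f_n\}=1+\Lambda(g,f_1,\ldots,f_n)\,\varepsilon$ with $\Lambda(g,f_1,\ldots,f_n)\in A$; by functoriality of $CC_n$ (Definition~\ref{defin:CC}) and of the construction $A\mapsto A[\varepsilon]$, the assignment $\Lambda$ is a morphism of functors $(\lo^n\gm)^{\times n}\times\lo^n\ga\to\ga$, and so is $(f_1,\ldots,f_n,g)\mapsto\res\!\big(g\,\frac{df_1}{f_1}\wedge\ldots\wedge\frac{df_n}{f_n}\big)$. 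Now $(\lo^n\gm)^{\times n}$ is represented by an ind-affine scheme of the type appearing in Theorem~\ref{cor:uniq} (Proposition~\ref{lemma:sharprepr}(iii) and Lemma~\ref{lemma:decomposethickcone}) and $\lo^n\ga\simeq\Ab^{D^{*n}}$ is an ind-affine space, hence ind-flat over $\z$ (Example~\ref{examp:loopga}). Thus Theorem~\ref{cor:uniq}, applied with $R=\z$, $S=\Q$, $N=n$, $Y=\lo^n\ga$ and $Z=\ga$, shows that $\Lambda$ and the residue pairing coincide as soon as they coincide over all $\Q$-algebras.

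So suppose $A$ is a $\Q$-algebra, so that $B=A[\varepsilon]$ is one too. Since $(g\,\varepsilon)^{2}=0$ in $\LL^n(B)$, the element $g\,\varepsilon$ is nilpotent; hence $1+g\,\varepsilon\in(\lo^n\gm)^{\sharp}(B)$ by Lemma~\ref{expl-form}(iii) (or Proposition~\ref{prop:analsharp}), its constant term and its part of negative valuation being nilpotent. Applying Theorem~\ref{prop-CCrational}(i) with $f_1$ there replaced by $1+g\,\varepsilon$ and $(f_2,\ldots,f_{n+1})$ by $(f_1,\ldots,f_n)$ gives
\[
CC_n\{1+g\,\varepsilon,f_1,\ldots,f_n\}=\exp\,\res\!\left(\log(1+g\,\varepsilon)\,\frac{df_1}{f_1}\wedge\ldots\wedge\frac{df_n}{f_n}\right).
\]
Here $\log(1+g\,\varepsilon)=g\,\varepsilon$ in $\LL^n(B)$ since all higher terms of the series $\log(1+x)$ vanish on $g\,\varepsilon$; moreover $\res\!\big(g\,\varepsilon\,\frac{df_1}{f_1}\wedge\ldots\wedge\frac{df_n}{f_n}\big)=\res\!\big(g\,\frac{df_1}{f_1}\wedge\ldots\wedge\frac{df_n}{f_n}\big)\,\varepsilon$ by $\z$-linearity of the residue map (the element $\varepsilon$ being a constant), and $\exp(c\,\varepsilon)=1+c\,\varepsilon$ for every $c\in A$ because $\varepsilon^{2}=0$. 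Combining these three observations yields~\eqref{residue0} over $\Q$-algebras, and with the previous paragraph this completes the proof.

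I expect the only points requiring care to be the bookkeeping in the descent step — checking that $(f_1,\ldots,f_n,g)\mapsto CC_n\{1+g\,\varepsilon,f_1,\ldots,f_n\}$ really is a morphism of representable functors with values in $T\gm\simeq\ga$, so that Theorem~\ref{cor:uniq} applies — together with the verification that $1+g\,\varepsilon\in(\lo^n\gm)^{\sharp}(B)$. Neither is a genuine obstacle; the substantive input is already contained in Theorems~\ref{prop-CCrational} and~\ref{cor:uniq}.
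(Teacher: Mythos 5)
Your proof is correct and follows essentially the same route as the paper: reduce to $\Q$-algebras via Theorem~\ref{cor:uniq} (with $Y=\lo^n\ga$, which is ind-flat being an ind-affine space), then compute over $\Q$-algebras using the explicit formula of Theorem~\ref{prop-CCrational}$(i)$ together with $\log(1+g\,\varepsilon)=g\,\varepsilon$ and $\exp(c\,\varepsilon)=1+c\,\varepsilon$. The only cosmetic difference is that the paper compares both sides as morphisms to $(\ga)^{\times 2}$ via both $\varepsilon$-coefficients, whereas you first observe the constant coefficient is $1$ on both sides and compare only the $\varepsilon$-coefficient; this changes nothing of substance.
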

\begin{proof}
Both sides of formula~\eqref{residue0}
define morphisms of ind-schemes from ${(L^n\gm)^{\times n}\times L^n \ga}$ to $(\ga)^{\times 2}$
given by coefficients at powers of $\varepsilon$ in the ring $B=A[\varepsilon]$. One easily checks that by Theorem~\ref{prop-CCrational}$(i)$, the values of both sides of formula~\eqref{residue0} coincide when $A$ is a $\Q$-algebra. By Example~\ref{examp:loopga}, the functor $L^n\ga$ is represented by an ind-affine space, which is an ind-flat ind-affine scheme over $\z$. Therefore by Theorem~\ref{cor:uniq} applied to $R=\Z$, $S=\Q$, and $Y=L^n\ga$, we have that the above two morphisms coincide.
\end{proof}

\begin{prop}
Let $A$ be a ring and $\eta$ be a formal variable that satisfies ${\eta^{n+2}=0}$. Define a ring $B:=A[\eta]$. Then for any collection ${g_1,\ldots,g_{n+1}\in \LL^n(A)}$ of iterated Laurent series, we have an equality in $B^*$
\begin{equation}  \label{residue}
CC_n \{1 + g_1 \eta, \ldots, 1 + g_{n+1}  \eta\} = 1 + \mathop{\rm res} (g_1 dg_2 \wedge \ldots \wedge dg_{n+1})  \eta^{n+1}\,,
\end{equation}
where the symbol $CC_n$ is applied to elements of the group $K^M_{n+1}\big(\LL^n(B)\big)$.
\end{prop}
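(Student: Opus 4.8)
The plan is to mimic the proof of~\eqref{residue0}: reduce the identity to $\Q$-algebras by an ind-flatness argument, and then evaluate the left hand side over $\Q$-algebras by means of the explicit formula of Theorem~\ref{prop-CCrational}$(i)$.

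For the reduction, I would first note that for every ring $A$ the series $1+g_i\eta$ is invertible in $\LL^n(A[\eta])$, with inverse the finite sum $\sum_{k=0}^{n+1}(-g_i\eta)^k$, so the left hand side of~\eqref{residue} is a well-defined element of $A[\eta]^*$ which depends functorially on $A$; the same is clear for the right hand side. Writing each side as $\sum_{j=0}^{n+1}c_j\eta^j$ (the coefficient $c_0$ equals $1$ on both sides, since the symbol $\{1,\dots,1\}$ is trivial and $CC_n$ is functorial), we obtain two morphisms of ind-schemes $(\lo^n\ga)^{\times(n+1)}\to(\ga)^{\times(n+1)}$, the data being the coefficients at $\eta^1,\dots,\eta^{n+1}$. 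By Example~\ref{examp:loopga} the source is an ind-affine space over $\Z$, hence a thick ind-cone; so by Proposition~\ref{lemma:analytic}$(i)$ (equivalently, Theorem~\ref{cor:uniq}) two such morphisms that coincide after extension of scalars to $\Q$ coincide already over $\Z$, i.e. as morphisms of functors. Thus it suffices to prove~\eqref{residue} when $A$ is a $\Q$-algebra.

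So assume $A$ is a $\Q$-algebra and set $B:=A[\eta]$, again a $\Q$-algebra. Since $\eta^{n+2}=0$, each $g_i\eta$ is nilpotent in $\LL^n(B)$, so $1+g_i\eta\in(\lo^n\gm)^{\sharp}(B)$ by Lemma~\ref{expl-form}$(iii)$. Hence Theorem~\ref{prop-CCrational}$(i)$ applies with first argument $f_1=1+g_1\eta$ and gives
$$
CC_n\{1+g_1\eta,\dots,1+g_{n+1}\eta\}=\exp\,\res\!\left(\log(1+g_1\eta)\,\frac{d(1+g_2\eta)}{1+g_2\eta}\wedge\dots\wedge\frac{d(1+g_{n+1}\eta)}{1+g_{n+1}\eta}\right).
$$
Now $\log(1+g_1\eta)=\sum_{k\geqslant 1}\frac{(-1)^{k+1}}{k}g_1^k\eta^k\equiv g_1\eta\pmod{\eta^2}$, and by Lemma~\ref{dif-form}$(i)$ one has $\frac{d(1+g_j\eta)}{1+g_j\eta}=d\log(1+g_j\eta)=\sum_{k\geqslant 1}(-1)^{k+1}g_j^{k-1}\,dg_j\,\eta^k\equiv\eta\,dg_j\pmod{\eta^2}$ in $\widetilde{\Omega}^1_{\LL^n(B)}$. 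In the $(n+1)$-fold wedge product every factor carries a power of $\eta$ at least $1$; the contributions of total $\eta$-degree $\geqslant n+2$ vanish, and the unique contribution of degree exactly $n+1$ is obtained by taking the leading term of each factor, i.e. it equals $g_1\,dg_2\wedge\dots\wedge dg_{n+1}\,\eta^{n+1}$. As $\res\colon\widetilde{\Omega}^n_{\LL^n(B)}\to B$ is $B$-linear, the argument of $\exp$ equals $\res(g_1\,dg_2\wedge\dots\wedge dg_{n+1})\,\eta^{n+1}$, an element of $\eta^{n+1}B$ whose square is zero (as $\eta^{2n+2}=0$); therefore its exponential is $1+\res(g_1\,dg_2\wedge\dots\wedge dg_{n+1})\,\eta^{n+1}$, which is precisely the right hand side of~\eqref{residue}.

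I do not expect any real obstacle: the substance is contained in Theorem~\ref{prop-CCrational}$(i)$ and in the ind-flatness of $\lo^n\ga$. The only two points needing a little care are verifying that $1+g_i\eta$ lies in $(\lo^n\gm)^{\sharp}$ over the base $A[\eta]$ — so that the explicit formula is legitimately available — and the bookkeeping of $\eta$-powers in the wedge product; both are immediate because $\eta$ is nilpotent of order $n+2$.
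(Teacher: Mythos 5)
Your proposal is correct and follows essentially the same route as the paper: both sides are viewed as morphisms of ind-schemes from $(\lo^n\ga)^{\times(n+1)}$ to a product of copies of $\ga$ via the coefficients at powers of $\eta$, one reduces to $\Q$-algebras using injectivity of $\OO$ of the representing ind-affine space under base change $\Z\subset\Q$ (the paper phrases this via ind-flatness from Proposition~\ref{lemma-repraffine}$(i)$, while your thick-ind-cone justification via Proposition~\ref{lemma:analytic}$(i)$ is an equivalent way to get the same injectivity), and then one evaluates over $\Q$-algebras by Theorem~\ref{prop-CCrational}$(i)$. Your explicit $\eta$-bookkeeping is just the detailed version of the paper's ``one easily checks'' step; only the parenthetical appeal to Theorem~\ref{cor:uniq} is slightly off target, since that theorem concerns $(\lo^n\gm)^{\times N}\times Y$ rather than a pure $(\lo^n\ga)^{\times(n+1)}$ source, but your primary citation already suffices.
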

\begin{proof}
Both sides of formula~\eqref{residue} define morphisms of ind-schemes ${(L^n \ga)^{n+1}\to(\g_a)^{\times (n+2)}}$
given by coefficients at powers of $\eta$ in the ring $B=A[\eta]$. One easily checks that by Theorem~\ref{prop-CCrational}$(i)$, the values of both sides of formula~\eqref{residue} coincide when $A$ is a $\Q$-algebra. By Proposition~\ref{lemma-repraffine}$(i)$, we have that the functor $(L^n\ga)^{n+1}$ is represented by an ind-affine space, which is an ind-flat ind-scheme over $\z$. Thus the natural homomorphism $\OO(L^n\Ab^{n+1})\to \OO\big((L^n\Ab^{n+1})_{\Q}\big)$ is injective. This proves the proposition.
\end{proof}

\medskip

\begin{rmk}\label{rmk:alternative}
Let $A$ be a $\Q$-algebra. Decomposition~\eqref{eq:decommult} in Subsection~\ref{sp-sub} implies that formulas~\eqref{exp-log-form},~\eqref{form-nu}, and~\eqref{form-sign} define uniquely a multilinear antisymmetric map
$\lo^n\gm(A)^{\times (n+1)}\to A^*$. Thus one could define a Contou-Carr\`ere symbol over \mbox{$\Q$-algebras} explicitly just by the right hand sides of formulas~\eqref{exp-log-form},~\eqref{form-nu}, and~\eqref{form-sign} and not using the boundary map for $K$-groups and geometric properties of the iterated loop group $\lo^n\gm$ and its special subgroups. When following this approach, one needs to check directly that the Contou-Carr\`ere symbol is well-defined, that is, that formulas ~\eqref{exp-log-form},~\eqref{form-nu}, and~\eqref{form-sign} define a map correctly and that this map satisfies the Steinberg relations. We do this in Propositions~\ref{prop:explCC} and~\ref{CC-Q-Milnor}, respectively, using only decompositions of the group functor~$\lo^n\gm$ given in Section~\ref{multiplicative}. In order to avoid a confusion, we denote the explicitly defined Contou-Carr\`ere symbol by~$CC^{ex}_n$.
\end{rmk}

\begin{prop}\label{prop:explCC}
For any $\Q$-algebra $A$, there is a unique multilinear antisymmetric map
$$
CC^{ex}_{n}  \; \colon  \; \lo^n\gm(A)^{\times (n+1)}\lrto A^*
$$
such that the formulas~\eqref{exp-log-form},~\eqref{form-nu}, and~\eqref{form-sign} are satisfied, where $CC_n$ is replaced by~$CC_n^{ex}$. The map $CC^{ex}_n$ is functorial with respect to $A$.
\end{prop}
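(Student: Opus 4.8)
The plan is to deduce the statement from the decomposition $\uz^n\times\gm\times\vv_{n,+}\times\vv_{n,-}\simeq L^n\gm$ of Proposition~\ref{prop-decomp}. Write $G_1:=t^{\uz^n}$, $G_2:=\gm$, $G_3:=\vv_{n,+}$, $G_4:=\vv_{n,-}$, viewed as group subfunctors of $L^n\gm$, so that every $f\in L^n\gm(A)$ is uniquely a product $f=t^{\underline l}\cdot c\cdot v_+\cdot v_-$ with factors in $G_1(A),\dots,G_4(A)$. Since multilinear maps distribute over direct sums, a multilinear map $L^n\gm(A)^{\times(n+1)}\to A^*$ is the same datum as a family, indexed by $(j_1,\dots,j_{n+1})\in\{1,2,3,4\}^{n+1}$, of multilinear maps $G_{j_1}(A)\times\cdots\times G_{j_{n+1}}(A)\to A^*$, and this correspondence is functorial in $A$. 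Hence it suffices to control $CC^{ex}_n$ on the ``pure'' tuples, those with $f_i\in G_{j_i}$.

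For uniqueness, I would check that the three formulas pin down the value on every pure tuple. Put $\underline l_k:=\nu(f_k)$, which vanishes whenever $j_k\ne1$. If some $j_i=2$, bring $f_i$ to the first position by antisymmetry and apply~\eqref{form-nu}: the value is $1$ unless all remaining entries lie in $G_1$, in which case it equals $f_i^{\pm\det(\underline l_k)_{k\ne i}}$. If no $j_i=2$ but some $j_i\in\{3,4\}$, then $f_i\in(L^n\gm)^{\sharp}$ (see Definition~\ref{defin:specialmult}); bring it to the first position and apply~\eqref{exp-log-form}. If all $j_i=1$, apply~\eqref{form-sign}. The sign ambiguities from the reordering are absorbed by antisymmetry of $\det$ and of the residue expression, so in each case the value is forced, and $CC^{ex}_n$ is unique.

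For existence, I would turn this around and \emph{prescribe} on pure tuples exactly the values just computed — using that $\sgn$ is a well-defined multilinear antisymmetric map (Proposition~\ref{lem-sign}), that $(f_1,\dots,f_{n+1})\mapsto f_1^{\det(\nu f_2,\dots,\nu f_{n+1})}$ is multilinear, and that $\exp\res\!\big(\log(f_1)\,\tfrac{df_2}{f_2}\wedge\cdots\wedge\tfrac{df_{n+1}}{f_{n+1}}\big)$ makes sense on $\widetilde{\Omega}^{n}_{\LL^n(A)}$ (see~\eqref{res}) when $f_1\in(L^n\gm)^{\sharp}$, the logarithm being taken via Proposition~\ref{log-map} — and then extend by the distributivity isomorphism above. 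One then has to verify: (i) the prescription is unambiguous, the only delicate case being a pure tuple containing both a $G_2$-entry and a $G_3\cup G_4$-entry, where~\eqref{form-nu} gives $1$ (a $\det$ with a zero column) and~\eqref{exp-log-form} also gives $1$ (since $\tfrac{da}{a}=0$ in $\widetilde{\Omega}^1_{\LL^n(A)}$ for $a\in A^*$, and by Proposition~\ref{prop-integer-res} for a $1+\Nil$ entry); (ii) the extended map is antisymmetric, which reduces to adjacent transpositions of pure tuples and follows from antisymmetry of $\sgn$ and $\det$ together with the standard integration-by-parts identity for $\res$ of $\sum_j(-1)^j\log(u_j)\,\tfrac{df_1}{f_1}\wedge\cdots\wedge\tfrac{df_{n+1}}{f_{n+1}}$ with the $j$-th factor omitted (Leibniz rule, closedness of $\tfrac{df}{f}$, $\res\circ d=0$, Lemma~\ref{dif-form}(i), Lemma~\ref{lemma:cohom}); (iii) formulas~\eqref{exp-log-form}, \eqref{form-nu}, \eqref{form-sign} hold for arbitrary arguments, obtained by expanding each $f_i$ into its four components and telescoping, with Lemma~\ref{lemma:convdom} and Lemma~\ref{dif-form} handling $\log$ of products and Proposition~\ref{prop-integer-res} recognising the constant-term contribution in~\eqref{exp-log-form}; (iv) functoriality in $A$, which is immediate since $\nu$, $\det$, $\sgn$, $\res$, $\log$, $\exp$ and the decomposition of $L^n\gm$ are all morphisms of functors.

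The hard part will be (ii) and (iii) for the residue factor: showing that $\res\!\big(\log(f_1)\,\tfrac{df_2}{f_2}\wedge\cdots\wedge\tfrac{df_{n+1}}{f_{n+1}}\big)$, once extended by multilinearity to all invertible iterated Laurent series, is genuinely antisymmetric with nilpotent argument, and that it dovetails with the $\sgn$- and $\det$-parts so that the three prescribed formulas come out exactly. This is precisely where the cohomological description of $\widetilde{\Omega}^{n}_{\LL^n(A)}/d\widetilde{\Omega}^{n-1}_{\LL^n(A)}$ (Lemma~\ref{lemma:cohom}), the Leibniz rule, and $\res\circ d=0$ are needed, together with careful sign bookkeeping. (Logically, since this proposition comes after Theorem~\ref{prop-CCrational}, existence and functoriality are also witnessed at once by $CC^{ex}_n:=CC_n$; the content of the statement, following Remark~\ref{rmk:alternative}, is the $K$-theory-free construction just described.)
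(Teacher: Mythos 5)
Your plan follows the same architecture as the paper's proof: uniqueness from decomposition~\eqref{eq:decomCC} (equivalently~\eqref{eq:decommult}) together with multilinearity and antisymmetry, consistency of~\eqref{exp-log-form} with~\eqref{form-nu} on $1+\Nil$ via Proposition~\ref{prop-integer-res}, antisymmetry in the first two slots via Lemmas~\ref{dif-form} and~\ref{lemma:cohom} and $\res\circ d=0$, antisymmetry of the monomial part via Proposition~\ref{lem-sign}, and functoriality from functoriality of the decomposition. The uniqueness half is fine (if no map satisfied the formulas, uniqueness would be vacuous, so you need not know there that the residue is nilpotent). The only bookkeeping difference is that you define the map on ``pure'' tuples and then re-verify the formulas by telescoping, whereas the paper defines it directly on $(\lo^n\gm)^{\sharp}\times(\lo^n\gm)^{\times n}$ and then on $(\lo^n\gm)^{0}\times(\lo^n\gm)^{\times n}$, so that the general-argument versions of~\eqref{exp-log-form} and~\eqref{form-nu} hold by construction.

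The genuine gap is in existence, and it is exactly the step you set aside as ``the hard part'': before you can even prescribe the value $\exp\res\big(\log(f_1)\,\frac{df_2}{f_2}\wedge\ldots\wedge\frac{df_{n+1}}{f_{n+1}}\big)$ on a pure tuple whose first entry lies in $G_3=\vv_{n,+}$ — and before~\eqref{exp-log-form} can be ``satisfied'' for arbitrary $f_1\in(\lo^n\gm)^{\sharp}(A)$ — you must know that this residue is a nilpotent element of $A$; for $f_1\in\vv_{n,+}$ the series $\log(f_1)$ has non-nilpotent coefficients, so this is not automatic, and $\exp$ of a non-nilpotent element is meaningless. This is precisely Step 1 of the paper's proof: by multilinearity one reduces to entries in the subgroups of~\eqref{eq:decomCC}; if some entry lies in $(1+\Nil)\times\vv_{n,-}$, its logarithm is nilpotent and the residue is nilpotent, while if $f_1\in\vv_{n,+}$ and all remaining entries are constants, elements of $\vv_{n,+}$, or monomials, the residue actually vanishes — proved by induction on $n$, using that such forms lie in $A((t_1))\ldots((t_{n-1}))[[t_n]]\,dt_1\wedge\ldots\wedge dt_n$ and reducing modulo $t_n$ when some entry is a power of $t_n$. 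The tools you earmark for this (Lemma~\ref{lemma:cohom}, the Leibniz rule, $\res\circ d=0$) are the right ones for the $f_1\leftrightarrow f_2$ antisymmetry and for the Steinberg relation, but they do not yield this nilpotency/vanishing; what is needed is the positivity-of-support/induction argument just described (or an equivalent lexicographic-support computation). Your closing parenthesis — taking $CC^{ex}_n:=CC_n$ from Theorem~\ref{prop-CCrational} — would make the statement true, but it reinstates the $K$-theoretic construction that Remark~\ref{rmk:alternative} and this proposition are meant to avoid, and in any case it is not what your main argument supplies.
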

\begin{proof}
As explained in Remark~\ref{rmk:alternative}, uniqueness follows from decomposition~\eqref{eq:decommult} in Subsection~\ref{sp-sub}. Thus we need to show that the map $CC^{ex}_n$ is defined correctly, which we do in the following steps.

\medskip

{\it Step 1.}
First we check that the right hand side of formula~\eqref{exp-log-form} is well-defined. By Proposition~\ref{log-map}, the map $\log$ in formula~\eqref{exp-log-form} is well-defined. Thus we need to prove that the value of the expression $\res(\cdots)$ in formula~\eqref{exp-log-form} belongs to $\Nil(A)$, whence the expression $\exp \, \res (\cdots)$ makes sense.

The expression $\res( \cdots)$ defines a multilinear map
$$
\Psi_n\;:\;(\lo^n\gm)^{\sharp}(A)  \times \lo^n\gm(A)^{\times n}   \lrto A  \,,
$$
$$
(f_1,f_2,\ldots,f_{n+1})\longmapsto \res\left(\log(f_1)\,\frac{df_2}{f_2}\wedge\ldots\wedge
\frac{df_{n+1}}{f_{n+1}}\right)\,.
$$
Using Definition~\ref{defin:specialmult} of the group $(\lo^n\gm)^{\sharp}(A)$ as a decomposition and decomposition~\eqref{eq:decomCC} of the group $\lo^n\gm(A)$ from Proposition~\ref{prop-decomp}, we see that it is enough to assume that all~$f_i$, $1 \leqslant  i \leqslant n+1$, belong to the subgroups in the above decompositions.

Suppose first that for some $i$, $1 \leqslant  i \leqslant n+1$, we have that $f_i\in (1+ \Nil)(A) \times \vv_{n,-}(A)$. Then by Remark~\ref{rmk:nilpseries}, the iterated Laurent series $\log(f_i)\in \lo^n\ga(A)$ has nilpotent coefficients and therefore by Lemma~\ref{dif-form}$(i)$, we have that $\Psi_n(f_1,\ldots,f_{n+1})\in\Nil(A)$.

Now suppose that $f_1$ belongs to the subgroup $\vv_{n,+}(A)$ and each $f_i$, $2 \leqslant i \leqslant n+1$, belongs either to the subgroup $G(A):=\gm(A)\times\vv_{n,+}(A)\times\uz^{n-1}(A)$, where $\uz^{n-1}(A)$ consists of elements $t_1^{\underline{l}_1}\cdot\ldots\cdot t_{n-1}^{\underline{l}_{n-1}}$, or to the subgroup $\uz(A)=(t_n)^{\uz(A)}$ that consists of elements~$t_n^{\underline{l}_n}$, where $\underline{l}_i\in \uz(A)$, $1\leqslant i\leqslant n$. We show by induction on $n$ that $\Psi_n(f_1,\ldots,f_{n+1})=0$. The evident base of the induction is $n=0$, where we put, by definition, $\vv_{0,+}(A)=\{1\}$.

If all $f_i$, $2 \leqslant  i \leqslant n+1$, belong to $G(A)$, then
$$
\log(f_1)\,\frac{df_2}{f_2}\wedge\ldots\wedge
\frac{df_{n+1}}{f_{n+1}}\in A((t_1))\ldots((t_{n-1}))[[t_n]]dt_1 \wedge \ldots \wedge dt_n\,.
$$
Hence, $\Psi_n(f_1,\ldots,f_{n+1})=0$.

Suppose that some $f_i$, $2\leqslant i \leqslant n+1$, belongs to $\uz(A)=(t_n)^{\uz(A)}$. Using multilinearity and the alternating property of the wedge product of differential forms, we can suppose, in addition, that $f_{n+1}=t_n$ and that all $f_i$, $2 \leqslant i \leqslant n$, belong to $G(A)$, because otherwise we have again $\Psi_n(f_1,\ldots,f_{n+1})=0$.

Under above assumptions, there is an equality
$$
\Psi_n(f_1,\ldots,f_{n},t_n)=\Psi_{n-1}(\bar{f}_1,\ldots,\bar{f}_{n})\,,
$$
where the map $\Psi_{n-1}$ is defined for collections of invertible elements of the ring $A((t_1))\ldots((t_{n-1}))$ and by $f\mapsto \bar f$ we denote the natural homomorphism from $A((t_1))  \ldots ((t_{n-1}))[[t_n]]$ to $A((t_1))  \ldots ((t_{n-1}))$. One checks directly that the homomorphism $f\mapsto \bar f$ sends $\vv_{n,+}(A)$ to $\vv_{n-1,+}(A)$, where $n\geqslant 1$ and, as we put above, $\vv_{0,+}(A)=\{1\}$. Hence by the inductive hypothesis, we have that ${\Psi_{n-1}(\bar{f}_1,\ldots,\bar{f}_{n})}=0$.

Thus we obtain a well-defined multilinear map
$$
CC^{ex}_n\;:\;(\lo^n\gm)^{\sharp}(A)\times \lo^n\gm(A)^{\times n}\lrto A^*\,,
$$
$$
(f_1,f_2,\ldots,f_{n+1})\longmapsto\exp\,\res\left(\log(f_1)\,\frac{df_2}{f_2}\wedge\ldots\wedge\frac{df_{n+1}}{f_{n+1}}\right)\,.
$$

\medskip

{\it Step 2.}
By Lemma~\ref{expl-form}$(iii)$, we see that the right hand sides of formulas~\eqref{exp-log-form} and~\eqref{form-nu} are overlapped when $f_1 \in (1 + \Nil)(A)$. We check that in this case the values of these two formulas coincide. Indeed, by Proposition~\ref{prop-integer-res} we have
 \begin{multline} \nonumber
 \exp\,\res\left(\log(f_1)\,\frac{df_2}{f_2}\wedge\ldots\wedge
\frac{df_{n+1}}{f_{n+1}}\right) = \exp\, \left(\log(f_1) \res\left(\frac{df_2}{f_2}\wedge\ldots\wedge
\frac{df_{n+1}}{f_{n+1}}\right)\right) =  \\ =
\exp\, \left(\log(f_1) \cdot \det\big( \nu(f_2), \ldots, \nu(f_{n+1})\big)          \right)
= f_1^{\,\det(\nu(f_2),\, \ldots,\,\nu(f_{n+1}))} \, .
 \end{multline}
Notice that, actually, here we use Proposition~\ref{prop-integer-res} only for $\Q$-algebras and that in this case Proposition~\ref{prop-integer-res} is proved explicitly without the theory of thick ind-cones (see the proof of Proposition~\ref{prop-integer-res}).

By decomposition~\eqref{eq:decommult} in Subsection~\ref{sp-sub}, we obtain that formulas~\eqref{exp-log-form} and~\eqref{form-nu} define correctly a multilinear map
\begin{equation}\label{eq:ccexpart}
CC^{ex}_n\;:\;(\lo^n\gm)^{0}(A)\times \lo^n\gm(A)^{\times n}\lrto A^*\,.
\end{equation}

\medskip

{\it Step 3.}
Now we check the antisymmetric property of the map in formula~\eqref{eq:ccexpart}.

Formulas~\eqref{exp-log-form} and~\eqref{form-nu} are antisymmetric with respect to $f_2,\ldots,f_{n+1}$, because the wedge product of differential forms and the determinant are alternating.

Further, formula~\eqref{exp-log-form} is antisymmetric with respect to $f_1$ and~$f_2$. Namely, if ${f_1,f_2 \in (\lo^n\gm)^{\sharp}(A)}$, then by Lemmas~\ref{dif-form}$(i)$ and~\ref{lemma:cohom}, there are equalities
\begin{multline}  \nonumber
0 = \res \left(  d\left( \log(f_1)  \log(f_2) \frac{df_3}{f_3}  \wedge \ldots \wedge \frac{df_{n+1}}{f_{n+1}}  \right)   \right)=   \\
= \res  \left(  \log(f_1)  \frac{df_2}{f_2}  \wedge \frac{df_3}{f_3}  \wedge \ldots \wedge   \frac{df_{n+1}}{f_{n+1}}        \right)
+ \res  \left(  \log(f_2)  \frac{df_1}{f_1}  \wedge \frac{df_3}{f_3}  \wedge \ldots \wedge   \frac{df_{n+1}}{f_{n+1}}        \right)   \, .
\end{multline}

Finally, formula~\eqref{form-nu} is antisymmetric with respect to $f_1$ and~$f_2$ as well. Indeed, if $f_1,f_2\in A^*$, then there is an equality
$$
f_1^{\,\det(\nu(f_2),\,\nu(f_3),\,\ldots,\,\nu(f_{n+1}))}=f_2^{\,\det(\nu(f_1),\,\nu(f_3),\,\ldots,\,\nu(f_{n+1}))}=1\,,
$$
because ${\nu(f_2)=\nu(f_1)=0}$.

\medskip

{\it Step 4.}
Formula~\eqref{form-sign} is antisymmetric by Proposition~\ref{lem-sign}. Combining this with Step 3 and decomposition~\eqref{eq:decommult} in Subsection~\ref{sp-sub}, we obtain a well-defined multilinear antisymmetric
map $CC_{n}^{ex}\colon\lo^n\gm(A)^{\times (n+1)}\to A^*$. Clearly, this map is functorial with respect to a $\Q$-algebra $A$, because decomposition~\eqref{eq:decomCC} is functorial.
\end{proof}

\begin{rmk}
For $n=2$, Proposition~\ref{prop:explCC} was proved in~\cite[\S\,3.3]{OZ1}.
\end{rmk}

\medskip

Now we show that the map $CC^{ex}_n$ in Proposition~\ref{prop:explCC} satisfies the Steinberg relation. First we prove an auxiliary partial result.

\begin{lemma} \label{St-Lemma}
Let $A$ be a $\Q$-algebra. For any collection of $n$ elements $f_1, f_3, \ldots, f_{n+1}$ of the group~$\lo^n\gm(A)$, there is an equality
\begin{equation}  \label{for-lem}
CC^{ex}_n(f_1, -f_1, f_3, \ldots, f_{n+1})=1\,.
\end{equation}
\end{lemma}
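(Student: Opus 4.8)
The statement to prove is the Steinberg-type relation $CC^{ex}_n(f_1,-f_1,f_3,\ldots,f_{n+1})=1$ for the explicitly defined symbol over a $\Q$-algebra $A$. The plan is to exploit multilinearity and the decomposition~\eqref{eq:decommult} of $\lo^n\gm(A)$ to reduce to the cases where each $f_i$ lies in one of the subgroups $\uz^n$, $\gm$, $\vv_{n,+}$, $\vv_{n,-}$, and then to verify~\eqref{for-lem} on each piece using the three defining formulas~\eqref{exp-log-form},~\eqref{form-nu},~\eqref{form-sign}. Because $-f_1$ appears in the second slot, the reduction is not entirely symmetric in the two arguments; I would first split $f_1$ according to the decomposition, which simultaneously determines the decomposition component of $-f_1$ (since $-1\in\gm(A)\subset(\lo^n\gm)^0(A)$, negation does not move $f_1$ between the ``large'' pieces $(\lo^n\gm)^0$ and $\uz^n$).

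\textbf{Key steps.} First I would reduce, via multilinearity in $f_3,\ldots,f_{n+1}$ and the decomposition~\eqref{eq:decomCC}, to the case that $f_3,\ldots,f_{n+1}$ each lie in one of $\gm(A)$, $\vv_{n,+}(A)$, $\vv_{n,-}(A)$, $\uz^n(A)$. Second, split into the two main cases for $f_1$. Case (a): $f_1\in(\lo^n\gm)^{\sharp}(A)$, so also $-f_1=(-1)\cdot f_1$ with $-1\in(1+\Nil)(A)\subset(\lo^n\gm)^0(A)$; actually more is true — writing $f_1=1+u$ with $u\in\LL^n(A)^{\sharp}$ and $\log(-f_1)=\log(-1)+\log(f_1)$ is not literally available since $-f_1\notin(\lo^n\gm)^{\sharp}$, but by multilinearity $CC^{ex}_n(f_1,-f_1,\ldots)=CC^{ex}_n(f_1,-1,\ldots)\cdot CC^{ex}_n(f_1,f_1,\ldots)$ and I would show each factor is $1$: the first because $-1\in\gm(A)$ and formula~\eqref{form-nu} (applied in the appropriately permuted slot, using antisymmetry from Proposition~\ref{prop:explCC} Step~3) gives $(-1)^{\det(\nu(f_1),\nu(f_3),\ldots)}=(-1)^0=1$ as $\nu(f_1)=0$; the second because $CC^{ex}_n(f_1,f_1,f_3,\ldots,f_{n+1})=\exp\,\res(\log(f_1)\,\frac{df_1}{f_1}\wedge\frac{df_3}{f_3}\wedge\cdots)$ and $\log(f_1)\,\frac{df_1}{f_1}=d(-\Li_2(\text{-}u))$ or, more directly, $\log(f_1)\,d\log(f_1)=\tfrac12\,d(\log(f_1)^2)$ is exact, so its residue (wedged with closed forms $\frac{df_i}{f_i}$) vanishes by Lemma~\ref{lemma:cohom}. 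Case (b): $f_1\in\gm(A)$, hence $-f_1\in\gm(A)$ too; then formula~\eqref{form-nu} gives $CC^{ex}_n(f_1,-f_1,\ldots)=f_1^{\det(\nu(-f_1),\nu(f_3),\ldots)}$, and since $-f_1\in\gm(A)$ has $\nu(-f_1)=0$, this column of the determinant is zero, so the value is $1$. Case (c): $f_1\in\uz^n(A)$, say $f_1=t^{\underline l}$; then $-f_1=(-1)\cdot t^{\underline l}$ with $-1\in\gm(A)$, so by multilinearity $CC^{ex}_n(t^{\underline l},-t^{\underline l},f_3,\ldots)=CC^{ex}_n(t^{\underline l},-1,f_3,\ldots)\cdot CC^{ex}_n(t^{\underline l},t^{\underline l},f_3,\ldots)$; the first factor is $(-1)^{\det(\nu(-1),\ldots)}$ with $\nu(-1)=0$ hence $1$ (using~\eqref{form-nu} after a permutation), and the second factor, after reducing $f_3,\ldots,f_{n+1}$ to their decomposition components and using multilinearity again, reduces to either a residue of an exact form (when some $f_i\in(\lo^n\gm)^{\sharp}$) or to $(-1)^{\sgn(\underline l,\underline l,\underline l_3,\ldots,\underline l_{n+1})}=(-1)^{\det(\underline l,\underline l_3,\ldots,\underline l_{n+1})}$ by Proposition~\ref{lem-sign}; but $\det(\underline l,\underline l,\ldots)$ — wait, the pair here is $(\underline l,\underline l)$ in slots $1,2$, so $\sgn(\underline l,\underline l,\underline l_3,\ldots)\equiv\det(\underline l,\underline l_3,\ldots,\underline l_{n+1})\pmod 2$, and I would still need to combine this with the first factor $(-1)^{\det(\underline l,\underline l_3,\ldots)}$ coming from the $-1$, so that the product is $(-1)^{2\det(\cdots)}=1$.

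\textbf{Main obstacle.} The delicate point is bookkeeping the sign contributions in case (c): the factor $CC^{ex}_n(t^{\underline l},-1,f_3,\ldots,f_{n+1})$ and the factor $CC^{ex}_n(t^{\underline l},t^{\underline l},f_3,\ldots,f_{n+1})$ must be shown to cancel, and this requires carefully tracking how the defining formulas~\eqref{form-nu} and~\eqref{form-sign} interact after one moves the scalar $-1$ out of the second argument and into a $\gm$-slot, including getting the permutation signs right when one invokes the antisymmetry proved in Step~3 of Proposition~\ref{prop:explCC}. A clean way to organize this is to first establish, purely from formulas~\eqref{exp-log-form}--\eqref{form-sign} and multilinearity, the identity $CC^{ex}_n(f,f,f_3,\ldots,f_{n+1})=CC^{ex}_n(-1,f,f_3,\ldots,f_{n+1})$ for all $f\in\lo^n\gm(A)$ (the analogue of the Milnor-$K$ relation $\{f,f\}=\{-1,f\}$), checked on each decomposition component of $f$; granting this, $CC^{ex}_n(f_1,-f_1,\ldots)=CC^{ex}_n(f_1,-1,\ldots)\cdot CC^{ex}_n(f_1,f_1,\ldots)=CC^{ex}_n(f_1,-1,\ldots)\cdot CC^{ex}_n(-1,f_1,\ldots)=1$ by antisymmetry, which closes the argument uniformly. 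So the real work is proving that one auxiliary identity component-by-component; everything else is a short deduction.
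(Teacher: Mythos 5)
Your proposal is correct and follows essentially the same route as the paper's proof: reduce by multilinearity, antisymmetry and decomposition~\eqref{eq:decommult}, dispose of the cases where an argument lies in $(\lo^n\gm)^{0}$ by exactness of logarithmic forms together with Lemma~\ref{lemma:cohom} and formula~\eqref{form-nu}, and settle the purely monomial case by splitting $-f_1=(-1)\cdot f_1$ and cancelling $(-1)^{\det}$ against $(-1)^{\sgn}$ via Proposition~\ref{lem-sign}, which is exactly the paper's final step. The differences are only organizational (the paper treats $f_1\in(\lo^n\gm)^{\sharp}$ directly with Lemma~\ref{dif-form}$(iii)$ applied to $\log(1-h)\,\frac{dh}{-1+h}$ rather than splitting off $-1$, and it records explicitly the antisymmetry-based multiplicativity in $f_1$ that your component-by-component reduction tacitly uses); also note that your initial claim in case (c) that $CC^{ex}_n(t^{\underline{l}},-1,f_3,\ldots,f_{n+1})=1$ is wrong, but, as you yourself then observe, this factor equals $(-1)^{\det(\underline{l},\underline{l}_3,\ldots,\underline{l}_{n+1})}$ and cancels against the second factor, so the final accounting is right.
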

\begin{proof}
Clearly, the left hand side of formula~\eqref{for-lem} is multilinear in $f_3, \ldots, f_{n+1}$. Antisymmetricity of the map $CC_n^{ex}$ implies that the left hand side is also linear in $f_1$. Indeed, for any collection $f_1,f_1',f_3,\ldots,f_{n+1}$ of elements in $\lo^n\gm(A)$, we have the equalities
\begin{multline*}
CC_n^{ex}(f_1f'_1,-f_1f'_1,f_3,\ldots,f_{n+1})=CC_n^{ex}(f_1,-f_1,f_3,\ldots,f_{n+1})\cdot \\
\cdot CC_n^{ex}(f_1,f'_1,f_3,\ldots,f_{n+1})\cdot CC_n^{ex}(f'_1,f_1,f_3,\ldots,f_{n+1})\cdot CC_n^{ex}(f'_1,-f'_1,f_3,\ldots,f_{n+1})\,.
\end{multline*}
Thus we can use decomposition~\eqref{eq:decommult} in Subsection~\ref{sp-sub}.

Suppose that $f_i\in(\lo^n\gm)^0(A)$ for some $i=1,3,\ldots,n+1$. If ${f_i\in\gm(A)}$, then the antisymmetric property and formula~\eqref{form-nu} imply that ${CC^{ex}_n(f_1,-f_1,f_3,\ldots,f_{n+1})=1}$. Now suppose that $f_i\in (\lo^n\gm)^{\sharp}(A)$. If $3\leqslant i\leqslant n+1$, then the antisymmetric property and formula~\eqref{exp-log-form} imply that $CC^{ex}_n(f_1,-f_1,f_3,\ldots,f_{n+1})=1$.
If $i=1$, that is, ${f_1 \in (\lo^n\gm)^{\sharp}(A)}$, then $f_1 = 1-h$, where $h \in \LL^n(A)^\sharp$. Then by formula~\eqref{exp-log-form}, we have
\begin{multline}  \nonumber
CC^{ex}_n(f_1,-f_1, f_3, \ldots, f_{n+1})  =
\exp\,\res\left(\log(1-h)\,\frac{dh}{-1+h}\wedge \frac{df_3}{f_3}\wedge\ldots\wedge
\frac{df_{n+1}}{f_{n+1}}\right)=\\
= \exp\,\res\left(d\left( \varphi(h) \frac{df_3}{f_3}\wedge \ldots\wedge
\frac{df_{n+1}}{f_{n+1}}\right)\right)= 1  \, ,
\end{multline}
where a power series $\varphi \in A[[x]]$ exists by Lemma~\ref{dif-form}$(iii)$ and the last equality follows from Lemma~\ref{lemma:cohom}.

It remains to consider the case when all $f_i$ belong to $\uz(A)^n$. By multilinearity and the antisymmetric property, there is an equality
$$
CC_n^{ex}(f_1,-f_1,\ldots,f_{n+1})=CC_n^{ex}(-1,f_1,\ldots,f_{n+1})^{-1}\cdot CC_n^{ex}(f_1,f_1,\ldots,f_{n+1})\,.
$$
Thus we conclude by formulas~\eqref{form-nu},~\eqref{form-sign} and Proposition~\ref{lem-sign}.
\end{proof}

The next proposition and the antisymmetric property of the map $CC^{ex}_n$ imply that~$CC^{ex}_n$ factors through the Milnor $K$-group, that is, we have a well-defined homomorphism of groups
$$
{CC_{n}^{ex}\;:\; K^M_{n+1}\big(A((t_1))\ldots((t_n))\big)\lrto A^*}\,,
$$
which is functorial with respect to a $\Q$-algebra $A$.

\begin{prop}  \label{CC-Q-Milnor}
Let $A$ be a $\Q$-algebra and let $f_1,\ldots,f_n\in \lo^n\gm(A)$ be a collection of invertible iterated Laurent series such that $f_1+f_2=1$. Then there is an equality ${CC^{ex}_n(f_1,\ldots,f_{n+1})=1}$.
\end{prop}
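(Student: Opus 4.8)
The plan is to verify the Steinberg relation $CC^{ex}_n(f_1,\dots,f_{n+1})=1$ whenever $f_1+f_2=1$ by reducing, via multilinearity and the antisymmetric property, to a small number of model cases indexed by the decomposition~\eqref{eq:decomCC} of $\lo^n\gm$. The crucial point is that $f_2=1-f_1$ is \emph{not} an independent variable, so the standard reduction to each direct factor must be done carefully: I would first use that the left hand side is multilinear in $f_3,\dots,f_{n+1}$ and, by Lemma~\ref{St-Lemma} applied in the same way as in its proof, is in fact linear in the pair $(f_1,f_2=1-f_1)$ only after splitting off a $CC^{ex}_n(f_1,-f_1,\dots)$ correction, which is identically $1$ by Lemma~\ref{St-Lemma}. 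Concretely, for $f_1,f_1'$ with $f_1+f_2=1$, $f_1'+f_2'=1$ one has $f_1f_1'+ (1-f_1f_1') = 1$ but $(1-f_1f_1')\neq f_2 f_2'$; however $1-f_1f_1' = f_2 + f_1 f_2' = f_2\cdot\big(1 + f_1 f_2'/f_2\big)$ when $f_2$ is invertible, and bilinearity together with Lemma~\ref{St-Lemma} lets one break the symbol into pieces each of which has a repeated-up-to-sign entry or a genuine Steinberg pair with one entry in a single factor of the decomposition. So after these reductions it suffices to treat the cases where $f_1$ lies in one of $\gm(A)$, $(\lo^n\gm)^{\sharp}(A)$, or $\uz^n(A)$ — and correspondingly $f_2=1-f_1$ is determined by Proposition~\ref{prop-decomp} and Example~\ref{examp:projections}(i).

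Next I would dispose of the three model cases. If $f_1\in\gm(A)$, i.e.\ $f_1=a\in A^*$ is a constant with $1-a\in A^*$, then $\nu(f_1)=\nu(f_2)=0$, so formula~\eqref{form-nu} (applied after using antisymmetry to move $f_1$ or $f_2$ to the first slot) gives $CC^{ex}_n(f_1,f_2,f_3,\dots,f_{n+1}) = f_1^{\det(\nu(f_2),\nu(f_3),\dots,\nu(f_{n+1}))}=a^{0}=1$, since the first column of the determinant is $\nu(1-a)=0$. If $f_1\in(\lo^n\gm)^{\sharp}(A)$, write $f_1=1-h$ with $h\in\LL^n(A)^{\sharp}\cap\LL^n(A)^*$, so $f_2=h$ and $\nu(h)=0$ (as $h=f_2$ has invertible ``constant part'' $-$ indeed $h\in(\lo^n\gm)^{\sharp}$ forces $h\in\LL^n(A)^\sharp\cap\LL^n(A)^*$, hence $\nu(h)=0$, $f_2\in(\lo^n\gm)^0$). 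Then by formula~\eqref{exp-log-form},
\[
CC^{ex}_n(f_1,f_2,f_3,\dots,f_{n+1})=\exp\res\!\left(\log(1-h)\,\frac{dh}{h}\wedge\frac{df_3}{f_3}\wedge\dots\wedge\frac{df_{n+1}}{f_{n+1}}\right),
\]
and Lemma~\ref{dif-form}(ii) identifies $\log(1-h)\,\frac{dh}{h}=d(-\Li_2(h))$, so the integrand is $d\big(-\Li_2(h)\,\frac{df_3}{f_3}\wedge\dots\wedge\frac{df_{n+1}}{f_{n+1}}\big)$ and its residue vanishes by Lemma~\ref{lemma:cohom} (note $-\Li_2(h)$ makes sense since $h\in\LL^n(A)^\sharp$). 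One still needs the subcase where $f_2=h\in(\lo^n\gm)^{\sharp}$ but $f_1=1-h$ is not in $(\lo^n\gm)^{\sharp}$; there one uses antisymmetry to put $f_2=h$ in the first slot and repeats the same $\Li_2$ argument, now with $\log(1-(1-h))\frac{d(1-h)}{1-h}$, which again equals $d(-\Li_2(1-h))$ up to the standard dilogarithm five-term-free identity $\log(1-h)\frac{dh}{h}+\log(h)\frac{d(1-h)}{1-h}=d(\text{something})$ — in fact one only needs that this sum is exact, which follows by differentiating and using $\log(h)=\log(1-(1-h))$ is a genuine power series in $1-h$ only when $h\in 1+\Nil$, so more robustly one falls back on Lemma~\ref{dif-form}(iii) with $\psi(x)=1-x$.

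Finally, the case $f_1\in\uz^n(A)$: here $f_1=t^{\underline l}$ for some locally constant $\underline l$, and $1-t^{\underline l}$ is invertible only when $\underline l>0$ (then $1-t^{\underline l}\in\vv_{n,+}(A)\subset(\lo^n\gm)^{\sharp}(A)$) or $\underline l<0$ (then $1-t^{\underline l}=-t^{\underline l}(1-t^{-\underline l})$ with $1-t^{-\underline l}\in\vv_{n,+}$), after passing to a suitable finite product decomposition $A\simeq\prod A_i$ to make $\underline l$ constant, as in the proof of Lemma~\ref{lemma:Steindet}. In either case $f_2=1-f_1$ has a factor in $\vv_{n,+}\subset(\lo^n\gm)^{\sharp}$, so after splitting $f_2$ multilinearly and using antisymmetry we are reduced either to the previous $(\lo^n\gm)^{\sharp}$ case or to $CC^{ex}_n(t^{\underline l},-t^{\underline l},\dots)$, which is $1$ by Lemma~\ref{St-Lemma}. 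I expect the main obstacle to be the bookkeeping in the reduction step: since $f_2=1-f_1$ is not free, one must argue that the bilinear-correction terms produced when decomposing $f_1$ (and hence $f_2=1-f_1$) into the factors of~\eqref{eq:decomCC} are each individually Steinberg-trivial or of the form covered by Lemma~\ref{St-Lemma}; getting this decomposition cleanly — handling the non-invertibility of $f_2$ in intermediate steps and the finite-product splitting of $A$ — is the delicate part, while the three model computations are short given Lemmas~\ref{dif-form}, \ref{lemma:cohom}, \ref{St-Lemma} and formulas~\eqref{exp-log-form}--\eqref{form-sign}.
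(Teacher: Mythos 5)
Your three model computations (the constant case via formula~\eqref{form-nu}, the sharp case via the dilogarithm identity of Lemma~\ref{dif-form}$(ii)$ and Lemma~\ref{lemma:cohom}, and the monomial case via Lemma~\ref{St-Lemma}) are essentially sound and coincide with the computations the paper actually performs. The genuine gap is in the reduction step that you yourself flag as ``the delicate part'': it does not work as sketched. Decomposing $f_1$ into factors of~\eqref{eq:decomCC} does not decompose the Steinberg symbol into pieces of the two types you allow. Concretely, from $1-f_1f_1'=f_2\big(1+f_1f_2'/f_2\big)$ and multilinearity you get
$$
\{f_1f_1',\,1-f_1f_1'\}=\{f_1,f_2\}+\{f_1,\,1+f_1f_2'/f_2\}+\{f_1',f_2\}+\{f_1',\,1+f_1f_2'/f_2\}\,,
$$
and the terms $\{f_1',\,1-f_1\}$ and $\{f_1,\,(1-f_1f_1')/(1-f_1)\}$ are neither symbols with a repeated-up-to-sign entry nor Steinberg pairs with one entry in a single factor; no argument is offered that $CC^{ex}_n$ kills them, and handling them is exactly as hard as the original problem. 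So the claimed reduction ``it suffices to treat $f_1\in\gm(A)$, $(\lo^n\gm)^{\sharp}(A)$, or $\uz^n(A)$'' is unsubstantiated, and a general $f_1$ with $\nu(f_1)=0$, which is a product $a(1-h)$, is not covered by your model cases.

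The paper avoids this entirely by a case analysis on the valuation $\nu(f_1)\in\Z^n$ (made constant by splitting $A$, as in Lemma~\ref{lemma:Steindet}). For $\nu(f_1)>0$ one has $1-f_1\in(\lo^n\gm)^{\sharp}(A)$ and the $\Li_2$ argument applies after a transposition. For $\nu(f_1)=0$ one writes $f_1=a(1-h)$ with $a\in A^*$, $1-a\in A^*$, $h\in\LL^n(A)^{\sharp}$, and splits \emph{only the first slot}, keeping $1-f_1$ fixed: the piece $\{a,1-f_1,\ldots\}$ is trivial by~\eqref{form-nu} since $\nu(1-f_1)=0$, and the piece $\{1-h,1-f_1,\ldots\}$ is handled by~\eqref{exp-log-form} together with Lemma~\ref{dif-form}$(iii)$ applied to $\psi(x)=a^{-1}-1+x$ (because $1-f_1=1-a+ah$), so no compatible decomposition of $1-f_1$ is ever needed. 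For $\nu(f_1)<0$ one uses the inversion identity $CC^{ex}_n(f_1,1-f_1,\ldots)=CC^{ex}_n(f_1,-f_1,\ldots)\cdot CC^{ex}_n(f_1^{-1},1-f_1^{-1},\ldots)^{-1}$ together with Lemma~\ref{St-Lemma}, reducing to the positive-valuation case; note that your treatment of $f_1=t^{\underline l}$ alone does not cover general negative-valuation $f_1$. A minor further slip: in your transposed sharp subcase, Lemma~\ref{dif-form}$(iii)$ with $\psi(x)=1-x$ does not produce the form $\log(1-g)\,\frac{dg}{g}$ you need (that $\psi$ would sit in the wrong denominator); what saves you there is Lemma~\ref{dif-form}$(ii)$, which applies because $g=1-h=f_1$ is invertible. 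Likewise your parenthetical claim that $h\in\LL^n(A)^{\sharp}\cap\LL^n(A)^*$ forces $\nu(h)=0$ is false (take $h=t_1$), though it is not load-bearing.
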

\begin{proof}
The proof is similar to the proof of the case $n=2$ in~\cite[Prop.\,4.2]{OZ1}. For the sake of completeness we briefly repeat it here for arbitrary $n$ (and in another notations than in~\cite{OZ1}).

Changing the $\Q$-algebra $A$ if needed, without loss of generality, we may assume that~$\nu(f_1)$ and $\nu(1-f_1)$ belong to the subgroup $\Z^n \subset \uz(A)^n$. Consider the following cases.

First suppose that $\nu(f_1) > 0$. Then by Proposition~\ref{prop-decomp} and Example~\ref{examp:projections}$(i)$, we have $1-f_1 \in (\lo^n\gm)^{\sharp}(A)$. By Lemmas~\ref{dif-form}$(ii)$ and~\ref{lemma:cohom}, there are equalities
\begin{multline} \nonumber
CC_n^{ex}(f_1, 1-f_1, f_3, \ldots, f_{n+1})= CC_n^{ex}(1-f_1, f_1, f_3, \ldots, f_{n+1})^{-1}= \\ =
\exp\,\res\left(-\log(1-f_1)\,\frac{df_1}{f_1}\wedge \frac{df_3}{f_3}\wedge \ldots\wedge
\frac{df_{n+1}}{f_{n+1}}\right) = \\ = \exp\,\res\left( d \left({\rm Li_2}(f_1) \frac{df_3}{f_3}\wedge \ldots\wedge
\frac{df_{n+1}}{f_{n+1}}\right) \right)=1 \, .
\end{multline}

Now suppose that $\nu(f_1)=0$. Then by Proposition~\ref{prop-decomp} and Example~\ref{examp:projections}$(i)$, we have $\nu(1-f_1) \geqslant 0$. If $\nu(1-f_1) > 0$, then transposing $f_1$ and $1-f_1$ and using the antisymmetric property of the map $CC_n^{ex}$, we reduce to the previous case. Therefore we can suppose that $\nu(1-f_1) = 0$. By decomposition~\eqref{eq:decommult} in Subsection~\ref{sp-sub}, we have that $f_1 = a(1-h)$, where $a \in A^*$, $1-a \in A^*$, and $h \in \LL^n(A)^\sharp$. By formula~\eqref{form-nu}, there are equalities
$$
CC^{ex}_n(a,1-f_1,f_3,\ldots,f_{n+1})= a^{\,\det(\nu(1-f_1),\,\nu(f_3),\,\ldots,\,\nu(f_{n+1}))}=1 \, .
$$
By formula~\eqref{exp-log-form}, there is an equality
$$
CC^{ex}_n(1-h,1-f_1,f_3,\ldots,f_{n+1})=\exp\,\res\left(
\log(1-h)\,\frac{dh}{a^{-1}-1+h}\wedge \frac{df_3}{f_3}\wedge\ldots\wedge
\frac{df_{n+1}}{f_{n+1}}\right)\,,
$$
because $1-f_1=1-a+ah$. By Lemmas~\ref{dif-form}$(iii)$ and~\ref{lemma:cohom}, the last expression equals one.

Finally, suppose that $\nu(f_1) < 0$. By multilinearity, there is an equality
\begin{multline*}
CC^{ex}_n(f_1, 1-f_1, f_3, \ldots, f_{n+1})= \\
=CC^{ex}_n(f_1,-f_1, f_3, \ldots, f_{n+1}) \cdot CC^{ex}_n(f_1^{-1}, 1 - f_1^{-1}, f_3, \ldots, f_{n+1})^{-1}\,.
\end{multline*}
Since $\nu(f_1^{-1})>0$, the last expression equals one by Lemma~\ref{St-Lemma} and the first case considered above.
\end{proof}

\subsection{Completed Contou-Carr\`ere symbol}\label{subsect:vert}

Let now $A=\mbox{``$\varprojlim\limits_{i\in I}$''}A_i$ be a pro-ring, that is, a pro-object in the category of rings. Recall that a~pro-object is the dual notion to an ind-object and the category of pro-rings is anti-equivalent to the category of ind-affine schemes (see Subsection~\ref{subsect:indschemes}). By $\widehat{A}:=\varprojlim\limits_{i\in I}A_i$ denote be the corresponding inverse limit of rings. One easily checks that there is a natural isomorphism of groups $\widehat{A}^*\stackrel{\sim}\lrto \varprojlim\limits_{i\in I}A^*_i$.

We have a pro-ring $\mbox{``$\varprojlim\limits_{i\in I}$''}\LL^n(A_i)$. Let $\widehat{\LL}^n(A):=\varprojlim\limits_{i\in I}\LL^n(A_i)$ be the corresponding inverse limit of rings.

\medskip

\begin{defin}\label{defin:compCC}
Define a {\it completed Contou-Carr\`ere symbol} $\widehat{CC}_n$ as the following composition of homomorphisms of groups:
$$
\widehat{CC}_n\;:\;K^M_{n+1}\big(\widehat{\LL}^n(A)\big)\lrto\varprojlim_{i\in I} K^M_{n+1}\big(\LL^n(A_i))\lrto
\varprojlim\limits_{i\in I} A_i^*\stackrel{\sim}\lrto \widehat{A}^*\,,
$$
where the second homomorphism is the inverse limit of the Contou-Carr\`ere symbols $CC_{n}\colon K^M_{n+1}\big(\LL^n(A_i)\big)\to A_i^*$ taken over the rings $A_i$, $i\in I$.
\end{defin}

Clearly, the homomorphism $\widehat{CC}_n$ is functorial with respect to the pro-ring~$A$.

\begin{examp}\label{examp:CCcompl}
\hspace{0cm}
\begin{itemize}
\item[(i)]
Let $R$ be a ring and let $A=\mbox{``$\varprojlim\limits_{d\in \N}$''}R[x]/(x^d)$. Then $\widehat{A}\simeq R[[x]]$. For each $d\in \N$, we have an isomorphism $\LL^n\big(R[x]/(x)^d\big)\simeq \LL^n(R)[x]/(x)^d$ (cf. Lemma~\ref{lemma:top}$(iii)$). Hence we have an isomorphism $\widehat{\LL}^n(A)\simeq \LL^n(R)[[x]]=R((t_1))\ldots((t_n))[[x]]$ and we obtain a symbol $\widehat{CC}_n\colon K^M_{n+1}\big(R((t_1))\ldots((t_n))[[x]]\big)\to R[[x]]^*$.
\item[(ii)]
More generally, let $M$ be a set and let $A=\mbox{``$\varprojlim\limits_{(M',d)}$''}R[M']/(M')^d$, where $M'$ runs over all finite subsets in $M$ and $d\in\N$ (see Subsection~\ref{subsect:convalg}). Then $\widehat{A}\simeq R[[M]]$ (see Definition~\ref{defin:powerseries}), $\widehat{\LL}^n(A)\simeq R((t_1))\ldots((t_n))[[M]]$, and we obtain a symbol $\widehat{CC}_n\colon K^M_{n+1}\big(R((t_1))\ldots((t_n))[[M]]\big)\to R[[M]]^*$.
\item[(iii)]
Let $A=\mbox{``$\varprojlim\limits_{d\in \N}$''}\Z/(p^d)$, where $p$ is a prime number. Then $\widehat{A}\simeq\Z_p$ is the ring of $p$-adic integers and the ring $\widehat{\LL}^n(A)\simeq \Z_p\{\{t_1\}\}\ldots\{\{t_n\}\}$ consists of infinite series $\sum\limits_{l\in\Z^n}a_lt^l$, where $a_l\in \Z_p$ and for any $d\in \N$, there is $\lambda\in\Lambda_n$ (see Subsection~\ref{subsect:Laurent}) such that if ${l\notin \z^n_{\lambda}}$, then $a_l\in p^d\,\Z_p$. We obtain a symbol ${\widehat{CC}_n\colon K^M_{n+1}\big(\Z_p\{\{t_1\}\}\ldots\{\{t_n\}\}\big)\to\z_p^*}$.
\end{itemize}
\end{examp}

\medskip

The following fact is directly implied by Theorem~\ref{prop-CCrational}$(i)$.

\begin{prop}\label{examp:VS}
Suppose that all $A_i$, $i\in I$, are $\Q$-algebras and consider a collection ${f_1,\ldots,f_{n+1}\in \widehat{\LL}^n(A)^*}$. Suppose that
$$
f_1=\varprojlim_{i\in I}f_{1,i}\in \varprojlim\limits_{i\in I}(\lo^n\gm)^{\sharp}(A_i)\subset \widehat{\LL}^n(A)^*\,.
$$
Then there is an equality in $\widehat{A}^*$
$$
\widehat{CC}_n\{f_1,\ldots,f_{n+1}\}=\exp\,\res\left(\log(f_1)\,\frac{df_2}{f_2}\wedge\ldots\wedge
\frac{df_{n+1}}{f_{n+1}}\right)\,,
$$
where $\log(f_1)=\varprojlim\limits_{i\in I}\log(f_{1,i})$ is a well-defined element of the ring ${\widehat{\LL}^n(A)=\varprojlim\limits_{i\in I}\LL^n(A_i)}$, the residue map is a homomorphism
$$
\res\;:\;\widehat{\LL}^n(A)dt_1\wedge\ldots\wedge dt_n\lrto \widehat{A}\,,
$$
the series $\exp$ is applied to an element of $\varprojlim\limits_{i\in I}\Nil(A_i)\subset \widehat{A}$, and each differential form $df_i$, $1\leqslant i \leqslant n$, belongs to the inverse limit $\varprojlim\limits_{i\in I}\widetilde{\Omega}^1_{\LL^n(A_i)}=\bigoplus\limits_{j=1}^n\widehat{\LL}^n(A)dt_j$.
\end{prop}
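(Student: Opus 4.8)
The plan is to deduce the formula by forming the inverse limit over $i \in I$ of the identity in Theorem~\ref{prop-CCrational}(i), applied at each $\Q$-algebra $A_i$.

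First I would unwind Definition~\ref{defin:compCC}. Write $f_{j,i} \in \LL^n(A_i)$ for the image of $f_j$ under the projection $\widehat{\LL}^n(A) \to \LL^n(A_i)$; since $\widehat{\LL}^n(A)^* \simeq \varprojlim_{i\in I} \LL^n(A_i)^*$ (the group isomorphism recalled before Definition~\ref{defin:compCC}, applied to the pro-ring $\mbox{``$\varprojlim_i$''}\,\LL^n(A_i)$), each $f_{j,i}$ is invertible, and by hypothesis $f_{1,i} \in (\lo^n\gm)^{\sharp}(A_i)$ for every $i$. By the definition of $\widehat{CC}_n$, the image of the symbol $\{f_1, \ldots, f_{n+1}\}$ under $K^M_{n+1}\big(\widehat{\LL}^n(A)\big) \to \varprojlim_i K^M_{n+1}\big(\LL^n(A_i)\big)$ is $\varprojlim_i \{f_{1,i}, \ldots, f_{n+1,i}\}$, so that $\widehat{CC}_n\{f_1, \ldots, f_{n+1}\} = \varprojlim_i CC_n\{f_{1,i}, \ldots, f_{n+1,i}\}$ in $\widehat{A}^* \simeq \varprojlim_i A_i^*$. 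Since each $A_i$ is a $\Q$-algebra and $f_{1,i} \in (\lo^n\gm)^{\sharp}(A_i)$, Theorem~\ref{prop-CCrational}(i) gives, for every $i$, the equality in $A_i^*$
$$CC_n\{f_{1,i}, \ldots, f_{n+1,i}\} = \exp\,\res\left(\log(f_{1,i})\,\frac{df_{2,i}}{f_{2,i}}\wedge\ldots\wedge\frac{df_{n+1,i}}{f_{n+1,i}}\right),$$
the argument of $\exp$ being a nilpotent element of $A_i$.

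It then remains to check that this right-hand side is compatible with the transition maps and to identify its inverse limit with the expression in the statement; this reduces to functoriality of the ingredients. Since $\log\colon (\lo^n\gm)^{\sharp}_{\Q} \to (\lo^n\ga)^{\sharp}_{\Q}$ is a morphism of functors (Proposition~\ref{log-map}), the $\log(f_{1,i})$ form an inverse system whose limit is the element $\log(f_1) \in \widehat{\LL}^n(A)$ of the statement. Functoriality of the de Rham differential, of the assignment $g \mapsto dg/g$, and of the wedge product shows that the forms $\log(f_{1,i})\,\frac{df_{2,i}}{f_{2,i}}\wedge\ldots\wedge\frac{df_{n+1,i}}{f_{n+1,i}}$ form an inverse system in $\widetilde{\Omega}^n_{\LL^n(A_i)}$ whose limit, computed coordinatewise using that each $\widetilde{\Omega}^1_{\LL^n(A_i)}$ is free on $dt_1, \ldots, dt_n$, is $\log(f_1)\,\frac{df_2}{f_2}\wedge\ldots\wedge\frac{df_{n+1}}{f_{n+1}}$. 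The residue map~\eqref{res} is functorial in the base ring, hence commutes with $\varprojlim_i$, yielding the map $\res\colon \widehat{\LL}^n(A)\,dt_1\wedge\ldots\wedge dt_n \to \widehat{A}$ and placing $\varprojlim_i \res(\cdots)$ in $\varprojlim_i \Nil(A_i) \subset \widehat{A}$; likewise $\exp$, being a power series applied to nilpotents, commutes with the transition maps, so that $\varprojlim_i \exp(\res(\cdots)) = \exp(\varprojlim_i \res(\cdots))$ in the sense of the statement. Taking $\varprojlim_i$ of the displayed identity then gives the claim. The only point demanding care --- rather than a genuine obstacle --- is exactly this bookkeeping: confirming that each operation ($\log$, $d\log$, $\wedge$, $\res$, $\exp$) is functorial in the base ring and therefore passes to the inverse limit, and matching the ad hoc meanings assigned to $\log(f_1)$, the forms $df_i$, $\res$, and $\exp$ in the statement with the corresponding limits.
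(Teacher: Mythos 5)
Your proposal is correct and is exactly the paper's argument: the paper simply states that the proposition "is directly implied by Theorem~\ref{prop-CCrational}$(i)$", i.e.\ one applies that explicit formula over each $\Q$-algebra $A_i$ and passes to the inverse limit using the functoriality of $\log$, $d\log$, $\wedge$, $\res$, and $\exp$, which is precisely the bookkeeping you carry out.
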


For instance, for a pro-ring $A=\mbox{``$\varprojlim\limits_{d\in \N}$''}R[x]/(x^d)$ as in Example~\ref{examp:CCcompl}(i), the residue $\res\colon R((t_1))\ldots((t_n))[[x]]dt_1\wedge\ldots\wedge dt_n\to R[[x]]$ in Proposition~\ref{examp:VS} is taken with respect to the variables~$t_1,\ldots,t_n$. Also, we have that $\varprojlim\limits_{i\in I}\Nil(A_i)=xR[[x]]$ in this case.

\medskip

\begin{rmk}
It is natural to expect the existence of an extension of the completed Contou-Carr\`ere symbol in Example~\ref{examp:CCcompl}(i) to a homomorphism from $K^M_{n+1}\big(R((t_1))\ldots((t_n))((x))\big)$ to~$R((x))^*$. Such a new symbol would be a morphism of group functors $L^{n+1}K^M_{n+1}\to L\gm$. Note that it would definitely differ from the morphism $L(CC_n)\colon L^{n+1}K^M_{n+1}\to L\gm$ as the latter morphism vanishes on the group $K^M_{n+1}\big(R((t_1))\ldots((t_n))[[x]]\big)$.
\end{rmk}

\begin{rmk}\label{rmk:Denisvertical}
A particular case of a completed Contou-Carr\`ere symbol $K^M_2\big(R((t))((x))\big)\to R((x))^*$ when $R$ is a field of characteristic zero was defined by the second named author in~\cite[Theor.\,2]{O97}. There it was considered a surface fibred over a curve and this symbol was constructed as a local direct image map related to a flag on the surface given by a point and the fiber passing through it. Another definition of a completed symbol in a particular case had been given by Kato~\cite{Kato}. A completed Contou-Carr\`ere symbol and its relation to Kato symbol were studied by Asakura~\cite[\S\,4]{Asa}, P\'al~\cite[\S\S\,3,4]{Pal}, Liu~\cite[\S\,3]{Liu}, and Chinburg, Pappas, Taylor~\cite[\S\S\,3e, 3f]{CPT}.
\end{rmk}

\medskip

The following lemma is implied immediately by the construction of the completed Contou-Carr\`ere symbol.

\begin{lemma}\label{lemma-trivkernel}
The completed Contou-Carr\`ere symbol $\widehat{CC}_n\colon K^M_{n+1}\big(\widehat{\LL}^n(A)\big)\to \widehat{A}^*$
sends the kernel $\Ker\Big(K^M_{n+1}\big(\widehat{\LL}^n(A)\big)\to K^M_{n+1}\big(\LL^n(A_i)\big)\Big)$ to the kernel $\Ker\big(\widehat{A}^*\to A_i^*\big)$ for each~$i\in I$.
\end{lemma}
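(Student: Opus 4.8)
The statement is a purely formal consequence of the way $\widehat{CC}_n$ is defined in Definition~\ref{defin:compCC}, so the plan is to unwind the definitions and chase the relevant compatibility through the inverse limits. First I would fix an index $i_0 \in I$ and denote by $K$ the kernel $\Ker\big(K^M_{n+1}(\widehat{\LL}^n(A))\to K^M_{n+1}(\LL^n(A_{i_0}))\big)$, where the map is induced by the ring homomorphism $\widehat{\LL}^n(A)=\varprojlim_{i}\LL^n(A_i)\to\LL^n(A_{i_0})$. Recall that $\widehat{CC}_n$ is by construction the composition
$$
K^M_{n+1}\big(\widehat{\LL}^n(A)\big)\xrightarrow{\ \rho\ }\varprojlim_{i\in I}K^M_{n+1}\big(\LL^n(A_i)\big)\xrightarrow{\ \varprojlim CC_n\ }\varprojlim_{i\in I}A_i^*\xrightarrow{\ \sim\ }\widehat{A}^*\,,
$$
where $\rho$ is the natural map assembling the homomorphisms induced by $\widehat{\LL}^n(A)\to\LL^n(A_i)$, and the last isomorphism is the natural one $\widehat A^*\stackrel{\sim}\to\varprojlim_i A_i^*$ mentioned before Definition~\ref{defin:compCC}.

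The key observation is that the inverse-limit projection $\varprojlim_i K^M_{n+1}(\LL^n(A_i))\to K^M_{n+1}(\LL^n(A_{i_0}))$ is precisely the $i_0$-component of the target, and that $\rho$ composed with this projection is exactly the map $K^M_{n+1}(\widehat{\LL}^n(A))\to K^M_{n+1}(\LL^n(A_{i_0}))$ whose kernel is $K$ — this is immediate from the definition of $\rho$ and of an inverse limit. Hence $\rho(K)$ is contained in the kernel of the $i_0$-th projection $\varprojlim_i K^M_{n+1}(\LL^n(A_i))\to K^M_{n+1}(\LL^n(A_{i_0}))$. Now I would invoke functoriality of the one-dimensional (iterated) Contou-Carr\`ere symbol with respect to the rings $A_i$ (established in Definition~\ref{defin:CC} and Proposition~\ref{nat-tr}, which make $CC_n$ a morphism of group functors): this functoriality says precisely that the maps $CC_n\colon K^M_{n+1}(\LL^n(A_i))\to A_i^*$ are compatible with the transition maps of both inverse systems, so $\varprojlim_i CC_n$ carries the kernel of the $i_0$-th projection on the source into the kernel of the $i_0$-th projection on the target, i.e. into $\Ker\big(\varprojlim_i A_i^*\to A_{i_0}^*\big)$. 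Transporting through the isomorphism $\varprojlim_i A_i^*\simeq\widehat A^*$, which sends the $i_0$-th projection to the map $\widehat A^*\to A_{i_0}^*$ induced by $\widehat A\to A_{i_0}$, gives $\widehat{CC}_n(K)\subseteq\Ker(\widehat A^*\to A_{i_0}^*)$, as claimed.

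The only point requiring any care — and the closest thing to an ``obstacle'' — is making sure the two diagrams commute: the square relating $\rho$, the transition maps of $\varprojlim_i K^M_{n+1}(\LL^n(A_i))$, and the projections to the $\LL^n(A_i)$; and the square expressing that $\{CC_n\}_{i\in I}$ is a morphism of the inverse system $\{K^M_{n+1}(\LL^n(A_i))\}$ to the inverse system $\{A_i^*\}$. Both commute because $CC_n$ is a natural transformation of group functors in the ring argument (Proposition~\ref{nat-tr} together with the functoriality of the Milnor $K$-groups and of $\det$), and because $\widehat{\LL}^n(A)\to\LL^n(A_i)\to\LL^n(A_j)$ equals $\widehat{\LL}^n(A)\to\LL^n(A_j)$ for $i\geqslant j$. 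Once these compatibilities are recorded, the inclusion of kernels is a one-line diagram chase, and no computation with explicit formulas is needed.
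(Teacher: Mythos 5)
Your proposal is correct and is exactly what the paper means when it says the lemma "is implied immediately by the construction of the completed Contou-Carr\`ere symbol": since $\widehat{CC}_n$ is defined component-wise as $\varprojlim_i CC_n$ (well-defined thanks to the functoriality of $CC_n$ from Proposition~\ref{nat-tr}), an element killed by the projection to $K^M_{n+1}\big(\LL^n(A_{i_0})\big)$ has trivial $i_0$-component throughout, hence lands in $\Ker\big(\widehat{A}^*\to A_{i_0}^*\big)$. Your spelled-out diagram chase matches the paper's (implicit) argument; the only quibble is the phrase "one-dimensional (iterated) Contou-Carr\`ere symbol," which should just read "$n$-dimensional."
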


\subsection{Integrality and convergence of the explicit formula}\label{integr-expl-form}

The aim of this subsection is to show that the right hand side of formula~\eqref{exp-log-form} in Theorem~\ref{prop-CCrational}$(i)$ is expressed by some universal power series with integral coefficients. The variables in these universal power series are coefficients of iterated Laurent series to which one applies the Contou-Carr\`ere symbol.

\medskip

Let $q$, $0\leqslant q\leqslant n$, be an integer and let $1\leqslant j_1<\ldots<j_q\leqslant n$ be a collection of~$q$ integers (for $q=0$, the collection is empty). Put $p:=n+1-q$. Thus, we have ${1\leqslant p\leqslant n+1}$. Consider countably many formal variables $x_{i,l}$, where $1\leqslant i\leqslant p$ and $l\in\z^n$. Given a ring $R$, for short, we denote just by~$R[[x_{i,l}]]$ the ring of power series ${R[[x_{i,l};\,1\leqslant i\leqslant p,\,l\in\z^n]]}$ in these formal variables (see Definition~\ref{defin:powerseries}). Consider infinite series
$$
\mbox{$f_i:=1+\sum\limits_{l\in\z^n} x_{i,l} t^{l}\,,\qquad 1\leqslant i\leqslant p$}\,,
$$
which are well-defined elements in the ring $\Z((t_1))\ldots((t_n))[[x_{i,l}]]$ (see Example~\ref{examp:CCcompl}(ii)). Moreover, for each $i$, $1\leqslant i\leqslant p$, we have that
$$
f_i\in \varprojlim\limits_{(M',d)}(\lo^n\gm)^{\sharp}\big(\Z[M']/(M')^d\big)\,,
$$
where~$M'$ runs over all finite subsets in $\{1,\ldots,p\}\times\z^n$ and $d\in\N$. In particular, ${f_i\in \Z((t_1))\ldots((t_n))[[x_{i,l}]]^*}$. Define the following power series (see Definition~\ref{defin:compCC}):
$$
\varphi_{n,j_1,\ldots,j_q}:=\widehat{CC}_n\{f_1,\ldots,f_{p},t_{j_1},\ldots,t_{j_q}\}\in \Z[[x_{i,l}]]^*\,.
$$
Lemma~\ref{lemma-trivkernel} implies that the constant term of the power series $\varphi_{n,j_1,\ldots,j_q}$ is equal to one.

\medskip

Consider the embeddings of rings
$$
\Z[[x_{i,l}]]\subset \Q[[x_{i,l}]]\,,\qquad \z((t_1))\ldots((t_n))[[x_{i,l}]]\subset \Q((t_1))\ldots((t_n))[[x_{i,l}]]\,.
$$
It turns out that the series $\varphi_{n,j_1,\ldots,j_q}$ viewed as an element of $\Q[[x_{i,l}]]^*$ can be constructed explicitly as follows. By funtoriality of the completed Contou-Carr\`ere symbol $\widehat{CC}_n$, Proposition~\ref{examp:VS} implies that there is an equality in $\Q[[x_{i,l}]]^*$
\begin{equation}\label{eq:phiformal}
\varphi_{n,j_1,\ldots,j_q}=\exp\,\res\left(\log(f_1)\,\frac{df_2}{f_2}\wedge\ldots\wedge
\frac{df_{p}}{f_{p}}\wedge\frac{dt_{j_1}}{t_{j_1}}\wedge\ldots\wedge\frac{dt_{j_q}}{t_{j_q}}\right)\,.
\end{equation}
Explicitly, the series $\varphi_{n,j_1,\ldots,j_q}$ is obtained by formally opening brackets in the right hand side of formula~\eqref{eq:phiformal}. More precisely, for every finite subset $M'\subset\{1,\ldots,p\}\times \Z^n$ and $d\in\N$, by $f\mapsto \bar f$ denote the natural homomorphism of rings
$$
\Q((t_1))\ldots((t_n))[[x_{i,l}]]\lrto \Q((t_1))\ldots((t_n))[M']/(M')^d\,.
$$
Consider the right hand side of formula~\eqref{eq:phiformal} over the ring $\Q((t_1))\ldots((t_n))[M']/(M')^d$. The series $\log(\bar f_1)=\sum\limits_{m\geqslant 1}(-1)^{m+1}\frac{(\bar{f}_1-1)^m}{m}$ is a finite sum in the ring $\Q((t_1))\ldots((t_n))[M']/(M')^d$, the expression $\res(\cdots)$ is an element of the ideal $(M')\cdot\Q((t_1))\ldots((t_n))[M']/(M')^d$, and the right hand side of formula~\eqref{eq:phiformal} applied over the ring $\Q((t_1))\ldots((t_n))[M']/(M')^d$ defines correctly an element
$$
\varphi_{n,j_1,\ldots,j_q}^{(M',d)}\in\Q[M']/(M')^d\,.
$$
Passing to the limit over all~$(M',d)$, we obtain the series $\varphi_{n,j_1,\ldots,j_q}=\varprojlim\limits_{(M',d)}\varphi_{n,j_1,\ldots,j_q}^{(M',d)}$.

\medskip

By the first construction of the series~$\varphi_{n,j_1,\ldots,j_q}$, we see that formally opening brackets in the right hand side of formula~\eqref{eq:phiformal}, one gets a series with {\it integral} coefficients (cf.~\cite[\S\,2.9]{Del}). Thus we deduce this integrality from the existence of the Contou-Carr\`ere symbol for all rings, not just $\Q$-algebras, that is, from the existence of the boundary map for algebraic $K$-groups (see Subsection~\ref{subsect:bound}).

\begin{examp}
Let $f=1+\sum\limits_{l\in \z^n}x_lt^l\in \Z((t_1))\ldots((t_n))[[x_l;\,l \in \Z^n]]$. Then the series $\varphi_{n,1,\ldots,n}\in\Z[[x_l;\,l\in\z^n]]$ is obtained by formally opening brackets in the expression
$$
\exp\,\res\left(\log(f)\,\frac{dt_1}{t_1}\wedge\ldots\wedge
\frac{dt_{n}}{t_{n}}\right)=\exp\,\left(\sum\limits_{m\geqslant 1}(-1)^{m+1}\frac{[(f-1)^m]_0}{m}\right)\,,
$$
where $[g]_0$ denotes the constant term of an iterated Laurent series $g$. The integrality of this series was also proved by Kontsevich~\cite[p.\,2, Step\,1]{Kon} by a combinatorial method.
\end{examp}

\medskip

The following lemma is implied directly by the construction of the completed Contou-Carr\`ere symbol (see also Example~\ref{examp:convalg}$(i)$).

\begin{lemma}\label{lemma:nilpolyn}
Let $A$ be a ring, $g_1,\ldots,g_p$ be Laurent polynomials (not just series) with nilpotent coefficients in $A$. Then there is an equality in $A^*$
$$
CC_n\{1+g_1,\ldots,1+g_p,t_{j_1},\ldots,t_{j_q}\}=\varphi_{n,j_1\ldots j_q}(g_1,\ldots,g_p)\,.
$$
\end{lemma}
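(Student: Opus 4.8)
The plan is to compare the two sides as morphisms of functors, reducing everything to a matter of opening brackets and comparing coefficients, using the completed Contou-Carr\`ere symbol as the bridge. Recall that by Definition~\ref{defin:compCC}, $\widehat{CC}_n$ is built by passing to the limit over the rings $A_i$ of the honest symbols $CC_n$, and by construction the series $\varphi_{n,j_1\ldots j_q}$ is precisely $\widehat{CC}_n\{f_1,\ldots,f_p,t_{j_1},\ldots,t_{j_q}\}$, where $f_i=1+\sum_{l\in\z^n}x_{i,l}t^l$ lives in $\Z((t_1))\ldots((t_n))[[x_{i,l}]]$ and in fact in the inverse limit $\varprojlim_{(M',d)}(\lo^n\gm)^{\sharp}\big(\Z[M']/(M')^d\big)$.

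First I would set up the universal situation. Let $A$ be the given ring and $g_1,\ldots,g_p$ Laurent polynomials with nilpotent coefficients in $A$; write $g_i=\sum_{l}b_{i,l}t^l$, a finite sum, with each $b_{i,l}\in\Nil(A)$. Choose a finite subset $M'\subset\{1,\ldots,p\}\times\z^n$ containing all the indices $(i,l)$ that actually occur in the $g_i$, and choose $d\in\N$ large enough that $(M')^d$ annihilates all relevant products of the $b_{i,l}$ — concretely, since finitely many nilpotent elements are involved, there is $d$ with $b_{i_1,l_1}\cdots b_{i_d,l_d}=0$ for all choices. This gives a ring homomorphism $\Z[M']/(M')^d\to A$ sending $x_{i,l}\mapsto b_{i,l}$ (for $(i,l)\in M'$) and $x_{i,l}\mapsto 0$ otherwise; under it the universal series $\bar f_i\in\Z((t_1))\ldots((t_n))[M']/(M')^d$ maps to $1+g_i\in\LL^n(A)$.

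Now the key step is functoriality. The honest Contou-Carr\`ere symbol $CC_n$ is a morphism of group functors (Definition~\ref{defin:CC}, via the functorial boundary map from Proposition~\ref{nat-tr}), hence commutes with the ring homomorphism $\Z[M']/(M')^d\to A$: the symbol $\{1+g_1,\ldots,1+g_p,t_{j_1},\ldots,t_{j_q}\}\in K^M_{n+1}\big(\LL^n(A)\big)$ is the image of $\{\bar f_1,\ldots,\bar f_p,t_{j_1},\ldots,t_{j_q}\}\in K^M_{n+1}\big(\LL^n(\Z[M']/(M')^d)\big)$, so $CC_n$ of the former equals the image in $A^*$ of $CC_n$ of the latter. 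On the other hand, by the very definition of $\widehat{CC}_n$ as an inverse limit of the $CC_n$ over the pro-ring $A=\mbox{``$\varprojlim_{(M',d)}$''}\,\Z[M']/(M')^d$ (Example~\ref{examp:CCcompl}(ii)), the value $CC_n\{\bar f_1,\ldots\}$ over $\Z[M']/(M')^d$ is exactly the image of $\varphi_{n,j_1\ldots j_q}=\widehat{CC}_n\{f_1,\ldots\}\in\Z[[x_{i,l}]]^*$ under the projection $\Z[[x_{i,l}]]\to\Z[M']/(M')^d$ — this is the content of the first (limit) construction of $\varphi_{n,j_1\ldots j_q}$ and of Lemma~\ref{lemma-trivkernel}. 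Since evaluation $\varphi_{n,j_1\ldots j_q}(g_1,\ldots,g_p)$ converges algebraically precisely because the $g_i$ are Laurent polynomials with nilpotent coefficients (Example~\ref{examp:convalg}(i)) and equals, by definition of algebraic convergence, the image of $\varphi_{n,j_1\ldots j_q}$ in $A$ along the composite $\Z[[x_{i,l}]]\to\Z[M']/(M')^d\to A$, chaining these identifications gives $CC_n\{1+g_1,\ldots,1+g_p,t_{j_1},\ldots,t_{j_q}\}=\varphi_{n,j_1\ldots j_q}(g_1,\ldots,g_p)$ in $A^*$.

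The main obstacle I anticipate is purely bookkeeping: matching up the two descriptions of $\varphi_{n,j_1\ldots j_q}$ (the $\widehat{CC}_n$ definition and the "formally open brackets in formula~\eqref{eq:phiformal}" description) with the notion of algebraic convergence and evaluation, so that "evaluate the power series at $g_i$" is literally "apply the projection to $\Z[M']/(M')^d$ then map to $A$". One has to be careful that the chosen $(M',d)$ is cofinal enough — large enough $M'$ to capture the support of the $g_i$ and large enough $d$ to kill the relevant nilpotents — and that the maps are compatible in the pro-system; but no genuine new idea is needed beyond functoriality of $CC_n$ and the definitions in Subsection~\ref{integr-expl-form}. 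I would phrase the proof in two short paragraphs: one fixing $(M',d)$ and the homomorphism to $A$, one invoking functoriality of $CC_n$ together with Lemma~\ref{lemma-trivkernel} and the construction of $\varphi_{n,j_1\ldots j_q}$ to conclude.
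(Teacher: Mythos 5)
Your argument is correct and takes essentially the same route as the paper, whose entire proof is the one-line remark that the lemma ``is implied directly by the construction of the completed Contou-Carr\`ere symbol (see also Example~\ref{examp:convalg}(i))''. Your writeup simply makes that explicit — fixing a cofinal pair $(M',d)$, using functoriality of $CC_n$ along $\Z[M']/(M')^d\to A$, and identifying evaluation of $\varphi_{n,j_1\ldots j_q}$ at the nilpotent Laurent-polynomial coefficients with the projection $\Z[[x_{i,l}]]\to\Z[M']/(M')^d$ followed by the map to $A$ — so there is no gap and no genuinely different idea.
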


Combining Lemma~\ref{lemma:nilpolyn} with Theorem~\ref{prop-key}, we obtain the following result (see also\ Definition~\ref{defin-conv}(iii)).

\begin{theor}\label{theor:integalexpl}
The integral power series~$\varphi_{n,j_1,\ldots,j_q}$ converges algebraically on the ind-closed subscheme $((\lo^n\ga)^{\sharp})^{\times p}\subset (\Ab^{\Z^n})^{\times p}$ and for any ring $A$ and a collection ${g_1,\ldots,g_p\in (\lo^n\ga)^{\sharp}(A)}$, there is an equality in $A^*$
$$
{CC_n\{1+g_1,\ldots,1+g_p,t_{j_1},\ldots,t_{j_q}\}=\varphi_{n,j_1\ldots j_s}(g_1,\ldots,g_p)}\,.
$$
\end{theor}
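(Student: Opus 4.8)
The plan is to deduce Theorem~\ref{theor:integalexpl} by combining the already-established integrality of the power series $\varphi_{n,j_1,\ldots,j_q}\in\Z[[x_{i,l}]]$ with the descent/uniqueness statement of Theorem~\ref{prop-key}. The essential point is that we already have two independent handles on the same object: on one side, the combinatorial construction of $\varphi_{n,j_1,\ldots,j_q}$ as an \emph{integral} power series (obtained by formally opening brackets in formula~\eqref{eq:phiformal}, or equivalently via Lemma~\ref{lemma-trivkernel} applied to the pro-ring $A=\mbox{``$\varprojlim\limits_{(M',d)}$''}\Z[M']/(M')^d$); on the other side, the regular function $\phi\in\OO(X_\Z)$ on $X:=\big((\lo^n\ga)^{\sharp}\big)^{\times p}$ given by $(g_1,\ldots,g_p)\mapsto CC_n\{1+g_1,\ldots,1+g_p,t_{j_1},\ldots,t_{j_q}\}$, which is a morphism of ind-schemes since $CC_n$ and $\Theta$ are morphisms of group functors and $X$ is an ind-closed subscheme of $(\lo^n\gm)^{\sharp,\times p}\times(\uz^n)^{\times q}$ via the assignment $g_i\mapsto 1+g_i$, $t_{j_k}\mapsto t_{j_k}$.

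First I would set up the hypotheses of Theorem~\ref{prop-key} with $N=p$, $R=\Z$, $S=\Q$. Lemma~\ref{lemma:nilpolyn} provides exactly the required compatibility on the dense subset: for any $\Q$-algebra (indeed any ring) $A$ and any collection $g_1,\ldots,g_p$ of Laurent \emph{polynomials} with nilpotent coefficients, one has $\varphi_{n,j_1,\ldots,j_q}(g_1,\ldots,g_p)=CC_n\{1+g_1,\ldots,1+g_p,t_{j_1},\ldots,t_{j_q}\}=\phi(g_1,\ldots,g_p)\in A$. Here the power series $\varphi_{n,j_1,\ldots,j_q}$, viewed a priori as an element of $\Q[[x_{i,l}]]$, already has coefficients in $\Z$ by the integrality discussion preceding Lemma~\ref{lemma:nilpolyn}; but even if one preferred not to invoke that, Theorem~\ref{prop-key} would reprove integrality as a byproduct, since it concludes that $\varphi$ has coefficients in $R=\Z$. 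Applying Theorem~\ref{prop-key} then yields that $\varphi_{n,j_1,\ldots,j_q}$ converges algebraically on $X_\Z=\big((\lo^n\ga)^{\sharp}\big)^{\times p}$ and goes to $\phi$ under the canonical homomorphism $\Ac(X_\Z)\to\OO(X_\Z)$ of formula~\eqref{eq:evalmap}. Unwinding what ``goes to $\phi$'' means —- namely, evaluation of the algebraically convergent series at $A$-points —- gives the claimed equality $CC_n\{1+g_1,\ldots,1+g_p,t_{j_1},\ldots,t_{j_q}\}=\varphi_{n,j_1,\ldots,j_q}(g_1,\ldots,g_p)$ for every ring $A$ and every $g_1,\ldots,g_p\in(\lo^n\ga)^{\sharp}(A)$, not merely for Laurent polynomials with nilpotent coefficients.

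The one point that needs a little care, and which I would spell out, is the verification that the recipe $(g_1,\ldots,g_p)\mapsto CC_n\{1+g_1,\ldots,1+g_p,t_{j_1},\ldots,t_{j_q}\}$ genuinely defines a regular function $\phi\in\OO(X_\Z)$, i.e.\ a morphism of ind-schemes $X\to\Ab^1$. This follows because $CC_n$ is a morphism of group functors $L^nK^M_{n+1}\to\gm$ (Definition~\ref{defin:CC}, using Proposition~\ref{nat-tr}), the symbol map $(\lo^n\gm)^{\times(n+1)}\to L^nK^M_{n+1}$ is a morphism of functors, the inclusions $1+\Nil\hookrightarrow\gm$ and $(\lo^n\gm)^{\sharp}\hookrightarrow\lo^n\gm$ and $\uz^n\hookrightarrow\lo^n\gm$ are morphisms of functors, and $(\lo^n\ga)^{\sharp}\simeq(\lo^n\gm)^{\sharp}$ as functors; composing and using that $\gm\hookrightarrow\Ab^1$ gives the morphism to $\Ab^1$. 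Since $X$ is a thick ind-cone by Proposition~\ref{lemma:sharprepr}$(ii)$ and Lemma~\ref{lemma:decomposethickcone}, Theorem~\ref{prop-key} applies verbatim. Thus the main (and essentially only) obstacle is bookkeeping: matching the indexing conventions of the variables $x_{i,l}$ ($1\le i\le p$, $l\in\z^n$) with those of the affine space $\Ab^M$, $M=\coprod_{i=1}^p\z^n$, in the statement of Theorem~\ref{prop-key}, and confirming that the hypotheses there are literally the content of Lemma~\ref{lemma:nilpolyn}. No deeper difficulty arises, since all the analytic heavy lifting (algebraic convergence on thick ind-cones, descent from $\Q$ to $\Z$) is already packaged in Section~\ref{representab} and in Theorem~\ref{prop-key} itself.
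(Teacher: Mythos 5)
Your proposal is correct and follows essentially the same route as the paper, which obtains Theorem~\ref{theor:integalexpl} precisely by combining Lemma~\ref{lemma:nilpolyn} (the compatibility at Laurent polynomials with nilpotent coefficients) with Theorem~\ref{prop-key} applied to $R=\Z$, $S=\Q$ and the thick ind-cone $\big((\lo^n\ga)^{\sharp}\big)^{\times p}$. Your extra verification that $(g_1,\ldots,g_p)\mapsto CC_n\{1+g_1,\ldots,1+g_p,t_{j_1},\ldots,t_{j_q}\}$ is a regular function, and the remark that integrality of the coefficients comes out of Theorem~\ref{prop-key} as a byproduct, are consistent with (and slightly more detailed than) the paper's one-line argument.
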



\medskip

Let $x_{i,l}$ be of weight $l\in\z^n$. A weight of a monomial in~$x_{i,l}$ is defined in a natural way. For example, the weight of $x_{i,l}^d$ is $dl$ and the weight of $x_{i,l}\,x_{i',l'}$ is $l+l'$. A weight homogeneous power series in~$\Q[[x_{i,l}]]$ is an infinite sums of monomials of the same weight, which is also called a weight of this series. For example, the polynomial $x_{i,l}^2+x_{i',2l}$ is weight homogenous of weight $2l$. Let us say that a series $\sum\limits_{l\in\z^n}\varphi_l\,t^l\in \Q((t_1))\ldots((t_n))[[x_{i,l}]]$ is weighted if $\varphi_l$ is a weight homogeneous power series of weight $l$ for each $l\in\z^n$. Similarly, for elements in $\Q((t_1))\ldots((t_n))[M']/(M')^d$, where~$M'$ and~$d$ are as above. The following result is proved for the case $n=1$ in~\cite[\S\,2.9]{Del}.

\begin{prop}\label{lemma:weight0}
The power series $\varphi_{n,j_1,\ldots,j_q}$ is weight homogeneous of weight zero.
\end{prop}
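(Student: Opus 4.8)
The plan is to reduce to the explicit formula~\eqref{eq:phiformal} and then exploit a $\mathbb{Z}^n$-grading, made rigorous by a torus rescaling. Since $\mathbb{Z}[[x_{i,l}]]\hookrightarrow\mathbb{Q}[[x_{i,l}]]$ is injective and $\varphi_{n,j_1,\ldots,j_q}=\varprojlim_{(M',d)}\varphi^{(M',d)}_{n,j_1,\ldots,j_q}$, it suffices to fix a finite subset $M'\subset\{1,\ldots,p\}\times\mathbb{Z}^n$ and $d\in\mathbb{N}$, put $B:=\mathbb{Q}[M']/(M')^d$, and show that $\varphi^{(M',d)}_{n,j_1,\ldots,j_q}$ is weight homogeneous of weight zero in the $\mathbb{Z}^n$-graded ring $B$, where $x_{i,l}$ has weight $l$ and $(M')^d$ is a homogeneous ideal. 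Recall that $\varphi^{(M',d)}_{n,j_1,\ldots,j_q}=\exp\res\omega$, where
$$
\omega=\log(\bar f_1)\,\frac{d\bar f_2}{\bar f_2}\wedge\ldots\wedge\frac{d\bar f_p}{\bar f_p}\wedge\frac{dt_{j_1}}{t_{j_1}}\wedge\ldots\wedge\frac{dt_{j_q}}{t_{j_q}}\in\widetilde{\Omega}^n_{\LL^n(B)},
$$
the elements $\bar f_i=1+\sum_{(i,l)\in M'}x_{i,l}t^l$ are units of $\LL^n(B)=\mathbb{Q}((t_1))\ldots((t_n))[M']/(M')^d$ (they differ from $1$ by a nilpotent), and $\res\omega$ lies in the nilpotent ideal $(M')B$, so $\exp\res\omega$ is a polynomial in $\res\omega$.

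Morally, the grading of $B$ extends to $\LL^n(B)$ by declaring $t_k$ to have weight $-e_k$ (the negative of the $k$-th standard basis vector of $\mathbb{Z}^n$), and to $\widetilde{\Omega}^{\bullet}$ by letting $dt_k$ also have weight $-e_k$; then every $\bar f_i$ has weight $0$ (each term $x_{i,l}t^l$ has weight $l-l=0$), hence $\log\bar f_1$, each $\frac{d\bar f_i}{\bar f_i}$, each $\frac{dt_{j_k}}{t_{j_k}}$, and so $\omega$, have weight $0$; since $t_1^{-1}\cdots t_n^{-1}\,dt_1\wedge\ldots\wedge dt_n$ also has weight $0$, the residue map carries weight $0$ to weight $0$, giving $\res\omega\in B$ of weight $0$, hence $\exp\res\omega$ of weight $0$. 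To formalize this, I would adjoin invertible symbols $s_1,\ldots,s_n$, set $\widetilde B:=B[s_1^{\pm 1},\ldots,s_n^{\pm 1}]$, write $s^l:=s_1^{l_1}\cdots s_n^{l_n}$, and introduce the continuous $\mathbb{Q}[s_1^{\pm1},\ldots,s_n^{\pm1}]$-algebra automorphism $\Sigma$ of $\LL^n(\widetilde B)$ determined by $x_{i,l}\mapsto s^{-l}x_{i,l}$ and $t_k\mapsto s_kt_k$; it is well defined since the $s_k$ are units, and it fixes each $\bar f_i$ because $\Sigma(x_{i,l}t^l)=s^{-l}x_{i,l}\cdot s^lt^l=x_{i,l}t^l$.

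The remaining work, which I expect to be the only nonroutine part, consists of three compatibility checks. First, $\Sigma$ preserves the submodule $K$ of Definition~\ref{defin:tildeforms}: this reduces to the identity $\frac{\partial(\Sigma g)}{\partial t_i}=s_i\,\Sigma(\partial g/\partial t_i)$, verified on monomials $g=t^m$, so $\Sigma$ descends to $\widetilde{\Omega}^{\bullet}_{\LL^n(\widetilde B)}$ and commutes there with $d$. Second, $\Sigma$ fixes each $\frac{dt_{j_k}}{t_{j_k}}$, and therefore fixes $\log\bar f_1$, the $\frac{d\bar f_i}{\bar f_i}$, and $\omega$. Third, $\res$ is $\Sigma$-equivariant, i.e. $\res(\Sigma\eta)=\Sigma_0(\res\eta)$ where $\Sigma_0:=\Sigma|_{\widetilde B}$, because $\Sigma$ multiplies $t^l\,dt_1\wedge\ldots\wedge dt_n$ by $s^l\cdot s_1\cdots s_n$, and this scalar equals $1$ exactly at $l=(-1,\ldots,-1)$. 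Granting these, $\res\omega=\res(\Sigma\omega)=\Sigma_0(\res\omega)$, so $\res\omega$ is fixed by $\Sigma_0$; but $\res\omega$ actually lies in $B$ (nothing in $\omega$ involves the $s_k$), and since $\Sigma_0$ multiplies a monomial of weight $w$ in the $x_{i,l}$ by $s^{-w}$ while $\widetilde B$ is free over the domain $\mathbb{Q}[s_1^{\pm1},\ldots,s_n^{\pm1}]$ on these monomials, the equation $\Sigma_0(\res\omega)=\res\omega$ forces every monomial of nonzero weight to have zero coefficient in $\res\omega$. Thus $\res\omega$, hence $\varphi^{(M',d)}_{n,j_1,\ldots,j_q}=\exp\res\omega$, is weight homogeneous of weight zero; letting $(M',d)$ vary completes the proof.
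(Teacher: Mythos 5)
Your proposal is correct and takes essentially the same route as the paper: reduce via \eqref{eq:phiformal} to the finite levels $\Q[M']/(M')^d$, check that the integrand, the de Rham differential, and the residue map all respect the $\Z^n$-weight grading in which $t_k$ carries weight $-e_k$ (so each $f_i$ has weight zero), and then pass to the limit over $(M',d)$. The only difference is one of packaging: you encode the weight bookkeeping as invariance under the rescaling automorphism $\Sigma$ with auxiliary units $s_1,\ldots,s_n$, whereas the paper verifies directly that sums, products, $\log$, $d\log$, and $\res$ preserve the class of weighted series; the verifications coincide (e.g.\ your identity $\partial(\Sigma g)/\partial t_i=s_i\,\Sigma(\partial g/\partial t_i)$ is exactly the paper's computation $g_i=\sum_{l}l_i\varphi_l\,t^l$, and your residue equivariance is the paper's observation that the residue of a weighted series has weight zero).
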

\begin{proof}
One easily checks that sums and products of weighted series in $\Q((t_1))\ldots((t_n))[[x_{i,l}]]$ are weighted. Further, if $f=\sum\limits_{l\in\z^n}\varphi_l\,t^l\in \Q((t_1))\ldots((t_n))[[x_{i,l}]]$ is weighted, then $df=\sum\limits_{i=1}^ng_i\,\frac{dt_i}{t_i}$, where $g_i$, $1\leqslant i\leqslant n$, are weighted as well. Indeed, there is an equality $g_i=\sum\limits_{l\in \z^n}l_i\varphi_l\,t^l$ for each $1\leqslant i\leqslant n$. Finally, if $f\in \Q((t_1))\ldots((t_n))[[x_{i,l}]]$ is weighted, then $\res\big(f\,\frac{dt_1}{t_1}\wedge\ldots\wedge\frac{dt_n}{t_n}\big)$ is a weight homogeneous power series in $\Q[[x_{i,l}]]$ of weight zero, being the constant term of $f$.

All together this implies that for each finite subset $M'\subset \{1,\ldots,p\}\times\z^n$ and $d\in\N$, the element ${\varphi^{(M',d)}_{n,j_1,\ldots,j_q}\in\Q[M']/(M')^d}$ is weight homogeneous of weight zero. Now we pass to the limit over all~$(M',d)$.
\end{proof}

\quash{
\begin{lemma}
Given an element $\lambda\in\Lambda_n$ (see Subsection~\ref{subsect:Laurent}) and an natural number $N\in\N$, there is an explicitly defined constant $C'=C'(\lambda,N)$ such that if $l_1+\ldots+l_N=0$ and $l_i\in\Z^n_{\lambda}$, then $|l_i|\geqslant C'$, where $|l_i|:=|l_1|+\ldots|l_n|$.
\end{lemma}

\begin{theor}
For any element $\lambda\in\Lambda_n$ (see Subsection~\ref{subsect:Laurent}) and an natural number $N\in\N$, there is an explicitly defined constant $C=C(\lambda,N)$ that satisfies the following condition. Let $(j_1,\ldots,j_q)$ be a collection as above, $A$ be a ring, and $g_1,\ldots,g_p\in(\lo^n\gm)^{\sharp}(A)$ be such that $g_i=\sum\limits_{l\in\z^n_{\lambda}}a_{i,l}t^l$ and $\big(\sum\limits_{l\in\z^n_{\lambda},l\leqslant 0})a_{i,l}t^l\big)^N=0$ (see Definition~\ref{defin:sharpaddfunctor}). Then $\varphi_{n,j_1,\ldots,j_q}(g_1,\ldots,g_p)$ depends on coefficients in the series $\varphi_{n,j_1,\ldots,j_q}$ only by monomial from a finite set of all monomials of degree at most $p(N-1)$ in variables $x_l$ such that $|l|\leqslant C$.
\end{theor}
\begin{proof}
It is clear the all monomials in $\varphi^{(M',d)}$
\end{proof}
}

\section{Relation to higher local class field theory}\label{hi-cl-f-th}

In this section, we explain how the $n$-dimensional Contou-Carr\`ere symbol $CC_n$ leads to the local reciprocity map in the explicit higher local class field theory for $n$-dimensional local fields of positive characteristic as constructed by Parshin in~\cite{P1}. (For another approach to higher local class field theory see Kato's papers~\cite{K0},~\cite{K1}.)

Proposition~\ref{prop-Witt} below is also of independent interest. It describes a bilinear pairing of group functors $L^n K_n^M \times L^n W_S  \to W_S $, where $W_S$ is the group functor of Witt vectors that depends on a set of positive integers $S$ closed under taking divisors.

\subsection{Unramified, Kummer, and Artin--Schreier--Witt extensions}\label{subsect:localcft}

Let $\F_q$ be a finite field of characteristic $p$ and let $\overline{\F}_q$ be its algebraic closure. Higher local class field theory describes the Galois group of the maximal Abelian extension $K^{ab}$ of the $n$-dimensional local field $K:=\F_q((t_1))\ldots((t_n))$. One has a local reciprocity map
$$
K_n^M(K) \lrto  \Gal(K^{ab}/K)\,.
$$
The image of the local reciprocity map is dense with respect to the profinite topology on the right hand side, see~\cite[\S\,4, Theor.\,1(1)]{P1}. The kernel is described in topological terms in loc.cit. and is described in algebraic terms by Fesenko in~\cite[Intr.]{Fes} as follows: the kernel is the subgroup of all divisible elements in $K_n^M(K)$.

\medskip

Let $K^{nr}$ be the maximal Abelian unramified extension of $K$. Explicitly, we have that $K^{nr}=\overline{\F}_q((t_1))\ldots((t_n))$ and there is an isomorphism of profinite groups
$$
\Gal(K^{nr}/K)\stackrel{\sim}\lrto\widehat{\Z}\,.
$$
Let $K^{Km}$ be the maximal Kummer extension of $K$, that is, the maximal Abelian extension of exponent $q-1$. Explicitly, we have that ${K^{Km}=\F_{q^{q-1}}((\sqrt[q-1]{t_1}))\ldots ((\sqrt[q-1]{t_n}))}$ and this extension is of finite degree over $K$. By Kummer theory, there is an isomorphism of finite groups
$$
\Gal(K^{Km}/K)\stackrel{\sim}\lrto\Hom\big(K^*/(K^*)^{q-1},\F_q^*\big)\,.
$$
Finally, let $K^{ASW}$ be the maximal Artin--Schreier--Witt extension of $K$, that is, the maximal Abelian $p$-extension. The degree of $K^{ASW}$ over $K$ is infinite. By Artin--Schreier--Witt theory, for each $r\geqslant 1$, there is an isomorphism of pro-$p$ groups
$$
\Gal(K^{ASW}/K)/p^r\Gal(K^{ASW}/K)\stackrel{\sim}\lrto\Hom\big(W_{p^r}(K)/(Fr-1)W_{p^r}(K),\Z/p^r\Z\big)\,,
$$
where $W_{p^r}$ denotes the group of $r$-truncated $p$-Witt vectors (see Subsection~\ref{ASWp} below) and $Fr$ denotes the Frobenius homomorphism for Witt vectors.

The maximal Abelian extension $K^{ab}$ is the composite $K^{nr}\cdot K^{Km}\cdot K^{ASW}$. In order to construct the local reciprocity map, one defines three homomorphisms from $K^M_n(K)$ to the above Galois groups and then one checks a compatibility between these homomorphisms. These three homomorphisms are defined in the proof of~\cite[\S\,4, Theor.\,1]{P1}. Below we explain how they are constructed with the help of the Contou-Carr\`ere symbol~$CC_n$.

\medskip

The homomorphism $K^M_n(K)\to\Gal(K^{nr}/K)$ is the composition of a homomorphism~$c_K\colon K^M_n(K)\to\Z$ defined in ~\cite[\S\,3, Def.\,2]{P1} and the natural embedding $\z\to \widehat{\z}$. It follows from~\cite[\S\,3, Prop.\,2]{P1} that $c_K=\nu_n$ (see Definition~\ref{def:addsymb}).

The homomorphism $K^M_n(K)\to \Gal(K^{Km}/K)$ is induced by a bilinear pairing $K_n^M(K)\times K^*\to \F_q^*$, which is defined as the composition of the product between Milnor $K$-groups and a homomorphism
$$
K_{n+1}^M(K)\lrto \F_q^*\,,\qquad \{f_1,\ldots,f_{n+1}\}\longmapsto (f_1,\ldots,f_{n+1})_{K/\F_q}\,,
$$
defined in~\cite[\S\,3, Def.\,2]{P1}. It follows from~\cite[\S\,2, Prop.\,2]{P1} and~\cite[\S\,3, Prop.\,2]{P1} that the latter homomorphism coincides with the Contou-Carr\`ere symbol ${CC_n\colon K^M_{n+1}\big(\LL^n(\F_q)\big)\to \F_q^*}$ over the field $\F_q$.

The homomorphism $K^M_n(K)\to \Gal(K^{ASW}/K)$ is induced by a collection of bilinear pairings $K_n^M(K)\times W_{p^r}(K)\to \z/p^r\Z$, $r\geqslant 1$. For each $r\geqslant 1$, the latter pairing is the composition of a bilinear pairing
\begin{equation}\label{eq:Witt}
K_{n}^M(K)\times W_{p^r}(K)\lrto W_{p^r}(\F_q)
\end{equation}
constructed explicitly in~\cite[\S\,3, Def.\,5]{P1} and the trace map $W_{p^r}(\F_q)\to W_{p^r}(\F_p)\simeq\Z/p^r\Z$. The pairing~\eqref{eq:Witt} is called a Witt pairing and was defined by Witt~\cite{Witt} for the case~$n=1$. We will show in Subsection~\ref{ASWp} (see Proposition~\ref{prop-Witt} and a discussion after it) how to construct the Witt pairing with the help of the Contou-Carr\`ere symbol $CC_n$. Notice that for this one needs to consider the Contou-Carr\`ere symbol not over a field, but over Artinian rings with non-trivial nilpotents. This relation between the Witt pairing and the Contou-Carr\`ere symbol was discovered in~\cite[\S\,4.3]{AP} for the case~$n=1$. The case $n=2$ was considered in~\cite[\S\,8]{OZ1}. Actually, a relation between the Contou-Carr\`ere symbol and the local class field theory was the main motivation for the construction of this symbol in~\cite{CC1} in the case $n=1$.

\subsection{Generalized Witt pairing}\label{ASWp}

First let us recall some basic facts on Witt vectors (see, e.g.,~\cite[Lect.\,26]{M}). Let $A$ be a ring and let $S$ be a set of positive integers such that $S$ is closed under taking divisors. The set of Witt vectors $W_S(A)$ is defined by the formula
$$
W_S(A):=\big\{w=(w_i)\,\mid\, i \in S,\;w_i\in A \big\}\,.
$$
Ghost (or auxiliary) coordinates $w(i)$ of a Witt vector $w \in W_S(A)$ are defined by the following polynomials in $w_i$ with integral coefficients:
$$
w(i):= \sum_{d  \mid i  }  dw_d^{i/d}\,,\qquad i \in S \,.
$$
Note that, conversely, the coordinates $w_i$ are expressed as polynomials in the ghost coordinates $w(i)$ with rational coefficients that have non-trivial denominators.

By definition, addition and multiplication in the ghost coordinates $w(i)$ of Witt vectors are coordinate-wise. Surprisingly, the corresponding addition and multiplication in the coordinates $w_i$ are given by polynomials in $w_i$ with {\it integral} coefficients (cf. Subsection~\ref{integr-expl-form}). Thus $W_S$ is a ring functor, that is, a functor from the category of rings to the category of rings. For short, put $W(A):=W_{\N}(A)$. For a prime $p$ and an integer $r\geqslant 1$, the ring $W_{p^r}(A):=W_{\{1,p,\,\ldots,\,p^{r-1}\}}(A)$ is called the ring of $r$-truncated $p$-Witt vectors.

\begin{rmk}\label{rmk:projWitt}
Let $S'\subset S$ be an embedding of subsets of $\N$ and $S$ and $S'$ are closed under taking divisors. Then the natural projection $W_{S}\to W_{S'}$ is a morphism of ring functors. In the ghost coordinates this homomorphism is also the natural projection.
\end{rmk}

\medskip

We will use only the additive structure on Witt vectors, thus we will consider $W_S$ as a group functor (see Section~\ref{sect:notation}). Note that if $S=\N$ and $A$ is a $\Q$-algebra, then for any Witt vector $w\in W(A)$, there is an equality in $A[[x]]$
\begin{equation}\label{ghost-series}
-\log\prod_{i\geqslant 1}(1-w_i x^i)=\sum_{i\geqslant 1}w(i)\frac{x^i}{i}\,.
\end{equation}
It follows that for any ring $A$ (not only for a $\Q$-algebra), there is a functorial isomorphism of groups
$$
\Upsilon\;:\;W(A)\stackrel{\sim}\lrto 1+xA[[x]]\,,\qquad w=(w_i)\longmapsto \prod_{i\geqslant 1}(1-w_ix^i)\,,
$$
where the group structure on $1+xA[[x]]$ is given by the product of power series. In particular, there is a functorial embedding of groups $W(A)\hookrightarrow A[[x]]^*$.

\medskip

Consider now the composition of maps
$$
K^M_n\big(\LL^n(A)\big)\times W\big(\LL^n(A)\big)\lrto K^M_n\big(\LL^n(A)[[x]]\big)\times \LL^n(A)[[x]]^*\stackrel{\sim}\lrto
$$
$$
\stackrel{\sim}\lrto \LL^n(A)[[x]]^*\times K^M_n\big(\LL^n(A)[[x]]\big)\lrto K^M_{n+1}\big(\LL^n(A)[[x]]\big)\lrto A[[x]]^*\,,
$$
where the first map is induced by the natural homomorphism of rings ${\LL^n(A)\to \LL^n(A)[[x]]}$ and the isomorphism of groups $\Upsilon\colon W\big(\LL^n(A)\big)\stackrel{\sim}\lrto 1+x\LL^n(A)[[x]]$, the second map is the transposition, the third map is given by the product between Milnor $K$-groups, and the last map is the completed Contou-Carr\`ere symbol $\widehat{CC}_n$ (see Definition~\ref{defin:compCC} and Example~\ref{examp:CCcompl}(i)). Lemma~\ref{lemma-trivkernel} implies that the image of this composition is contained in the subgroup $1+xA[[x]]\simeq W(A)$ in $A[[x]]^*$. Thus we obtain a bilinear pairing
$$
K^M_n\big(\LL^n(A)\big)\times W\big(\LL^n(A)\big)\lrto  W(A)\,,
$$
which is functorial with respect to the ring $A$. Let $\{f_1,\ldots,f_n\}$, $f_i\in \LL^n(A)^*$, be a symbol in $K^M_n\big(\LL^n(A)\big)$ and let $(g_1,g_2,\ldots)$, $g_i\in\LL^n(A)$, be a Witt vector in $W\big(\LL^n(A)\big)$. Following Witt~\cite{Witt}, we denote the pairing of $\{f_1,\ldots,f_n\}$ and $(g_1,g_2,\ldots)$ by $(f_1,\ldots,f_n\mid g_1,g_2,\ldots]$.

\medskip

\begin{lemma}\label{Prop-Witt-explicit}
If $A$ is a $\Q$-algebra, then for any symbol $\{f_1, \ldots, f_n \}$ in $K^M_n\big(\LL^n(A)\big)$, any Witt vector $(g_1,g_2,\ldots)$ in $W\big(\LL^n(A)\big)$, and any $i\in \N$, there is an equality in $A$ between the $i$-th ghost coordinate
\begin{equation}  \label{Witt-ghost}
(f_1, \ldots, f_n \mid g_1,g_2,\ldots](i)= \res\left(g(i)\,\frac{df_1}{f_1}\wedge\ldots\wedge\frac{df_n}{f_n}\right)\,.
\end{equation}
\end{lemma}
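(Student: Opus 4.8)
The plan is to reduce the ghost-coordinate identity \eqref{Witt-ghost} to the explicit formula for the completed Contou-Carr\`ere symbol already established in Proposition~\ref{examp:VS}. First I would unwind the definition of the pairing $(f_1,\ldots,f_n\mid g_1,g_2,\ldots]$. By construction it is the image, under the completed symbol $\widehat{CC}_n$ applied to the ring $\LL^n(A)[[x]]$ (see Example~\ref{examp:CCcompl}(i) with $R=\LL^n(A)$, treating the variable as $x$ and the Laurent variables as $t_1,\ldots,t_n$), of the symbol $\{h,f_1,\ldots,f_n\}$, where $h:=\Upsilon(g_1,g_2,\ldots)=\prod_{i\geqslant 1}(1-g_ix^i)\in 1+x\LL^n(A)[[x]]$. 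Since $A$ is a $\Q$-algebra, so is $\LL^n(A)$, and the series $h$ lies in $\varprojlim_d(\lo^n\gm)^{\sharp}\big(\LL^n(A)[x]/(x^d)\big)$ because its constant term (in $x$) is $1$ and all higher coefficients involve $x$; hence the hypothesis of Proposition~\ref{examp:VS} (with the roles of the base ring and the uniformizer adapted as in Example~\ref{examp:CCcompl}(i)) is satisfied with the first argument being the ``$\sharp$''-element $h$.

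Next I would apply Proposition~\ref{examp:VS} to get the equality in $\big(\LL^n(A)[[x]]\big)$, or rather in $A[[x]]^*$ after noting the image lands in $1+xA[[x]]$,
$$
\Upsilon\big((f_1,\ldots,f_n\mid g_1,g_2,\ldots]\big)=\exp\,\res\left(\log(h)\,\frac{df_1}{f_1}\wedge\ldots\wedge\frac{df_n}{f_n}\right),
$$
where $\res$ is taken with respect to $t_1,\ldots,t_n$ and produces an element of $xA[[x]]$, and $\log(h)\in x\LL^n(A)[[x]]$ is the well-defined series $\varprojlim_d\log(\bar h)$. Now I would invoke the classical Witt identity \eqref{ghost-series}, which gives $-\log h=\sum_{i\geqslant 1}g(i)\frac{x^i}{i}$ in $\LL^n(A)[[x]]$, so that
$$
\res\left(\log(h)\,\frac{df_1}{f_1}\wedge\ldots\wedge\frac{df_n}{f_n}\right)=-\sum_{i\geqslant 1}\res\left(g(i)\,\frac{df_1}{f_1}\wedge\ldots\wedge\frac{df_n}{f_n}\right)\frac{x^i}{i},
$$
using $\Q$-linearity and continuity of $\res$ with respect to the $x$-adic topology (the residue map commutes with the inverse limit over $d$ by its very construction). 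Therefore, writing $W(i)$ for the $i$-th ghost coordinate of the Witt vector $(f_1,\ldots,f_n\mid g_1,g_2,\ldots]$ and unwinding \eqref{ghost-series} once more for the left-hand side, namely $-\log\Upsilon\big((f_1,\ldots,f_n\mid g_1,g_2,\ldots]\big)=\sum_{i\geqslant 1}W(i)\frac{x^i}{i}$, we obtain by combining the two displays that $\sum_{i\geqslant1}W(i)\frac{x^i}{i}=\sum_{i\geqslant1}\res\big(g(i)\,\frac{df_1}{f_1}\wedge\ldots\wedge\frac{df_n}{f_n}\big)\frac{x^i}{i}$ in $A[[x]]$, and comparing coefficients of $x^i/i$ yields \eqref{Witt-ghost}.

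The only genuinely delicate points I anticipate are bookkeeping ones rather than conceptual obstacles: first, making sure the hypotheses of Proposition~\ref{examp:VS} are matched to the set-up of Example~\ref{examp:CCcompl}(i), i.e. that the ``base'' pro-ring in that proposition is $\mbox{``$\varprojlim$''}\,\LL^n(A)[x]/(x^d)$, that $\widehat{A}$ there is $\LL^n(A)[[x]]$, and that $\widehat{\LL}^n$ of it is $\LL^n(A)[[x]]$ with the $n$ Laurent variables, so that the residue map in \eqref{Witt-ghost} is the one with respect to $t_1,\ldots,t_n$ and produces a power series in $x$; second, the passage from an identity of elements of $1+xA[[x]]\simeq W(A)$ to an identity of ghost coordinates, which is exactly the content of \eqref{ghost-series} applied on both sides and is legitimate since $A$ is a $\Q$-algebra. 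I would also remark that the differential forms $df_i$ are taken in $\widetilde{\Omega}^1$ over $\LL^n(A)[[x]]$ but only their $dt_1,\ldots,dt_n$ components matter, and that the wedge product $\frac{df_1}{f_1}\wedge\ldots\wedge\frac{df_n}{f_n}$ together with the single series factor $g(i)$ is precisely the shape to which $\res$ (coefficient of $t_1^{-1}\cdots t_n^{-1}dt_1\wedge\cdots\wedge dt_n$) applies, giving an element of $A$ for each fixed $i$. No step requires anything beyond Proposition~\ref{examp:VS}, the Witt identity \eqref{ghost-series}, and the linearity and continuity of $\res$, so the proof should be short.
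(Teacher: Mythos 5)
Your proposal is correct and follows essentially the same route as the paper's proof: apply Proposition~\ref{examp:VS} to $\widehat{CC}_n\{\Upsilon(g_1,g_2,\ldots),f_1,\ldots,f_n\}$, rewrite $\log\prod_{i\geqslant 1}(1-g_ix^i)$ via formula~\eqref{ghost-series}, use linearity of $\res$, and compare coefficients of $x^i/i$. The only slip is a bookkeeping one: in Example~\ref{examp:CCcompl}(i) the base ring should be $R=A$ (not $R=\LL^n(A)$), so that the pro-ring is $\mbox{``$\varprojlim$''}A[x]/(x^d)$, $\widehat{\LL}^n$ of it is $\LL^n(A)[[x]]$ with the $n$ Laurent variables, and the symbol takes values in $A[[x]]^*$ — which is exactly the setup you in fact use in the rest of the argument.
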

\begin{proof}
Using Proposition~\ref{examp:VS} and formula~\eqref{ghost-series}, we obtain the equalities
\begin{multline*}
\hspace{-0.5cm}
-\log\,\Upsilon(f_1,\ldots,f_n\mid g_1,g_2,\ldots]=
-\log\,\widehat{CC}_n \Big\{\Upsilon(g_1,g_2,\ldots),f_1,\ldots,f_n\Big\}=\\=
-\res\Big(\log\Big(\,\mbox{$\prod\limits_{i\geqslant 1}(1-g_i x^i)$}\Big)\,\frac{df_1}{f_1}\wedge\ldots\wedge\frac{df_n}{f_n}\Big)=
\res\Big(\Big(\,\mbox{$\sum\limits_{i\geqslant 1}g(i)$}\frac{x^i}{i}\Big)\,\frac{df_1}{f_1}\wedge\ldots\wedge\frac{df_n}{f_n}\Big)=\\=
\sum_{i\geqslant 1}\res\,\Big(g(i)\,\frac{df_1}{f_1}\wedge\ldots\wedge\frac{df_n}{f_n}\Big)\frac{x^i}{i}\,.
\end{multline*}
\end{proof}

\medskip

Now let $S$ be a subset in $\N$ closed under taking divisors. Define a group functor $U_S$ by the formula
$$
U_S:=\Ker\big(W\lrto W_S\big)\,.
$$

\begin{prop}\label{prop-Witt}
There is a bilinear morphism of functors
\begin{equation}\label{eq:genWitt}
L^nK^M_n\times L^nW_S\lrto W_S
\end{equation}
given in the ghost coordinates by formula~\eqref{Witt-ghost}.
\end{prop}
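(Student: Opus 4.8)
The plan is to construct the pairing~\eqref{eq:genWitt} by first building the "untruncated" pairing $L^nK^M_n \times L^nW \to W$ already essentially assembled in the text --- namely the composition through the completed Contou-Carr\`ere symbol $\widehat{CC}_n$ applied over the pro-ring $\mbox{``$\varprojlim\limits_{d}$''}\,\LL^n(A)[x]/(x^d)$ --- and then showing that it descends to $W_S$. Concretely, recall that for any ring $A$ we have the bilinear, functorial pairing
$$
\beta_A\;:\;K^M_n\big(\LL^n(A)\big)\times W\big(\LL^n(A)\big)\lrto W(A)
$$
obtained by combining the isomorphism $\Upsilon\colon W\big(\LL^n(A)\big)\toiso 1+x\LL^n(A)[[x]]$, the product between Milnor $K$-groups, and $\widehat{CC}_n$; its image lands in $1+xA[[x]]\simeq W(A)$ by Lemma~\ref{lemma-trivkernel}. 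Taking $A=\LL^n(B)$ gives, after using the natural isomorphism $\LL^n\big(\LL^n(B)\big)\simeq \LL^n(B)((u_1))\ldots((u_n))$ --- wait, more simply: we directly get a bilinear morphism of \emph{functors} $L^nK^M_n\times L^nW\to W$, since all constituent maps are functorial in $A$ and $L^nW(A)=W\big(\LL^n(A)\big)$, $L^nK^M_n(A)=K^M_n\big(\LL^n(A)\big)$.

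The key step is then the descent to $W_S$. I would argue as follows. Fix the subset $S\subset\N$ closed under divisors. Composing the pairing $L^nK^M_n\times L^nW\to W$ with the projection $W\to W_S$ (which is a morphism of ring functors by Remark~\ref{rmk:projWitt}) yields a bilinear morphism $L^nK^M_n\times L^nW\to W_S$. It remains to show this morphism kills the subfunctor $L^nK^M_n\times L^nU_S$, i.e.\ that the pairing of any symbol with a Witt vector lying in $U_S\big(\LL^n(A)\big)=\Ker\big(W(\LL^n(A))\to W_S(\LL^n(A))\big)$ has trivial image in $W_S(A)$. Equivalently, I must show that if $w=(w_i)\in W\big(\LL^n(A)\big)$ has $w_i=0$ for all $i\in S$, then $\beta_A(\sigma,w)\in U_S(A)$ for every symbol $\sigma$. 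This is the heart of the matter, and I expect it to be the main obstacle, because the pairing is \emph{a priori} defined only through the $x$-adic completion and $\Upsilon$, not coordinate-wise in the $w_i$; one cannot just read off the answer from the ghost formula~\eqref{Witt-ghost} over $\Q$-algebras since the $w_i$-to-ghost passage has denominators.

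To overcome this, I would first establish the claim over $\Q$-algebras using Lemma~\ref{Prop-Witt-explicit}: if $w\in U_S\big(\LL^n(A)\big)$ and $A$ is a $\Q$-algebra, then one must check that the ghost coordinates $\res\big(g(i)\,\tfrac{df_1}{f_1}\wedge\ldots\wedge\tfrac{df_n}{f_n}\big)$ for $i\in S$ --- which are the ghost coordinates of $\beta_A(\sigma,w)$ --- determine an element of $U_S(A)$. But here one should be careful: membership in $U_S$ is a condition on the ordinary coordinates $w_i$, $i\in S$, being zero, not on ghost coordinates. The clean route is instead purely structural: the composite $L^nK^M_n\times L^nW\xrightarrow{\beta} W \twoheadrightarrow W_S$ factors through $L^nW_S$ if and only if the restriction of $\beta$ (post-composed with $W\to W_S$) to $L^nK^M_n\times L^nU_S$ is zero, and since everything is functorial and $\beta$ is "built from $\widehat{CC}_n$", I would check this on the universal case by working with the pro-ring of power series: the generators of $U_S\big(\LL^n(A)\big)$ under $\Upsilon$ are products $\prod_{i\notin S}(1-w_ix^i)$, and one verifies that the resulting element of $A[[x]]^*$, after projecting appropriately, lies in $1+\big(\text{ideal generated by }x^i,\,i\notin S\big)$ --- hence maps to $1$ in $W_S(A)$. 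Because $\widehat{CC}_n$ is multilinear and functorial, and the structure map $\LL^n(A)\to\LL^n(A)[[x]]$ respects the $x$-grading, the image of such a generator under $\beta_A$ is a power series in $x$ supported on exponents that are $\N$-linear combinations involving some $i\notin S$; reducing modulo $S$ via Remark~\ref{rmk:projWitt} and using that $S$ is closed under divisors (so $W_S$ only "sees" exponents all of whose relevant divisors lie in $S$) gives the vanishing. I would first prove this over $\Q$ using the explicit series identity behind Lemma~\ref{Prop-Witt-explicit} and equation~\eqref{ghost-series}, then descend to all rings by the integrality/torsion-freeness argument of Theorem~\ref{cor:uniq} (or directly from integrality of the Witt and Contou-Carr\`ere universal polynomials, Theorem~\ref{theor:integalexpl}), since the ind-schemes representing the relevant loop functors are products of thick ind-cones and ind-flat ind-affine schemes. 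Finally, bilinearity of~\eqref{eq:genWitt} and the ghost-coordinate formula are inherited from $\beta$ and Lemma~\ref{Prop-Witt-explicit} respectively, completing the proof.
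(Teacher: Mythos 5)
Your overall skeleton is the same as the paper's: the pairing $L^nK^M_n\times L^nW\to W$ is the one already assembled from $\widehat{CC}_n$, the problem reduces to showing that a symbol paired with a Witt vector in $U_S\big(\LL^n(A)\big)$ lands in $U_S(A)$, this is checked over $\Q$-algebras, and the general case follows from Theorem~\ref{cor:uniq} applied to $\Z\subset\Q$. For that last step, note that what Theorem~\ref{cor:uniq} requires is ind-flatness of the extra factor $Y=L^nU_S$; the paper gets this because $U_S$ is the affine space $\Ab^{\N\smallsetminus S}$, so $L^nU_S$ is an ind-affine space by Proposition~\ref{lemma-repraffine}$(i)$ --- you should verify this explicitly rather than appeal only to the thick-ind-cone description of $(L^n\gm)^{\times n}$, which is already built into the theorem.

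The genuine gap is in your treatment of the $\Q$-algebra case. The ``structural'' criterion you substitute --- that the image lies in $1+\big(x^i;\ i\notin S\big)$ and that such elements map to $1$ in $W_S(A)$ --- is false: take $S$ the set of odd positive integers (closed under divisors); then $1-x^3$ lies in $1+(x^2)$, which is the ideal generated by the $x^i$ with $i\notin S$, yet $\Upsilon^{-1}(1-x^3)$ has $w_3=1\ne 0$ with $3\in S$, so it is not in $U_S(A)$. Likewise the heuristic that ``$W_S$ only sees exponents all of whose divisors lie in $S$'' fails for ordinary Witt coordinates (already $w_3=-a_3+a_1a_2$ involves the coefficient $a_2$). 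The worry that pushed you to this detour is unfounded over $\Q$: there the ghost-coordinate map $W(A)\to A^{\N}$ is a bijection commuting with the projections $W\to W_S$ (Remark~\ref{rmk:projWitt}), and if $(g_1,g_2,\ldots)\in U_S\big(\LL^n(A)\big)$ then, since $S$ is closed under divisors, $g(i)=\sum_{d\mid i}d\,g_d^{i/d}=0$ for every $i\in S$; hence by Lemma~\ref{Prop-Witt-explicit} all ghost coordinates of the pairing indexed by $S$ vanish, so the pairing lies in $U_S(A)$. This one-line ghost argument is exactly the paper's, and it is what should replace your ideal-membership step; the denominators in passing from ghost to ordinary coordinates only obstruct the argument over general rings, which is precisely what the descent via Theorem~\ref{cor:uniq} handles.
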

\begin{proof}
It is enough to prove that for any ring $A$, any symbol $\{f_1, \ldots, f_n \}$ in  $K^M_n\big(\LL^n(A)\big)$, and any Witt vector $(g_1,g_2,\ldots)$ in $U_S\big(\LL^n(A)\big)$, the pairing ${(f_1,\ldots,f_n\mid g_1,g_2\ldots]}$ belongs to the subgroup $U_S(A)$. If $A$ is a $\Q$-algebra, then this follows immediately from Remark~\ref{rmk:projWitt} and Lemma~\ref{Prop-Witt-explicit}.

In general, we need to show that the composition of morphisms of functors
$$
(L^n\gm)^{\times n}\times L^nU_S\lrto L^nK^M_n\times L^nW\lrto W\lrto W_S
$$
is equal to zero. Clearly, the functor $U_S$ is represented by the affine space $\Ab^{\N\smallsetminus S}$. Hence by Proposition~\ref{lemma-repraffine}$(i)$, $L^nU_S$ is represented by an ind-affine space, which is an ind-flat ind-affine scheme over $\z$. Thus we conclude by Theorem~\ref{cor:uniq} applied to the embedding of rings $\Z\subset\Q$.
\end{proof}

\begin{rmk}\label{rmk:pairingWitt}
Let $S'\subset S$ be an embedding of subsets in $\N$ such that $S$ and $S'$ are closed under taking divisors. Then pairings~\eqref{eq:genWitt} for $S$ and $S'$ commute with each other under the natural projections $W_{S}\to W_{S'}$ and $L^nW_{S}\to L^nW_{S'}$ (see Remark~\ref{rmk:projWitt}). This follows directly from the proof of Proposition~\ref{prop-Witt}.
\end{rmk}

When $S=\{1,\ldots,p^{r-1}\}$, $r\geqslant 1$, pairing~\eqref{eq:genWitt} evaluated at $A=\F_q$ is equal to the pairing from~\cite[\S\,3, Def.\,5]{P1} (this follows from functoriality of pairing~\eqref{eq:genWitt}).

\quash{
Also, combining Theorem~\ref{theor:integalexpl} with Proposition~\ref{prop-Witt}, we obtain that pairing~\eqref{eq:genWitt} is given by some universal polynomials with integral coefficients. The variables in these polynomials are coefficients of the iterated Laurent series $f_i$ and $g_j$ as above. This is a generalization of~\cite[\S\,3, Lem.\,1]{P1}.

Consider a ring $A$, a symbol $\{f_1, \ldots, f_n \}  \in  K^M_n\big(\LL^n(A)\big)$, and a Witt vector $(g_1,g_2,\ldots)\in W_S\big(\LL^n(A)\big)$. Let $f_i=\mbox{$\sum\limits_{l\in\z^n}a_{i,l}t^l$}$, $1\leqslant i\leqslant n$,
$g_j=\mbox{$\sum\limits_{l\in\z^n}b_{j,l}t^l$}$, $j\geqslant 1$, and let
$(f_1,\ldots,f_n\mid g_1,g_2,\ldots]=(w_1,w_2,\ldots)$. Then $w_i\in A$ are given by universal polynomials with integral coefficients in $a_{i,l}$, $a_{i,0}^{-1}$, and $b_{j,l}$, where $1\leqslant i\leqslant n$, $j\geqslant 1$, $l\in\z^n$.
}

\end{document}